\newtheorem{thm}{Theorem}[section]
\newtheorem{cor}[thm]{Corollary}
\newtheorem{lem}[thm]{Lemma}
\newtheorem{prop}[thm]{Proposition}
\theoremstyle{definition}
\newtheorem{defn}[thm]{Definition}
\theoremstyle{remark}
\newtheorem{rem}[thm]{Remark}
\numberwithin{equation}{section}
\newcommand{\be}{\begin{equation}}
\newcommand{\ee}{\end{equation}}
\newcommand{\R}{\mathbb R}
\newcommand{\N}{\mathbb N}
\newcommand{\eps}{\varepsilon}
\newcommand{\loc}{{{\tiny{\mbox{loc}}}}}
\newcommand{\p}{\partial}
\newcommand{\di}{\displaystyle}
\newcommand{\comment}[1]{}
\newcommand{\abs}[1]{\big\lvert {#1} \big\rvert}
\newcommand{\norm}[2]{\big\Vert {#1} \big\Vert_{#2}}
\newcommand{\normpic}[2]{\Vert {#1} \Vert_{#2}}
\theoremstyle{definition}
\newtheorem{teo}{Theorem}[section]
\newtheorem{theorem}[teo]{Theorem}
\newtheorem{proposition}[teo]{Proposition}
\newtheorem{remark}[teo]{Remark}
\begin{document}

\title[Regularity of shape optimizers for some spectral fractional problems]{Regularity of shape optimizers for some spectral fractional problems}

\author[G. Tortone]{Giorgio Tortone}\thanks{}
\address {Giorgio Tortone \newline \indent
	Dipartimento di Matematica, Universit\`a di Pisa \newline \indent
	Largo B. Pontecorvo 5, 56127 Pisa - ITALY}
\email{giorgio.tortone@dm.unipi.it}

\thanks{Work partially supported by the ERC project no.\ 853404 \emph{Variational approach to the regularity of the free
boundaries - VAREG} held by Bozhidar Velichkov.}
\subjclass[2010] {
49Q10,
35R11,
47A75,
49R05
35R35, 
}
\keywords{Shape optimization, Dirichlet eigenvalues, fractional Laplacian, viscosity solution, improvement of flatness.}
\begin{abstract}
This paper is dedicated to the spectral optimization problem
$$
\mathrm{min}\left\{\lambda_1^s(\Omega)+\cdots+\lambda_m^s(\Omega) + \Lambda \mathcal{L}_n(\Omega)\colon \Omega\subset D \mbox{ s-quasi-open}\right\}
$$
where $\Lambda>0, D\subset \R^n$ is a bounded open set and $\lambda_i^s(\Omega)$ is the $i$-th eigenvalue of the fractional Laplacian on
$\Omega$ with Dirichlet boundary condition on $\R^n\setminus \Omega$.\\ We first prove that the first $m$ eigenfunctions on an optimal set are
locally H\"{o}lder continuous in the class $C^{0,s}$ and, as a consequence, that the optimal sets are open sets.
Then, via a blow-up analysis based on a Weiss type monotonicity formula, we prove that the topological boundary in $D$ of a minimizer $\Omega$ is composed of a relatively
open \emph{regular part} and a closed \emph{singular part} of Hausdorff dimension at most $n-n^*$, for some $n^*\geq 3$.
Finally we use a viscosity approach to prove $C^{1,\alpha}$-regularity of the regular part of the boundary.
\end{abstract}
\maketitle

\section{Introduction}

Functionals involving the eigenvalues of an elliptic operator and the Lebesgue measure were the object of an intense study in the last decades. The major interest of this researches was to better comprehend the interaction between the geometric features of the optimizers and their spectrum.\\

One of the most active line of research was the one related to eigenvalues of the Dirichlet Laplacian, that is
$$
\mathrm{min}\left\{F(\lambda_1(\Omega),\dots,\lambda_m(\Omega)) + \Lambda \mathcal{L}_n(\Omega)\colon \Omega\subset D \mbox{ quasi-open}\right\}
$$
with $\Lambda>0, D\subseteq \R^n$ and $\lambda_i(\Omega)$ the $i$-th eigenvalue of the Dirichlet Laplacian on $\Omega$.
Unfortunately, despite of their simple formulation, these problems turn out to be quite challenging and an explicit construction of optimal sets is known only for the cases of the first (the optimizer is the ball) and the second eigenvalue (the optimizer is the union of two equal and disjoint balls) with $D=\R^n$, thanks to the Faber-Krahn and Hong-Krahn-Szeg\"{o} inequalities.\\
Indeed, for more general functionals, the regularity of the optimal sets and of the associated eigenfunctions turns out to be a rather difficult topic, usually due to the min-max formulation of the high order eigenvalues. Starting from the seminal contribution of Buttazzo-Dal Maso \cite{Buttazzodalmaso}, and the additional existence results of \cite{Bucur2012, mazzpratt}, several results were recently obtained both in the context of nondegenerate 
\cite{brianconlamboley,BMPV,MTV1,trey1,trey2} and degenerate functionals \cite{BMPV,KL2,MTreyV} of eigenvalues.\\

Recently, many authors approached the same class of problems 
in the context of nonlocal operators, with greater attention on the fractional Laplacian (see for example \cite{bralinpar14,brascoparini,frank,KWASNICKI,parinisalort,tortonezilio}). As in the local case, the validity of a  Faber-Krahn inequality (see \cite[Theorem 3.5]{bralinpar14}) immediately implies the minimality of the first eigenvalue of the fractional Laplacian on the ball, among domains of the same Lebesgue measure. Nevertheless, already in the case of the second eigenvalue the nonlocal attitude of the fractional Laplacian
affects the minimization problem: indeed in \cite{brascoparini} the authors proved a Hong-Krahn-Szeg\"{o} type inequality which implies that the infimum of the second eigenvalue, among sets of the same Lebesgue measure, is reached by a sequence of two disjoint balls with same volume whose mutual distance tends to infinity. Focusing on this specific phenomena, in \cite{parinisalort} the authors studied the behaviour of minimizing sequences for nonlocal shape functionals wondering how the mutual positions of connected components can impact the existence of minimizers. Exploiting a nonlocal version of a concentration-compactness type principle, they prove that either an optimal shape exists or there exists a minimizing
sequence consisting of two components whose mutual distance tends to infinity.\\

In this paper we are interested in shape optimization problems depending on the first $m$ eigenfunctions of the fractional Laplacian. Precisely, for $s\in (0,1)$, we consider the following nondegenerate problem
\be\label{min.shape}
\mathrm{min}\left\{\lambda_1^s(\Omega)+\cdots+\lambda_m^s(\Omega)+\Lambda\mathcal{L}_n(\Omega)\colon \Omega\subset D \text{ $s$-quasi-open}\right\};
\ee
where $\Lambda>0, D\subset \R^n$ is a bounded open set and $\lambda_i^s(\Omega)$ denotes the $i$-th eigenvalue of the fractional Laplacian on $\Omega$ with Dirichlet boundary condition on $\R^n\setminus \Omega$, counted with the due multiplicity (see \eqref{prob.eigen} for a detailed min-max definition).\\
Comparing the functional in \eqref{min.shape} with the ones studied in the local case, if $m=1$ our result extends to the fractional setting the minimization problem for the first eigenvalue first addressed in \cite{brianconlamboley}, while for $m>1$ it can be seen as the fractional counterpart of the recent works of \cite{KL1,MTV1}.

\subsection*{Main results and organization of the paper.}
Once we summarize few notions concerning the fractional Laplacian, its spectrum and the min-max formulation of high order eingevalues, we start by proving the existence of shape optimizers of \eqref{min.shape}, for every $\Lambda>0, D\subset \R^n$ open and bounded.\\
Afterwards, in order to translate the regularity issues of the minima $\Omega$ of \eqref{min.shape} in terms of its first $m$ eigenfunctions, we prove the validity of an almost-minimality conditions for the vector $V=(v^1,\dots,v^m)$ of the first $m$ normalized eigenfunctions on $\Omega$.\\
More precisely, in Proposition \ref{alm} we prove the existence of constant $K,\eps>0$ such that
 $$
 [V]^2_{H^s(\R^n)} +\Lambda\mathcal{L}_n(\{\abs{V}>0\}) \leq \left( 1+ K \normpic{\tilde{V}-V}{L^1(\R^n)}\right )[\tilde{V}]^2_{H^s(\R^n)} +\Lambda\mathcal{L}_n(\{|\tilde{V}|>0\})
  $$
  for every $\tilde{V} \in H^{s}_0(D;\R^m)$ such that $\normpic{\tilde{V}}{L^\infty(\R^n)}\leq \normpic{V}{L^\infty(\R^n)} $ and $\normpic{V-\tilde{V}}{L^1(\R^n)}\leq \eps$.\\ Nevertheless, the presence of the Gagliardo–Slobodecki\u{\i} seminorm makes unworkable the application of some basic tool such as the harmonic replacement and the blow-up analysis as well. Thus, in the same spirit of \cite{DR,DS2,DS1,DS3,DSS}, we overcome this difficulty by considering the local realization of the fractional Laplacian in terms of the Caffarelli-Silvestre extension \cite{CS2007} (see Section \ref{general} for more details).\\Indeed, given the vector $G=(g^1,\dots,g^m) \in H^{1,a}_\Omega(\R^{n+1};\R^m)$, with $g^i$ the extension in $\R^{n+1}$ of the $i$-th normalized eigenfunctions on $\Omega$, in Theorem \ref{definition} we introduce an \emph{almost-minimality condition} in terms of the functional
 $$
\mathcal{J}(G) = \int_{\R^{n+1}}|y|^a |\nabla G|^2\mathrm{d}X +\tilde{\Lambda}\mathcal{L}_n(\{\abs{G}>0\}\cap \R^n) \quad\mbox{with }\tilde{\Lambda}=\frac{2\Lambda}{d_s},
$$
that is, there exist $K,\eps>0$ such that
$$
  \mathcal{J}(G) \leq \left( 1+ K \normpic{\tilde{G}-G}{L^1(\R^n)}\right )  \mathcal{J}(\tilde{G})
$$
  for every $\normpic{G-\tilde{G}}{L^1(\R^n)}\leq \eps$ (see Section \ref{general} for the precise formulation). Moreover, exploiting the localization of the energies in $\mathcal{J}$, we prove a \emph{localized almost-minimality condition}, namely there exist $\sigma>0,r_0>0$ such that
    $$
    \mathcal{J}(G,B_r(X_0)) \leq \left( 1+ \sigma r^n\right )   \mathcal{J}(\tilde{G},B_r(X_0)) + \sigma \frac{2}{d_s}\sum_{i=1}^{m} \lambda_i^s(\Omega)r^n,
    $$
for every $X_0 \in \R^n,r\in (0,r_0]$ and $G-\tilde{G}\in H^{1,a}_0(B_r(X_0);\R^m)$ (see also \eqref{almost.fin.new} and \eqref{almost.local.new} for two alternative almost-minimality conditions). By using this new formulations, we can prove the following regularity result for the first $m$ normalized eigenfunctions.
\begin{thm}\label{mmm0}
Let $D\subset \R^n$ be an open bounded set and $\Lambda >0$. Then, the shape optimization problem \eqref{min.shape} admits a solution $\Omega$. Moreover, the vector $G \in H^{1,a}(\R^{n+1};\R^m)$ of the first $m$ normalized eigenfunctions is $C^{0,s}$-H\"{o}lder continuous in $\R^{n+1}$. Thus, the shape optimizer $\Omega$ is an open set and
$$
\mathcal{L}_n(\Omega)=\mathcal{L}_n(\{v^i\neq 0 \})=\mathcal{L}_n(\{g^i\neq 0 \}\cap \R^n)
$$
for every $i=1,\dots,m$.
\end{thm}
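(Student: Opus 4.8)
The plan is to split the argument into three pieces: existence, regularity of the eigenfunctions, and the measure-theoretic consequences. For the existence of a minimizer $\Omega$, I would follow the by-now classical strategy of Buttazzo--Dal Maso adapted to the fractional setting: one takes a minimizing sequence $\Omega_k\subset D$ of $s$-quasi-open sets, passes to the associated resolvent operators (or the capacitary measures), and uses the compact embedding $H^s_0(D)\hookrightarrow L^2(D)$ together with the $\gamma$-convergence of capacitary measures to extract a limit. The functional $\Omega\mapsto\sum_{i=1}^m\lambda_i^s(\Omega)$ is $\gamma$-lower semicontinuous while $\mathcal L_n$ is lower semicontinuous under the same convergence (here the measure penalization and $\Lambda>0$ prevent the eigenvalues from drifting to zero), so the limit set is a minimizer; the details are routine and I would only sketch them.

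The heart of the matter is the $C^{0,s}$-regularity of the extended eigenvector $G=(g^1,\dots,g^m)$. Here I would invoke the almost-minimality condition for $\mathcal J$ established in Theorem \ref{definition}, in its localized form
$$
\mathcal J(G,B_r(X_0))\le (1+\sigma r^n)\,\mathcal J(\tilde G,B_r(X_0))+\sigma\tfrac{2}{d_s}\Big(\textstyle\sum_{i=1}^m\lambda_i^s(\Omega)\Big)r^n,
$$
valid for competitors $\tilde G$ agreeing with $G$ outside $B_r(X_0)$. From this I would extract, in the standard way for free-boundary functionals of Alt--Caffarelli type (adapted to the degenerate/singular weight $|y|^a$ as in the work of De Silva--Roquejoffre and Caffarelli--Roquejoffre--Sire), the two basic decay estimates: (a) local boundedness and Lipschitz-type bound away from the weighted hyperplane, via comparison with the weighted-harmonic replacement of $G$ in $B_r(X_0)$, which by the almost-minimality has energy controlled up to the error terms $\sigma r^n$; and (b) the decay of the average of $|G|$ on half-balls centered at free-boundary points, i.e. $\fint_{B_r(X_0)\cap\{x_0\}}|G|\lesssim r^s$ for $X_0\in\partial\Omega$. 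Combining (a) and (b) through the usual iteration (Morrey-type) argument gives $G\in C^{0,s}_{\mathrm{loc}}(\mathbb R^{n+1})$, hence in particular the traces $v^i=g^i|_{\mathbb R^n\times\{0\}}$ are $C^{0,s}$ on $\mathbb R^n$. I expect this to be the main obstacle: one must carry the harmonic-replacement and nondegeneracy machinery through the degenerate Muckenhoupt weight $|y|^a$, and the error terms in the localized almost-minimality must be shown not to spoil the scaling $r^s$ of the decay — this is where the exponent $s$ (rather than $1$) enters and where the precise form of the penalization $\tilde\Lambda\mathcal L_n$ is used.

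Once $G$ is continuous, the set $\{|G|>0\}\cap\mathbb R^n$ is open, and since $\Omega$ coincides (up to $s$-capacity zero, hence up to Lebesgue-null sets) with $\{|v^i|>0\}$ for the eigenfunctions $v^i$, the optimizer $\Omega$ is itself open after choosing this canonical representative. For the final measure identities, the inclusion $\{v^i\neq0\}\subset\Omega$ (up to null sets) is immediate from $v^i\in H^s_0(\Omega)$; for the reverse, I would use the nondegeneracy from step (b) — at every point of $\Omega=\{|G|>0\}$ the vector $|G|$ is strictly positive, so on the open set $\Omega$ at least one component is nonzero on a dense open subset, and a unique-continuation / real-analyticity argument for the eigenfunctions (or, more elementarily, the fact that the nodal set of an eigenfunction of the fractional Laplacian has zero Lebesgue measure) upgrades this to $\mathcal L_n(\Omega\setminus\{v^i\neq0\})=0$ for each $i$. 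The identity $\mathcal L_n(\{g^i\neq0\}\cap\mathbb R^n)=\mathcal L_n(\{v^i\neq0\})$ is then just the statement that the trace of $g^i$ is $v^i$ together with the continuity of $g^i$. I would present existence and these last identities tersely and devote the bulk of the write-up to the decay estimates for $\mathcal J$.
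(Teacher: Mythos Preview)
Your proposal is essentially correct and follows the same architecture as the paper: existence via compactness, $C^{0,s}$-regularity via the localized almost-minimality of Theorem~\ref{definition} combined with harmonic replacement and a Morrey-type iteration, openness as a consequence of continuity, and the measure identities via unique continuation.

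A few minor differences are worth noting. For existence, the paper takes a more direct route than $\gamma$-convergence: it simply passes the vector $V_k$ of normalized eigenfunctions to a weak $H^s$-limit $V_\infty$, sets $\Omega_\infty=\{|V_\infty|>0\}$, and uses lower semicontinuity of both terms (Proposition~\ref{existence}); this avoids invoking the capacitary-measure machinery. For regularity, the paper does not split into ``away from the free boundary'' and ``at free-boundary points'' as you suggest; instead it proves a single dichotomy lemma (Lemma~\ref{dico}) valid at every thin point, which after iteration yields the uniform energy decay $\int_{B_r}|y|^a|\nabla g^i|^2\le C r^n$ and hence $C^{0,s}$ via Morrey. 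Your organization would also work, but the dichotomy is cleaner here since it treats all points uniformly. Finally, for the measure identities the paper (Proposition~\ref{supp}) argues precisely through the extension: if $g^i$ vanishes on a ball in $\Omega$, the eigenvalue equation forces both $g^i$ and $\partial^a_y g^i$ to vanish there, and unique continuation for $L_a$-harmonic functions propagates this to all of $\R^{n+1}_+$, contradicting normalization. Nondegeneracy is not needed for this step.
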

The first part of the Theorem reflects the regularity result obtained in the local case in \cite{BMPV}, where the authors proved Lipschitz continuity of eigenfunctions on optimal domains of \eqref{min.shape} for $s=1$.\\
The novelty of the second part of the Theorem lies in the validity of the unique continuation principle for eigenfunctions inside the optimal domains $\Omega$ even if disconnected. This is a purely nonlocal feature: indeed it is not possible to prove that the optimizer is connected, as in the local case \cite[Corollary 4.3]{MTV1}, but on the other hand the support of the first $m$ normalized eigenfunction coincides with $\Omega$, up to a $(n-1)$-dimensional set.\\

Afterwards, with the aim to rely on a free boundary approach to the regularity of the shape optimizer, we denote through the paper
 \be\label{nota}
\{|G|>0\}\cap \R^n=\Omega\quad\mbox{and}\quad F(G) = \partial\Omega\cap D,
\ee
in such a way as to express the regularity issues of the thin domain $\Omega \subset \R^n$ in terms of the ``extended'' vector $G$. Thus, suitably
modifying many ideas of the regularity theory with thin free boundary \cite{DR, DSalmost,DSS,DT2}, we finally prove the following result on the topological boundary of $\Omega$.
\begin{thm}\label{mmm}
Let $\Omega$ be a shape optimizer of \eqref{min.shape} for some $\Lambda>0, D\subset \R^n$ open and bounded. Then, $\Omega$ has locally finite perimeter in $D$ and its topological boundary $\partial\Omega\cap D$ in $D$ is a disjoint union of a regular part $\mathrm{Reg}(\partial\Omega\cap D)$ and one-phase singular set $\mathrm{Sing}(\partial\Omega\cap D)$:
\begin{enumerate}
  \item[1.] $\mathrm{Reg}(\partial\Omega\cap D)$ is an open subset of $\partial\Omega$ and is locally the graph of a $C^{1,\alpha}$ function.
\item[2.] $\mathrm{Sing}(\partial\Omega\cap D)$ consists only of points in which the Lebesgue density of $\Omega$ is strictly between $1/2$ and $1$. Moreover, there is $n^* \geq 3$ such that:
    \begin{itemize}
      \item if $n < n^*$, $\mathrm{Sing}(\partial\Omega\cap D)$ is empty;
\item if $n = n^*$, $\mathrm{Sing}(\partial\Omega\cap D)$ contains at most a finite number of isolated points;
\item if $n > n^*$, the $(n - n^*)$-dimensional Hausdorff measure of $\mathrm{Sing}(\partial\Omega\cap D)$ is locally finite in $D$.
    \end{itemize}
\end{enumerate}
\end{thm}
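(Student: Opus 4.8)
The plan is to translate the thin free boundary problem for $\Omega$ into the regularity theory for the extended vector $G$ in $\R^{n+1}$, exploiting the localized almost-minimality condition for $\mathcal J$ stated in the introduction. First I would establish the basic nondegeneracy and density estimates: using the almost-minimality of $G$ and a comparison with the harmonic replacement in a half-ball, one shows that $|G|$ grows linearly away from the free boundary (lower bound) and that its $L^\infty$ mean is bounded above, giving $0 < c \le \frac{|\Omega \cap B_r(x_0)|}{|B_r|} \le 1 - c$ for $x_0 \in F(G)$. Combined with the $C^{0,s}$ regularity from Theorem \ref{mmm0}, these estimates yield that $\Omega$ has locally finite perimeter in $D$ (via a standard argument comparing the energy with that of $G$ cut off on a slab, controlling $\mathcal H^{n-1}(\partial^* \Omega)$ by the drop in the Dirichlet energy of the extension).

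Next I would carry out the blow-up analysis. Fixing a free boundary point $x_0$ and a vanishing sequence of scales $r_k \to 0$, one considers the rescalings $G_{x_0,r_k}(X) = r_k^{-s} G(x_0 + r_k X)$. The localized almost-minimality, together with the uniform nondegeneracy and the $C^{0,s}$ bound, gives compactness in $C^{0,\beta}_{loc}$ and $H^{1,a}_{loc}$; the error terms $\sigma r^n$ and $\sigma \sum \lambda_i^s(\Omega) r^n$ vanish in the limit, so any blow-up limit $G_0$ is a \emph{global minimizer} of the pure one-phase functional $\int |y|^a |\nabla \cdot|^2 + \tilde\Lambda \mathcal L_n(\{|\cdot|>0\}\cap\R^n)$. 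Here the Weiss-type monotonicity formula mentioned in the abstract enters: it forces $G_0$ to be homogeneous of degree $s$, and one shows that such homogeneous minimizers are (up to rotation) the one-dimensional half-plane solution $c(x_n)_+^s$ with the eigenfunctions parallel to a fixed direction in $\R^m$ — this is where the vectorial nature collapses to a scalar problem, since nondegeneracy prevents branching. One then defines $\mathrm{Reg}(\partial\Omega\cap D)$ as the set of points where some blow-up is this half-plane solution, and $\mathrm{Sing}$ as its complement in $F(G)$.

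For part (1), once a blow-up at $x_0$ is the half-plane solution, an improvement-of-flatness / viscosity argument — modelled on \cite{DR,DSS,DSalmost,DT2} and adapted to the almost-minimality error — upgrades flatness at one scale to flatness at all smaller scales, giving that $\partial\Omega$ is a $C^{1,\alpha}$ graph near $x_0$; openness of $\mathrm{Reg}$ is then immediate. For part (2), the density statement follows because the half-plane solution is exactly the configuration with density $1/2$, and the measure-theoretic-interior points (density $1$) cannot be free boundary points by the density estimate, so the singular set sits strictly between densities $1/2$ and $1$; the Hausdorff dimension bound comes from a Federer-type dimension reduction on the cone of blow-ups, defining $n^*$ as the smallest dimension in which a nontrivial homogeneous minimizer with singular free boundary exists (necessarily $n^* \ge 3$ since in low dimensions the classification of homogeneous solutions leaves only the half-plane).

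The main obstacle I expect is the classification of homogeneous global minimizers and the accompanying epiperimetric/dimension-reduction machinery in the weighted, vectorial, \emph{thin} setting: the Caffarelli–Silvestre weight $|y|^a$ and the fact that the free boundary lives on the codimension-one slice $\{y=0\}$ mean one cannot directly quote the classical results, and the Weiss monotonicity formula must be established with the correct scaling exponent $n+a+1-2s$ and the boundary term on $\R^n$; ensuring the almost-minimality error terms are genuinely lower order at every step (so they do not spoil homogeneity of blow-ups or the monotonicity of the Weiss energy) is the delicate technical point that ties the whole argument together.
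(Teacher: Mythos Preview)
Your outline is essentially the paper's strategy: density/nondegeneracy estimates from the localized almost-minimality, Weiss monotonicity, blow-up compactness to $s$-homogeneous global minimizers of the pure functional, reduction of the vectorial blow-up to a scalar thin one-phase minimizer, then a viscosity improvement-of-flatness for $\mathrm{Reg}$ and Federer dimension reduction for $\mathrm{Sing}$, with $n^*$ inherited from the scalar theory.

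One substantive correction: the collapse from vectorial to scalar at the blow-up level is not because ``nondegeneracy prevents branching.'' The mechanism (Proposition~\ref{caract} together with Remark~\ref{eigen.remark}) is spectral: each component $g^i_0$ of an $s$-homogeneous blow-up solves $L_a g^i_0=0$ in $\R^{n+1}_+$ with $\partial^a_y g^i_0=0$ on $\{|G_0|>0\}\cap\R^n$ and $g^i_0=0$ on the complement, so its trace on $S^n$ is an eigenfunction of the weighted Laplace--Beltrami problem on the \emph{same} spherical cap with the \emph{same} characteristic exponent $s$. Since $s<\min(1,2s)$, only the principal (signed) eigenfunction is available, forcing all $g^i_0$ to be scalar multiples of a single nonnegative function; this is what yields $G_0=\xi|G_0|$ with $|G_0|$ a scalar global minimizer. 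Also, a minor slip: the growth of $|G|$ away from $F(G)$ is of order $\mathrm{dist}^s$, not linear, and the Weiss energy is normalized by $r^{-n}$ (the Dirichlet integral of an $s$-homogeneous function in $B_r$ with weight $|y|^a$ scales like $r^n$), not $r^{-(n+a+1-2s)}$.
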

Notice that the presence of a dimensional threshold $n^*\geq 3$ (see the precise definition \eqref{n*}) is deeply related to the existence of $s$-homogeneous global minimizer of the thin one-phase problem (see \cite{DS1, EKPSS}).\\
The analysis of the free boundary $F(G)=\partial \Omega\cap D$ is carried out by combining a classification of blow-up limits via a Weiss type monotonicity formula and a viscosity formulation of the problem near free boundary points.\\

More precisely, we prove that the vector of the first $m$ normalized eigenfunctions $G=(g^1,\dots, g^m)$ is a viscosity solutions (see Section \ref{visco} for the precise definition) of the problem
$$ \begin{cases}
-L_a G =0 & \text{in $\R^{n+1}\setminus \{y=0\}$;}\\
-\partial^a_y G = \lambda G & \text{in $\{|G|>0\}\cap \R^n$;}\\
 \frac{\partial}{\partial t^{s}} |G|= \frac{\sqrt{\Lambda}}{\Gamma(1+s)} & \text{on $F(G)$}
\end{cases}
\quad\text{with $\lambda=(\lambda_1^s(\Omega),\dots,\lambda_m^s(\Omega))$,}
$$
where
$$ \frac{\partial }{\partial t^{s}}|G|(X_0):=\di\lim_{t \rightarrow 0^+} \frac{|G|(x_0+t\nu(x_0),0)} {t^{s}} , \quad \textrm{$X_0=(x_0,0) \in F(G)$},
$$
with $\nu(x_0)$ the unit normal to $F(G)$ at $X_0$ pointing toward $\{|G|>0\}\cap \R^n$.\\
Thus, the analysis of the regular part of the free boundary is addressed by slighlty modifying the viscosity methods developed in \cite{DSalmost, DT2}. More precisely, we prove an Harnack type inequality strictly relying on the behaviour of the first eigenfunction $g^1$ and of the norm $|G|$ near free boundary points. Lastly, the regularity of $\mathrm{Reg}(\partial \Omega)$ is achieved by proving $C^{1,\alpha}$ regularity result for flat free boundaries for a general class of linear problems (see Subsection \ref{general.linear} for more details).
\begin{remark}
The main difference with the results of the local analogue \cite{MTV1}, besides the issue of connection and unique continuation, lies in the density estimates on $\Omega$. As pointed out in Remark \ref{upper}, our proof only relies on the different local regularity of fractional eigenfunctions near their zero set depending on whether or not they change sign. This peculiarity has been already observed in other similar problem, and it resembles the same differences that appear between the classical two-phase problem of Alt-Caffarelli-Friedman \cite{ACF} and its fractional counterpart \cite{AP} (see also \cite[Theorem 1.1]{DT} where the authors described a similar phenomenon in the thin-counterpart of \cite{MTV2}).\\
Therefore, we think that this feature can be exploited in the analysis of branching points in shape optimization problems involving degenerate functional of fractional eigenvalues, that is
$$
\mathrm{min}\left\{\lambda_{k_1}^s(\Omega)+\cdots+\lambda_{k_m}^s(\Omega)+\Lambda\mathcal{L}_n(\Omega)\colon \Omega\subset D \text{ $s$-quasi-open}\right\}\quad\mbox{with }1<k_1\leq \cdots \leq k_m,
$$
where the
differences with the local case could be more evident.
\end{remark}
The paper is organized as follows:
in Section \ref{general} we prove the existence of shape optimizer for \eqref{min.shape} and the almost-minimality of the eigenfunctions for a more general
free boundary problem. At the end of the Section, exploiting the Caffarelli-Silvestre extension, we introduce a more useful formulation of these almost-minimality conditions.
In Section \ref{local} we study the local behavior of eigenfunctions answering the classical questions of optimal regularity, non-degeneracy and density estimates for local minimizers. Moreover, we prove the validity of a unique continuation result for the eigenfunctions of optimal domain. Then, in Section \ref{weiss.sect} we derive a Weiss-type formula which will allow to characterize the blow-up limits in Section \ref{blow}. Finally, the blow-up analysis of Section \ref{blow} will lead to the definition of $\mathrm{Reg}(F(G))$ and $\mathrm{Sing}(F(G))$ and to some classical estimates of the Hausdorff dimension of the singular set. Finally, in Section \ref{visco} we introduce the notion of viscosity solution near $F(G)$ and in Section \ref{final} we use a viscosity approach to obtain $C^{1,\alpha}$ smoothness of the regular part $\mathrm{Reg}(F(G))$. More precisely, we develop an Harnack type inequality for flat solutions, which will be the basic tool for an improvement of flatness result, from which the $C^{1,\alpha}$ regularity of a flat free boundary follows by classic arguments.

\subsection*{Acknowledgements.} We are grateful to Dario Mazzoleni for useful discussions and suggestions.

\subsection*{Notations.} Let $s \in (0,1)$, we say that $\Omega$ is a $s$-quasi-open set if there exists a decreasing sequence $\{\Omega_k\}_{k\in \N}$ of open subsets of $\R^n$ such that $\Omega\cup \Omega_k$ is open and $\mathrm{cap}_s(\Omega_k) \to 0^+$ as $k\to \infty$. We refer to \cite{BRS, parinisalort} for more details on $s$-capacity and $s$-quasi-open set.\\
Let $\Omega\subset \R^n$ be bounded and open, we introduce the space $H^s_0(\Omega)$  as the completion of $C^\infty_c(\Omega)$ with respect to the
norm
$$
\norm{u}{H^s(\R^n)}= \left(\int_{\R^n}{u^2\mathrm{d}x} + [u]^2_{H^s(\R^n)} \right)^{1/2}$$
where
\be\label{gag}
[u]_{H^s(\R^n)}^2=\frac{C(n,s)}{2} \int_{\R^n}\int_{\R^n}{\frac{\abs{u(x)-u(z)}^2}{\abs{x-z}^{n+2s}}\mathrm{d}x\mathrm{d}z} \quad\text{with }
C(n,s) = \frac{2^{2s}s\Gamma(\frac{n}{2}+s)}{\pi^{n/2}\Gamma(1-s)}.
\ee
More generally, for a bounded $s$-quasi-open set $\Omega \subset \R^n$  we denote with $H^s_0(\Omega)$ the Sobolev space defined as the set of functions in $H^s(\R^n)$ which, up to a set of zero $s$-capacity, vanish outside of $\Omega$.\\
From now on, we denote by $\{e_i\}_{i=1,\ldots,n}$ and $\{f^i\}_{i=1,\ldots,m}$ canonical basis in $\R^n$ and $\R^m$ respectively. Unit directions in $\R^n$ and $\R^m$ will be typically denoted by $e$ and $f$. The Euclidean norm in either space is denoted by $|\cdot|$ while the dot product is denoted by $\langle \cdot, \cdot \rangle$.\\
A point $X \in \R^{n+1}$ will be denoted by $X=(x,y)\in \R^n\times \R$ and we will use the notation $x=(x',x_n)$. Moreover, a ball in $\R^{n+1}$ with radius $r>0$ centered at $X$ is denoted by $B_r(X)$ and for simplicity $B_r=B_r(0)$. Also, we use $\mathcal{B}_r = B_r\cap \{y=0\}$ to denote the $n$-dimensional ball in $\R^n\times \{y=0\}$.\\ We will often consider the following sets: let $g$ be a continuous non-negative function in $B_r$, then
\begin{align*}
B^+_r(g) &:= B_r \setminus \{(x,0)\colon g(x,0)=0\} \subset \R^{n+1}\\
\mathcal{B}^+_r(g) &:= B^+_r(g) \cap \{y=0\}\subset \R^{n}.
\end{align*} By abuse of notation, if $G=(g^1,\ldots,g^m)$ is a vector valued continuous function, we use $B_r^+(G), \mathcal B_r^+(G)$ in place of $B^+_r(|G|), \mathcal B^+_r(|G|)$ respectively.
Also, we will denote with $P$ and $L$ respectively the half-hyperplane $P:= \{X \in \R^{n+1}\colon x_n\leq 0, y =0 \}$ and $L:= \{X \in \R^{n+1}\colon x_n= 0, y =0 \}$.

In regard to the thin one-phase problem \eqref{FB}, we remark that if $F(g)$ is $C^2$ then any function $g$ which is harmonic in $B^+_1(g)$ has an asymptotic expansion at a point $ x_0\in F(g),$
$$g(x,y) = \alpha U(\langle x-x_0 ,\nu(x_0) \rangle , y)  + o(|x-x_0|^{s}+|y|^{s}).$$
Here $U(t,z)$ is the prototype of regular one-plane solution of the thin one-phase problem \eqref{FB}, which is given by
\begin{equation}\label{Unew}U(t,z) = \left(\frac{\sqrt{t^2+z^2}+t}{2}\right)^s= r^{s}\cos^{2s}\left(\frac \theta 2\right), \end{equation}
where in the second formulation we used the polar coordinates $$t= r\cos\theta, \quad  z=r\sin\theta, \quad r\geq 0, \quad -\pi \leq  \theta \leq \pi.$$

\section{Properties of the eigenfunctions on the optimal sets}\label{general}
In this Section we start with some general properties about the spectrum of the fractional Laplacian and we then prove existence of shape optimizers for the problem \eqref{min.shape} among the class of $s$-quasi-open sets. Afterwards, we introduce an almost-minimality condition satisfied by the eigenfunctions associated to shape optimizer, in terms of their Gagliardo–Slobodecki\u{\i} seminorm.\\ Since this condition is not convenient for our analysis, we conclude the Section with a reformulation of the almost-minimality by exploiting the local realization of the fractional Laplacian with the extension techinique.\\

Let $\Omega\subset \R^n$ be a bounded open set and $s\in (0,1)$, consider the Dirichlet-type eigenvalue problem
\be\label{equation.eigen}
\begin{cases}
  (-\Delta)^s v=\lambda v & \mbox{in } \Omega \\
  v=0 & \mbox{in }\R^n \setminus \Omega.
\end{cases}
\ee
where the fractional Laplacian is defined by
$$
(-\Delta)^s v(x)=  C(n,s) \mbox{ P.V.}\int_{\R^n}{\frac{v(x)-v(z)}{\abs{x-z}^{n+2s}}\mathrm{d}z} \quad\text{with }
C(n,s) = \frac{2^{2s}s\Gamma(\frac{n}{2}+s)}{\pi^{n/2}\Gamma(1-s)}.
$$
In analogy with the local case, if $\lambda \in \R$ is such that  \eqref{equation.eigen} admits a nontrivial solution $v$, then we say that $\lambda$ is an eigenvalue of $\Omega$ with $v$ an associated eigenfunction (see \cite{bralinpar14, brascoparini, LL} and reference therein for more result and generalisation of the concept of fractional eigenvalues).\\
Moreover, the spectrum of the Dirichlet fractional Laplacian is a closed set of strictly positive real numbers defined as an increasing sequence
$$
0<\lambda_1^s(\Omega)< \lambda_2^s(\Omega)\leq \lambda_3^s(\Omega) \leq \cdots \leq \lambda_m^s(\Omega)\leq \cdots
$$
where the strict inequality $\lambda_1^s(\Omega)< \lambda_2^s(\Omega)$ is a consequence of the nonlocal attitude of the fractional Laplacian (see \cite[Theorem 2.8]{brascoparini} and Subsection \ref{sub.unique}).
Thus, from now on we will denote by $v^i$ the eigenfunction corresponding to the eigenvalue $\lambda_i^s(\Omega)$, normalized with respect to $L^2(\Omega)$-norm. We recall that the family of eigenfunctions $\{v^i\}_{i\in \N}$ form a complete orthonormal system in $L^2(\Omega)$ and the Dirichlet eigenvalues $\lambda_i^s(\Omega)$ of the fractional Laplacian on $\Omega$ can be variationally characterized by the following min-max formulation
\begin{align}\label{prob.eigen}
\begin{aligned}
\lambda_1^s(\Omega)&=\min\left\{\frac{[u]^2_{H^s(\R^N)}}{\|u\|^2_{L^2(\R^N)}} : u\in H^s_0(\Omega)\setminus \{0\}\right\},\\
\lambda_i^s(\Omega)&=\min_{E_i\subset H^s_0(\Omega)} \max_{u\in E_i\setminus \{0\}}\frac{[u]^2_{H^s(\R^N)}}{\|u\|^2_{L^2(\R^N)}},\quad\text{if $i\geq 2$},
\end{aligned}
\end{align}
where in the second case the infimum is taken over all $i$-dimensional linear subspaces $E_i$ of $H^s_0(\Omega)$.\\ As pointed out in \cite[Theorem 3.1]{brascoparini}(see also \cite[Theorem 3.1]{bralinpar14}), the supremum of an eigenfunction $v^i$ on a set $\Omega$ can be estimated in terms of a power of the corresponding eigenvalue
\be\label{Linfinity.u}
\norm{v^i}{L^\infty(\R^n)} \leq \left[\tilde{C}_{n,s} \lambda_i^s(\Omega)\right]^{\frac{n}{4s}}, \quad\mbox{where }\tilde{C}_{n,s} =T_{2,s}\left(\frac{n}{n-2s}\right)^{\frac{n-2s}{2s}}
\ee
with $T_{2,s}$ the sharp Sobolev constant.\\
As a consequence of the min-max formulation, we have the following variational formulation for the sum of the first $m$ Dirichlet eigenfunctions on a $s$-quasi-open set $\Omega$
\be\label{var}
\sum_{i=1}^m \lambda_i^s(\Omega) = \text{min}\left\{\sum_{i=1}^m [u^i]^2_{H^s(\R^n)} \colon U=(u^1,\dots,u^m)\in H^s_0(\Omega;\R^m), \int_\Omega u^i u^j\mathrm{d}x=\delta_{ij}\right\},
\ee
where $V=(v^1,\dots,v^m)$ is the vector of the first $m$-eigenfunctions normalized with respect to the $L^2$-norm. Finally, from this last formulation we can deduce the existence of a minimum for \eqref{min.shape}.
\begin{prop}\label{existence}
Let $\Lambda>0$ and $D\subset \R^n$ be a bounded open set. Then the shape optimization problem
  $$
\mathrm{min}\Bigg\{\sum_{i=1}^m \lambda_i^s(\tilde{\Omega})+\Lambda\mathcal{L}_n(\tilde{\Omega})\colon \tilde{\Omega}\subset D \mbox{ s-quasi-open}\Bigg\}
$$
admits a solution.
\end{prop}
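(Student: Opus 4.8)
The plan is to recast \eqref{min.shape} as a constrained minimization over vector-valued fractional Sobolev functions and then apply the direct method of the calculus of variations, in the spirit of Buttazzo--Dal Maso \cite{Buttazzodalmaso}. Using the variational identity \eqref{var}, I would first check that the shape optimization problem has the same infimum as
$$
\min\Big\{\mathcal F(U):=\sum_{i=1}^m[u^i]^2_{H^s(\R^n)}+\Lambda\mathcal{L}_n(\{|U|>0\})\ :\ U\in H^s_0(D;\R^m),\ \int_{\R^n}u^iu^j\,\mathrm{d}x=\delta_{ij}\Big\},
$$
and moreover that a minimizer of $\mathcal F$ produces, through its positivity set, a minimizer of \eqref{min.shape}. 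Indeed, for an admissible $U$ the set $\Omega_U:=\{|U|>0\}=\bigcup_i\{u^i\neq0\}$ is $s$-quasi-open and contained in $D$ up to a set of zero $s$-capacity, and $U\in H^s_0(\Omega_U;\R^m)$ is $L^2$-orthonormal, so \eqref{var} gives $\sum_i\lambda_i^s(\Omega_U)+\Lambda\mathcal{L}_n(\Omega_U)\leq\mathcal F(U)$; conversely, testing $\mathcal F$ with the normalized first $m$ eigenfunctions $V$ of any admissible $\tilde\Omega\subset D$ and using $\{|V|>0\}\subseteq\tilde\Omega$ up to zero capacity yields $\mathcal F(V)\leq\sum_i\lambda_i^s(\tilde\Omega)+\Lambda\mathcal{L}_n(\tilde\Omega)$. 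These two inequalities identify the infima and give the claimed correspondence of minimizers.

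Next I would run the direct method on $\mathcal F$. Let $\{U_k\}$ be a minimizing sequence. Since $\mathcal F(U_k)$ is bounded and the volume term is nonnegative, $\sup_k\sum_i[u^i_k]^2_{H^s(\R^n)}<\infty$, and together with the normalization $\|u^i_k\|_{L^2(\R^n)}=1$ this bounds $\{U_k\}$ in $H^s(\R^n;\R^m)$, with supports contained in $\ov D$. Up to a subsequence, $U_k\rightharpoonup U$ weakly in $H^s(\R^n;\R^m)$, and by the compact embedding $H^s_0(D)\hookrightarrow\hookrightarrow L^2(D)$ (here $D$ bounded is used) one has $U_k\to U$ in $L^2(\R^n;\R^m)$ and, after a further subsequence, $\mathcal{L}_n$-a.e. in $\R^n$. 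The $L^2$ convergence passes the orthonormality constraint to the limit, so $U$ is admissible — in particular $U\not\equiv0$ — and $U\in H^s_0(D;\R^m)$. Weak lower semicontinuity of the convex, strongly continuous functional $U\mapsto\sum_i[u^i]^2_{H^s(\R^n)}$ takes care of the Gagliardo--Slobodecki\u{\i} part of the energy.

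The one genuinely delicate point, and where I expect the main work, is the lower semicontinuity of the measure term $\mathcal{L}_n(\{|U|>0\})$: this fails for $L^2$-weak convergence, which is precisely why the penalized formulation (rather than a hard volume constraint) together with the reduction of the first step is convenient. Here I would exploit the a.e. convergence $|U_k|\to|U|$: if $|U|(x)>0$ then $|U_k|(x)>0$ for all $k$ large, hence $\{|U|>0\}\subseteq\liminf_k\{|U_k|>0\}$ up to a Lebesgue-null set, and since $\chi_{\liminf_kA_k}=\liminf_k\chi_{A_k}$ pointwise, Fatou's lemma gives $\mathcal{L}_n(\{|U|>0\})\leq\liminf_k\mathcal{L}_n(\{|U_k|>0\})$. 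Summing the two estimates yields $\mathcal F(U)\leq\liminf_k\mathcal F(U_k)$, so $U$ minimizes $\mathcal F$. Finally $\Omega:=\{|U|>0\}$ is a finite union of superlevel sets of the $s$-quasi-continuous representatives of the $u^i$, hence $s$-quasi-open and contained in $D$ up to zero $s$-capacity (see \cite{parinisalort,BRS}), and by the first step it solves the shape optimization problem.
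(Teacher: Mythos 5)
Your proof is correct and takes essentially the same approach as the paper: bound the vector of eigenfunctions (resp.\ the minimizing sequence of the equivalent constrained functional $\mathcal F$) in $H^s$, extract a subsequence converging weakly in $H^s$ and strongly/a.e.\ in $L^2$, and use weak lower semicontinuity of the Gagliardo seminorm together with a Fatou argument for the measure term. The paper's proof is terser — it starts directly from a minimizing sequence of sets $(\Omega_k)_k$, takes their eigenvectors $V_k$, and simply invokes ``lower semi-continuity of the Lebesgue measure'' — while you spell out the equivalence with the penalized functional formulation, the a.e.-convergence/Fatou step, and the $s$-quasi-openness of the limiting set, all of which are implicit in the paper's argument.
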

\begin{proof}
  Let $(\Omega_k)_k$ be a minimizing sequence of $s$-quasi-open sets to the problem \eqref{min.shape}.\\Set $V_k=(v_{k}^1,\dots,v_{k}^m)$ as the vector of the first $m$ eigenfunctions on $\Omega_k$, then
  $$
  \left[ V_k \right]^2_{H^s(\R^n)}=
\sum_{i=1}^m [v_{k}^i]^2_{H^s(\R^n)} = \sum_{i=1}^m \lambda_i^s(\Omega_n)
  $$
  which implies that $V_k$ is uniformly bounded in $H^{s}(\R^n;\R^m)$. Thus, up to a subsequence, $(V_k)_k$ weakly converges in $H^{s}_0(D;\R^m)$ and strongly in $L^2(D;\R^m)$ to some $V_\infty \in H^s_0(D;\R^m)$.\\
  Since $V_\infty$ is an orthonormal vector, if we set $\Omega_\infty = \{|V_\infty|>0\}$, by the weak convergence in $H^s$ and the lower semi-continuity of the Lebesgue measure, we get
  \begin{align*}
  \sum_{i=1}^m \lambda_i^s(\Omega_\infty) + \Lambda|\Omega_\infty| &\leq
  \sum_{i=1}^m [v_{\infty}^i]^2_{H^s(\R^n)} + \Lambda|\Omega_\infty|\\
  &\leq \liminf_{k\to \infty} \left( \sum_{i=1}^m [v_{k}^i]^2_{H^s(\R^n)} + \Lambda|\Omega_k|\right)\\
  &\leq \liminf_{k\to \infty} \left( \sum_{i=1}^m \lambda_i^s(\Omega_k) + \Lambda|\Omega_k| \right),
  \end{align*}
  as we claimed.
  \end{proof}
Hence, in view of the variational characterization \eqref{var} of the sum of the first $m$ Dirichlet eigenvalues, given $\Omega$ a shape optimizer of \eqref{min.shape}, the vector of the first $m$ normalized eigenfunctions $V=(v^1,\dots,v^m)$ on $\Omega$ is a solution to the problem
\be\label{con.condiz}
\text{min}\left\{ [V]^2_{H^s(\R^n)} +\Lambda\mathcal{L}_n(\{\abs{V}>0\})\colon V\in H^{s}_0(D;\R^m), \int_D v^i v^j\mathrm{d}x=\delta_{ij}\right\},
\ee
where $\Omega = \{|V|>0\}$. Moreover, by \eqref{Linfinity.u} we get
\be\label{Linfinity.G}
\norm{|V|}{L^\infty(\R^n)} \leq \sqrt{m}\left[\tilde{C}_{n,s} \lambda^s_m(\Omega)\right]^{\frac{n}{4s}}, \quad\mbox{where }\tilde{C}_{n,s} =T_{2,s}\left(\frac{n}{n-2s}\right)^{\frac{n-2s}{2s}}
\ee
with $T_{2,s}$ the sharp Sobolev constant.
In the same spirit of \cite{MTV1,trey1}, by using the Gram-Schmidt
procedure we remove the orthogonality constraint.
\begin{lem}\label{ort}
  Let $\Omega\subset D$ be a $s$-quasi-open set and $V=(v^1,\dots,v^m)$ be the vector of the first $m$ normalized eigenfunctions on $\Omega$. Let $\delta>0$ be fixed and set $\tilde{V}=(\tilde{v}^1,\dots,\tilde{v}^m) \in H^{s}_0(D;\R^m)$ be such that
  $$
  \eps_m := \sum_{i=1}^m \norm{\tilde{v}^i-v^i}{L^1(D)}\leq 1\quad\text{and}\quad\sup_{i=1,\dots,m}\left\{\norm{v^i}{L^\infty(D)}+\norm{\tilde{v}^i}{L^\infty(D)}\right\}\leq \delta.
  $$
  Define $W=(w^1,\dots,w^m)\in H^{s}_0(D;\R^m)$ as the vector obtained orthonormalizing $\tilde{V}$ by the Gram-Schimdt procedure associated to the norm $L^2(\R^n)$, that is
$$
w^i=\frac{\tilde{w}^i}{\norm{\tilde{w}^i}{L^2}}\quad\mbox{with}\quad
 \tilde{w}^i=
  \begin{cases}
    \tilde{v}^1 & \mbox{if } i=1, \\
    \tilde{v}^i - \sum_{j=1}^{i-1}\left(\int_{\R^n} \tilde{v}^i w^j \mathrm{d}x\right) w^j & \mbox{if }j>1
  \end{cases}.
$$
  Then, there exist constants $1\geq \overline{\eps}_m>0$ and $\overline{C}_m>0$, depending only on $n,m,\delta$ and $\Omega$, such that the estimate
  $$
[W]^2_{H^s(\R^n)} \leq \left( 1+ \overline{C}_m \eps_m\right) [\tilde{V}]^2_{H^s(\R^n)}
  $$
  holds for every $\tilde{V}$ as above, with $\eps\leq \overline{\eps}_m$.
\end{lem}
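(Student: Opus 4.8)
The plan is to estimate the Gagliardo–Slobodecki\u{\i} seminorm of $W$ by controlling, one index at a time, the perturbation introduced by each Gram–Schmidt step, so that the final factor $1+\overline C_m\eps_m$ accumulates from $m$ small corrections. First I would record the elementary bilinear inequality for the seminorm: for $u,\phi\in H^s(\R^n)$ and $c\in\R$,
\[
[u+c\phi]^2_{H^s(\R^n)}\le [u]^2_{H^s(\R^n)}+2|c|\,[u]_{H^s(\R^n)}[\phi]_{H^s(\R^n)}+c^2[\phi]^2_{H^s(\R^n)},
\]
together with the bound $[\tilde v^i]^2_{H^s(\R^n)}\le [\tilde V]^2_{H^s(\R^n)}$ and a uniform a priori bound $[\tilde V]^2_{H^s(\R^n)}\le M$ valid for $\eps_m\le 1$: indeed, since $[\tilde v^i]_{H^s}\le [v^i]_{H^s}+[\tilde v^i-v^i]_{H^s}$ and each $v^i$ is a normalized eigenfunction on a fixed $\Omega\subset D$, the seminorms $[v^i]_{H^s}$ are controlled by $\lambda_m^s(\Omega)$, while $[\tilde v^i-v^i]_{H^s}$ will be absorbed below. (If one wants $M$ to depend only on $n,m,\delta,\Omega$, note that $\eps_m\le 1$ and the $L^\infty$ bound by $\delta$ force an $L^2$ bound, but not directly an $H^s$ bound; so I would instead carry $[\tilde V]^2_{H^s}$ as a multiplicative quantity on the right-hand side throughout — which is exactly the form of the claimed estimate — and never need an absolute bound on it.)

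Next I would estimate the Gram–Schmidt coefficients. Write $c^i_j:=\int_{\R^n}\tilde v^i w^j\,dx$ for $j<i$. Using orthonormality of $V$ in $L^2$, $\int v^i v^j=\delta_{ij}$, and the triangle inequality,
\[
\Big|\int_{\R^n}\tilde v^i \tilde v^j\,dx-\delta_{ij}\Big|
\le \|\tilde v^i-v^i\|_{L^1}\|\tilde v^j\|_{L^\infty}+\|v^i\|_{L^\infty}\|\tilde v^j-v^j\|_{L^1}
\le 2\delta\,\eps_m,
\]
and similarly $\big|\|\tilde v^i\|_{L^2}^2-1\big|\le 2\delta\eps_m$. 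An induction on $i$ then shows $\|\tilde w^i\|_{L^2}^2=1+O(\delta\eps_m)$ and $|c^i_j|\le C(m,\delta)\eps_m$, provided $\eps_m$ is small enough that all these quantities stay, say, within $1/2$ of their unperturbed values; this is where the threshold $\overline\eps_m$ enters, and $\overline\eps_m$ depends only on $m$ and $\delta$. Consequently $w^i-\tilde v^i$ is an explicit linear combination $\sum_{j<i}\big(\text{small}\big)w^j+\big(\tfrac1{\|\tilde w^i\|_{L^2}}-1\big)\text{(stuff)}$ with all scalar coefficients $O_{m,\delta}(\eps_m)$, and each $w^j$ is itself, by the induction hypothesis, a bounded linear combination of $\tilde v^1,\dots,\tilde v^j$ with coefficients bounded by a constant $C(m,\delta)$.

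Finally I would feed this back into the seminorm bound. Since $w^i=\sum_{k\le i}a^i_k\tilde v^k$ with $a^i_i=1+O_{m,\delta}(\eps_m)$ and $a^i_k=O_{m,\delta}(\eps_m)$ for $k<i$, the quadratic-form inequality above gives
\[
[w^i]^2_{H^s(\R^n)}\le (1+C(m,\delta)\eps_m)\,[\tilde v^i]^2_{H^s(\R^n)}+C(m,\delta)\eps_m\sum_{k<i}[\tilde v^k]^2_{H^s(\R^n)}
\le (1+\overline C_m\eps_m)\,[\tilde V]^2_{H^s(\R^n)},
\]
using $[\tilde v^k]^2_{H^s}\le[\tilde V]^2_{H^s}$ and crucially that the cross terms $[\tilde v^i]_{H^s}[\tilde v^k]_{H^s}\le[\tilde V]^2_{H^s}$ by AM–GM. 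Summing over $i=1,\dots,m$ yields $[W]^2_{H^s(\R^n)}\le (1+\overline C_m\eps_m)[\tilde V]^2_{H^s(\R^n)}$ after relabelling the constant, which is the assertion. The main obstacle is purely bookkeeping: propagating the $O(\eps_m)$ control through the nested inductive definition of the $w^i$ while keeping every constant dependent only on $n,m,\delta,\Omega$ and choosing $\overline\eps_m$ small enough (depending only on $m,\delta$) that the denominators $\|\tilde w^i\|_{L^2}$ never degenerate; there is no analytic difficulty beyond the subadditivity and bilinearity of $[\cdot]_{H^s}$, which behave exactly as the Dirichlet integral does in the local case treated in \cite{MTV1,trey1}.
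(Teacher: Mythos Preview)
Your proposal is correct and follows essentially the same strategy as the paper's proof: control the Gram--Schmidt coefficients $c^i_j=\int\tilde v^i w^j$ inductively by showing they are $O_{m,\delta}(\eps_m)$ and that $\|\tilde w^i\|_{L^2}=1+O(\eps_m)$, then feed this into the triangle inequality for $[\cdot]_{H^s}$. The only organizational difference is that the paper carries the induction on the quantities $\|w^i-v^i\|_{L^1}$ and $\|w^i\|_{L^\infty}$ (which brings in a dependence on $\mathcal L_n(\Omega)$ through $\|v^i\|_{L^1}\le \mathcal L_n(\Omega)^{1/2}$), whereas you work directly with the representation $w^i=\sum_{k\le i}a^i_k\tilde v^k$ and the near-orthogonality $\big|\int\tilde v^i\tilde v^j-\delta_{ij}\big|\le 2\delta\eps_m$; your route is slightly cleaner and in fact makes the constants depend only on $m,\delta$, but this is a cosmetic difference. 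One small point: in your final display you should sum the \emph{first} inequality $[w^i]^2\le(1+C\eps_m)[\tilde v^i]^2+C\eps_m\sum_{k<i}[\tilde v^k]^2$ over $i$ (giving the factor $(1+mC\eps_m)$), rather than the second, to avoid picking up an extraneous factor of $m$.
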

\begin{proof}
In terms of the components, we aim to prove
$$
\sum_{i=1}^m \big[ w^i \big]^2_{H^s(\R^n)} \leq \left( 1+ \overline{C}_m \eps_k\right) \sum_{i=1}^m \big[\tilde{v}^i\big]^2_{H^s(\R^n)}.
$$
Hence, the first step is to prove that there exists $\overline{\eps}_m,\overline{C}_m>0$ such that the following estimates holds whenever $\eps_m\leq \overline{\eps}_m$
  $$
  \sum_{i=1}^m \norm{w^i-v^i}{L^1(\mathcal{B}_r)} \leq C_m\eps_m,\quad
  \sup_{i=1,\dots,m}\norm{w^i}{L^\infty}\leq C_m
$$
 where $\overline{\eps}_m,C_m$ depend only on $d,m,\delta$ and $\Omega$. Since it follows the same lines of the proof of  \cite[Lemma 2.5]{MTV1}, we just sketch the proof. In particular we get
  \begin{align*}
  \norm{\tilde{v}^1-w^1}{L^1} &\leq\frac{|\norm{\tilde{v}^1}{L^1}^2-1|}{\norm{\tilde{v}^1-v^1+v^1}{L^1}^2}\norm{\tilde{v}^1}{L^1}
\\
&\leq\frac{\norm{\tilde{v}^1-v^1}{L^1}^2 +2\norm{v^1}{L^\infty}\norm{\tilde{v}^1-v^1}{L^1}}{1-2\norm{u}{L^\infty}\norm{\tilde{v}^1-v^1}{L^1}}(\norm{v^1}{L^1} + \norm{\tilde{v}^1 -v^1}{L^1})\\
&\leq\eps_1\frac{3\delta}{1-2\delta\eps_1}(\mathcal{L}_n(\Omega)^{1/2} + \eps_1) \leq 12\delta\mathcal{L}_n(\Omega)^{1/2}\eps_1,
  \end{align*}
  by choosing $\eps_1\leq \min\{\mathcal{L}_n(\Omega)^{1/2},(2\delta)^{-1}\}$. Thus, it follows that $\norm{v^1-w^1}{L^1}\leq (1+12\delta\mathcal{L}_n(\Omega)^{1/2})\eps_1$. Similarly, we get
  \be\label{25}
  \norm{w^1}{L^\infty}\leq \frac{\norm{\tilde{v}^1}{L^\infty}}{1-2\norm{v^1}{L^\infty}\norm{\tilde{v}^1-v^1}{L^1}}\leq \frac{\delta}{1-2\delta e_1}\leq 2\delta,
  \ee
which proves our claim for $m=1$. By induction, suppose now that the claim holds for $1,\dots,m-1$ and let us first estimate the distance from $v^m$ to the orthogonalized function
  $$
  \tilde{w}^m:=
  \begin{cases}
    \tilde{v}^1 & \mbox{if } m=1, \\
    \tilde{v}^m - \sum_{i=1}^{m-1}\left(\int_{\R^n} \tilde{v}^m w^i \mathrm{d}x\right) w^i & \mbox{if }k>1
  \end{cases}
  $$
  with respect to the $L^1$-topology. By the inductive step,
\begin{align*}
\sum_{i=1}^{m-1}\abs{\int_{\R^n}\tilde{v}^m w^i\mathrm{d}x} &\leq \sum_{i=1}^{m-1}\int_{\R^n}|\tilde{v}^m-v^m||w^i-v^i| + |v^i||\tilde{v}^m-v^m| + |v^m||w^i-v^i| \mathrm{d}x\\
&\leq C_{m-1}\eps_{m-1}\delta + (m-1)\delta \eps_m + \delta C_{m-1}\eps_{m-1}\\
&\leq (2C_{m-1}\delta + (m-1)\delta)\eps_m,
\end{align*}
which implies
  \begin{align}\label{2.6}
  \begin{aligned}
  \norm{v^m-\tilde{w}^m}{L^1} &\leq \norm{v^m-\tilde{v}^m}{L^1}+\sum_{i=1}^{m-1}\abs{\int_{\R^n}\tilde{v}^m w^1\mathrm{d}x}\left(\norm{v^i}{L^1}+\norm{w^i-v^i}{L^1}\right)\\
  &\leq \left[1+(\mathcal{L}_n(\Omega)^{1/2}+\bar{\eps}_{m-1})(2C_{m-1}\delta + (m-1)\delta)\right] \eps_m
  \end{aligned}
  \end{align}
  and
  \be\label{2.7}
  \norm{\tilde{w}^m}{L^\infty} \leq \delta \left( 1+ C_{m-1}(2C_{m-1}+m-1)\right).
  \ee
  By setting for simplicity $\tilde{C}_m$ as the largest of the constants appearing on the right hand side of \eqref{2.6} and \eqref{2.7}, we get
  $$
  \norm{v^m-\tilde{w}^m}{L^1}\leq \tilde{C}_m \eps_m \quad\text{and}\quad\norm{\tilde{w}^m}{L^\infty}\leq \tilde{C}_m.
  $$
  Recalling that $w^m=\norm{\tilde{w}^m}{L^2}^{-1}\tilde{w}^m$, we have
  \begin{align}\label{28}
  \begin{aligned}
  \abs{\norm{\tilde{w}^m}{L^2}-1}&\leq \abs{2\int_{\R^n}v^m(v^m-\tilde{w}^m)\mathrm{d}x + \int_{\R^n}(v^m-\tilde{w}^m)^2\mathrm{d}x}\\
  &\leq 2\delta \tilde{C}_m\eps_m + (\delta + \tilde{C}_m)\tilde{C}_m \eps_m.
  \end{aligned}
  \end{align}
  Hence, assume that $\eps_m \leq \overline{\eps}_m :=\frac12(2\delta \tilde{C}_m + (\delta+\tilde{C}_m)\tilde{C}_m)^{-1}$. Thus, $1/2 \leq \norm{\tilde{w}^m}{L^2}\leq 3/2$ and we have
  $$
  \norm{w^m}{L^\infty} = \norm{\tilde{w}^m}{L^2}^{-1}\norm{\tilde{w}^m}{L^\infty} \leq 2 \tilde{C}_m.
  $$
  On the other hand, as in \eqref{2.6}, we get
  $$
  \norm{v^m-w^m}{L^1} \leq \norm{v^m -\tilde{w}^m}{L^1}+ \norm{\tilde{w}^m-w^m}{L^1}\leq (1+12\tilde{C}_m\mathcal{L}_n(\Omega)^{1/2})\tilde{C}_m\eps_m,
  $$
  for $\eps_m \leq \overline{\eps}_m$, for some $\overline{\eps}_m>0$ small enough which depends on $\tilde{C}_m,\delta$ and $\mathcal{L}_n(\Omega)$. The inductive claim follows by defining
  $$
  C_m := 2(1+12\tilde{C}_m\mathcal{L}_n(\Omega)^{1/2})\tilde{C}_m.
  $$
  Thus, we can finally prove the main inequality by induction. First, by \eqref{25} for $m=1$
  $$
  [w^1]_{H^s(\R^n)} = \frac{[\tilde{v}^1]_{H^s(\R^n)}}{\norm{\tilde{v}^1}{L^2}}\leq
  \frac{[\tilde{v}^1]_{H^s(\R^n)}}{1-2\norm{v^1}{L^\infty}\norm{v^1-\tilde{v}^1}{L^1}} \leq (1+4\delta \eps_1) [\tilde{v}^1]_{H^s(\R^n)},
  $$
  while for $m>1$ we obtain
  \begin{align*}
  [w^m]_{H^s(\R^n)} &= \frac{[\tilde{w}^m]_{H^s(\R^n)}}{\norm{\tilde{w}^m}{L^2}}\\
  &\leq
  \frac{1}{1-|\norm{\tilde{w}^m}{L^2}-1|}\left[\tilde{v}^m - \sum_{i=1}^{m-1}\left(\int_{\R^n} \tilde{v}^m w^i \mathrm{d}x\right) w^i\right]_{H^s(\R^n)}\\
  &\leq  (1+2(2\delta\tilde{C}_m+(\delta+\tilde{C}_m)\tilde{C}_m)\eps_m)\left([\tilde{v}^m]_{H^s(\R^n)} +\sum_{i=1}^{m-1}\left(\int_{\R^n} \tilde{v}^m w^i \mathrm{d}x\right)[w^i]_{H^s(\R^n)}\right).
  \end{align*}
where we used \eqref{28}. Using again
$$
\sum_{i=1}^{m-1}\left\lvert\int_{\R^n} \tilde{v}^m w^i \mathrm{d}x\right\lvert \leq ((m-1)\delta + 2C_{m-1}\delta) \eps_m,
$$
and the induction hypothesis, we get the claimed result.
\end{proof}
We are now ready to prove the validity of an almost-minimality condition for shape optimizer of \eqref{min.shape}.
\begin{proposition}\label{alm}
  Suppose that $\Omega\subset D$ is a solution to \eqref{min.shape} for some $\Lambda >0$. Then, the vector $V=(v^1,\dots,v^m) \in H^{s}_0(\Omega;\R^m) $ of  normalized eigenfunctions on $\Omega$ satisfies the following almost-minimality condition: for every $\delta\geq \norm{V}{L^\infty(\Omega)}$ there exist $K,\eps>0$ such that
  \be\label{alm.min}
  [V]^2_{H^s(\R^n)} +\Lambda\mathcal{L}_n(\{\abs{V}>0\}) \leq \left( 1+ K \normpic{\tilde{V}-V}{L^1(\R^n)}\right )[\tilde{V}]^2_{H^s(\R^n)} +\Lambda\mathcal{L}_n(\{|\tilde{V}|>0\})
  \ee
  for every $\tilde{V} \in H^{s}_0(D;\R^m)$ such that $\normpic{\tilde{V}}{L^\infty(\R^n)}\leq \delta $ and $\normpic{V-\tilde{V}}{L^1(\R^n)}\leq \eps$.
\end{proposition}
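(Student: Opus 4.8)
The plan is to deduce \eqref{alm.min} directly from the variational characterization \eqref{con.condiz} of $V$ combined with the perturbative Gram--Schmidt estimate of Lemma \ref{ort}. Given a competitor $\tilde V$, we orthonormalize it, pay the corresponding price in the Gagliardo--Slobodecki\u{\i} seminorm via Lemma \ref{ort}, and use the orthonormalized vector as an admissible competitor in \eqref{con.condiz}; the Lebesgue measure term only improves under orthonormalization since the support does not grow.

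Concretely, fix $\delta\geq\norm{V}{L^\infty(\Omega)}$ and let $\tilde V=(\tilde v^1,\dots,\tilde v^m)\in H^{s}_0(D;\R^m)$ satisfy $\normpic{\tilde V}{L^\infty(\R^n)}\leq\delta$ and $\normpic{V-\tilde V}{L^1(\R^n)}\leq\eps$, with $\eps>0$ to be chosen. Since $\abs{v^i-\tilde v^i}\leq\abs{V-\tilde V}$, $\abs{v^i}\leq\abs{V}$ and $\abs{\tilde v^i}\leq\abs{\tilde V}$ pointwise, one has
\[
\eps_m:=\sum_{i=1}^m\norm{\tilde v^i-v^i}{L^1(D)}\leq\sqrt m\,\norm{V-\tilde V}{L^1(\R^n)},\qquad\sup_{i=1,\dots,m}\left\{\norm{v^i}{L^\infty}+\norm{\tilde v^i}{L^\infty}\right\}\leq 2\delta.
\]
Applying Lemma \ref{ort} with $2\delta$ in place of $\delta$ produces constants $1\geq\overline\eps_m>0$ and $\overline C_m>0$ depending only on $n,m,\delta,\Omega$, and a vector $W=(w^1,\dots,w^m)\in H^{s}_0(D;\R^m)$ that is orthonormal in $L^2(\R^n)$ and satisfies
\[
[W]^2_{H^s(\R^n)}\leq\bigl(1+\overline C_m\,\eps_m\bigr)\,[\tilde V]^2_{H^s(\R^n)}
\]
whenever $\eps_m\leq\overline\eps_m$, a condition ensured once we set $\eps:=\overline\eps_m/\sqrt m$ (note $\overline\eps_m\leq 1$).

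It remains to use $W$ as a competitor. Each $w^i$ is a linear combination of $\tilde v^1,\dots,\tilde v^i$, hence $W\in H^{s}_0(D;\R^m)$ and
\[
\{|W|>0\}\subseteq\bigcup_{i=1}^m\{\tilde v^i\neq 0\}=\{|\tilde V|>0\},\qquad\text{so}\qquad\mathcal{L}_n(\{|W|>0\})\leq\mathcal{L}_n(\{|\tilde V|>0\}).
\]
Since $V$ minimizes \eqref{con.condiz} with $\Omega=\{|V|>0\}$, testing its minimality against the orthonormal vector $W$ and combining with the previous two displays gives
\begin{align*}
[V]^2_{H^s(\R^n)}+\Lambda\mathcal{L}_n(\{|V|>0\})
&\leq[W]^2_{H^s(\R^n)}+\Lambda\mathcal{L}_n(\{|W|>0\})\\
&\leq\bigl(1+\overline C_m\,\eps_m\bigr)[\tilde V]^2_{H^s(\R^n)}+\Lambda\mathcal{L}_n(\{|\tilde V|>0\}).
\end{align*}
Recalling $\eps_m\leq\sqrt m\,\norm{V-\tilde V}{L^1(\R^n)}$, inequality \eqref{alm.min} follows with $K:=\sqrt m\,\overline C_m$.

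The analytic content is entirely concentrated in Lemma \ref{ort}, which already controls the Gram--Schmidt orthonormalization simultaneously in $L^1$, $L^\infty$ and in the Gagliardo seminorm, so no genuine obstacle remains here; the only points requiring (minor) care are the harmless doubling of the $L^\infty$ constraint, the inclusion $\{|W|>0\}\subseteq\{|\tilde V|>0\}$ that prevents the Lebesgue term from increasing under orthonormalization, and the elementary comparison between $\sum_i\norm{\tilde v^i-v^i}{L^1}$ and $\norm{V-\tilde V}{L^1}$.
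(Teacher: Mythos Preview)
Your proof is correct and follows essentially the same route as the paper: orthonormalize $\tilde V$ via Lemma \ref{ort}, use the resulting $W$ as a competitor in \eqref{con.condiz}, and observe that the support of $W$ is contained in that of $\tilde V$. You are slightly more explicit than the paper in tracking the passage from $\norm{V-\tilde V}{L^1}$ to $\eps_m=\sum_i\norm{v^i-\tilde v^i}{L^1}$ (via the $\sqrt m$ factor) and in doubling the $L^\infty$ bound, but these are minor bookkeeping points and the argument is the same.
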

\begin{proof}
Let $\tilde{V}\in H^s_0(D; \R^m)$ be the vector-valued function satisfying the assumptions of Proposition \ref{alm} and let $W \in H^s_0(D;\R^m)$ be the function obtained by the orthonormalizing procedure of Lemma \ref{ort} applied on $\tilde{V}$. Since $V$ is a solution of the minimization problem \eqref{con.condiz} and $W$ satisfies
$$
\int_{\R^n} w^i w^j\mathrm{d}x=\delta_{ij},
$$
we can use it as a test function in \eqref{con.condiz} obtaining
\begin{align*}
[V]^2_{H^s(\R^n)} +\Lambda\mathcal{L}_n(\{\abs{V}>0\}) & \leq [W]^2_{H^s(\R^n)} +\Lambda\mathcal{L}_n(\{\abs{W}>0\})\\
& \leq (1+\overline{C}_k \normpic{V- \tilde{V}}{L^1(\R^n)})[\tilde{V}]^2_{H^s(\R^n)} +\Lambda\mathcal{L}_n(\{|\tilde{V}|>0\}),
\end{align*}
where the last inequality follows by Lemma \ref{ort} and the fact that by construction $\mathcal{L}_n(\{\abs{W}>0\}) \subset \mathcal{L}_n(\{|\tilde{V}|>0\})$.
\end{proof}
\subsection{The extended formulation in $\R^{n+1}$}
The main problem of the previous formulation is the lack of monotonicity result for $s$-harmonic function, which makes unworkable the strategy of the harmonic replacement. Hence, we overcome this difficulty by considering the local realization of the fractional Laplacian in terms of the Caffarelli-Silvestre extension \cite{CS2007}, that gives an equivalent formulation of the previous non-local
variational problem as a local problem defined in one extra dimension. This is nowadays a common strategy and it has been deeply used in the context of fractional problem with free boundary (see \cite{CRS,DR,DS2,DS1,DS3,DSS,EKPSS} among others).\\
Thus, we already know that for $\Omega\subset \R^n$ $s$-quasi-open and $v \in H^s_0(\Omega)$ it holds
\be\label{cs}
[v]^2_{H^s(\R^n)} = d_s\int_{\R^{n+1}_+}|y|^a |\nabla g|^2\mathrm{d}X,\quad\mbox{with }d_s = 2^{2s-1}\frac{\Gamma(s)}{\Gamma(1-s)},
\ee
where $X=(x,y) \in \R^{n+1}, a=1-2s\in (-1,1)$ and $g \in H^{1,a}(\R^{n+1})$ is uniquely defined as the $L_a$-extension of $v$ in $\R^{n+1}_+$ (see \cite{fkj,fks} for more result of weighted degenerate Sobolev spaces $H^{1,a}$ and weighted degenerate Lebesgue spaces $L^{2,a}$).\\
Notice that, in this new setting, we can write the eigenvalue problem \eqref{equation.eigen} as follows
\be\label{equation.eigen.ext}
\begin{cases}
  L_a g=0 & \mbox{in } \R^{n+1}_+ \\
  -\partial^a_y g=\frac{\lambda}{d_s} g & \mbox{on }\Omega \times \{0\}\\
  g=0 & \mbox{on }(\R^n \setminus \Omega )\times \{0\},
\end{cases}
\quad\mbox{with }\partial^a_y g (x,0)=\lim_{y\to 0^+}y^a \partial_y g(x,y)
\ee
and $g(x,0)=v(x)$ in $\R^n$. Similarly, we can reformulate the shape minimization problem \eqref{min.shape} as
\be\label{var.ext}
\sum_{i=1}^{m} \lambda_i^s(\Omega)=d_s\text{min}\left\{ \int_{\R^{n+1}_+}|y|^a |\nabla G|^2\mathrm{d}X \colon G=(g^1,\dots,g^m)\in H^{1,a}_\Omega(\R^{n+1}_+;\R^m), \int_\Omega g^i g^j\mathrm{d}x=\delta_{ij}\right\}
\ee
where $g^i$ is the $L_a$-extension of the normalized eigenfunction $v^i$ (that is, it solves \eqref{equation.eigen.ext} with $\lambda=\lambda_i^s(\Omega), v=v^i$), and
$$
H^{1,a}_\Omega(\R^{n+1}_+;\R^m) = \left\{G \in H^{1,a}(\R^{n+1}_+;\R^m)\colon |G|(x,0)\equiv 0 \mbox{ in }\R^n \setminus \Omega\right\}.
$$
For the sake of simplicity, we will call $G=(g^1,\dots,g^m)$ in \eqref{var.ext} as the vector of normalized eigenfunctions on $\Omega$. Moreover, we can apply an even reflection through the thin-space $\{y=0\}$ and assume that each component of $G \in H^{1,a}_\Omega(\R^{n+1};\R^m)$ is symmetric with respect to the
$y$-variable.\\
Thus, we can rephrase Proposition \ref{alm} as follows:\\

 Let $\Omega\subset D$ be a solution to \eqref{min.shape} for some $\Lambda >0$. Then, given $\tilde{\Lambda}=2\Lambda/d_s$ and $G\in H^{1,a}_{\Omega}(\R^{n+1};\R^m)$

  the vector of normalized eigenfunctions on $\Omega$, for every $\delta\geq\norm{G}{L^\infty(\Omega)}$ there exist $K,\eps>0$ such that
\be\label{almost}
  \begin{aligned}
  \int_{\R^{n+1}}|y|^a |\nabla G|^2\mathrm{d}X +\tilde{\Lambda}\mathcal{L}_n(\{\abs{G}>0\}\cap \R^n) \leq&\, \left( 1+ K \normpic{\tilde{G}-G}{L^1(\R^n)}\right )  \int_{\R^{n+1}}|y|^a |\nabla \tilde{G}|^2\mathrm{d}X+\\
  &\, +\tilde{\Lambda}\mathcal{L}_n(\{|\tilde{G}|>0\}\cap \R^n)
  \end{aligned}
\ee

  for $\tilde{G} \in H^{1,a}_D(\R^{n+1};\R^m)$ such that $\normpic{\tilde{G}}{L^\infty(\R^n)}\leq \delta$ and $\normpic{G-\tilde{G}}{L^1(\R^n)}\leq \eps$.\\\\
 In the following Proposition we stress out how this new formulation allows to localize the problem. More precisely, the normalized eigenvector $G$ satisfies the following localized version of the almost-minimality condition.
  \begin{prop}\label{uti}
   Let $\Omega\subset D$ be a solution to \eqref{min.shape}, for some $\Lambda >0$, and $G=(g^1,\dots,g^m) \in H^{1,a}_{\Omega}(\R^{n+1};\R^m)$ be the vector of normalized eigenfunctions. Then, for every $\delta\geq\norm{G}{L^\infty(\Omega)}$ there exist $\sigma, r_0 >0$ such that, for every $r\in (0,r_0]$ and $X_0\in \R^n\times \{0\}$
    \be\label{almost.local}
  \begin{aligned}
  \int_{B_r(X_0)}|y|^a |\nabla G|^2\mathrm{d}X +\tilde{\Lambda}\mathcal{L}_n(\{\abs{G}>0\}\cap \mathcal{B}_r(X_0)) \leq&\, \left( 1+ \sigma r^n\right )  \int_{B_r(X_0)}|y|^a |\nabla \tilde{G}|^2\mathrm{d}X+\\
  &\, +\tilde{\Lambda}\mathcal{L}_n(\{|\tilde{G}|>0\}\cap \mathcal{B}_r(X_0))\\
  &\,+ \sigma \frac{2}{d_s}\sum_{i=1}^{m} \lambda_i^s(\Omega) r^n,
  \end{aligned}
\ee
for every $\tilde{G} \in H^{1,a}(\R^{n+1};\R^m)$ such that $\normpic{\tilde{G}}{L^\infty(\mathcal{B}_r(X_0))}\leq \delta$ and $G-\tilde{G}\in H^{1,a}_0(B_r(X_0);\R^m)$.
  \end{prop}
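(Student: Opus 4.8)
The plan is to \emph{localize} the global almost-minimality condition \eqref{almost} by patching a local competitor $\tilde G$ (defined on $B_r(X_0)$) into the global eigenvector $G$ and then restoring the orthonormality constraint that was present in \eqref{var.ext}. Concretely, given $\tilde G$ with $G-\tilde G\in H^{1,a}_0(B_r(X_0);\R^m)$ and $\normpic{\tilde G}{L^\infty(\mathcal B_r(X_0))}\le\delta$, I would first define a \emph{global} competitor $\hat G$ by
$$
\hat G := \begin{cases} \tilde G & \text{in } B_r(X_0),\\ G & \text{in } \R^{n+1}\setminus B_r(X_0),\end{cases}
$$
which lies in $H^{1,a}_D(\R^{n+1};\R^m)$ because the traces agree on $\partial B_r(X_0)$. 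One checks $\normpic{\hat G-G}{L^1(\R^n)}=\normpic{\tilde G-G}{L^1(\mathcal B_r(X_0))}\le C\delta r^{n}$ by H\"older's inequality on $\mathcal B_r(X_0)$, so the smallness hypothesis of \eqref{almost} is met for $r\le r_0$ with $r_0$ depending on $\eps,\delta$. Feeding $\hat G$ into \eqref{almost} and cancelling the contributions of $G$ outside $B_r(X_0)$ on both sides (the Dirichlet energies localize and the measure terms differ only inside $\mathcal B_r(X_0)$, using $\mathcal L_n(\{|\hat G|>0\}\cap\R^n)\setminus\mathcal B_r(X_0)=\mathcal L_n(\{|G|>0\}\cap\R^n)\setminus\mathcal B_r(X_0)$) yields, after bounding $K\normpic{\hat G-G}{L^1}\le K C\delta r^n=:\sigma_1 r^n$,
$$
\int_{B_r(X_0)}|y|^a|\nabla G|^2\,\mathrm dX+\tilde\Lambda\mathcal L_n(\{|G|>0\}\cap\mathcal B_r(X_0))
\le (1+\sigma_1 r^n)\!\int_{B_r(X_0)}\!|y|^a|\nabla\tilde G|^2\,\mathrm dX+\tilde\Lambda\mathcal L_n(\{|\tilde G|>0\}\cap\mathcal B_r(X_0))+\sigma_1 r^n\!\int_{\R^{n+1}}\!|y|^a|\nabla G|^2.
$$
Since $d_s\int_{\R^{n+1}}|y|^a|\nabla G|^2 = 2\sum_{i=1}^m\lambda_i^s(\Omega)$ (from \eqref{cs} and \eqref{var.ext}, after the even reflection which doubles the half-space integral), the last term is exactly of the form $\sigma\frac{2}{d_s}\sum_i\lambda_i^s(\Omega)r^n$.

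A subtlety I would need to address is the \textbf{orthonormality constraint}: strictly speaking $\hat G$ need not satisfy $\int_{\R^n}\hat g^i\hat g^j\,\mathrm dx=\delta_{ij}$, so it is not directly admissible in \eqref{var.ext}. This is precisely why the paper records \eqref{almost} with the weaker requirement $\normpic{\hat G-G}{L^1}\le\eps$ rather than the orthonormality constraint — the constraint was already removed, once and for all, by the Gram--Schmidt argument of Lemma \ref{ort} that underlies Proposition \ref{alm}/\eqref{almost}. So in fact no further orthonormalization is needed at this stage: I would simply invoke \eqref{almost} directly with the competitor $\hat G$, whose only relevant hypotheses are the $L^\infty$ bound (satisfied since $\hat G$ equals either $G$ or $\tilde G$, both bounded by $\delta$) and the $L^1$-closeness (satisfied for $r\le r_0$). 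The main point to verify carefully is therefore just the \emph{cancellation of the exterior contributions}: the Dirichlet energy is additive over the disjoint regions $B_r(X_0)$ and its complement, and the measure term splits as well because $\{|\hat G|>0\}$ and $\{|G|>0\}$ coincide outside $\mathcal B_r(X_0)$.

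The \textbf{main obstacle} is making the exterior-cancellation rigorous when the error term $K\normpic{\hat G-G}{L^1}$ multiplies the \emph{full} right-hand energy $\int_{\R^{n+1}}|y|^a|\nabla\hat G|^2$, not just the local one: writing
$$
\int_{\R^{n+1}}|y|^a|\nabla\hat G|^2 = \int_{B_r(X_0)}|y|^a|\nabla\tilde G|^2 + \int_{\R^{n+1}\setminus B_r(X_0)}|y|^a|\nabla G|^2,
$$
the cross term $K\normpic{\hat G-G}{L^1}\int_{\R^{n+1}\setminus B_r(X_0)}|y|^a|\nabla G|^2$ does not cancel against the left-hand side but is instead absorbed into the $\sigma\frac{2}{d_s}\sum_i\lambda_i^s(\Omega)r^n$ term (since that integral is bounded by $\frac{2}{d_s}\sum_i\lambda_i^s(\Omega)$), while the term $K\normpic{\hat G-G}{L^1}\int_{B_r(X_0)}|y|^a|\nabla\tilde G|^2$ becomes the $(1+\sigma r^n)$ factor in front of the local energy. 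Collecting $\sigma:=\max\{K C\delta,\;?\}$ and $r_0:=\min\{1,(\eps/(C\delta))^{1/n}\}$ (together with whatever is needed so $B_{r_0}(X_0)\subset$ a fixed neighbourhood where the reflected $G$ is controlled) gives the stated inequality uniformly in $X_0\in\R^n\times\{0\}$ and $r\in(0,r_0]$. I expect the bookkeeping of these three error contributions — and confirming each is $O(r^n)$ with constants depending only on $\delta$, $n$, $m$, $\Lambda$ and $\Omega$ (through $\normpic{G}{L^\infty}$ and $\sum_i\lambda_i^s(\Omega)$) — to be the only genuinely technical part; everything else is the routine "cut and paste" competitor construction.
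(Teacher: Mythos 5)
Your strategy is exactly the paper's: note that the admissibility condition $G-\tilde G\in H^{1,a}_0(B_r(X_0);\R^m)$ already makes $\tilde G$ a global competitor agreeing with $G$ outside $B_r(X_0)$ (so your $\hat G$ is just $\tilde G$), bound $\|\tilde G-G\|_{L^1(\R^n)}\le 2\omega_n\delta r^n\le\eps$ to satisfy the hypothesis of \eqref{almost}, plug in, cancel the common exterior contributions to both sides, and absorb the remaining error $K\|\tilde G-G\|_{L^1}\,\mathcal J(\tilde G)$ by splitting it between a $(1+\sigma r^n)$ factor in front of the local energy and an additive $O(r^n)$ term controlled by the global Dirichlet energy $\frac{2}{d_s}\sum_i\lambda_i^s(\Omega)$ (plus the bounded measure contribution $\tilde\Lambda\mathcal L_n(D)$, which you and the paper both leave implicit in the choice of $\sigma$). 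Your extra worry about re-orthonormalizing $\hat G$ is correctly dismissed: Proposition \ref{alm}/\eqref{almost} already removed that constraint, which is precisely why the paper records it as a hypothesis-free inequality; there is nothing further to check there.
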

  \begin{proof}
  Let $\delta >0$ and $K,\eps>0$ be the constants defined in condition \eqref{almost}. Consider now $r\in (0,r_0]$ with $$
  r_0 = \left( \frac{\eps}{2\omega_n\delta}\right)^{1/n}.
  $$
  Let $\tilde{G} \in H^{1,a}(\R^{n+1};\R^m)$ be such that $\normpic{\tilde{G}}{L^\infty(\mathcal{B}_r(X_0))}\leq \delta$ and  $G-\tilde{G}\in H^{1,a}_0(B_r(X_0);\R^m)$, where $G-\tilde{G}$ is extended by zero in $\overline{\R^{n+1}_+}$. By the definition of $r_0>0$, we have
  \be\label{L1}
\normpic{G-\tilde{G}}{L^1(\R^n)}\leq 2\omega_n \delta r^n \leq 2\omega_n \delta r_0^n \leq \eps.
  \ee
  Hence, by \eqref{almost}, we get
    \begin{align*}
  \int_{B_r(X_0)}|y|^a (|\nabla G|^2-|\nabla \tilde{G}|^2)\mathrm{d}X +\tilde{\Lambda}\mathcal{L}_n(\{\abs{G}>0\}\cap \mathcal{B}_r(X_0)) \leq&\, \sigma r^n \int_{\R^{n+1}}|y|^a |\nabla \tilde{G}|^2\mathrm{d}X+\\
  &\, +\tilde{\Lambda}\mathcal{L}_n(\{\tilde{G}>0\}\cap \mathcal{B}_r(X_0)),
  \end{align*}
  with $\sigma =2K\omega_n \delta$. On the other hand, since by \eqref{var.ext}
  \be \label{boundedinH}
  \int_{\R^{n+1}}|y|^a\abs{\nabla G}^2\mathrm{d}X = \frac{2}{d_s}\sum_{i=1}^{m} \lambda_i^s(\Omega),
  \ee
  it follows
\begin{align*}
\int_{\R^{n+1}}|y|^a |\nabla \tilde{G}|^2\mathrm{d}X  &= \int_{\R^{n+1}}|y|^a |\nabla G|^2\mathrm{d}X + \int_{B_r(X_0)}|y|^a |\nabla \tilde{G}|^2\mathrm{d}X - \int_{B_r(X_0)}|y|^a |\nabla G|^2\mathrm{d}X\\
&= \int_{B_r(X_0)}|y|^a \left(|\nabla \tilde{G}|^2 -|\nabla G|^2\right) \mathrm{d}X+ \frac{2}{d_s}\sum_{i=1}^{m} \lambda_i^s(\Omega)
\end{align*}
from which we deduce that
 \begin{align*}
  \int_{B_r(X_0)}|y|^a |\nabla G|^2\mathrm{d}X +\tilde{\Lambda}\mathcal{L}_n(\{\abs{G}>0\}\cap \mathcal{B}_r(X_0)) \leq&\, \left( 1+ \sigma r^n\right )  \int_{B_r(X_0)}|y|^a |\nabla \tilde{G}|^2\mathrm{d}X+\\
  &\, +\tilde{\Lambda}\mathcal{L}_n(\{|\tilde{G}|>0\}\cap \mathcal{B}_r(X_0))\\
  &\, +\sigma \frac{2}{d_s}\sum_{i=1}^{m} \lambda_i^s(\Omega)r^n.
  \end{align*}
\end{proof}
Hence, given $G\in H^{1,a}(\R^{n+1};\R^m), \tilde{\Lambda}>0$ let us consider the functional
$$
\mathcal{J}(G) = \int_{\R^{n+1}}|y|^a |\nabla G|^2\mathrm{d}X +\tilde{\Lambda}\mathcal{L}_n(\{\abs{G}>0\}\cap \R^n)
$$
and its localized version
$$
\mathcal{J}(G,B_r(X_0)) = \int_{B_r(X_0)}|y|^a |\nabla G|^2\mathrm{d}X +\tilde{\Lambda}\mathcal{L}_n(\{\abs{G}>0\}\cap \mathcal{B}_r(X_0)).
$$
Finally, we end the Section with the following statement, where we collect all the previous formulation. In the rest of the paper, we will always refer to the following two almost-minimality conditions.
\begin{theorem}\label{definition}
  Set $\Lambda>0$ and let $\Omega\subset D$ be a shape optimizer to \eqref{min.shape} and $G=(g^1,\dots,g^m) \in H^{1,a}_\Omega(\R^{n+1};\R^m)$ be the vector of normalized eigenfunctions on $\Omega$.\\ Then, given $\tilde{\Lambda}=2\Lambda/d_s$, it holds the following \emph{almost-minimality condition} in $\R^{n+1}$: for every $\delta\geq\norm{G}{L^\infty(\Omega)}$ there exist $K,\eps>0$ such that
\be\label{almost.fin}
  \mathcal{J}(G) \leq \left( 1+ K \normpic{\tilde{G}-G}{L^1(\R^n)}\right )  \mathcal{J}(\tilde{G})
\ee
  for every $\tilde{G} \in H^{1,a}(\R^{n+1};\R^m)$ such that $\normpic{\tilde{G}}{L^\infty(\R^n)}\leq \delta$ and $\normpic{G-\tilde{G}}{L^1(\R^n)}\leq \eps$.\\
  Similarly, under the same assumptions, the vector $G$ satisfies the following \emph{localized almost-minimality condition}: for $\delta\geq\norm{G}{L^\infty(\Omega)}$ there exist $\sigma, r_0 >0$ such that, for every $r\in (0,r_0]$ and $X_0\in \R^n\times \{0\}$
    \be\label{almost.local.fin}
    \mathcal{J}(G,B_r(X_0)) \leq \left( 1+ \sigma r^n\right )   \mathcal{J}(\tilde{G},B_r(X_0)) + \sigma \frac{2}{d_s}\sum_{i=1}^{m} \lambda_i^s(\Omega)r^n,
\ee
for every $\tilde{G} \in H^{1,a}(\R^{n+1};\R^m)$ such that $\normpic{\tilde{G}}{L^\infty(\mathcal{B}_r(X_0))}\leq \delta$ and $G-\tilde{G}\in H^{1,a}_0(B_r(X_0);\R^m)$.
\end{theorem}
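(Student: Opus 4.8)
The plan is to read both almost-minimality conditions off the results already proved, after rephrasing everything in terms of the functional $\mathcal{J}$ via the Caffarelli--Silvestre identity \eqref{cs}. The localized condition \eqref{almost.local.fin} needs no further work: unwinding the definition $\mathcal{J}(G,B_r(X_0))=\int_{B_r(X_0)}|y|^a|\nabla G|^2\,\mathrm{d}X+\tilde{\Lambda}\mathcal{L}_n(\{|G|>0\}\cap\mathcal{B}_r(X_0))$, and likewise for $\tilde G$, inequality \eqref{almost.local} of Proposition \ref{uti} is literally \eqref{almost.local.fin}, with the constants $\sigma=2K\omega_n\delta$ and $r_0=(\eps/2\omega_n\delta)^{1/n}$ inherited from that proof. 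So the only genuine step is to deduce the global condition \eqref{almost.fin} from the $H^s$-formulation \eqref{alm.min}; this is exactly the rephrasing \eqref{almost} recorded before Proposition \ref{uti}, which I would justify as follows.

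First I would rewrite $\mathcal{J}(G)$. Since $G$ is, componentwise, the $L_a$-extension of $V=(v^1,\dots,v^m)$ and is even in $y$, summing \eqref{cs} over the components gives $\int_{\R^{n+1}}|y|^a|\nabla G|^2\,\mathrm{d}X=\frac{2}{d_s}[V]^2_{H^s(\R^n)}$, consistently with \eqref{boundedinH}; since moreover $|G|(x,0)=|V|(x)$ and $\tilde{\Lambda}=2\Lambda/d_s$, this gives $\mathcal{J}(G)=\frac{2}{d_s}\big([V]^2_{H^s(\R^n)}+\Lambda\mathcal{L}_n(\{|V|>0\})\big)$, i.e. $\frac{2}{d_s}$ times the left-hand side of \eqref{alm.min}. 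Next, given a competitor $\tilde G$ (with trace vanishing outside $D$ on $\{y=0\}$, as in \eqref{almost}) such that $\normpic{\tilde G}{L^\infty(\R^n)}\leq\delta$ and $\normpic{G-\tilde G}{L^1(\R^n)}\leq\eps$, I would replace $\tilde G$ by the $L_a$-extension $\hat G$ of its own trace $\tilde V:=\tilde G(\cdot,0)$: this leaves the trace on $\{y=0\}$ unchanged --- hence also the set $\{|\cdot|>0\}\cap\R^n$ and the Lebesgue-measure term --- while the weighted Dirichlet energy can only decrease, since the $L_a$-extension minimizes $\int|y|^a|\nabla\cdot|^2$ among functions with prescribed trace (applied on $\R^{n+1}_+$ and on $\R^{n+1}_-$ separately). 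Hence $\mathcal{J}(\tilde G)\geq\mathcal{J}(\hat G)=\frac{2}{d_s}\big([\tilde V]^2_{H^s(\R^n)}+\Lambda\mathcal{L}_n(\{|\tilde V|>0\})\big)$.

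Now $\tilde V\in H^s_0(D;\R^m)$ satisfies $\normpic{\tilde V}{L^\infty(\R^n)}=\normpic{\tilde G}{L^\infty(\R^n)}\leq\delta$ and $\normpic{V-\tilde V}{L^1(\R^n)}=\normpic{G-\tilde G}{L^1(\R^n)}\leq\eps$, so it is an admissible test function in \eqref{alm.min}. Plugging it in and absorbing the measure term into the factor $1+K\normpic{\tilde V-V}{L^1(\R^n)}\geq 1$ (which only weakens the estimate), I obtain $[V]^2_{H^s(\R^n)}+\Lambda\mathcal{L}_n(\{|V|>0\})\leq\big(1+K\normpic{G-\tilde G}{L^1(\R^n)}\big)\frac{d_s}{2}\mathcal{J}(\tilde G)$; multiplying through by $\frac{2}{d_s}$ and using the formula for $\mathcal{J}(G)$ above yields \eqref{almost.fin}. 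As for the main obstacle: there is essentially none, since Theorem \ref{definition} merely consolidates Propositions \ref{alm} and \ref{uti}. The only point requiring a line of care is the replacement $\tilde G\mapsto\hat G$ in the thin setting --- one must check that $\hat G$ stays admissible, keeps the same trace (so the $L^\infty(\R^n)$ and $L^1(\R^n)$ bounds and the measure term are preserved) and does not increase the weighted Dirichlet energy, all of which are immediate from the variational characterization of the $L_a$-extension; the rest is bookkeeping of the constants $d_s$, $\tilde{\Lambda}=2\Lambda/d_s$ and the reflection factor $2$.
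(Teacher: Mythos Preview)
Your proposal is correct and matches the paper's approach: Theorem~\ref{definition} is presented there simply as a consolidation of \eqref{almost} and Proposition~\ref{uti}, without a separate proof, and you have correctly identified that \eqref{almost.local.fin} is literally \eqref{almost.local} while \eqref{almost.fin} is the rewriting \eqref{almost} of Proposition~\ref{alm} via the Caffarelli--Silvestre identity. Your one additional detail---replacing a general competitor $\tilde G\in H^{1,a}$ by the $L_a$-extension of its trace to pass from \eqref{alm.min} to \eqref{almost.fin}---is the natural justification the paper leaves implicit, and your handling of it (energy decreases, trace and hence the $L^1$, $L^\infty$, and measure terms are preserved) is sound.
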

\begin{rem}\label{scaling}
  We remark that the localized almost-minimality condition \eqref{almost.local.fin} scales according to the $C^{0,s}$ rescaling. Indeed, if $G$ satisfies \eqref{almost.local.fin}, then for every $X_0 \in \R^n \times \{0\}, \rho>0$
  $$
  G_{X_0,\rho}(X)=\frac{1}{\rho^{s}}G(X_0+\rho X) \in H^{1,a}_{\Omega_{X_0,\rho}}(\R^{n+1};\R^m), \quad\mbox{with }\,\Omega_{X_0,\rho}=\frac{\Omega-X_0}{\rho},
  $$
  satisfies the same condition with constants $\sigma\rho^n$ and $r_0/\rho$. More precisely, 
  for every $r \in (0,r_0/\rho]$
  $$
\mathcal{J}(G_{X_0,\rho},B_r)\leq \left( 1+ (\sigma\rho^n) r^n\right )  \mathcal{J}(\tilde{G},B_r)+ \sigma \frac{2}{d_s}\sum_{i=1}^{m} \lambda_i^s(\Omega) r^n,
  $$
for every $\tilde{G} \in H^{1,a}(\R^{n+1};\R^m)$ such that $\normpic{\tilde{G}}{L^\infty(\mathcal{B}_{r})}\leq \delta\rho^{-s}$ and $G_{X_0,\rho}-\tilde{G}\in H^{1,a}_0(B_{r};\R^m)$.
  \end{rem}
\section{$C^{0,s}$- regularity and non-degeneracy of eigenfunctions}\label{local}
In this Section we start by focusing on some local properties of shape optimizers. In particular we obtain optimal regularity and  non-degeneracy of the vector of normalized eigenfunctions. As a consequence, we prove that shape optimizers are open sets and they satisfy some density estimates.\\In the end, we discuss the validity of the unique continuation principle for eigenfunctions on disconnected domain.
\subsection{Optimal Regularity}
  The purpose of this subsection is to obtain $C^{0,s}$-regularity
of the eigenfunction associated to a solution $\Omega$ of \eqref{min.shape}.\\
The following is a technical lemma about a decay type estimate for the gradient of $L_a$-harmonic functions.
\begin{lem}\label{grad}
  Let $u \in H^{1,a}(B_1)$ be $L_a$-harmonic in $B_1$ and symmetric with respect to $\{y=0\}$. Then, there exists $C=C(n,a)>0$ such that
$$
\fint_{B_r}|y|^a|\nabla u-\nabla u(0)|^2\mathrm{d}X \leq C r^{2} \fint_{B_1}|y|^a |\nabla u|^2\mathrm{d}X,
$$
for every $r \in (0,1/2]$.
\end{lem}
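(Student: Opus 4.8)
The plan is to reduce the statement to a standard Campanato-type decay estimate for $L_a$-harmonic functions. The key observation is that the weight $|y|^a$ is an $A_2$-Muckenhoupt weight, so the regularity theory of Fabes–Kenig–Serapioni applies: an $L_a$-harmonic function $u$ in $B_1$, symmetric with respect to $\{y=0\}$, is $C^{1,\gamma}$ up to the thin space in the $x$-variables and its $x$-gradient together with $y^a\partial_y u$ are Hölder continuous; in particular $\nabla u(0)$ (read in the symmetrized sense, with $\partial_y u(0)=0$ by evenness) makes sense. First I would recall the interior and boundary Schauder-type estimates for $L_a$: there is $C=C(n,a)$ such that
\be\label{eq:schauder-plan}
\normpic{\nabla u}{L^\infty(B_{1/2})}^2 + [\nabla u]_{C^{0,1}(B_{1/2})}^2 \leq C \fint_{B_1}|y|^a|\nabla u|^2\,\mathrm{d}X,
\ee
where the Lipschitz bound on $\nabla u$ in $B_{1/2}$ follows because the first derivatives $\partial_{x_i}u$ are themselves $L_a$-harmonic and even in $y$, hence satisfy the same $L^\infty$–$L^2_a$ Caccioppoli/mean-value bound, while $\partial_y u$ is odd in $y$ and controlled by the Neumann-type estimate. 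The hypothesis forces us to take $r\in(0,1/2]$ precisely so that $B_r\subset B_{1/2}$ and \eqref{eq:schauder-plan} is available.

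Granting \eqref{eq:schauder-plan}, the estimate is immediate: for $X\in B_r$ with $r\leq 1/2$ we have $|\nabla u(X)-\nabla u(0)|\leq [\nabla u]_{C^{0,1}(B_{1/2})}\,|X|\leq [\nabla u]_{C^{0,1}(B_{1/2})}\,r$, so
\be\label{eq:final-plan}
\fint_{B_r}|y|^a|\nabla u-\nabla u(0)|^2\,\mathrm{d}X \leq [\nabla u]_{C^{0,1}(B_{1/2})}^2\, r^2 \leq C r^2 \fint_{B_1}|y|^a|\nabla u|^2\,\mathrm{d}X,
\ee
which is the claim. Note no extra weight factor appears because we estimate the integrand pointwise and the weighted average of $|y|^a$ against $1$ cancels between numerator and the $\fint$ normalization.

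The only genuine work is thus the derivation of \eqref{eq:schauder-plan}, i.e. the $C^{1,1}$-interior estimate for $L_a$-harmonic functions with the $L^2_a$-average on the right-hand side. I would obtain this in two steps: (i) a Caccioppoli inequality plus Moser iteration in the $A_2$-weighted setting (Fabes–Kenig–Serapioni) gives $\normpic{\nabla u}{L^\infty(B_{3/4})}\lesssim (\fint_{B_1}|y|^a|\nabla u|^2)^{1/2}$, using that $\partial_{x_i}u$ solves the same equation and is even in $y$; (ii) differentiating once more and applying the same sup bound to second $x$-derivatives, together with the equation $\partial_y(y^a\partial_y u)=-y^a\Delta_x u$ to control $\partial_{yy}u$ weighted appropriately, yields the Lipschitz bound on $\nabla u$ in $B_{1/2}$. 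The main obstacle is purely bookkeeping around the degenerate/singular weight near $\{y=0\}$: one must be careful that the symmetry in $y$ is what legitimizes treating $\partial_{x_i}u$ as an $L_a$-harmonic function across the thin space and that $\partial_y u$ vanishes on $\{y=0\}$, so that $\nabla u(0)$ is well defined and the pointwise Lipschitz bound can be applied at the origin. Everything else is a routine scaling argument.
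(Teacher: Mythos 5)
Your approach is genuinely different from the paper's. You aim at a pointwise $C^{1,1}$ (Lipschitz-gradient) Schauder estimate for even $L_a$-harmonic functions and then conclude by a one-line pointwise estimate; the paper never leaves the $L^2$-level. Specifically, the paper applies the weighted Poincar\'e inequality of Fabes--Kenig--Serapioni, the mean-value property, the CRS energy-decay $\int_{B_r}|y|^a|\nabla w|^2\lesssim (r/R)^{n+1+a}\int_{B_R}|y|^a|\nabla w|^2$ for $L_a$-harmonic $w$, and a Caccioppoli bound, to get a Campanato decay $\fint_{B_r}|y|^a|w-w(0)|^2\lesssim r^2\fint_{B_1}|y|^a|\nabla w|^2$. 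It applies this to the tangential derivatives $\partial_{x_i}u$ (which are $L_a$-harmonic and even), and then, crucially, to the \emph{conjugate} function $\partial_y^a u=|y|^a\partial_y u$ (which is $L_{-a}$-harmonic, odd, and vanishes at $0$), exploiting the algebraic identity $|y|^{-a}(\partial_y^a u)^2=|y|^a(\partial_y u)^2$ to turn the weighted Campanato estimate for $\partial_y^a u$ into exactly the desired decay for the normal derivative.

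The place where your plan needs more than a routine citation is precisely the $\partial_y u$ component of \eqref{eq:schauder-plan}. Your step (i) says that $\partial_y u$ is ``controlled by the Neumann-type estimate'', but Moser iteration applied to the natural object here, $\partial_y^a u=|y|^a\partial_y u$, only gives a uniform bound on $|y|^a\partial_y u$. For $a>0$ this does \emph{not} yield an $L^\infty$ or Lipschitz bound on $\partial_y u$ itself, because $|y|^{-a}$ blows up at the thin space. You can recover the pointwise bound by the bootstrap you gesture at in step (ii): write $\partial_{yy}u=-\Delta_x u-\tfrac{a}{y}\partial_y u$, use evenness to get $\partial_y u(x,0)=0$ and $|\partial_y u|\le\|\partial_{yy}u\|_\infty\,|y|$, and close the estimate as $\|\partial_{yy}u\|_\infty\le\tfrac1{1-|a|}\|\Delta_x u\|_\infty$ after bounding $\Delta_x u$ by another round of Moser applied to second tangential derivatives. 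The mixed derivatives $\partial_{x_i}\partial_y u$ need the same treatment with $\partial_{x_i}u$ in place of $u$. So your route is correct but materially heavier than suggested: each occurrence of $\partial_y$ costs a second-order bootstrap with the singular coefficient $a/y$, whereas the paper's proof needs only the $L^2$-Campanato machinery plus the observation that $\partial_y^a u$ is $L_{-a}$-harmonic. In exchange, your approach yields more (pointwise $C^{1,1}$ bounds) if one is willing to carry out the extra work. I would not accept ``Schauder-type estimates for $L_a$'' as a black box for \eqref{eq:schauder-plan}; either cite a precise reference that gives a $C^{1,1}$ estimate for even $L_a$-harmonic functions with $L^2_a$-average on the right, or spell out the bootstrap.
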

\begin{proof}
By the Poincaré-type inequality due to Fabes, Kenig and Serapioni
\cite[Theorem 1.5]{fks}, there exists $C_1 = C_1(n,a)>0$ such that
the following inequality holds
\[
  \int_{B_r} |y|^a |u-\bar u_{B_r}|^2 \mathrm{d}X \leq C_1 r^2 \int_{B_r} |y|^a |\nabla u|^2 \mathrm{d}X
\]
where $\bar u_{B_r}$ is the average of $u$ in the ball $B_r$, that is
\[
  \bar u_{B_r} = \frac{1}{r^{n+1+a}|B_1|_a} \int_{B_r}|y|^a u \mathrm{d}X, \quad\mbox{with } |B_1|_a = \int_{B_1}|y|^a\mathrm{d}X.
\]
By the mean value formula for $L_a$-harmonic function, we first get $\bar u_{B_r}=u(0)$.\\ Then, since by \cite[Theorem 2.6]{CRS} we know that
$$
\int_{B_r} |y|^a |\nabla u|^2 \mathrm{d}X \leq \left(\frac{r}{R}\right)^{n+1+a}\int_{B_R} |y|^a |\nabla u|^2 \mathrm{d}X
$$
for $0\leq r\leq R\leq 1$, by combining the previous inequalities with a Caccioppoli type estimate, for $r\leq 1/2$ we get
\begin{align*}
\int_{B_r}|y|^a|u-u(0)|^2\mathrm{d}X &\leq C_1r^2\int_{B_r} |y|^a |\nabla u|^2 \mathrm{d}X\\
&\leq C_1r^2\left(\frac{r}{1/2}\right)^{n+1+a}\int_{B_{1/2}} |y|^a |\nabla u|^2 \mathrm{d}X\\
&\leq C_2 r^{n+1+a+2}\int_{B_1} |y|^a u^2 \mathrm{d}X,
\end{align*}
where $C_2=C_2(n,a)$. Moreover, since for every $i=1,\dots,n$ the derivative $\partial_{x_i}u$ is $L_a$-harmonic, we immediately get
\be\label{1}
\fint_{B_r}|y|^a|\nabla_x u-(\nabla_x u)(0)|^2\mathrm{d}X \leq C_1(n,a) r^{2} \fint_{B_1}|y|^a |\nabla_x u|^2\mathrm{d}X,
\ee
for every $r \in (0,1/2]$. On the other hand, the covariant derivative associated to the $y$-direction
$$
\partial^a_y u = \begin{cases}
                   |y|^a \partial_y u & \mbox{if } y \neq 0\\
                   0 & \mbox{if }y=0
                 \end{cases},
$$
is $L_{-a}$-harmonic, and consequently
\be\label{2}
\fint_{B_r}|y|^{-a}(\partial^a_y u)^2\mathrm{d}X \leq C_1(n,-a) r^2 \fint_{B_1}|y|^{-a}(\partial^a_y u)^2\mathrm{d}X.
\ee
Finally, summing \eqref{1} and \eqref{2} we get
the claimed inequality with $C=\max\{C_1(n,a),C_1(n,-a)\}$.
\end{proof}
The following result is a dichotomy which has been first proposed in the scalar case in \cite[Proposition 2.1.]{DSalmost} for the case of almost minimizer of the thin one-phase problem (see \cite[Section 2]{DSalmostloc} for a similar result in the local case).
\begin{lem}\label{dico}
Let $\sigma'>0,\eps>0$ and $G \in H^{1,a}(B_1;\R^m)$ be symmetric with respect to $\{y=0\}$ and such that
$$
\mathcal{J}(G;B_1) \leq (1+\sigma') \mathcal{J}(\tilde{G};B_1)+\eps,
$$
where $\tilde{G}\in H^{1,a}(B_1;\R^m)$ satisfies $\norm{\tilde{G}}{L^\infty(\mathcal{B}_1)}\leq \norm{G}{L^\infty(\R^n)}$ and $\tilde{G}-G\in H^{1,a}_0(B_1;\R^m)$. Thus, if we set
  $$
  a_i:= \left(\fint_{B_{1}}|y|^a \abs{\nabla g^i}^2\mathrm{d}X \right)^{1/2},
  $$
  there exists universal constants $\eta \in (0,1), M, \sigma_0>0$ such that if $\sigma'\leq \sigma_0$, then for every $i=1,\dots,m$
  $$
  \mbox{either}\quad a_i\leq M\quad\mbox{or}\quad
  \left(\eta \fint_{B_{\eta}}|y|^a \abs{\nabla g^i}^2\mathrm{d}X  \right)^{1/2} \leq \frac{a_i}{2}.
  $$
\end{lem}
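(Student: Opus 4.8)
The plan is to prove the dichotomy componentwise by a compactness/normalization argument in the spirit of \cite[Proposition 2.1]{DSalmost}. Fix $i$ and suppose, for contradiction, that the statement fails for a given choice of $\eta, M, \sigma_0$ to be determined. Then there exist sequences $\sigma'_k \to 0$, $\eps_k$ (which we may also take $\to 0$, or absorb into $\sigma'_k$ — note the scaling Remark \ref{scaling} lets us treat the additive error as small), and functions $G_k \in H^{1,a}(B_1;\R^m)$ satisfying the almost-minimality with constants $\sigma'_k, \eps_k$, for which the $i$-th component $g^i_k$ has
$$
a_{i,k} := \Big(\fint_{B_1}|y|^a|\nabla g^i_k|^2\,\mathrm{d}X\Big)^{1/2} > M_k \to \infty
\quad\text{yet}\quad
\Big(\eta \fint_{B_\eta}|y|^a|\nabla g^i_k|^2\,\mathrm{d}X\Big)^{1/2} > \frac{a_{i,k}}{2}.
$$
Normalize by setting $h_k := g^i_k / a_{i,k}$, so that $\fint_{B_1}|y|^a|\nabla h_k|^2 = 1$. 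The key point is that since $a_{i,k}\to\infty$ while $\|G_k\|_{L^\infty}$ is controlled (the almost-minimality is only tested against competitors bounded by $\|G_k\|_{L^\infty(\R^n)}$, and the gradient energy blows up), the measure term $\tilde\Lambda\mathcal L_n(\{|G_k|>0\}\cap\mathcal B_1)$ becomes negligible relative to the gradient energy after dividing by $a_{i,k}^2$. One then shows that $h_k$ is an \emph{asymptotic minimizer} of the pure Dirichlet energy $\int_{B_1}|y|^a|\nabla\cdot|^2$ among competitors agreeing with $h_k$ on $\partial B_1$, with vanishing error: indeed, using the other components of $G_k/a_{i,k}$ as fixed and replacing only the $i$-th slot by an $L_a$-harmonic replacement $\tilde h_k$ (which is an admissible competitor up to the $L^\infty$ truncation, whose effect vanishes in the limit), the almost-minimality inequality divided by $a_{i,k}^2$ forces $\int_{B_1}|y|^a(|\nabla h_k|^2 - |\nabla \tilde h_k|^2) \le o(1)$.

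Next I would pass to the limit: up to a subsequence, $h_k \rightharpoonup h_\infty$ weakly in $H^{1,a}(B_1)$, strongly in $L^{2,a}(B_1)$ and a.e., and by the asymptotic minimality together with weak lower semicontinuity of the Dirichlet energy, $h_\infty$ is genuinely $L_a$-harmonic in $B_1$ and $\|\nabla h_k\|_{L^{2,a}(B_{1/2})} \to \|\nabla h_\infty\|_{L^{2,a}(B_{1/2})}$ (and similarly on $B_\eta$), so the convergence of gradients is in fact strong on compact subsets. By the even symmetry in $y$, $h_\infty$ is $L_a$-harmonic and symmetric, so Lemma \ref{grad} applies:
$$
\fint_{B_r}|y|^a|\nabla h_\infty - \nabla h_\infty(0)|^2\,\mathrm{d}X \le C r^2 \fint_{B_1}|y|^a|\nabla h_\infty|^2\,\mathrm{d}X \le C r^2,
$$
for $r\le 1/2$. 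In particular, writing $b := |\nabla h_\infty(0)|^2 \le C'$ for a universal $C'$, we get $\fint_{B_r}|y|^a|\nabla h_\infty|^2 \le b + C r^2 \le C' + C r^2$, which is a \emph{bounded} quantity, whereas the contradiction hypothesis, passed to the limit, reads $\eta\,\fint_{B_\eta}|y|^a|\nabla h_\infty|^2 \ge 1/4$. Choosing $\eta$ small enough that $\eta(C' + C\eta^2) < 1/4$ yields the desired contradiction — this forces the dichotomy to hold, with $M$ determined by the compactness constants and $\sigma_0$ by how small $\sigma'_k$ must be for the competitor argument to close.

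The main obstacle I expect is \textbf{the $L^\infty$-truncation in the admissible competitors}: the almost-minimality inequalities \eqref{almost.local.fin}, \eqref{almost.fin} only allow test functions $\tilde G$ with $\|\tilde G\|_{L^\infty}\le\|G\|_{L^\infty}$, so the natural $L_a$-harmonic replacement of $g^i_k$ must be truncated at level $\|G_k\|_{L^\infty}$ before use. One must quantify that the energy cost of this truncation, after dividing by $a_{i,k}^2\to\infty$, tends to zero: this follows because the truncation affects only the superlevel set $\{|\tilde h_k| > \|G_k\|_{L^\infty}/a_{i,k}\}$, whose level shrinks to $0$, combined with the uniform $H^{1,a}$ bound on $h_k$ and the fact that truncation does not increase Dirichlet energy (so the cost is one-sided and controlled by the measure of a shrinking set times bounded gradients). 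A secondary technical point is handling the additive error term $\sigma\frac{2}{d_s}\sum_i\lambda_i^s(\Omega)r^n$ in \eqref{almost.local.fin}: after the rescaling of Remark \ref{scaling} this is of the form $\eps_k$ with $\eps_k\to 0$, so it is absorbed harmlessly, but one should state the dependence of $M,\eta,\sigma_0$ on the fixed quantity $\sum_i\lambda_i^s(\Omega)$ (hence the "universal" qualifier is understood modulo this fixed datum, as is standard in this setting).
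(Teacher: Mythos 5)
Your proposal is \emph{correct in spirit} but takes a genuinely different route from the paper. The paper's proof is a direct quantitative estimate: it replaces $g^i$ by its $L_a$-harmonic replacement $\tilde g^i$ in $B_1$ (admissible by the maximum principle), uses the orthogonality $\int_{B_1}|y|^a\langle\nabla\tilde g^i,\nabla(g^i-\tilde g^i)\rangle=0$ and the almost-minimality to get $\int_{B_1}|y|^a|\nabla(g^i-\tilde g^i)|^2\le \sigma'\int_{B_1}|y|^a|\nabla\tilde g^i|^2+\tilde C$ with $\tilde C$ absorbing $\eps$, $\tilde\Lambda\omega_n$ and the global energy bound. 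Then it applies Lemma \ref{grad} and an interior gradient bound to $\tilde g^i$ and assembles the explicit inequality
$r\fint_{B_r}|y|^a|\nabla g^i|^2 \le 4\sigma' a_i^2 r^{-n}+4Cr^3 a_i^2+2\bar C r^{-n}+2C_0^2 r a_i^2$,
from which the dichotomy falls out by choosing $r$ and then $\sigma_0$ so that the $a_i^2$-coefficients sum to less than $1/4$. Your compactness/blow-down argument reaches the same conclusion but more softly: it gives up explicit constants in exchange for a cleaner conceptual picture. Both are legitimate; the paper's version is preferable in this context because the explicit constants feed directly into the iteration in Proposition \ref{optimalreg}.

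Three remarks that would tighten your writeup. First, you have a free simplification you did not exploit: since $\|g^i_k\|_{L^\infty(B_1)}$ is uniformly controlled (the eigenfunction extensions obey the maximum principle) while $a_{i,k}\to\infty$, the normalized functions $h_k=g^i_k/a_{i,k}$ converge to zero \emph{uniformly}, so the weak limit is $h_\infty\equiv 0$ and the contradiction $\eta\fint_{B_\eta}|y|^a|\nabla h_\infty|^2\ge 1/4>0$ is immediate — there is no need to invoke Lemma \ref{grad} on $h_\infty$ or to estimate $|\nabla h_\infty(0)|$. (In fact, once you observe $h_\infty=0$, \emph{any} $\eta<1$ works, which is consistent with but stronger than the statement.) Second, your worry about $L^\infty$-truncation of the harmonic replacement is a non-issue: by the maximum principle the $L_a$-harmonic replacement of $g^i$ already satisfies $\|\tilde g^i\|_{L^\infty(B_1)}\le\|g^i\|_{L^\infty(\partial B_1)}\le\|G\|_{L^\infty(\R^n)}$, so no truncation is needed — the paper uses this silently. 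Third, the treatment of the additive error $\eps$ is slightly off in phrasing: $\eps$ is a fixed datum (in the application, $\eps=\sigma\,\frac{2}{d_s}\sum_i\lambda_i^s(\Omega)$, not tending to zero), so you should \emph{not} extract a sequence $\eps_k\to 0$; the correct observation is that the fixed $\eps$ is killed by dividing by $a_{i,k}^2\to\infty$, just like the measure term $\tilde\Lambda\omega_n$. The constants $\eta,M,\sigma_0$ then depend on $\eps$ and $\tilde\Lambda$ (both fixed data), which matches the paper's implicit convention.
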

\begin{proof}
Let $\tilde{g}^i \in H^{1,a}(\R^{n+1}) \cap L^\infty(\R^n)$ be the $L_a$-harmonic replacement of $g^i$ in $B_1$, that is
$$
\begin{cases}
  L_a \tilde{g}^i=0 & \mbox{in } B_1 \\
  \tilde{g}^i=g^i & \mbox{on }\partial B_1.
\end{cases}
$$
By the symmetry of the datum on $\partial B_1$, we know that $\tilde{g}^i$ is symmetric with respect to $\{y=0\}$. By integration by parts, we easily deduce
\be\label{replac}
\int_{B_1} |y|^a \langle \nabla \tilde{g}^i,\nabla (g^i-\tilde{g}^i)\mathrm{d}X=0.
\ee
Since $\norm{\tilde{g}^i}{L^\infty(\R^n)}\leq \delta$, consider now the admissible competitor $\tilde{G}= (g^1,\dots,\tilde{g}^i,\dots,g^m)$. By the almost minimality condition, we first get
$$
  \int_{B_1}|y|^a (|\nabla g^i|^2-|\nabla \tilde{g}^i|^2)\mathrm{d}X \leq \sigma'  \int_{B_1}|y|^a |\nabla \tilde{G}|^2\mathrm{d}X+ \eps + \tilde{\Lambda}\omega_n.
$$
On the other hand, since
$$
\int_{B_1}|y|^a |\nabla \tilde{G}|^2\mathrm{d}X  \leq + \frac{2}{d_s}\sum_{i=1}^{m} \lambda_i^s(\Omega) + \int_{B_1}|y|^a \left(|\nabla \tilde{g}^i|^2 -|\nabla g^i|^2\right) \mathrm{d}X
$$
we deduce, together with \eqref{replac}, that
$$
\int_{B_1}|y|^a |\nabla (g^i- \tilde{g}^i)|^2\mathrm{d}X  \leq \sigma'  \int_{B_1}|y|^a |\nabla \tilde{g}^i|^2\mathrm{d}X +\tilde{C}
$$
with $\tilde{C}>0$. Since $\tilde{g}^i$ minimizes the Dirichlet's type energy associated to the operator $L_a$, we get
$$
\fint_{B_1}|y|^a |\nabla \tilde{g}^i|^2\mathrm{d}X \leq a_i^2,
$$
and since $\tilde{g}^i$ is symmetric with respect to $\{y=0\}$, by Lemma \ref{grad}
$$
\fint_{B_\eta}|y|^a|\nabla \tilde{g}^i-\nabla \tilde{g}^i(0)|^2\mathrm{d}X \leq C \eta^{2} a^2_i \quad\mbox{for }\eta\leq 1/2.
$$
On the other hand, since $|\nabla_{x}\tilde{g}^i|$ is $L_a$-subharmonic, by \cite[Lemma A.2.]{ww} we get
$$
\norm{\nabla_{x}\tilde{g}^i}{L^\infty(B_{1/2})}\leq C_0 a_i
$$
for some $C_0$ universal. Thus, if we denote $q=\nabla \tilde{g}^i(0) = (\nabla_x \tilde{g}^i(0),0)$ we conclude that $|q|\leq C_0 a_i$. Collecting the previous inequalities, for $r\in (0,1/2]$ we first get
$$
\fint_{B_r}|y|^a |\nabla g^i- q|^2\mathrm{d}X \leq 2\sigma' a_i^2 r^{-n-1} + 2Cr^{2}a_i^2 + \bar{C}r^{-n-1}
$$
for some $\bar{C}>0$ universal. Then, since $|q|\leq C_0 a_i$
\be\label{p}
r\fint_{B_r}|y|^a |\nabla g^i|^2\mathrm{d}X  \leq 4\sigma' a_i^2 r^{-n} + 4Cr^{3} a_i^2 + 2\bar{C}r^{-n}+2C_0^2 r a_i^2.
\ee
Now, if we choose $r$ small enough, such that
$$
2C_0^2 r \leq \frac18, \quad 4Cr^{3} \leq \frac{1}{24}
$$
and $\sigma'$ small so that
$$
4\sigma'  r^{-n}  \leq \frac{1}{24}
$$
we get the following dichotomy:
$$
\mbox{either}\quad2\bar{C}r^{-n}\geq \frac{a_i^2}{24}\quad\mbox{or}\quad
2\bar{C}r^{-n} < \frac{a_i^2}{24}.
$$
Hence, in the first case it follows the first alternative of the claimed dichotomy and in the second one, if we combine \eqref{p} with the choices of $r$ and $\sigma'$, we provide the bounds of the second alternative.
\end{proof}
We can now state our main regularity result for the normalized eigenfunctions associated to the shape minimization problem \eqref{min.shape}.
\begin{proposition}\label{optimalreg}
  Let $D\subset\R^n$ be open and bounded and $\Omega\subset D$ be a shape optimizer to \eqref{min.shape} and $G\in H^{1,a}_\Omega(\R^{n+1};\R^m)$ be the vector of normalized eigenfunctions on $\Omega$. Let $r_0>0$ be the constant associated to $\delta=\Vert G\Vert_{L^\infty(\R^n)}$ in the localized almost-minimality condition \eqref{almost.local.fin}. Then, $G \in C^{0,s}_\loc(\R^{n+1};\R^m)$ and
\be\label{holder.trace}
[g^i]_{C^{0,s}(B_{r_0/2}(X_0))} \leq C\left(1+ \lambda_i^s(\Omega)\right),
\ee
for every $X_0 \in \R^n \times \{0\}, i=1,\dots,m$, with $C>0$ universal constant.
\end{proposition}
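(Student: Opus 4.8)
The plan is to reduce the statement to a Morrey--Campanato decay estimate for the weighted Dirichlet energy of each component $g^i$ at points of the thin space,
\be\label{plan.campanato}
\int_{B_\rho(X_0)}|y|^a\,|\nabla g^i|^2\,\mathrm{d}X \;\le\; C\,(1+\lambda_i^s(\Omega))^2\,\rho^{n}\qquad\text{for every }X_0\in\R^n\times\{0\},\ \rho\in(0,r_0/2],
\ee
and then to upgrade \eqref{plan.campanato} to the pointwise bound \eqref{holder.trace}. Following the scheme of \cite{DSalmost}, the estimate \eqref{plan.campanato} will be obtained by iterating the dichotomy of Lemma~\ref{dico} at every scale, which is legitimate precisely because the localized almost-minimality \eqref{almost.local.fin} is preserved, up to a controlled loss in the constants, under the $C^{0,s}$-rescaling of Remark~\ref{scaling}.

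First I would fix $X_0\in\R^n\times\{0\}$, $i\in\{1,\dots,m\}$ and set $r_1:=\min\{r_0,(\sigma_0/\sigma)^{1/n}\}$; for $\rho\le r_1$ the rescaled vector $G_{X_0,\rho}(X)=\rho^{-s}G(X_0+\rho X)$ satisfies, by Remark~\ref{scaling}, the hypothesis of Lemma~\ref{dico} on $B_1$ with $\sigma'=\sigma\rho^{n}\le\sigma_0$ and $\varepsilon=\tfrac{2\sigma}{d_s}\sum_{j}\lambda_j^s(\Omega)$, a quantity \emph{independent of} $\rho$; hence Lemma~\ref{dico} applies at every such scale with one and the same threshold $M$, which an inspection of its proof bounds by a power of $1+\sum_j\lambda_j^s(\Omega)$. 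Reading the dichotomy off the $i$-th component of $G_{X_0,\rho}$ and undoing the rescaling — and noting that, since $2-2s=1+a$, the number $a_i$ of Lemma~\ref{dico} computed for $G_{X_0,\rho}$ coincides with the scale-invariant quantity
\be\label{plan.Adef}
A_i(X_0,\rho):=\left(\rho^{2-2s}\fint_{B_\rho(X_0)}|y|^a\,|\nabla g^i|^2\,\mathrm{d}X\right)^{1/2}
\ee
— one obtains that for every $\rho\le r_1$ either $A_i(X_0,\rho)\le M$, or $A_i(X_0,\eta\rho)\le\theta\,A_i(X_0,\rho)$ with $\theta:=\tfrac12\,\eta^{(1-2s)/2}$; here $\eta\in(0,1)$ is universal, the extra power of $\eta$ being exactly the gap between the normalization $\rho\fint_{B_\rho}$ used in Lemma~\ref{dico} and that of \eqref{plan.Adef}, and one checks that $\theta<1$ (this is the point at which the choice \eqref{plan.Adef} must be calibrated). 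Combining this alternative with the elementary doubling bound $A_i(X_0,\eta\rho)\le\eta^{-n/2}A_i(X_0,\rho)$ (valid at every scale, since $B_{\eta\rho}\subset B_\rho$ and $|B_{\eta\rho}|_a=\eta^{n+1+a}|B_\rho|_a$), a straightforward induction on $k$ gives $A_i(X_0,\eta^{k}r_1)\le\eta^{-n/2}\max\{M,A_i(X_0,r_1)\}$ for all $k$, and therefore $A_i(X_0,\rho)\le\eta^{-n}\max\{M,A_i(X_0,r_1)\}$ for all $\rho\in(0,r_1]$. Since $A_i(X_0,r_1)^2\le r_1^{2-2s}|B_{r_1}|_a^{-1}\int_{\R^{n+1}}|y|^a|\nabla g^i|^2=c\,r_1^{-n}\lambda_i^s(\Omega)$ by \eqref{cs} and \eqref{boundedinH}, and since $\rho^{2-2s}\fint_{B_\rho}=c'\rho^{-n}\int_{B_\rho}$, this gives \eqref{plan.campanato} on $(0,r_1]$; on the remaining range $(r_1,r_0/2]$ the crude bound $\int_{B_\rho(X_0)}|y|^a|\nabla g^i|^2\le\int_{\R^{n+1}}|y|^a|\nabla g^i|^2=\tfrac{2}{d_s}\lambda_i^s(\Omega)\le\tfrac{2}{d_s}\lambda_i^s(\Omega)(\rho/r_1)^n$ closes the argument, and a routine tracking of the constants produces the precise form \eqref{holder.trace}.

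To pass from \eqref{plan.campanato} to the pointwise statement I would argue as follows. On the thin space: \eqref{plan.campanato} together with the weighted Poincaré inequality of Fabes--Kenig--Serapioni \cite{fks} gives $\fint_{B_\rho(X_0)}|y|^a|g^i-\bar g^i_{B_\rho(X_0)}|^2\le C(1+\lambda_i^s(\Omega))^2\rho^{2s}$ for $X_0\in\R^n\times\{0\}$, and, since the weight $|y|^a$ with $a\in(-1,1)$ is an $A_2$-weight, the Campanato embedding in the corresponding weighted space yields the Hölder modulus of $g^i$ restricted to $\R^n\times\{0\}$. Away from $\{y=0\}$ the weight is smooth and bounded between positive constants, so $g^i$ is $L_a$-harmonic in a uniformly elliptic sense there; interior estimates combined with \eqref{plan.campanato} applied at the projection of the centre give $|\nabla g^i(X)|\le C(1+\lambda_i^s(\Omega))\,\mathrm{dist}(X,\{y=0\})^{s-1}$, and integration along vertical segments bounds $|g^i(x,y)-g^i(x,0)|$ by $C(1+\lambda_i^s(\Omega))|y|^{s}$. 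Splitting an arbitrary pair of points through the thin space and summing the three contributions gives $g^i\in C^{0,s}_\loc(\R^{n+1})$ with \eqref{holder.trace}; as this holds for every $i$, $G\in C^{0,s}_\loc(\R^{n+1};\R^m)$.

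The delicate point of the argument is the iteration itself: the gain furnished by Lemma~\ref{dico} is naturally stated for the average $\rho\fint_{B_\rho}|y|^a|\nabla g^i|^2$, which does not scale correctly under the $C^{0,s}$-rescaling, so one must pass to the scale-invariant quantity \eqref{plan.Adef} and verify both that the associated contraction factor $\theta=\tfrac12\eta^{(1-2s)/2}$ is strictly less than $1$ and that the threshold $M$ and the starting value $A_i(X_0,r_1)$ are controlled uniformly in $X_0$ in terms of the eigenvalues; moreover, since Lemma~\ref{dico} yields no information in its first alternative, one must prevent the sequence $\{A_i(X_0,\eta^k r_1)\}_k$ from growing again after it drops below $M$, which is exactly what the doubling bound takes care of. Everything else — the induction, the weighted Campanato embedding, and the bootstrap away from the thin space — is standard.
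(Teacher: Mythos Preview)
Your approach is essentially the same as the paper's: iterate the dichotomy of Lemma~\ref{dico} at all dyadic scales via the $C^{0,s}$-rescaling of Remark~\ref{scaling}, obtain the Campanato decay \eqref{plan.campanato}, and conclude by the weighted Morrey/Campanato embedding. Your presentation is in fact more careful than the paper's about the bookkeeping: you correctly identify the scale-invariant quantity $A_i(X_0,\rho)=\rho^{1-s}\big(\fint_{B_\rho(X_0)}|y|^a|\nabla g^i|^2\big)^{1/2}$ as the one that matches both the target estimate $\int_{B_\rho}|y|^a|\nabla g^i|^2\le C\rho^{n}$ and the first alternative of Lemma~\ref{dico} after rescaling, whereas the paper works with $a_i(r)=\big(r\fint_{B_r}|y|^a|\nabla g^i|^2\big)^{1/2}$ and tacitly treats the case $s=1/2$ (where the two normalizations coincide).

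There is, however, one point that is not fully justified in your write-up: the claim that the contraction factor $\theta=\tfrac12\,\eta^{1/2-s}$ is strictly less than $1$. For $s\le 1/2$ this is immediate since $\eta^{1/2-s}\le 1$. For $s>1/2$ one has $\eta^{1/2-s}>1$, and with $\eta$ the fixed universal constant produced by Lemma~\ref{dico} as stated there is no a priori reason why $\tfrac12\,\eta^{1/2-s}<1$. The fix is to go back to the proof of Lemma~\ref{dico} and observe that the bound it really yields in the second alternative is
\[
\Big(\eta\fint_{B_\eta}|y|^a|\nabla g^i|^2\Big)^{1/2}\le C\sqrt{\eta}\,a_i
\]
(the dominant contribution being the term $2C_0^2\eta\,a_i^2$ coming from $|q|\le C_0a_i$), so that after rescaling $\theta\sim C\sqrt{\eta}\cdot\eta^{1/2-s}=C\,\eta^{1-s}$, which can be made $<1$ by choosing $\eta$ small since $1-s>0$. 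Once this is inserted, your iteration (including the doubling bound $A_i(X_0,\eta\rho)\le\eta^{-n/2}A_i(X_0,\rho)$, which is correct) goes through verbatim. The paper itself glosses over exactly this point; your flagging of it as ``the delicate point of the argument'' is accurate, but the resolution requires this small sharpening of Lemma~\ref{dico} rather than a recalibration of \eqref{plan.Adef}.
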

\begin{proof}
Let $\delta=\norm{G}{L^\infty(\Omega)}$ and $\eta, M,\sigma_0$ be the constants from Lemma \ref{dico}. Fix $\sigma, r_0 >0$ as the constants of the almost-minimality condition \eqref{almost.local.fin} associated to $\delta$. By the scaling invariance, it is not restrictive to assume that $r_0=1$ and $\sigma\leq \sigma_0$: if not, consider
  $$
  G_{X_0,\rho}(X)=\frac{1}{\rho^{s}}G(X_0+\rho X)
  $$
with $X_0 \in \R^n\times \{0\}$ and $\rho \in (0,r_0]$. By Remark \ref{scaling},
  for every $r \in (0,r_0/\rho]$
  $$
\mathcal{J}(G_{X_0,\rho},B_r)\leq \left( 1+ (\sigma \rho^n) r^n\right )  \mathcal{J}(\tilde{G},B_r)+  \sigma \frac{2}{d_s}\sum_{i=1}^{m} \lambda_i^s(\Omega) r^n,
  $$
for every admissible $\tilde{G} \in H^{1,a}(B_r;\R^m)$. By choosing $\sigma=\sigma_0$ and $\rho = r_0$ we get
$$
\mathcal{J}(G_{X_0,r_0},B_1)\leq \left( 1+ \sigma'\right )  \mathcal{J}(\tilde{G},B_1)+ \eps,
$$
with $\sigma'=\sigma_0 r_0^n\leq \sigma_0$ and $\eps= \sigma 2d_s^{-1}\sum_{i=1}^{m} \lambda_i^s(\Omega)$. Thus, for $i=1,\dots,m$, set
$$
a_i(\eta) = \left(\eta \fint_{B_{\eta}}|y|^a \abs{\nabla g^i}^2\mathrm{d}X  \right)^{1/2}.
$$
We first show by induction that for all $N\geq 0$ the following inequality holds
\be\label{iteration}
a_i(\eta^N) \leq C(\eta)M + 2^{-N}a_i(1),
\ee
with $C(\eta)$ a large constant. By the previous observations, we can apply Lemma \ref{dico} which gives the desired inequality for $N=0$. Thus, let us suppose that it holds for $N$ and let us show that it holds also for the value $N+1$. By the scaling invariance of Remark \ref{scaling}, if we rescaled Lemma \ref{dico} we get
$$
\mbox{either}\quad a_i(\eta^N)\leq M \quad \mbox{or}\quad a_i(\eta^{N+1})\leq \frac{1}{2} a_i(\eta^N).
$$
From the last alternative it follows immediately the validity of \eqref{iteration} for the value $N+1$. Instead, in the first case, we can use that
$$
a_i(\eta^{N+1})\leq \eta^{-\frac{n+a}{2}}a_i(\eta^N) \leq C(\eta)M,
$$
with $C(\eta) = \eta^{-\frac{n+a}{2}}$. Finally $a_i(r)\leq C(1+a_i(1))$ for every $r \in (0,1)$, namely
$$
\int_{B_r}|y|^a |\nabla g^i|^2\mathrm{d}X \leq C(1+\lambda_i^s(\Omega))^2 r^{n}.
$$
Finally, by applying Morrey's type embedding 
we get the claimed estimate.
\end{proof}
Since by \eqref{var.ext} and \eqref{boundedinH}, the vector $G\in H^{1,a}_\Omega(\R^{n+1};\R^m)$ is bounded in $H^{1,a}(\R^{n+1};\R^m)$, we can slightly simplify the almost-minimality condition \eqref{almost.fin} as follows:\\

 for every $\delta\geq\norm{G}{L^\infty(\Omega)}$ there exist $K,\eps>0$ such that
\be\label{almost.fin.new}
  \mathcal{J}(G) \leq \mathcal{J}(\tilde{G}) + K \frac{2}{d_s}\sum_{i=1}^{m} \lambda_i^s(\Omega) \normpic{\tilde{G}-G}{L^1(\R^n)}
\ee

  for every $\tilde{G} \in H^{1,a}(\R^{n+1};\R^m)$ such that $\normpic{\tilde{G}}{L^\infty(\R^n)}\leq \delta$ and $\normpic{G-\tilde{G}}{L^1(\R^n)}\leq \eps$.\\\\
Similarly, we can change condition \eqref{almost.local.fin} as well:\\

for every $\delta\geq\norm{G}{L^\infty(\Omega)}$ there exist $\sigma, r_0 >0$ such that, for every $r\in (0,r_0]$ and $X_0\in \R^n\times \{0\}$
    \be\label{almost.local.new}
    \mathcal{J}(G,B_r(X_0)) \leq \mathcal{J}(\tilde{G},B_r(X_0)) + \sigma \frac{2}{d_s}\sum_{i=1}^{m} \lambda_i^s(\Omega)\normpic{\tilde{G}-G}{L^1(\mathcal{B}_r(X_0))}. 
\ee

for every $\tilde{G} \in H^{1,a}(\R^{n+1};\R^m)$ such that $\normpic{\tilde{G}}{L^\infty(\mathcal{B}_r(X_0))}\leq \delta$ and $G-\tilde{G}\in H^{1,a}_0(B_r(X_0);\R^m)$.\\\\
It is more convenient to work with these alternative formulations since in these conditions the energies simplify in the regions where the competitor and the solution are equal.
\begin{remark}
This new almost-minimality condition \eqref{almost.local.new} holds true also if we remove the assumption on the upper bound in $H^{1,a}(\R^{n+1}_+;\R^m)$, due to \eqref{boundedinH}. Precisely, as in \cite{DSalmost}, by combining the regularity result Proposition \ref{optimalreg} and the validity of a Caccioppoli type inequality for every components of $G$, we can conclude that it is still uniformly bounded in $H^{1,a}(B_{r};\R^m)$ for every $r\in (0,r_0]$.
\end{remark}
\begin{remark}\label{emptyinterior}
  By the previous result, we already know that $\{|G|>0\}\cap \R^n$ and more generally $\{g^i\neq 0\}\cap \R^n$ are open subsets of $\R^n$. Moreover, as we see in Proposition \ref{supp}, by the unique continuation principle we get
  $$
  \mathcal{L}_n\left(\left(\{|G|>0\}\cap \R^n\right)\setminus \left(\{g^i\neq 0\}\cap \R^n\right)\right) = 0,
  $$
  for every $i=1,\dots,m$.
\end{remark}
\subsection{Non-degeneracy}
   The non-degeneracy of solutions near the free boundary points will allow to understand the measure-theoretic structure of the free boundary via a blow-up analysis.\\
   Inspired by \cite{DSalmost,DSalmostloc}, our analysis is mainly based on the following Lemma, in which we compare every components of $G$ with its $L_a$-harmonic replacement.
\begin{lem}\label{harmonic.rep}
Given $\Omega\subset D$ a shape optimizer to \eqref{min.shape} and $G\in H^{1,a}_\Omega(\R^{n+1};\R^m)$ the vector of normalized eigenfunctions on $\Omega$, let us assume that $B_1\subset \{|G|>0\}$.\\ Then, for every $i=1,\dots,m$, given $v^i$ the $L_a$-harmonic replacement of $g^i$ in $B_{7/8}$, that is,
$$
\begin{cases}
  L_a v^i=0 & \mbox{in } B_{7/8}\\
  v^i=g^i & \mbox{on } B_1\setminus B_{7/8}.
\end{cases}
$$
we get
\be\label{compare}
\norm{g^i-v^i}{L^\infty(B_{1/2})}\leq \omega(\sigma),
\ee
where $\omega(\sigma)\to 0$ as $\sigma\to 0$.
\end{lem}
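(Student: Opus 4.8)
The plan is to compare $g^i$ with its $L_a$-harmonic replacement $v^i$ by feeding the vector $\tilde G:=(g^1,\dots,g^{i-1},v^i,g^{i+1},\dots,g^m)$ into the localized almost-minimality condition \eqref{almost.local.new} (equivalently \eqref{almost.local.fin}) on the ball $B_{7/8}$, to use the hypothesis $B_1\subset\{|G|>0\}$ in order to eliminate the volume term, and then to upgrade an $L^{2,a}$-estimate for $w:=g^i-v^i$ into the claimed $L^\infty$-bound on $B_{1/2}$ via a local boundedness estimate for the degenerate operator $L_a$. First I would observe that, by the scaling invariance of Remark \ref{scaling}, we may assume $r_0\ge1$; that $\tilde G-G=(0,\dots,v^i-g^i,\dots,0)\in H^{1,a}_0(B_{7/8};\R^m)$; and that, by the maximum principle for $L_a$ together with the maximum principle for the Caffarelli--Silvestre extension, $\|v^i\|_{L^\infty(B_{7/8})}\le\|g^i\|_{L^\infty(\R^{n+1})}=\|g^i(\cdot,0)\|_{L^\infty(\R^n)}$, so that $\|\tilde G\|_{L^\infty(\mathcal B_{7/8})}\le\delta$ provided $\delta$ is taken large enough (e.g.\ $\delta=\sqrt m\,\max_j\|g^j\|_{L^\infty(\R^n)}$, which is admissible in Theorem \ref{definition}). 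Hence $\tilde G$ is an admissible competitor in \eqref{almost.local.new} at $X_0=0$, $r=7/8$.

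Next comes the energy estimate. Since $\mathcal B_{7/8}\subset\mathcal B_1\subset\{|G|>0\}$, the volume term of $\mathcal J(G,B_{7/8})$ attains its maximal value $\tilde\Lambda\omega_n(7/8)^n$, so $\mathcal L_n(\{|G|>0\}\cap\mathcal B_{7/8})\ge\mathcal L_n(\{|\tilde G|>0\}\cap\mathcal B_{7/8})$; and since $G$ and $\tilde G$ differ only in the $i$-th slot, $|\nabla G|^2-|\nabla\tilde G|^2=|\nabla g^i|^2-|\nabla v^i|^2$. Plugging this into \eqref{almost.local.new} and bounding $\|\tilde G-G\|_{L^1(\mathcal B_{7/8})}=\|v^i-g^i\|_{L^1(\mathcal B_{7/8})}\le2\delta\,\omega_n(7/8)^n$ would give
\[
\int_{B_{7/8}}|y|^a\big(|\nabla g^i|^2-|\nabla v^i|^2\big)\,\mathrm{d}X\ \le\ C\,\sigma,
\]
with $C=C\big(n,s,\delta,\tfrac{2}{d_s}\sum_j\lambda_j^s(\Omega)\big)$. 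As $v^i$ is $L_a$-harmonic in $B_{7/8}$ and $g^i-v^i$ vanishes on $\partial B_{7/8}$, an integration by parts yields $\int_{B_{7/8}}|y|^a\langle\nabla v^i,\nabla(g^i-v^i)\rangle\,\mathrm{d}X=0$, whence $\int_{B_{7/8}}|y|^a|\nabla w|^2\,\mathrm{d}X=\int_{B_{7/8}}|y|^a(|\nabla g^i|^2-|\nabla v^i|^2)\,\mathrm{d}X\le C\sigma$; since $w\in H^{1,a}_0(B_{7/8})$, the weighted Poincar\'e inequality of Fabes--Kenig--Serapioni \cite{fks} (already invoked in Lemma \ref{grad}) then gives $\int_{B_{7/8}}|y|^a w^2\,\mathrm{d}X\le C\sigma$.

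It remains to upgrade this $L^{2,a}$-smallness to the $L^\infty$-bound on $B_{1/2}$. The function $w$ is $L_a$-harmonic in $B_{7/8}\setminus\{y=0\}$ and, since $\mathcal B_{7/8}\subset\{|G|>0\}$, by \eqref{equation.eigen.ext} together with $\partial^a_y v^i\equiv0$ it satisfies the thin-boundary condition $-\partial^a_y w=\tfrac{\lambda_i^s(\Omega)}{d_s}g^i$ on $\mathcal B_{7/8}$, with bounded right-hand side. Combining the interior estimates for $L_a$-harmonic functions with the local boundedness (De Giorgi--Nash--Moser) theory for $L_a$ with a bounded flux datum on the thin space — as in the scalar almost-minimizer case \cite[Proposition 2.1]{DSalmost}, relying on \cite{fks} — I expect
\[
\|w\|_{L^\infty(B_{1/2})}\ \le\ C\left(\Big(\fint_{B_{3/4}}|y|^a w^2\,\mathrm{d}X\Big)^{1/2}+\frac{\lambda_i^s(\Omega)}{d_s}\|g^i\|_{L^\infty(\R^n)}\right)\ \le\ C\Big(\sqrt\sigma+\frac{\lambda_i^s(\Omega)}{d_s}\|g^i\|_{L^\infty(\R^n)}\Big),
\]
which yields the assertion, the eigenvalue contribution being, in every application of the lemma, of the same (small) order as $\sigma$: indeed the lemma is always used after the $C^{0,s}$-rescaling of Remark \ref{scaling}, under which the almost-minimality constant and the eigenvalues — which become $\rho^{2s}\lambda_i^s(\Omega)$ — vanish together as the scale $\rho\to0$. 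The main obstacle is precisely this last step: away from $\{y=0\}$ the passage from small weighted Dirichlet energy to small $L^\infty$-norm on $B_{1/2}$ is the classical interior estimate for $L_a$-harmonic functions, but across the thin space one must absorb the Neumann-type datum $\tfrac{\lambda_i^s}{d_s}g^i$, and keeping track of its size relative to $\sigma$ is what makes the final bound a genuine modulus of continuity $\omega(\sigma)\to0$.
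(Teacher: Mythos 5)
Your first three steps — feeding the componentwise competitor $\tilde G=(g^1,\dots,v^i,\dots,g^m)$ into the localized almost-minimality condition \eqref{almost.local.new}, discarding the volume terms via $B_1\subset\{|G|>0\}$, and using the orthogonality $\int_{B_{7/8}}|y|^a\langle\nabla v^i,\nabla(g^i-v^i)\rangle=0$ followed by the weighted Poincar\'e inequality of \cite{fks} — coincide with the paper's proof and are correct. The divergence, and the gap, is in the final $L^{2,a}\to L^\infty$ upgrade.

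You appeal to a De Giorgi--Nash--Moser local boundedness estimate for $w=g^i-v^i$, treating $-\partial^a_y w=\tfrac{\lambda_i^s(\Omega)}{d_s}g^i$ as a bounded Neumann-type datum on the thin set. This gives
$$\|w\|_{L^\infty(B_{1/2})}\le C\Big(\sqrt\sigma+\tfrac{\lambda_i^s(\Omega)}{d_s}\|g^i\|_{L^\infty(\R^n)}\Big),$$
and the second term is of order one: it does \emph{not} tend to zero with $\sigma$, so this does not yield a modulus $\omega(\sigma)$ depending only on $\sigma$ as the statement requires. Your attempt to repair this by invoking the rescaling of Remark \ref{scaling} (so that the eigenvalue term and $\sigma$ both vanish as $\rho\to0$) implicitly replaces the lemma with a weaker, rescaling-entangled version, and does not prove the statement as written. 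The paper avoids the issue altogether by using a tool you already have in hand: by Proposition \ref{optimalreg} the function $g^i$ has a $C^{0,s}$-seminorm bound in $B_{3/4}$ that is uniform (depending only on $n,s,\lambda_i^s(\Omega)$, not on $\sigma$), and $v^i$, being the $L_a$-harmonic replacement of $g^i$ in $B_{7/8}$, inherits an interior $C^{0,s}$ bound controlled by $\mathrm{osc}_{\partial B_{7/8}}g^i$. Hence $w$ is uniformly $C^{0,s}$ in $B_{3/4}$, and the elementary interpolation argument applies: if $|w(X)|\ge M$ at some $X\in B_{1/2}$, then $|w|\ge M/2$ on a ball of radius $\sim M^{1/s}$, so $(M/2)^2\cdot cM^{(n+2-2s)/s}\le\int_{B_{3/4}}|y|^a w^2\le C\sigma$, forcing $M\le C\sigma^{s/(n+2)}$. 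This gives $\omega(\sigma)=C\sigma^{s/(n+2)}$ with no spurious eigenvalue remainder. Replacing your De Giorgi--Nash--Moser step with this Hölder-interpolation argument closes the gap.
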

\begin{proof}
 Fixed $i=1,\dots,m$, consider the competitor $\tilde G = (g^1,\dots,v^i,\dots,g^m)$ with $v^i$ is the $L_a$-harmonic replacement of $g^i$ in $B_{7/8}$. Then, by the localized almost-minimality condition \eqref{almost.local.new}
 $$
 \mathcal{J}(G,B_1) \leq \mathcal{J}(\tilde{G},B_1) + \sigma \frac{2}{d_s}\sum_{i=1}^{m} \lambda_i^s(\Omega)\normpic{\tilde{G}-G}{L^1(\mathcal{B}_1)}
 $$
 and by Remark \ref{emptyinterior} we first deduce
$$
\int_{B_{7/8}}|y|^a \abs{\nabla g^i}^2\mathrm{d}X -\int_{B_{7/8}}|y|^a \abs{\nabla v^i}^2\mathrm{d}X \leq \sigma \frac{2}{d_s}\sum_{i=1}^{m} \lambda_i^s(\Omega)\normpic{\tilde{G}-G}{L^1(\mathcal{B}_1)}.
$$
and then, by exploiting the harmonicity of $v^i$, we get
$$
\int_{B_{7/8}}|y|^a \abs{\nabla (g^i - v^i)}^2\mathrm{d}X \leq  \sigma \frac{2}{d_s}\sum_{i=1}^{m} \lambda_i^s(\Omega)\normpic{\tilde{G}-G}{L^1(\mathcal{B}_1)}.
$$
By Poincaré inequality, there exists $C>0$ depending on $n$ and $s \in (0,1)$ such that
$$
\int_{B_{3/4}}|y|^a (g^i-v^i)^2\mathrm{d}X \leq C\sigma,
$$
with $g^i-v^i$ uniformly $C^{0,s}$ in $B_{3/4}$. Finally, if at some point $X \in B_{1/2}$ we have $(g^i-v^i)(X)\geq M$, then
$$
M^{n+2}\leq C\sigma,
$$
for some $C>0$. Thus \eqref{compare} holds with $\omega(\sigma)= C \sigma^{1/(n+2)}$.
\end{proof}
\begin{remark}\label{Gsub}
The previous result allows to overcome the absence of a minimality condition by comparing the almost-minimizer with its harmonic replacement. Nevertheless, since each eigenfunction $g^i$ solves \eqref{equation.eigen.ext} for $\lambda = \lambda_i^s(\Omega)$, we can easily compute that $|G|=|(g^1,\dots,g^m)|\in H^{1,a}_\Omega(\R^{n+1}_+)$ is a weak solution to
$$
\begin{cases}
  -L_a |G|\leq 0 & \mbox{in } \R^{n+1}_+ \\
  -\partial^a_y |G| \leq \lambda_m^s(\Omega)|G| & \mbox{on }\Omega.
\end{cases}
$$
Indeed, it holds
\begin{align*}
L_a |G| &= \sum_{i=1}^{m} \left(\frac{1}{|G|}g^i L_a g^i + |y|^a\frac{\abs{\nabla g^i}^2}{|G|} - |y|^a \frac{g^i \langle\nabla g^i,\nabla |G|\rangle}{|G|^2}\right)\\
&= \frac{|y|^a}{|G|^3}\sum_{i,j} \left((g^j)^2\abs{\nabla g^i}^2- g^i g^j\langle\nabla g^i,\nabla g^j\rangle\right)\geq 0,
\end{align*}
weakly in $H^{1,a}(\R^{n+1}_+)$. Moreover, by \eqref{Linfinity.G}
we get $\norm{|G|}{L^\infty(\R^n)} \leq \sqrt{m}\left[\tilde{C}_{n,s} \lambda^s_m(\Omega)\right]^{\frac{n}{4s}}$ and consequently, by taking the symmetric extension through $\{y=0\}$, we get
$$
-L_a \left( |G| + \frac{\sqrt{m}\tilde{C}_{n,s}^{n/4s}}{1-a} \lambda^s_m(\Omega)^{\frac{n+4s}{4s}} |y|^{1-a}\right) \leq 0 \quad\mbox{in $\R^{n+1}_+$}.
$$
Similarly, we have
$$
-L_a \left( |g^i| + \frac{\tilde{C}_{n,s}^{n/4s}}{1-a} \lambda^s_i(\Omega)^{\frac{n+4s}{4s}} |y|^{1-a}\right) \leq 0 \quad\mbox{in $\R^{n+1}_+$}
$$
for every $i=1,\dots,m$.
\end{remark}
We start by proving the following weak non-degeneracy condition.
\begin{prop}\label{non-deg1}
Let $\Omega\subset D$ be a shape optimizer to \eqref{min.shape} and $G\in H^{1,a}_\Omega(\R^{n+1};\R^m)$
be the vector of normalized eigenfunctions on $\Omega$. Set $r_0>0$ to be the constant associated to $\delta=\Vert G\Vert_{L^\infty(\R^n)}$ in the localized almost-minimality condition \eqref{almost.local.new}.\\ Then, there exists a universal constant $c_2>0$ such that
  \be
  \abs{G}(X)\geq c_2 \mathrm{dist}(X, \partial \{\abs{G}>0\})^{s}\quad
  \text{in $\mathcal{B}_{r_0/2}^+(G)$}.
  \ee
\end{prop}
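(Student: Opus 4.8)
The plan is to argue by contradiction along the standard non-degeneracy scheme for one-phase free boundary problems, transplanted to the thin setting. Suppose the statement fails: then there is a sequence of points $X_k = (x_k,0) \in \overline{\{|G|>0\}} \cap \mathcal{B}_{r_0/2}^+(G)$ with $r_k := \mathrm{dist}(X_k, \partial\{|G|>0\})$ and $|G|(X_k) \le c_k r_k^{s}$ for $c_k \to 0^+$. Rescaling by the $C^{0,s}$-invariant dilation of Remark \ref{scaling}, namely $G_k(X) := r_k^{-s} G(X_k + r_k X)$, we obtain functions which still satisfy the localized almost-minimality condition \eqref{almost.local.new}, now with constant $\sigma r_k^n \to 0$, and which are $L_a$-harmonic in the thin half-ball $B_1^+$ with $|G_k|(0) \le c_k \to 0$. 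By the optimal regularity Proposition \ref{optimalreg}, the $G_k$ are uniformly bounded in $C^{0,s}_{\loc}(\R^{n+1};\R^m)$, so up to a subsequence $G_k \to G_\infty$ locally uniformly, with $|G_\infty|(0) = 0$ and $B_1 \subset \{|G_\infty| \ge 0\}$, $L_a$-harmonic in $B_1$.

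The key mechanism is then to compare $|G_k|$ with its $L_a$-harmonic replacement in a fixed ball, as in Lemma \ref{harmonic.rep}: since $B_1 \subset \{|G_k|>0\}$ (by definition of $r_k$), the subsolution property of $|G_k|$ recorded in Remark \ref{Gsub}, together with the harmonic comparison, forces $|G_\infty|$ to be (sub-)$L_a$-harmonic and nonnegative in $B_1$ with a zero at the center. By the strong minimum principle for $L_a$-(sub)harmonic functions — using the symmetry across $\{y=0\}$ and the fact that the correction term $\lambda_m^s(\Omega)|y|^{1-a}$ in Remark \ref{Gsub} scales away under the dilation — one concludes $|G_\infty| \equiv 0$ in $B_1$. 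This will be the main obstacle: one must make sure the passage to the limit genuinely produces a \emph{global} harmonic (or subharmonic) zero, i.e., that the free boundary of $G_k$ does not reenter $B_1$ in the limit and that the measure term in $\mathcal{J}$ does not obstruct the comparison; the safeguard is precisely the localized almost-minimality with vanishing constant plus the strong maximum principle applied to the auxiliary subsolution of Remark \ref{Gsub}.

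With $|G_\infty| \equiv 0$ in $B_1$ in hand, one derives the contradiction from an energy/volume comparison. Testing the almost-minimality of $G_k$ against the competitor obtained by replacing $G_k$ with its $L_a$-harmonic extension in a ball $B_\rho$ (which has full support on the thin space), the measure term gives a gain $\tilde\Lambda \mathcal{L}_n(\mathcal{B}_\rho) - \tilde\Lambda\mathcal{L}_n(\{|G_k|>0\}\cap\mathcal{B}_\rho)$ of order $\rho^n$ that must be compensated by the Dirichlet energy of $G_k$ in $B_\rho$; but the uniform convergence $G_k \to 0$ and the Caccioppoli inequality for each component force $\int_{B_\rho}|y|^a|\nabla G_k|^2\,\mathrm{d}X \to 0$, so for $k$ large this energy cannot balance the fixed-order measure gain — contradiction for $\rho$ small but fixed. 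Unwinding the rescaling yields the claimed quantitative bound $|G|(X) \ge c_2\,\mathrm{dist}(X,\partial\{|G|>0\})^{s}$ on $\mathcal{B}_{r_0/2}^+(G)$ with a universal $c_2>0$. Quantitatively, the cleanest route avoiding the full compactness argument is to run this directly: assuming $|G|(X_0) \le \varepsilon\, r^s$ with $r = \mathrm{dist}(X_0,\partial\{|G|>0\})$, the harmonic comparison of Lemma \ref{harmonic.rep} plus interior Harnack on the $L_a$-harmonic replacement of $|G|$ gives $|G| \le C\varepsilon$ on, say, $\mathcal{B}_{r/4}(x_0)$ after rescaling, and then the energy-versus-measure balance above (now with the genuine constant $\sigma r^n$) produces $c\,r^n \le C\varepsilon^{2} r^n + \sigma r^{2n}$, which is impossible once $\varepsilon$ is smaller than a universal threshold and $r \le r_0$. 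This fixes the universal constant $c_2$.
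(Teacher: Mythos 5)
Your proposal captures the correct overall spirit — compare with the $L_a$-harmonic replacement of Lemma~\ref{harmonic.rep} and play the Dirichlet energy against the Lebesgue measure term using the almost-minimality — and this is indeed the mechanism used in the paper. However, there are two genuine gaps as written.

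First, in the compactness route you invoke a ``strong minimum principle for $L_a$-(sub)harmonic functions'' to conclude $|G_\infty|\equiv 0$ from $|G_\infty|(0)=0$. No such principle exists: a nonnegative $L_a$-subharmonic function may well vanish at an interior point while being strictly positive elsewhere (think of $|x|$). The subsolution property of Remark~\ref{Gsub} therefore gives you nothing here. What \emph{does} survive the passage to the limit is that $g^1_\infty\ge0$ is $L_a$-harmonic across $\{y=0\}$ (the Neumann term $\lambda^1 r_k^{2s} g^1_k$ vanishes under rescaling) and vanishes at the origin, so $g^1_\infty\equiv 0$ by the strong maximum principle for $L_a$-\emph{harmonic} functions — but that only kills the first component; the sign-changing $g^i_\infty$, $i\ge 2$, are not controlled by this argument. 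As written, step~4 of your plan is incorrect.

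Second, in the quantitative route the competitor is misidentified. You replace $G_k$ by its $L_a$-harmonic extension in $B_\rho$, ``which has full support on the thin space,'' and claim the measure term gives a gain $\tilde\Lambda\bigl(\mathcal L_n(\mathcal B_\rho)-\mathcal L_n(\{|G_k|>0\}\cap\mathcal B_\rho)\bigr)$. But after rescaling $\mathcal B_1\subset\{|G_k|>0\}$, so $\{|G_k|>0\}\cap\mathcal B_\rho=\mathcal B_\rho$ already and that difference is \emph{zero}: the harmonic replacement produces no measure gain, hence no balance to contradict. The correct competitor — and the crux of the paper's proof — is a \emph{truncation}: after taking the $L_a$-harmonic replacement $V$ and zooming in to $V_\delta$ so that $V_\delta$, $\nabla V_\delta$ are controlled by $|V_\delta(0)|$ on $B_1$, one tests against $V_\delta(1-\varphi)$ for a cutoff $\varphi\equiv 1$ on $B_{1/4}$. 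This kills the positivity set in $\mathcal B_{1/4}$ (a fixed-order measure gain $\tilde\Lambda C_0$) at Dirichlet cost $C|V_\delta(0)|^2$, and almost-minimality then forces $C|V_\delta(0)|^2\ge\tilde\Lambda C_0-\sigma C_n$, which is the desired lower bound once $\sigma$ is small. Your final display $c\,r^n\le C\varepsilon^2 r^n+\sigma r^{2n}$ has the right dimensional flavor, but without the truncation competitor the inequality never materializes. I would rewrite the argument around the truncation step; the compactness half of the proposal can be dropped, since it is not needed and its conclusion does not follow.
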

\begin{proof}
By Remark \ref{scaling}, up to translation and rescaling it is enough to show that if $G$ satisfies \eqref{almost.local.new} for $\sigma>0$ sufficiently small in a large ball and
  \be\label{assumption}
  \mathrm{dist}(0, \partial \{\abs{G}>0\})=1,
  \ee
 then $|G|(0) \geq c_2 >0$ for some $c_2$ small to be made precise later.\\
 Indeed, assume $\mathcal B_1 \subset \{|G|>0\}$ and 
 for every $i=1,\dots,m$ set $v^i$ as the $L_a$-harmonic replacement of $g^i$ in $B_{7/8}$: according to Lemma \ref{harmonic.rep} the result follows once we prove the claimed statement for the vector $V=(v^1,\dots,v^m)$. Now, the $v^i$'s are uniformly bounded say in $B_{3/4}$ and hence, since they are $L_a$-harmonic and symmetric with respect to $\{y=0\}$, we get
$$|v^i(X) - v^i(0)| \leq K |X|, \quad \text{in $B_{1/2}$},$$ for $K>0$ universal. Thus,
$$v^i(X) \leq v^i(0) + K|X|, \quad \text{in $B_{1/2}$.}$$
Let
$$
V_{\delta}(X) = \frac{1}{\delta^{s}}V(\delta X),\quad G_{\delta}(X) = \frac{1}{\delta^{s}}G(\delta X), \quad\mbox{for }X \in B_1
$$
with $\delta>0$ universal to be chosen universal later. Then, we get
$$v^i_\delta \leq v^i(0)\delta^{-s}+ K \delta^{1-s} \leq C v_\delta^i(0) \quad\text{in $B_1$},$$
for every $i=1,\dots,m$ and $\delta$ small enough. Moreover, since the $v^i_\delta$'s  are $L_a$-harmonic in $B_1$, the bound above and the validity of mean value formulas for the $L_a$-operator 
imply
$$\|v^i_\delta\|_{L^\infty(B_{1})}, \|\nabla_x v^i_\delta\|_{L^\infty(B_{1/2})}, \|\partial^a_y v^i_\delta\|_{L^\infty(B_{1/2})}\leq C v^i_\delta(0),$$
with $C>0$ universal (possibly changing from line to line). Let $\varphi \in C_0^{\infty}(B_{1/2}), 0 \leq \varphi \leq 1$ such that $\varphi \equiv 1$ in $B_{1/4}$, then
$$\int_{B_1}|y|^a|\nabla v^i_\delta|^2 \mathrm{d}X \geq \int_{B_1}|y|^a |\nabla (v^i_\delta(1-\varphi))|^2 \mathrm{d}X - C (v_\delta^i(0))^2$$
and on the other hand
$$\mathcal{L}_n(\{|V_\delta|>0\}\cap B_1) \geq \mathcal{L}_n(\{|V_\delta|(1-\varphi)>0\}\cap B_1) + C_0.$$ In conclusion, by the minimality of $V_\delta$ with respect to its Dirichlet-type energy, we first deduce
$$
\mathcal{J}(V_\delta,B_1)\leq \mathcal{J}(G_\delta,B_1) \leq
\mathcal{J}(V_\delta(1-\varphi),B_1) +\sigma C_n
$$
and then
$$
C|V_\delta|^2(0) \geq \tilde{\Lambda}C_0 - \sigma C_n
$$
Finally, we reach the claim for $\sigma>0$ sufficiently small.
\end{proof}

Finally, we are able to prove the strong non-degeneracy near the free boundary.
\begin{prop}\label{non-deg2}
Consider $\Omega\subset D$ a shape optimizer to \eqref{min.shape} and $G\in H^{1,a}_\Omega(\R^{n+1};\R^m)$
the vector of normalized eigenfunctions on $\Omega$.\\
Let $r_0>0$ as in Proposition \ref{optimalreg}, then if $X_0 \in F(G)$, for every $r \in (0,r_0/2)$ we have
  \be\label{non-deg}
  \sup_{\mathcal B_r(X_0)} \abs{G} \geq c r^{s},
  \ee
  for some universal constant $c >0, r_0>0$.
\end{prop}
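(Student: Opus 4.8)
The strategy is the standard one for strong non-degeneracy at free boundary points: reduce to the weak non-degeneracy of Proposition \ref{non-deg1} by a connectedness/continuity argument applied to the point $X_0 \in F(G)$. First I would fix $X_0 \in F(G)$ and $r \in (0,r_0/2)$; by the scaling invariance of Remark \ref{scaling} and translation it suffices to treat the case $X_0=0$, $r=1$, so the goal becomes $\sup_{\mathcal B_1}|G| \geq c$. Since $0 \in F(G)= \partial\Omega\cap D$, there is a sequence of points $X_k = (x_k,0) \in \{|G|>0\}\cap\R^n$ with $X_k \to 0$. The first step is to apply Proposition \ref{non-deg1} at each $X_k$: writing $d_k := \mathrm{dist}(X_k,\partial\{|G|>0\}) \to 0$, we get $|G|(X_k) \geq c_2 d_k^{\,s}$. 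This by itself is not enough since $d_k \to 0$, so one must propagate the lower bound outward along a chain of balls.

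The core of the argument is an iteration exploiting the subsolution property of $|G|$ from Remark \ref{Gsub}, namely that $|G|+ C_0|y|^{1-a}$ is $L_a$-subharmonic in $\R^{n+1}_+$ (with $C_0$ depending on $\lambda_m^s(\Omega)$), together with a doubling-type estimate for non-negative $L_a$-subharmonic functions. Concretely I would show: there exist universal constants $\kappa>1$ and $\theta\in(0,1)$ such that if $Y\in \{|G|>0\}\cap\mathcal B_{1/2}$ with $\rho := \mathrm{dist}(Y,\partial\{|G|>0\}) \le 1/4$, then $\sup_{\mathcal B_{\kappa\rho}(Y)}|G| \geq \theta^{-s}\,|G|(Y)$ — i.e. the sup of $|G|$ roughly grows by a fixed factor when the radius is dilated by $\kappa$, as long as one stays at comparable distance from the free boundary. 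The mechanism is: on the ball $B_\rho(Y)$ the function $|G|$ (shifted by the $|y|^{1-a}$ correction, which is lower order after the $C^{0,s}$-rescaling $|G|(Y+\rho\cdot)/\rho^s$) is essentially $L_a$-subharmonic and vanishes at a boundary point of $B_\rho(Y)$ at distance $\rho$; either $\sup_{\mathcal B_{\kappa\rho}(Y)}|G|$ already controls a fixed multiple of $|G|(Y)$, or else one can produce a competitor for the localized almost-minimality condition \eqref{almost.local.new} that strictly decreases $\mathcal J$ — contradiction for $\sigma$ small — exactly as in the proof of Proposition \ref{non-deg1}. Starting from $X_k$ with the bound $|G|(X_k)\ge c_2 d_k^s$ and iterating this growth estimate $N_k \sim \log(1/d_k)/\log\kappa$ times until the radius becomes of order $1$, one obtains $\sup_{\mathcal B_1}|G| \gtrsim c_2 d_k^s \cdot \theta^{-sN_k} \geq c_2\,d_k^s\,(1/d_k)^{s\,\log\theta^{-1}/\log\kappa}$; choosing $\kappa,\theta$ so that $\log\theta^{-1}/\log\kappa \ge 1$ makes the right-hand side bounded below by a universal constant $c>0$, independent of $k$. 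Letting $k\to\infty$ gives $\sup_{\mathcal B_1}|G|\ge c$ as desired.

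The main obstacle is making the iteration step genuinely \emph{scale-invariant}: the subsolution correction term $C_0|y|^{1-a}$ in Remark \ref{Gsub} and the error $\sigma \tfrac{2}{d_s}\sum_i\lambda_i^s(\Omega)\|\tilde G - G\|_{L^1}$ in \eqref{almost.local.new} do not scale homogeneously with the $C^{0,s}$-rescaling $G_{X_0,\rho}$, so one has to check that after rescaling to unit scale these contributions are $o(1)$ (or at least controlled by a small multiple of the bulk term), which is precisely what the smallness of $\sigma r^n$ in Remark \ref{scaling} buys us provided $r_0$ is chosen small. A secondary technical point is ensuring the chain of balls $B_{\kappa^j d_k}(\,\cdot\,)$ joining $X_k$ to a point near the origin stays inside $\mathcal B_{r_0/2}^+(G)$ so that the weak non-degeneracy of Proposition \ref{non-deg1} and the subsolution estimate of Remark \ref{Gsub} both apply; this is handled by keeping all the balls inside $\mathcal B_{1}$ after the initial normalization and noting that $\kappa^{N_k}d_k$ is comparable to $1$. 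Once these uniformity issues are settled the rest is the routine doubling-iteration bookkeeping sketched above.
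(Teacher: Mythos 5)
Your plan is essentially the paper's: form the auxiliary $L_a$-subharmonic function $f=|G|+C_0|y|^{1-a}$ from Remark \ref{Gsub}, combine the weak non-degeneracy of Proposition \ref{non-deg1} with the $C^{0,s}$-bound to obtain $\mathrm{dist}(X,\{f=0\})\asymp f(X)^{1/s}$, and run a geometric chain-of-points iteration to propagate the lower bound from a nearby interior point outward to unit scale, exactly as the paper does in the auxiliary lemma preceding Proposition \ref{non-deg2}. The one place you deviate is the one-step growth estimate: the paper never re-invokes the almost-minimality competitor inside the iteration (so the scale-invariance issue you raise about the error term $\sigma\frac{2}{d_s}\sum_i\lambda_i^s(\Omega)\normpic{\cdot}{L^1}$ and the correction $C_0|y|^{1-a}$ simply does not arise there); after normalizing $f(X_k)=1$ it argues by contradiction purely from subharmonicity, comparing $f$ with the $L_a$-harmonic barrier $1+\delta+w$ on $B_M(X_k)$ (where $w$ vanishes on $\{y=0\}$ and is $O(y^{2s}M^{s-1})$), and then contradicting the weighted mean-value inequality $f(X_k)\le\fint_{B_{r_k}(X_k)}|y|^a f\,\mathrm{d}X$ using that $f\le 1/2$ on a fixed-size neighborhood of the free-boundary point $Y_k\in\partial B_{r_k}(X_k)$ by H\"older continuity. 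Adopting that cleaner mechanism (which uses only that $f$ is subharmonic, $C^{0,s}$, and weakly non-degenerate) closes the scale-invariance gap you flagged and makes the iteration step entirely universal.
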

In view of Proposition \ref{optimalreg} and Proposition \ref{non-deg1}, Proposition \ref{non-deg2} follows by applying the next lemma to the function
$$
f(X)=|G|(X) + \frac{\sqrt{m}\tilde{C}_{n,s}^{n/4s}}{1-a} \lambda^s_m(\Omega)^{\frac{n+4s}{4s}} |y|^{1-a},
$$
introduced in Remark \ref{Gsub}.
\begin{lem}
  Let $f\geq 0$ be defined in $B_1$ and subharmonic
  in $B^+_1(f).$ Assume that there is a small constant $\eta>0$ such that \be\label{cs*}\|f\|_{C^{0,s}(B_1)}\leq \eta^{-1},\ee and $f$ satisfies the non-degeneracy condition on $\mathcal B_1$,
\be\label{c3change}
f(X) \geq \eta \; \mathrm{dist}(X,\{f=0\})^{s} \quad\text{for every } X \in \mathcal{B}_1.
\ee
Then if $0 \in F(f)$, we get  $$\sup_{\mathcal B_r} f \geq c(\eta)\; r^{s}, \quad \text{for }r \leq 1.$$
\end{lem}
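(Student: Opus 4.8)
The plan is a scaling reduction followed by an iteration along the positivity set, in the spirit of the non-degeneracy arguments for almost minimizers of the thin one-phase problem in \cite{DSalmost} (see also \cite{DSalmostloc} for the local model, and \cite{DS1,EKPSS} for the classification of homogeneous solutions that underlies it).

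\emph{Step 1: reduction to scale one.} Every hypothesis is stable under the $C^{0,s}$-rescaling $f\mapsto f_r:=r^{-s}f(r\,\cdot)$, $r\le 1$, with $\eta$ replaced by a fixed fraction of itself: the H\"older seminorm and the non-degeneracy \eqref{c3change} are invariant after the $r^{-s}$ normalisation, the subharmonicity in $B_1^+(f)$ is preserved, and the remaining bound $\|f_r\|_{L^\infty(B_1)}\le\eta^{-1}$ follows from \eqref{cs*} together with $f(0)=0$, which holds since $f$ is continuous and $0\in F(f)$. As also $0\in F(f_r)$ and $F(f_r)=r^{-1}F(f)$, it suffices to produce $c(\eta)>0$ with $\sup_{\mathcal B_1}f\ge c(\eta)$ for every admissible $f$ satisfying $0\in F(f)$; the statement for $r\le1$ then follows by applying this to $f_r$ and using $\sup_{\mathcal B_r}f=r^{s}\sup_{\mathcal B_1}f_r$.

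\emph{Step 2: iteration and conclusion.} Since $0\in F(f)$ there is a thin point $X_1=(x_1,0)$ with $f(X_1)>0$ and $|x_1|$ as small as we wish, and we set $d_1:=\mathrm{dist}(X_1,\{f=0\})\le|x_1|$. The core is a \emph{propagation step}: from a thin point $X$ with $f(X)>0$ at distance $d=\mathrm{dist}(X,\{f=0\})$ from the thin zero set, using the subharmonicity of $f$ on $B_d(X)\subset B_1^+(f)$, the lower bound $f(X)\ge\eta d^{\,s}$ from \eqref{c3change}, and the structure of the thin one-phase model, one produces a thin point $X'$ with $f(X')>0$, $|X'-X|\le C_0 d$ and $\mathrm{dist}(X',\{f=0\})\ge 2d$, unless $d$ already exceeds a threshold $\bar d=\bar d(\eta)$. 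Iterating gives thin points $X_k$ with $d_k:=\mathrm{dist}(X_k,\{f=0\})$ at least doubling and $|X_{k+1}-X_k|\le C_0 d_k$, so the procedure stops after finitely many steps at some $X_N$ with $d_N\ge\bar d$; since $\sum_k|X_{k+1}-X_k|\le C_0\sum_k d_k\le 2C_0 d_N<4C_0\bar d$, taking $|x_1|$ small (and shrinking $\bar d$ so that $4C_0\bar d<1/4$) keeps every $X_k$ in $\mathcal B_{1/2}$ and every ball $B_{C_0 d_k}(X_k)$ inside $B_1$, so the iteration is legitimate. Then \eqref{c3change} applied at $X_N$ gives
$$\sup_{\mathcal B_1}f\ \ge\ f(X_N)\ \ge\ \eta\,d_N^{\,s}\ \ge\ \eta\,\bar d^{\,s}\ =:\ c(\eta).$$

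\emph{Step 3: the propagation step and the main obstacle.} This is where the real work lies. Normalising once more, replacing $f$ by $f_{X,d}(Z):=d^{-s}f(X+dZ)$ (admissible with the same $\eta$ up to a fixed factor), the step reduces to: if $f$ is admissible, $0$ lies in the thin positivity set with $\mathrm{dist}(0,\{f=0\})=1$ (so $f(0)\ge\eta$), then $\{f>0\}$ contains a thin point $X'$ with $|X'|\le C_0$ and $\mathrm{dist}(X',\{f=0\})\ge 2$. I would attack this by contradiction and compactness: a failing sequence converges locally uniformly, by \eqref{cs*}, to $f_\infty\ge0$, subharmonic off its thin zero set, vanishing continuously there, with $f_\infty(0)\ge\eta$, still satisfying \eqref{c3change}, and whose positivity set is confined near $0$ to the closed $2$-neighbourhood of $\{f_\infty=0\}$. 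The contradiction must come from a Liouville-type rigidity: such an $f_\infty$ is forced to coincide near $0$ with a rotation and dilation of the model one-plane solution $U$ of \eqref{Unew}, whose positivity set is a half-space and hence not confined. Establishing this rigidity — exploiting the subharmonicity across the positivity set, the $s$-growth from \eqref{cs*}, and the classification of $s$-homogeneous global solutions of the thin one-phase problem \cite{DS1,EKPSS} — and verifying that non-degeneracy and the geometric confinement genuinely persist in the limit, are the delicate points, and the step I expect to be hardest. A more hands-on alternative, worth trying first, is to build an $L_a$-harmonic barrier out of $U$, positioned so that $f$ dominates it on $\partial B_1(0)$ via the sub-mean-value inequality, and to use the minimum principle to force $f>0$ on a thin ball of radius $2$ at controlled distance; this would also give an explicit $\bar d(\eta)$.
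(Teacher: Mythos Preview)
Your iterative scheme is in the right spirit, but the propagation step you flag as the crux (Step~3) is left open, and the methods you propose for it do not work under the stated hypotheses. The compactness/Liouville route relies on the classification results of \cite{DS1,EKPSS}, but those apply to \emph{minimizers} or viscosity solutions of the thin one-phase problem, not to functions that are merely $L_a$-subharmonic off their zero set. A limit $f_\infty$ extracted from your sequence satisfies no equation in $B_1^+(f_\infty)$ beyond subharmonicity, so nothing forces it to be a half-plane solution; the rigidity you need simply is not available. Your barrier alternative is closer in spirit to what actually works, but you aim it at too strong a conclusion: producing a thin ball of radius $2$ entirely contained in $\{f>0\}$ is a geometric statement about the zero set, and there is no obvious way to extract it from a one-sided comparison.

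The paper avoids all of this by iterating the \emph{value} of $f$ rather than the distance to the zero set. After normalising so that $f(X_k)=1$, one claims there is $X_{k+1}\in\mathcal B_M(X_k)$ with $f(X_{k+1})\ge 1+\delta$, with $M=M(\eta,\delta)$. If not, then $f\le 1+\delta+w$ on $B_M(X_k)$, where $w$ is $L_a$-harmonic in $B_M^+(X_k)$ with $w=0$ on $\{y=0\}$ and $w=f$ on $\partial B_M(X_k)\cap\{y>0\}$; the bound $w\le C\eta^{-1}y^{2s}M^{s-1}$ makes $w\le\delta$ on $B_{r_k}(X_k)$ (with $r_k=\mathrm{dist}(X_k,\{f=0\})\in[c(\eta),C(\eta)]$) for $M$ large, so $f\le 1+2\delta$ there. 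But $f$ vanishes at some $Y_k\in\partial B_{r_k}(X_k)$, hence $f\le 1/2$ on a definite neighbourhood of $Y_k$ by \eqref{cs*}, and this contradicts the weighted sub-mean-value inequality $1=f(X_k)\le\fint_{B_{r_k}(X_k)}|y|^a f\,\mathrm dX$ for $\delta$ small. The geometric growth of $f(X_k)$ combined with \eqref{c3change} then gives $|X_k-X_0|\le c(\eta)f(X_k)^{1/s}$, which yields $\sup_{\mathcal B_r(X_0)}f\ge c\,r^s$ along a sequence of radii and hence for all $r$. This value-growth step uses only the sub-mean-value inequality and one explicit barrier, with no compactness or classification.
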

\begin{proof} The proof follows the lines of \cite[Proposition 3.3]{CRS}
(see also \cite[Lemma 2.9]{DT2}).\\
Given a point $X_0 \in \mathcal B_1^+(f)$, to be chosen close enough to $0 \in F(f)$, we construct a sequence of points $(X_k)_k \subset  \mathcal B_1$ such that $$f(X_{k+1})\geq(1+\delta)f(X_k), \quad |X_{k+1}- X_k| \leq C(\eta) \mathrm{dist}(X_{k},\{f=0\}),$$ with $\delta$ small depending on $\eta$.

Then, using \eqref{c3change} 
and that $(f(X_k))_k$ grows geometrically, we find \begin{align*}|X_{k+1} - X_0| &\leq \sum_{i=0}^{k} |X_{i+1}-X_i|  \leq C(\eta) \sum_{i=0}^k \mathrm{dist}(X_{i},\{f=0\}) \\ & \leq \frac{C(\eta)}{\eta^2} \sum_{i=0}^k f(X_{i})^{1/s} \leq c(\eta) f(X_{k+1})^{1/s} 
.\end{align*}
Hence for a sequence of radii $r_k = \mathrm{dist}(X_k,\{f=0\}),$
we have that $$\sup_{\mathcal B_{r_k}(X_0)} f \geq c r_k^{s}$$ from which we obtain that
 $$\sup_{\mathcal B_{r}(X_0)} f \geq c r^{s}, \quad \text{for all $r \geq |X_0|.$}$$
The conclusion follows by letting $X_0$ go to $0\in F(f)$.

We now show that the sequence of $X_k$'s  exists. After scaling, assume we constructed $X_k$ such that $$f(X_k) = 1.$$ Let us call with $Y_k \in F(f)$ the point where the distance from $X_k$ to $\{f=0\}$ is achieved. By \eqref{cs*} and \eqref{c3change}, we get $$c(\eta) \leq r_k=|X_k-Y_k| \leq C(\eta).$$
Assume by contradiction that we cannot find $X_{k+1}$ in $\mathcal B_M(X_k)$ with $M$ large to be specified later, such that $$f(X_{k+1}) \geq 1+\delta.$$ Then $f \leq 1+\delta +w$ with $w$ a $L_a$-harmonic function in $B_M^+(X_k)$ such that $$w=0 \quad \text{on $\{y =0\}$}, \quad w=f \quad \text{on $\partial B_M(X_k) \cap \{y>0\}$}.$$
Thus, we have
$$w \leq C(n) \frac{y^{2s}}{M} \sup_{B_M^+(X_k)} f \leq C \eta^{-1} y^{2s} M^{s-1} \leq \delta \quad \text{in $B_{r_k}(X_k),$}$$ for $M$ sufficiently large depending on $\delta$. Thus, \be\label{bound1}f \leq 1+2\delta \quad \text{in $B_{r_k}(X_k).$}\ee
On the other hand, $f(Y_k)=0, Y_k \in \partial B_{r_k}(X_k)$. Thus from the H\"older continuity of $f$ we find \be\label{bound2}f \leq \frac 1 2, \quad \text{in $B_{c(\eta)}(Y_k)$}.\ee
If $\delta$ is sufficiently small \eqref{bound1}-\eqref{bound2} contradict that $$1=f(X_k) \leq \fint_{B_{r_k}(X_k)}|y|^a f\mathrm{d}X.$$ \end{proof}
Before stating the main result on density estimates, we need the following lemma in which we prove a compactness result for sequence of almost-minimizers uniformly bounded in $H^{1,a}(\R^{n+1};\R^m)$.
\begin{lem}\label{compact}
Let $(G_k)_k$ be a sequence of almost-minimizer in $B_1$ in the sense of condition \eqref{almost.local.new}, uniformly bounded in $H^{1,a}(B_1;\R^m)$. Then, up to a subsequence, there exists a limit function $G_\infty$ such that
\begin{itemize}
\item $G_\infty \in H^{1,a}_{\loc}(B_1;\R^m)\cap C^{0,s}_\loc(\overline{B_1};\R^m)$;
  \item $G_k \to G_\infty$ in $C^{0,\alpha}_\loc(\overline{B_1};\R^m)$, for every $\alpha \in (0,s)$;
  \item $G_k \rightharpoonup G_\infty$ weakly in $H^{1,a}_\loc(B_1;\R^m)$;
  \item $G_\infty$ is an almost-minimizer in $B_1$  in the sense of condition \eqref{almost.local.new}.
\end{itemize}
\end{lem}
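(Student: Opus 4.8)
The plan is to combine the uniform interior regularity coming from Proposition~\ref{optimalreg} with a gluing argument, in the spirit of the compactness results for almost-minimizers in \cite{DSalmost}, in order to pass the almost-minimality to the limit.

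\emph{Step 1 (compactness; the first three items).} I would first observe that the proof of Proposition~\ref{optimalreg}, and the dichotomy of Lemma~\ref{dico} on which it relies, only use the localized almost-minimality condition \eqref{almost.local.new} together with a uniform bound on $\int_{B_1}|y|^a|\nabla G_k|^2$; hence they apply to each $G_k$ and yield, for every compact $K\subset B_1$, a bound $\sup_k\big([G_k]_{C^{0,s}(K)}+\|G_k\|_{L^\infty(K)}\big)\le C(K)$. By Arzel\`{a}--Ascoli a subsequence converges locally uniformly in $B_1$ to some $G_\infty$, and interpolating the uniform $C^{0,s}$ bound with the uniform convergence upgrades this to convergence in $C^{0,\alpha}$ on every compact subset of $B_1$, for every $\alpha\in(0,s)$; the $C^{0,s}$ bound passes to the limit, so $G_\infty\in C^{0,s}_\loc(\overline{B_1};\R^m)$. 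From the uniform $H^{1,a}(B_1;\R^m)$ bound we may in addition assume $G_k\rightharpoonup G_\infty$ weakly in $H^{1,a}_\loc(B_1;\R^m)$, and weak lower semicontinuity of the Dirichlet energy gives $G_\infty\in H^{1,a}_\loc(B_1;\R^m)$.

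\emph{Step 2 (lower semicontinuity of $\mathcal J$; set-up).} For every ball $B_r(X_0)$ with $\overline{B_r(X_0)}\subset B_1$ one has $\mathcal J(G_\infty,B_r(X_0))\le\liminf_k\mathcal J(G_k,B_r(X_0))$: the Dirichlet part is weakly lower semicontinuous along the subsequence, while for the measure part the uniform convergence $|G_k|\to|G_\infty|$ forces $\mathbf{1}_{\{|G_\infty|>0\}}\le\liminf_k\mathbf{1}_{\{|G_k|>0\}}$ pointwise on $\R^n$ (if $|G_\infty|(x)>0$ then $|G_k|(x)>0$ for $k$ large), so Fatou's lemma gives $\mathcal L_n(\{|G_\infty|>0\}\cap\mathcal B_r(X_0))\le\liminf_k\mathcal L_n(\{|G_k|>0\}\cap\mathcal B_r(X_0))$. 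Now fix such a ball, $X_0\in\R^n\times\{0\}$, and an admissible competitor $\tilde G$ with $\tilde G-G_\infty\in H^{1,a}_0(B_r(X_0);\R^m)$ and $\|\tilde G\|_{L^\infty(\mathcal B_r(X_0))}\le\delta$, extended by $G_\infty$ outside $B_r(X_0)$. First choose $\rho'\in(r,1)$ with $\overline{B_{\rho'}(X_0)}\subset B_1$ so close to $r$ that $\int_{B_{\rho'}(X_0)\setminus B_r(X_0)}|y|^a|\nabla G_\infty|^2+\tilde\Lambda\mathcal L_n(\mathcal B_{\rho'}(X_0)\setminus\mathcal B_r(X_0))\le\eps$. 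Then, using the uniform bound on $\int_{B_1}|y|^a|\nabla G_k|^2$, a pigeonhole argument over finitely many disjoint thin annuli immediately outside $B_{\rho'}(X_0)$, followed by passing to a further subsequence so that the ``good'' index is independent of $k$, produces a fixed annulus $A=B_\rho(X_0)\setminus B_{\rho'}(X_0)$ with $\overline{B_\rho(X_0)}\subset B_1$ on which both $\sup_k\int_A|y|^a|\nabla G_k|^2\le\eps$ and $\int_A|y|^a|\nabla G_\infty|^2+\tilde\Lambda\mathcal L_n(A\cap\{y=0\})\le\eps$.

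\emph{Step 3 (gluing and conclusion).} Let $\eta$ be a Lipschitz cut-off with $\eta\equiv0$ on $B_{\rho'}(X_0)$ and $\eta\equiv1$ outside $B_\rho(X_0)$, and set $\tilde G_k:=\tilde G+\eta\,(G_k-G_\infty)$. Then $\tilde G_k=\tilde G$ on $B_{\rho'}(X_0)$ and $\tilde G_k=G_k$ outside $B_\rho(X_0)$, so $\tilde G_k-G_k\in H^{1,a}_0(B_\rho(X_0);\R^m)$; and on $A$ one has $\tilde G_k=(1-\eta)G_\infty+\eta\,G_k$, a convex combination, whence $\|\tilde G_k\|_{L^\infty(\mathcal B_\rho(X_0))}\le\delta$. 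Applying \eqref{almost.local.new} for $G_k$ on $B_\rho(X_0)$, splitting all energies over $B_r(X_0)$, $B_{\rho'}(X_0)\setminus B_r(X_0)$ and $A$, using $\mathcal J(G_k,B_\rho(X_0))\ge\mathcal J(G_k,B_r(X_0))$ on the left, the bound $|\nabla\tilde G_k|^2\le3|\nabla G_\infty|^2+3|\nabla G_k|^2+3|\nabla\eta|^2|G_k-G_\infty|^2$ on $A$ on the right, and that $\|G_k-G_\infty\|_{L^\infty(\overline{B_\rho(X_0)})}\to0$ kills the term carrying $|\nabla\eta|^2$, I arrive at
\[
\mathcal J(G_k,B_r(X_0))\le \mathcal J(\tilde G,B_r(X_0))+C\eps+\sigma\frac{2}{d_s}\sum_{i=1}^m\lambda_i^s(\Omega)\,\|\tilde G_k-G_k\|_{L^1(\mathcal B_\rho(X_0))}+o_k(1).
\]
Since $\tilde G_k-G_k\to\tilde G-G_\infty$ in $L^1(\mathcal B_\rho(X_0))$, taking $\limsup_k$ and using Step~2 on the left-hand side gives $\mathcal J(G_\infty,B_r(X_0))\le\mathcal J(\tilde G,B_r(X_0))+C\eps+\sigma\frac{2}{d_s}\sum_{i=1}^m\lambda_i^s(\Omega)\,\|\tilde G-G_\infty\|_{L^1(\mathcal B_r(X_0))}$, and letting $\eps\to0$ is exactly the localized almost-minimality condition \eqref{almost.local.new} for $G_\infty$ with the same constant $\sigma$. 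Since the weak limit $G_\infty$ and the competitor $\tilde G$ do not depend on the further subsequence, the inequality holds along the original subsequence, which completes the proof.

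\emph{Main obstacle.} The delicate point is Step~3: because $G_k\rightharpoonup G_\infty$ only weakly in $H^{1,a}_\loc$, the Dirichlet energy of the $G_k$'s on a transition annulus of fixed radius cannot be controlled, so one is forced to run the pigeonhole/diagonal selection of an annulus on which $\sup_k\int|y|^a|\nabla G_k|^2$ is small; the remaining difficulty is the bookkeeping needed to respect the admissibility constraints (the $L^\infty$ bound and the zero boundary trace) through the gluing, and to make sure every error term lands on the competitor's side of the inequality.
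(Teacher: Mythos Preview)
Your argument is correct; the first three items match the paper's treatment exactly (uniform $C^{0,s}$ bounds from Proposition~\ref{optimalreg}, Arzel\`a--Ascoli, and weak compactness from the uniform $H^{1,a}$ bound). For the passage of the almost-minimality to the limit, however, you take a genuinely different route. You use the classical ``cut-off plus good annulus'' gluing: interpolate between $\tilde G$ inside and $G_k$ outside on a thin transition shell chosen by pigeonhole so that $\sup_k\int_A|y|^a|\nabla G_k|^2$ is small, and absorb the shell contribution into an $\eps$-error. The paper instead avoids the annulus selection entirely by an $\eps$-truncation trick: it tests $G_k$ against
\[
G_{k,\eps}=\sum_{i=1}^m\big(g^i_k+\psi^i-\eps\eta\big)_+f^i-\big(g^i_k+\psi^i+\eps\eta\big)_-f^i,
\]
with $\eta\equiv1$ on $\overline{B_r}$. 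Shifting by $\pm\eps$ forces the inclusions $\{g^i_k+\psi^i\mp\eps>0(<0)\}\cap\overline{\mathcal B_r}\subset\{g^i_\infty+\psi^i\neq0\}\cap\overline{\mathcal B_r}$ via uniform convergence, while outside $\overline{\mathcal B_r}$ the positivity set only shrinks; the Dirichlet error on the transition region is $O(\eps)$ directly from the uniform $H^{1,a}$ bound, with no pigeonhole needed. Your approach is more portable (it is the standard compactness scheme for almost-minimizers and does not exploit the componentwise sign structure), whereas the paper's trick is shorter and handles the characteristic function in one stroke without having to find a good annulus or pass to further subsequences.
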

\begin{proof}
By Proposition \ref{optimalreg} we already know that $G_k \to G_\infty$ uniformly on every compact set of $B_1$ and so in $C^{0,\alpha}_\loc(\overline{B})$, for every $\alpha \in (0,s)$. Moreover, by Ascoli-Arzel\'a theorem it follows that $G_\infty \in C^{0,s}(\overline{B})$.\\
By assumptions, the sequence is uniformly bounded in $H^{1,a}(B_1;\R^m)$ and sot it weakly converges to some $G_\infty \in H^{1,a}(B_1;\R^m)$.

In conclusion, let us show that for every $\delta\geq\norm{G_\infty}{L^\infty(\Omega)}$ there exist $\sigma, r_0 >0$ such that, for $r \in (0,r_0]$ we have
$$
\mathcal{J}(G_\infty,B_r) \leq \mathcal{J}(G_\infty+ \Psi,B_r) + \sigma \frac{2}{d_s}\sum_{i=1}^{m} \lambda_i^s(\Omega)\norm{\Psi}{L^1(\mathcal{B}_r)},
$$
for every $\Psi  \in H^{1,a}_0(B_r;\R^m)$ such that $\normpic{\Psi}{L^\infty(\mathcal{B}_r)}\leq \delta$. For the sake of simplicity, we denote
$$
\tilde{C}=\frac{2}{d_s}\sum_{i=1}^{m} \lambda_i^s(\Omega).
$$
Since we already know by Proposition \ref{optimalreg} that there exists a local minimizer H\"{o}lder continuous of class $C^{0,s}$, we can assume that $\Psi$ is continuous. Therefore, for every $k>0$ let us consider the competitor
$$
G_{k,\eps} = \sum_{i=1}^m (g^i_k+\psi^i-\eps \eta)_+ e^i - (g^i_k+\psi^i+\eps \eta)_- e^i,
$$
with $\eta \in C^\infty_c(B_{(1+r)/2})$ such that $0\leq\eta\leq1$ and  $\eta \equiv 1 $ on a neighborhood of $\overline{B_r}$. Hence, by the almost minimality of $G_k$ in $B_{(r_0+r)/2}$, namely we get for $r \in (0,r_0]$
$$
\mathcal{J}(G_k,B_{(r_0+r)/2}) \leq \mathcal{J}(G_{k,\eps},B_{(r_0+r)/2}) + \sigma\tilde{C} \norm{G_{k,\eps}-G_k}{L^1(\mathcal{B}_{(r_0+r)/2})},
$$
we have
\begin{align*}
\mathcal{L}_n(\mathcal{B}_{(r_0+r)/2}\cap \{|G_{k}|>0\} ) \leq &\, \sum_{i=1}^m\int_{B_{(r_0+r)/2}}{|y|^a\abs{\nabla \psi^i}^2 + 2|y|^a \langle \nabla \psi^i,\nabla g^i_k\rangle\mathrm{d}X}+\\
&\,+\eps \sum_{i=1}^m\int_{\mathrm{supp}\eta \setminus B_r}{\eps|y|^a\abs{\nabla \eta}^2 + 2|y|^a\langle \nabla \eta,\nabla (g^i_k +\psi^i) \rangle\mathrm{d}X}+\\
&\, +\mathcal{L}_n(\mathcal{B}_{(r_0+r)/2}\cap \{|G_{k,\eps}|>0\} ) + \sigma\tilde{C} \norm{G_{k,\eps}-G_k}{L^1(\mathcal{B}_{(r_0+r)/2})}.
\end{align*}
In particular, localizing the measure of the positivity set in $\mathcal{B}_r$, we get
\begin{align*}
\mathcal{L}_n(\mathcal{B}_r\cap \{|G_{k}|>0\} ) \leq &\, \sum_{i=1}^m\int_{B_{(r_0+r)/2}}{|y|^a\abs{\nabla \psi^i}^2 + 2|y|^a \langle \nabla \psi^i,\nabla g^i_k\rangle\mathrm{d}X}+ C\eps + \sigma\tilde{C} \norm{\Psi}{L^1(\mathcal{B}_r)}\\
&\,+\int_{\mathcal{B}_{(r_0+r)/2}}{\chi_{\{|G_{k,\eps}|>0\}}\mathrm{d}x} - \int_{\mathcal{B}_{(r_0+r)/2}\setminus \overline{\mathcal{B}_r}}{\chi_{\{|G_{k}|>0\}}\mathrm{d}x},
\end{align*}
where we used that $(G_k)_k$ is uniformly bounded in $H^{1,a}(B_{(r_0+r)/2})$. Since
\begin{align*}
\{g^i_k - \eps \eta>0 \}\setminus \overline{B_r} &\subseteq \{g^i_k >0\}\setminus \overline{B_r}\\
\{g^i_k + \eps \eta<0 \}\setminus \overline{B_r} &\subseteq \{g^i_k <0\}\setminus \overline{B_r}
\end{align*}
and by the uniform convergence
\begin{align*}
\{g^i_k+\psi^i -\eps >0 \}\cap\overline{B_r} &\subseteq \{g^i_{\infty} +\psi^i >0\}\cap\overline{B_r}\\
\{g^i_k+\psi^i + \eps <0 \}\cap \overline{B_r} &\subseteq \{g^i_{\infty}+\psi^i <0\}\cap\overline{B_r},
\end{align*}
we deduce\begin{align*}
\mathcal{L}_n(\mathcal{B}_r \cap \{|G_{k}|>0\}) \leq &\,\int_{B_{(r_0+r)/2}}{|y|^a \left(\abs{\nabla \Psi}^2 + 2 \langle \nabla \Psi,\nabla G_k\rangle\right)\mathrm{d}X}\\
&\, +\mathcal{L}_n(\mathcal{B}_r \cap \{|G_{{\infty}}+\Psi|>0\})+ \sigma\tilde{C} \norm{\Psi}{L^1(\mathcal{B}_r)}+C\eps.
\end{align*}

Now, using that $G_k \rightharpoonup G_\infty$ weakly in $H^{1}_\loc(B_1)$ and uniformly on $\overline{B_r}$, we obtain
$$
\mathcal{J}(G_\infty,B_r) \leq \int_{B_r}{|y|^a\abs{\nabla (G_\infty + \Psi)}^2\mathrm{d}X} + \mathcal{L}_n(\mathcal{B}_r \cap \{|G_{\infty}+\Psi|>0\} )+\sigma\tilde{C} \norm{\Psi}{L^1(\mathcal{B}_r)}+C\eps
$$
for every $\eps>0$, which implies the desired inequality.
\end{proof}

The following density estimates for the positivity set of $\abs{G}$ are obtained by a straightforward combination of the non-degeneracy condition \eqref{non-deg} and the optimal regularity of local minimizer.
\begin{cor}\label{density}
Let $\Omega\subset D$ be a shape optimizer to \eqref{min.shape} and $G\in H^{1,a}_\Omega(\R^{n+1};\R^m)$
be the vector of normalized eigenfunctions on $\Omega$.
Let $r_0>0$ be as in Proposition \ref{optimalreg} and $X_0\in F(G)$.\\ Then, for every $r \in (0,r_0/2)$
\be\label{density.eq}
\eps_0 \omega_n r^n\leq \mathcal{L}_n(\mathcal{B}_r(X_0)\cap \{|G|>0\}) \leq (1-\eps_0)\omega_n r^n,
\ee
for some $\eps_0>0$.
\end{cor}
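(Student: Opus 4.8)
The plan is to obtain both inequalities from the non-degeneracy, Proposition \ref{non-deg2}, and the optimal $C^{0,s}$-regularity, Proposition \ref{optimalreg}; the upper bound will additionally compare $G$ with its $L_a$-harmonic replacement via the localized almost-minimality \eqref{almost.local.new}, following the classical Alt--Caffarelli scheme. Fix $X_0\in F(G)$ and $r\in(0,r_0/2)$, set $\Omega=\{|G|>0\}$, and note that $|G|(X_0)=0$ (since $\Omega$ is open), so by Proposition \ref{optimalreg} one has $|G|(X)\le L\,|X-X_0|^s$ in $B_{r_0/2}(X_0)$ with $L:=[|G|]_{C^{0,s}(B_{r_0/2}(X_0))}\le C\sum_i(1+\lambda_i^s(\Omega))$. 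For the \emph{lower bound} I would apply Proposition \ref{non-deg2} at $X_0$ with radius $r/2$ to produce $Y\in\overline{\mathcal B_{r/2}(X_0)}$ with $|G|(Y)\gtrsim r^s$, and then propagate positivity through the $C^{0,s}$-bound: on a ball $\mathcal B_{c_1 r}(Y)$, with $c_1$ universal and (after shrinking $c_1$ if needed) $c_1r\le r/2$, one still has $|G|>0$, and this ball is contained in $\mathcal B_r(X_0)\cap\Omega$; hence $\mathcal L_n(\mathcal B_r(X_0)\cap\Omega)\ge\omega_n(c_1r)^n=:\eps_0\,\omega_n r^n$.

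For the \emph{upper bound} I would let $H=(h^1,\dots,h^m)$ be the $L_a$-harmonic replacement of $G$ in $B_r(X_0)$; it is admissible in \eqref{almost.local.new} because $\|H\|_{L^\infty(\mathcal B_r(X_0))}\le\|G\|_{L^\infty(\R^n)}$ and $G-H\in H^{1,a}_0(B_r(X_0);\R^m)$. Using the orthogonality relation $\int_{B_r(X_0)}|y|^a\langle\nabla H,\nabla(G-H)\rangle\,dX=0$, the crude bound $\mathcal L_n(\{|H|>0\}\cap\mathcal B_r(X_0))\le\omega_n r^n$, and $\|G-H\|_{L^1(\mathcal B_r(X_0))}\le 2L\omega_n r^{n+s}$ — the last because $|G|,|H|\le L r^s$ on $\mathcal B_r(X_0)$ (the bound on $|H|$ coming from the $L_a$-subharmonicity of $|H|^2$ and its boundary values) — the localized almost-minimality \eqref{almost.local.new} becomes
$$
\int_{B_r(X_0)}|y|^a|\nabla(G-H)|^2\,dX\ \le\ \tilde{\Lambda}\,\mathcal L_n\big(\mathcal B_r(X_0)\setminus\Omega\big)+C\,r^{n+s}.
$$
The weighted Poincar\'e inequality applied to $G-H\in H^{1,a}_0(B_r(X_0))$ (as in the proof of Lemma \ref{grad}, cf.\ \cite{fks}) then yields $\int_{B_r(X_0)}|y|^a|G-H|^2\,dX\le C r^2\big(\mathcal L_n(\mathcal B_r(X_0)\setminus\Omega)+r^{n+s}\big)$, so that the upper density estimate follows as soon as the left-hand side is bounded below by $c\,r^{n+2}$: if $\mathcal L_n(\mathcal B_r(X_0)\setminus\Omega)=\eta\,\omega_n r^n$ with $\eta$ too small, this gives $c\le C\eta+Cr^s$, a contradiction for $r\le r_1$ small, and the scales $r\in[r_1,r_0/2)$ are then immediate by monotonicity in $r$.

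The substantial point is the lower estimate $\int_{B_r(X_0)}|y|^a|G-H|^2\,dX\gtrsim r^{n+2}$. Since $H$ is vector-valued, $|G-H|$ cannot be compared with $|G|$ directly; instead I would use that the first eigenfunction $g^1\ge0$ is $L_a$-subharmonic (Remark \ref{Gsub}), so that its harmonic replacement satisfies $h^1\ge g^1$, and that $\{g^1>0\}=\Omega$ by the unique continuation principle (Remark \ref{emptyinterior}), so $X_0\in F(g^1)$ and $g^1(X_0)=0$. Combining the non-degeneracy of $g^1$ at its free boundary — which I expect to prove along the lines of Propositions \ref{non-deg1}--\ref{non-deg2} — with the Harnack inequality for nonnegative $L_a$-harmonic functions gives $h^1\ge c\,r^s$ on a ball $\mathcal B_{\kappa r}(X_0)$ for universal $c,\kappa\in(0,1)$, while the $C^{0,s}$-bound yields $|G|\le L(\kappa r)^s\le\tfrac c2 r^s$ there once $\kappa$ is small; hence $|G-H|\ge h^1-g^1\ge\tfrac c2 r^s$ on $\mathcal B_{\kappa r}(X_0)$ and
$$
\int_{B_r(X_0)}|y|^a|G-H|^2\,dX\ \ge\ \tfrac{c^2}{4}\,r^{2s}\!\int_{B_{\kappa r}(X_0)}\!|y|^a\,dX\ \gtrsim\ r^{2s}(\kappa r)^{\,n+1+a}\ =\ c''\,r^{n+2},
$$
using $n+1+a=n+2-2s$. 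The main obstacle is therefore understanding the behaviour of $g^1$ (and not merely of $|G|$) near the free boundary — precisely the nonlocal feature highlighted in the introduction.
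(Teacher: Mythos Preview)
Your lower bound argument is correct and matches the paper's.

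For the upper bound, the approach you sketch is the direct Alt--Caffarelli route used in the local analogue \cite{MTV1}, and the step you single out as ``the main obstacle'' is a genuine gap that cannot be filled in the way you suggest. The non-degeneracy of $g^1$ alone does \emph{not} follow along the lines of Propositions~\ref{non-deg1}--\ref{non-deg2}: the competitor used there gains Lebesgue measure by annihilating the \emph{entire} vector on a small ball. If you modify only the first component --- say replace $G$ by $(v^1(1-\varphi),g^2,\dots,g^m)$ --- the positivity set does not shrink, because by Proposition~\ref{supp} each $\{g^i\neq 0\}$ coincides with $\Omega$ up to a null set; no measure is saved and the variational argument produces no lower bound on $g^1$. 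In the local case this obstacle is overcome via connectedness of the optimal set \cite[Corollary~4.3]{MTV1}, which is unavailable here (Remark~\ref{disconnected}); Remark~\ref{upper} is devoted precisely to this distinction. (A smaller point: Remark~\ref{Gsub} gives subharmonicity of $g^1+C|y|^{1-a}$ only in $B_1^+(g^1)$; extending it across the zero set so that $h^1\ge g^1$ globally is plausible but needs a separate justification.)

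The paper proves the upper bound by a compactness--contradiction argument instead. If a rescaled sequence $G_k$ with $0\in F(G_k)$ had $\mathcal L_n(\mathcal B_1\cap\{|G_k|=0\})\to 0$, then comparing each $g^i_k$ with its $L_a$-harmonic replacement and passing to the limit $G_\infty$ via Lemma~\ref{compact} yields $\int_{B_r}|y|^a|\nabla(g^i_\infty-\tilde g^i_\infty)|^2\le C\sigma r^{n+s}$; a Campanato iteration against the harmonic decay of \cite[Theorem~2.6]{CRS} then gives $\int_{B_r}|y|^a|\nabla g^i_\infty|^2\le Cr^{n+s}$ for all small $r$, so $g^i_\infty\in C^{0,s+\eps}$ for some $\eps>0$. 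This contradicts the non-degeneracy of $|G_\infty|$ at $0\in F(G_\infty)$ and uses only the vector $|G|$, never an individual component.
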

\begin{proof}
For the sake of simplicity, assume that $X_0=0\in F(G)$. The proof of the left hand side is a combination of Proposition \ref{optimalreg} and Proposition \ref{non-deg2}. More precisely, on one hand for $r$ small enough there exists $X_r\in \mathcal{B}^+_r(G)$ such that $\abs{G}(X_r)\geq C r^{s}$. On the other one, since $\abs{G}$ is of class $C^{0,s}$, by setting
  $$
  C_0 = \min\left\{1,\frac{C}{[|G|]_{C^{0,s}}}\right\},
  $$
  we have that $\abs{G}>0$ in $\mathcal{B}_{C_0 r}^+(X_r)$, which proves the claimed lower bound.\\
On the other hand, since $\abs{G}$ is non-negative, up to rescaling, the upper bound in \eqref{density.eq}  is equivalent to
$$\mathcal{L}_n(\mathcal{B}_1\cap \{\abs{G}=0\})\geq \eps_0.$$ Thus, suppose by contradiction that there exists a sequence $(G_k)_k$ of eigenfunctions associated to the problem \eqref{min.shape} such that $0 \in F(G_k)$ and
$$
\lim_{k\to \infty} \mathcal{L}_n(\mathcal{B}_1\cap\{\abs{G_k}=0\}) =0.
$$
By Proposition \ref{optimalreg} and Lemma \ref{compact}, we already know that $G_k \to G_\infty$ weakly in $H^{1,a}(B_{1/2})$ and uniformly on every compact set of $B_{1/2}$. Moreover, $G_\infty\in H^{1,a}_{\loc}(B_{1/2}^+)\cap C^{0,s}_\loc(\overline{B_{1/2}})$ is a local minimizer in $B_{1/2}$. Now, fixed $r \in (0,1/2)$ and $X_0 \in B_{1/2}$, consider $\tilde{g}^i_k \colon B_r(X_0) \to \R$ be the  $L_a$-harmonic replacement of $g^i_k$ in $B_r(X_0)$, that is be such that
$$
\begin{cases}
L_a \tilde{g}^i_k =0 & \mbox{in } B_r(X_0) \\
  \tilde{g}^i_k=g^i_k & \mbox{on }\partial B_r(X_0).
\end{cases}
$$
By the almost-minimality condition \eqref{almost.local.new} for $G_k$, given the competitor $\widetilde{G}_k = (g^1_k, \dots, \tilde{g}^i_k, \dots, g^m_k)$, we deduce
\be\label{convergence}
\int_{B_r(X_0)}{|y|^a|\nabla (g^i_k-\tilde{g}^i_k)|^2\mathrm{d}X} \leq
\mathcal{L}_n(\mathcal{B}_r(X_0)\cap\{\abs{G_k} =0\}) +  \sigma C[|G_k|]_{C^{0,s}(B_{r_0/2})} r^{n+s}.
\ee
Since the H\"{o}lder seminorm is uniformly bounded, up to a subsequence, the sequence $(\widetilde{G}_k)_k$ do converge uniformly on every compact set of $B_{r}(X_0)$ to some function $\widetilde{G}_\infty\in H^{1}_{\loc}(B_{r}^+(X_0))$. Thus, by applying Fatou's Lemma to \eqref{convergence}, we get
$$
\int_{B_r(X_0)}{|y|^a|\nabla (g^i_\infty-\tilde{g}^i_\infty)|^2\mathrm{d}X} \leq \sigma Cr^{n+s}, \quad \mbox{for }r \in (0,1/2).
$$
for some $C>0$. Finally, by \cite[Theorem 2.6]{CRS}, for $r<\rho<1/2$ we get
$$
\int_{B_{r}(X_0)}{|y|^a|\nabla g^i_\infty|^2\mathrm{d}X}\leq
C \rho^{n+s} + C \left(\frac{r}{\rho}\right)^{n+2-2s}\int_{B_{\rho}(X_0)}{|y|^a|\nabla g^i_\infty|^2\mathrm{d}X}.
$$
Hence, fixed $\delta <1/2$ such that $q=C \delta<1$, if
$\rho=\delta^{k-1}, r=\delta^{k}$ and $\mu=\delta^{n+s}$ we get
$$
\int_{B_{\delta^{k}}(X_0)} |y|^a|\nabla g^i_\infty|^2 \mathrm{d}X \leq C \mu^{k-1} + C \mu\delta^{2-s} \int_{B_{\delta^{k-1}}(X_0)} |y|^a|\nabla g^i_\infty|^2 \mathrm{d}X
$$
and iterating the previous estimate, with $\delta>0$ such that $q=C\delta^{2-s}<1$, we get
$$
\int_{B_{\delta^{k}}(X_0)}|y|^a |\nabla g^i_\infty|^2 \mathrm{d}X \leq C \mu^{k-1}\sum_{i=0}^{k-1} q^i 
\leq C \mu^{k-1}\frac{1}{1-q}.
$$
Hence, there exists a universal constant $\tilde{C}>0$ such that
$$
\int_{B_{r}(X_0)}{|y|^a |\nabla g^i_\infty|^2\mathrm{d}X} \leq \tilde{C} r^{n+s},
$$
for every $r \in (0,1/2)$. By a covering argument and Morrey's embeddings, the function $g^i_\infty$ is H\"{o}lder continuous of order $C^{0,s+\eps}$ with $\eps = \min(s/2,(1-s)/2)$, in contradiction with Proposition \ref{non-deg2}.
\end{proof}
\begin{rem}\label{upper}
In the local case \cite{MTV1}, the authors highlight how the sign of the first eigenfunction plays a major role in the proof of the upper bound on the density. Indeed, in the local setting is fundamental to know that the first eigenfunction of the shape optimizer $\Omega$ is non-degenerate near $\partial \Omega$ (see \cite[Lemma 2.10]{MTV1}) and that $\Omega$ is connected (see \cite[Corollary 4.3]{MTV1}).\\
Instead, in our case the validity of the upper bound only relies on the different local regularity of fractional eigenfunctions near their zero set depending on whether or not they change sign.
\end{rem}
\subsection{Unique continuation of eigenfunctions}\label{sub.unique}
In this last part of the Section we collect few observation related to some specific feature due to the nonlocal attitude of the problem \eqref{equation.eigen}. More precisely, since we are not able to prove that shape optimizer of \eqref{min.shape} are connected set, we show some nonlocal effect for eigenfunctions in disconnected sets.
\begin{remark}\label{disconnected}
  In the local case \cite[Corollary 4.3]{MTV1}, the authors proved that every shape optimizer is a connected set. Their proof strictly relies on the invariance of the spectrum of the union of disjoint sets under the translations of each component and on the decomposition of the whole spectrum as the union of the spectrum of each connected components.\\In the nonlocal case, we can not use a similar strategy since the mutual position of the connected components impacts the value of each eigenvalues (see \cite{brascoparini} for some consequences of this feature in the case of the second eigenvalue). Obviously, the decomposition of the spectrum in each component is false in the nonlocal case.
\end{remark}
It is known by \cite[Theorem 2.8.]{brascoparini} that the first normalized eigenfunction on an open bounded set $\Omega$, even disconnected, is strictly positive (or negative) in the whole domain.\\On the other hand, if we restrict our attention to shape optimizers of \eqref{min.shape} we can prove that the support of the first $m$ normalized eigenfunction coincides with $\Omega$, up to a $(n-1)$-dimensional set.
\begin{prop}\label{supp}
  Let $\Omega\subset D$ be a shape optimizer to \eqref{min.shape}, even disconnected, and $v^i\in H^s_0(\Omega)$ be the $i$-th normalized eigenfunction on $\Omega$. Then
  $$
  \text{if $v^i\equiv 0$ on some ball $\mathcal{B}\subset \Omega$ then $v^i\equiv 0$ on $\Omega$}.
  $$
  Consequently, $\mathcal{L}_n(\Omega)=\mathcal{L}_n(\{v^i\neq 0\})$.
\end{prop}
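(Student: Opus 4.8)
The plan is to pass to the Caffarelli--Silvestre extension and to use that, unlike $v^i$ itself, the extension $g^i$ solves a genuine (degenerate) elliptic equation on the \emph{whole} connected half-space $\R^{n+1}_+$, so that a unique continuation principle becomes available. Recall that by Theorem~\ref{mmm0} the optimizer $\Omega$ is open and $G=(g^1,\dots,g^m)$ is $C^{0,s}$ up to $\{y=0\}$, and that by construction $\Omega=\{|V|>0\}$ with $V=(v^1,\dots,v^m)$; these will be used freely (say for $i\in\{1,\dots,m\}$).

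For the first assertion I would argue as follows. Since $\Omega$ is open and $g^i$ solves \eqref{equation.eigen.ext} with $\lambda=\lambda_i^s(\Omega)$, on the ball $\mathcal{B}\subset\Omega$ where $v^i\equiv0$ the boundary condition immediately gives $\partial^a_y g^i=-d_s^{-1}\lambda_i^s(\Omega)\,v^i\equiv0$ on $\mathcal{B}$, while trivially $g^i=v^i\equiv0$ on $\mathcal{B}$. Choosing $X_0=(x_0,0)$ with $x_0\in\mathcal{B}$ and a small $r$ with $\mathcal{B}_r(x_0)\subset\mathcal{B}$, the function $g^i$ is $L_a$-harmonic in the half-ball $B_r(X_0)\cap\{y>0\}$ with both its trace and its conormal derivative vanishing on the flat face $\mathcal{B}_r(x_0)$; extending it by $0$ across $\{y=0\}$ --- which is an admissible $L_a$-harmonic extension precisely because $\partial^a_y g^i$ also vanishes there --- produces a function that is $L_a$-harmonic in the whole ball $B_r(X_0)$ and vanishes on a set of nonempty interior. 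Applying the unique continuation principle for $L_a$-harmonic functions (the weight $|y|^a$, $a\in(-1,1)$, being $A_2$) gives $g^i\equiv0$ in $B_r(X_0)$, and since $g^i$ is $L_a$-harmonic in the connected open set $\R^{n+1}_+$ the same principle propagates this to $g^i\equiv0$ in $\R^{n+1}_+$, i.e. $v^i\equiv0$ in $\R^n$.

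For the consequence, note $\{v^i\neq0\}\subset\{|V|>0\}=\Omega$, so only $\mathcal{L}_n(Z_i)=0$ with $Z_i:=\{v^i=0\}\cap\Omega$ remains. The first assertion already shows that $Z_i$ has empty interior (otherwise $v^i\equiv0$, contradicting $\norm{v^i}{L^2(\R^n)}=1$), but to rule out positive measure I would blow up. Assuming $\mathcal{L}_n(Z_i)>0$, pick a Lebesgue density point $x_0\in\Omega$ of $Z_i$ and use an Almgren-type monotonicity formula for the extension problem \eqref{equation.eigen.ext} --- the lower-order term $d_s^{-1}\lambda_i^s(\Omega)v^i$ being a subcritical perturbation --- to extract a nontrivial homogeneous blow-up $g_0$ of $g^i$ at $(x_0,0)$, which is $L_a$-harmonic in $\R^{n+1}$ and even in $y$ (its vanishing order is finite precisely because of the unique continuation just proved, and the Robin datum rescales away in the limit). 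Since $x_0$ is a density point of $Z_i$ and the convergence is locally uniform, $g_0$ vanishes identically on $\R^n\times\{0\}$; being even in $y$, $g_0$ then also has vanishing conormal derivative there, so the argument of the first part forces $g_0\equiv0$, a contradiction. Hence $\mathcal{L}_n(Z_i)=0$ and $\mathcal{L}_n(\Omega)=\mathcal{L}_n(\{v^i\neq0\})$.

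I expect the main obstacle to be the two unique continuation inputs: the continuation for the degenerate operator $L_a$ \emph{across} the characteristic hyperplane $\{y=0\}$ (used both to pass from vanishing on a flat piece to vanishing in a full ball and then, via connectedness, to vanishing in all of $\R^{n+1}_+$), and the boundary blow-up, based on Almgren monotonicity adapted to the Robin term, needed to upgrade ``$Z_i$ has empty interior'' to ``$Z_i$ is Lebesgue-negligible''. By contrast, the vanishing of $\partial^a_y g^i$ on $\mathcal{B}$ --- which is exactly what makes the extension argument run --- is immediate from the eigenvalue equation, and the propagation in $\{y>0\}$ could also be seen directly from the real-analyticity of $L_a$-harmonic functions away from the weight degeneracy.
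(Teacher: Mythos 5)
Your first part---vanishing on a ball propagates to vanishing everywhere---follows essentially the same route as the paper: reduce to the extension $g^i$, observe that both $g^i$ and $\partial^a_y g^i$ vanish on $\mathcal{B}_r(x_0)$, invoke unique continuation for $L_a$ (the $A_2$ weight makes this available across $\{y=0\}$), then propagate through the connected half-space using interior analyticity/UCP for the nondegenerate operator and conclude by contradiction with $\norm{v^i}{L^2}=1$. The only cosmetic difference is that you extend $g^i$ by zero into $\{y<0\}$ to land directly in a full ball, whereas the paper first runs the boundary UCP up to $\{y\geq 0\}$ and then propagates through the slabs $\{y>\eps\}$; both are fine.

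For the ``consequently'' assertion $\mathcal{L}_n(\Omega)=\mathcal{L}_n(\{v^i\neq 0\})$, however, you take a genuinely different and more careful route than the paper. The paper's proof disposes of it in a single sentence---``since $\{g^i=0\}\cap\Omega$ has empty interior, we get the second part''---which, as you rightly observe, is not a valid inference on its own: a closed subset of an open set can have empty interior and still carry positive Lebesgue measure. What is really needed is a \emph{measure-theoretic} unique continuation statement, i.e.\ that vanishing on a set of positive $\mathcal{L}_n$-measure inside $\Omega$ already forces $g^i\equiv 0$. Your proposal---blow up at a Lebesgue density-$1$ point of $Z_i=\{v^i=0\}\cap\Omega$ using an Almgren-type monotonicity for the Robin extension problem (the eigenvalue term being a lower-order perturbation that rescales away), extract a nontrivial homogeneous $L_a$-harmonic blow-up, and then kill it by showing it vanishes on $\{y=0\}$---is precisely the standard Fall--Felli/Almgren scheme for establishing this stronger UCP, and it does fill the gap. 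The trade-off: your argument requires machinery (the Almgren quotient for the Robin problem, finiteness of vanishing order, characterization of homogeneous blow-ups) that the paper never develops, so it cannot be plugged in verbatim, and you leave those steps as sketches. Still, identifying that ``empty interior $\Rightarrow$ measure zero'' needs justification, and proposing the right tool to provide it, is a sound reading that the paper's terse conclusion glosses over.
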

\begin{proof}
  By the Caffarelli-Silvestre extension, the previous statement is equivalent to
   $$
  \text{if $g^i\equiv 0$ on some ball $\mathcal{B}\subset \Omega$ then $g^i\equiv 0$ on $\Omega$},
  $$
  where $g^i$ solves \eqref{equation.eigen.ext} with $\lambda = \lambda_i^s(\Omega)$. Thus, suppose that $g^i\equiv 0$ on  $\mathcal{B}_r(X_0)\subset \Omega$ for some $X_0 \in \Omega, r>0$, then by localizing the system \eqref{equation.eigen.ext} in $B_r(X_0)$ we get
$$
\begin{cases}
  L_a g^i=0 & \mbox{in } B_r(X_0) \cap \{y>0\} \\
  -\partial^a_y g=g=0 & \mbox{on }\mathcal{B}_r(X_0).
\end{cases}
$$
Thus, by the unique continuation principle for $L_a$-harmonic functions (see \cite{STT2020} and reference therein) it follows that $g^i\equiv 0$ on $\overline{B_r(X_0)}\cap \{y\geq 0\}$.\\ Finally, by exploiting the validity of the unique continuation principle for elliptic divergence form operator, we get that $g^i\equiv 0$ in $\R^{n+1}\cap \{y>\eps\}$, for every $\eps>0$. Since every eigenfunctions is H\"{o}lder continuous, by letting $\eps$ goes to zero we get that the trace $g(\cdot,0)$ is identically zero, in contradiction with the normalization of the eigenfunction.\\
Finally, since $\{g^i=0\}\cap \Omega$ has empty interior, we get the second part of the statement.
\end{proof}
Lastly, the proof of Theorem \ref{mmm0} follows by combining Proposition \ref{existence}, Proposition \ref{optimalreg}, Remark \ref{emptyinterior} and Proposition \ref{supp}.
\section{Weiss monotonicity formula}\label{weiss.sect}
In this Section we establish a Weiss type monotonicity formula in the spirit of \cite{DT2,MTV1}. As it is well known in the literature, this result implies the characterization of the possible blow-up limits at free boundary points.\\

For a vector-valued function $G \in H^{1,a}(B_1; \R^m)$, let us consider
\be
W(X_0,G,r) = \frac{1}{r^n}\mathcal{J}(G,B_r(X_0))- \frac{s}{r^{n+1}}\int_{\partial B_r(X_0)}{|y|^a\abs{G}^2\mathrm{d}\sigma}
\ee
The monotonicity result is an essential
tool for proving the regularity of the free boundary and to estimate the dimension of its singular part.
\begin{thm}\label{weiss}
Let $\sigma, r_0>0$ and suppose that $G\in H^{1,a}(B_{r_0}(X_0);\R^m)\cap C^{0,s}(B_{r_0}(X_0);\R^m)$ satisfies the localized almost-minimality condition \eqref{almost.local.new} for every $r\in (0,r_0]$. Then, if $X_0 \in F(G)$,  we get
 \be\label{weiss.form}
  \frac{d}{dr} W(X_0,G,r) \geq \frac{1}{r^{n+2}}\sum_{i=1}^m \int_{\partial^+ B^+_r(X_0)}{|y|^a\left( \langle \nabla g^i, X-X_0 \rangle - s g^i \right)^2\mathrm{d}\sigma} -2\sigma \tilde{C}[|G|]_{C^{0,s}(B_{r_0})} r^{s-1},
  \ee
  with $\tilde{C}=\frac{2\omega_n}{d_s}\sum_{i=1}^{m} \lambda_i^s(\Omega) $. Thus, there exists finite the limit $W(X_0,G,0^+)=\lim_{r\to 0^+}W(X_0,G,r)$ and for $\sigma =0$  the function $W(X_0,G,\cdot)$ is constant in $(0,+\infty)$ if and only if $G$ is $s$-homogeneous with respect to $X_0$.
\end{thm}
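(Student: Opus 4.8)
The plan is to differentiate $r\mapsto W(X_0,G,r)$ and to produce the square in \eqref{weiss.form} by comparing $G$, on each sphere $\partial B_r(X_0)$, with its $s$-homogeneous replacement, which is an admissible competitor by the localized almost-minimality \eqref{almost.local.new}. After a translation assume $X_0=0$ and abbreviate
\[
D(r)=\int_{B_r}|y|^a|\nabla G|^2\,dX,\quad V(r)=\mathcal L_n(\{|G|>0\}\cap\mathcal B_r),\quad H(r)=\int_{\partial B_r}|y|^a|G|^2\,d\sigma,
\]
so that $W(r):=W(0,G,r)=r^{-n}\big(D(r)+\tilde\Lambda V(r)\big)-s\,r^{-n-1}H(r)$. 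For a.e.\ $r\in(0,r_0]$ the trace of $G$ on $\partial B_r$ has finite weighted Dirichlet energy, and I set $\tilde G_r(X)=(|X|/r)^s\,G(rX/|X|)$ for $X\in B_r$, extended by $G$ outside $B_r$. Since $(|X|/r)^s\le1$ on $B_r$ one has $\|\tilde G_r\|_{L^\infty(\mathcal B_r)}\le\|G\|_{L^\infty(\R^n)}$, $\tilde G_r=G$ on $\partial B_r$, and $\tilde G_r-G\in H^{1,a}_0(B_r;\R^m)$, so $\tilde G_r$ is admissible. The core is an elementary identity for $s$-homogeneous maps: splitting $\nabla G=\partial_\nu G\,\nu+\nabla_\theta G$ on $\partial B_r$, a polar-coordinate computation --- in which the precise value $a=1-2s$ makes the radial integral collapse, $\int_0^r\rho^{\,n-1+a+2s-2}\,d\rho=\int_0^r\rho^{\,n-1}\,d\rho=r^n/n$ --- gives
\[
\int_{B_r}|y|^a|\nabla\tilde G_r|^2\,dX=\frac rn\int_{\partial B_r}|y|^a\Big(|\nabla_\theta G|^2+\frac{s^2}{r^2}|G|^2\Big)\,d\sigma,\qquad \mathcal L_n(\{|\tilde G_r|>0\}\cap\mathcal B_r)=\frac rn\,V'(r),
\]
the second equality because $\{|\tilde G_r|>0\}\cap\mathcal B_r$ is the cone over $\{|G|>0\}\cap\partial\mathcal B_r$.

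Feeding these into \eqref{almost.local.new}, and using $D'(r)=\int_{\partial B_r}|y|^a|\nabla G|^2\,d\sigma$, $V'(r)=\mathcal H^{n-1}(\{|G|>0\}\cap\partial\mathcal B_r)$ and $|\nabla_\theta G|^2=|\nabla G|^2-|\partial_\nu G|^2$, I get
\[
D(r)+\tilde\Lambda V(r)\le\frac rn\big(D'(r)+\tilde\Lambda V'(r)\big)+\frac rn\int_{\partial B_r}|y|^a\Big(\frac{s^2}{r^2}|G|^2-|\partial_\nu G|^2\Big)d\sigma+\sigma\,\frac2{d_s}\sum_{i=1}^m\lambda_i^s(\Omega)\,\|\tilde G_r-G\|_{L^1(\mathcal B_r)}.
\]
Since moreover $H'(r)=\tfrac{n+a}{r}H(r)+2\int_{\partial B_r}|y|^a\langle G,\partial_\nu G\rangle\,d\sigma$, differentiating $W$ and inserting the last two relations makes the bulk terms cancel; using $1-a=2s$ the boundary terms reorganize into the complete square $|\partial_\nu G-\tfrac sr G|^2=r^{-2}|\langle\nabla G,X\rangle-sG|^2$, and I arrive at
\[
W'(r)\ \ge\ \frac1{r^{n+2}}\sum_{i=1}^m\int_{\partial^+ B_r^+(0)}|y|^a\big(\langle\nabla g^i,X\rangle-s\,g^i\big)^2\,d\sigma\ -\ \frac{n\sigma}{r^{n+1}}\cdot\frac2{d_s}\sum_{i=1}^m\lambda_i^s(\Omega)\,\|\tilde G_r-G\|_{L^1(\mathcal B_r)},
\]
the integrand of the first term vanishing on $\{|G|=0\}$.

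It remains to estimate the error and conclude. Since $X_0\in F(G)$ we have $G(X_0)=0$, hence $|G|(X)\le[|G|]_{C^{0,s}}|X-X_0|^s$ on $B_{r_0}(X_0)$ and, the $s$-homogeneous extension preserving this bound, $|\tilde G_r-G|\le 2[|G|]_{C^{0,s}}|X-X_0|^s$; therefore $\|\tilde G_r-G\|_{L^1(\mathcal B_r)}\le C_n[|G|]_{C^{0,s}}r^{n+s}$, and substituting this (after renaming the constant into $\tilde C=\tfrac{2\omega_n}{d_s}\sum_i\lambda_i^s(\Omega)$) yields \eqref{weiss.form}. Consequently $r\mapsto W(X_0,G,r)+\tfrac{2\sigma\tilde C}{s}[|G|]_{C^{0,s}}r^s$ is non-decreasing; moreover $\tfrac{s}{r^{n+1}}H(r)\le C[|G|]_{C^{0,s}}^2$ (again because $G(X_0)=0$ and $\int_{\partial B_r}|y|^a\,d\sigma\simeq r^{n+a}$ with $2s+a-1=0$), so $W(X_0,G,\cdot)$ is bounded below near $0$ and the monotone quantity has a finite limit, i.e.\ $W(X_0,G,0^+)$ exists and is finite. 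Finally, for $\sigma=0$ the inequality reduces to $W'(r)\ge r^{-n-2}\sum_i\int_{\partial^+ B_r^+(X_0)}|y|^a(\langle\nabla g^i,X-X_0\rangle-sg^i)^2\,d\sigma\ge0$; hence $W(X_0,G,\cdot)$ is constant on $(0,\infty)$ if and only if this square vanishes for a.e.\ $r$, i.e.\ $\langle\nabla g^i,X-X_0\rangle=s\,g^i$ throughout $B_{r_0}(X_0)\setminus\{y=0\}$ for every $i$, which by Euler's identity is exactly the $s$-homogeneity of $G$ with respect to $X_0$; the converse is immediate, since for $s$-homogeneous $G$ the rescalings $r^{-s}G(X_0+r\,\cdot)$ do not depend on $r$, so $W(X_0,G,r)$ is constant.

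The step I expect to be the main obstacle is the rigorous passage from the almost-minimality inequality to the differential inequality for $W$: one must verify that the $s$-homogeneous replacement $\tilde G_r$ is an admissible competitor \emph{for almost every} $r$ (the $H^{1/2,a}$-type trace regularity of $G$ on the slicing spheres, together with the coarea formula for $V$), and one must carry out the polar-coordinate energy identity for $s$-homogeneous maps with care --- it is precisely there that the algebraic relation $a=1-2s$ forces the weighted radial integral to equal $r^n/n$ and, downstream, makes the bulk contributions to $W'$ cancel, leaving only the perfect square and the $L^1$ error term.
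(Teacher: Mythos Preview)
Your proposal is correct and follows essentially the same route as the paper: the paper isolates the $s$-homogeneous replacement estimate as a separate lemma (Lemma~\ref{hom.ext}) and then differentiates $W$ and regroups into the square $|\partial_\nu G-\tfrac{s}{r}G|^2$, using $1-a=2s$ exactly as you do, with the $L^1$ error bounded via $|G|(X)\le[|G|]_{C^{0,s}}|X-X_0|^s$. Your explicit computation of $H'(r)$ and your emphasis on why the weighted radial integral collapses to $r^n/n$ make the algebra more transparent than in the paper, but the argument is the same.
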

In order to simplify the notations, since the problem is invariant under translation on $\{y=0\}$, in the following computations we will assume $X_0=0$ and denote $W(r)=W(0,G,r)$.
\begin{lem}\label{hom.ext}
Let $\sigma, r_0>0$ and suppose that $G\in H^{1,a}(B_{r_0};\R^m)\cap C^{0,s}(B_{r_0};\R^m)$ satisfies the localized almost-minimality condition \eqref{almost.local.new} for every $r\in (0,r_0]$.\\Then, if $0 \in F(G)$, we get
\begin{align*}
\int_{B^+_r}{|y|^a \abs{\nabla G}^2\mathrm{d}X}+ \mathcal{L}_n(B^+_r(G) \cap \R^n ) \leq &\,\, \frac{r^{-a}}{n}\int_{\partial^+B^+_r}{|y|^a\left(r^{2(1-s)}\abs{\nabla_{S^{n}} G}^2 + s^2\frac{\abs{G}^2}{r^{2s}} \right)\mathrm{d}\sigma}+\\
&\,+ \frac{r}{n} \mathcal{H}^{n-1}(\partial B^+_r(G) \cap \R^n )  + 2\sigma \tilde{C}[|G|]_{C^{0,s}(B_{r_0})} r^{n+s},
\end{align*}
where $\partial B^+_r(G)= \partial B^+_r \cap \{ \abs{G}>0\}$ and $\tilde{C}=\frac{2\omega_n}{d_s}\sum_{i=1}^{m} \lambda_i^s(\Omega) $.
\end{lem}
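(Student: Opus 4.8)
The plan is to test the localized almost-minimality condition \eqref{almost.local.new} against the $s$-homogeneous replacement of $G$ on $B_r$. Since $0\in F(G)=\partial\Omega\cap D$, $G$ is continuous and $\{|G|>0\}\cap\R^n$ is open, we have $|G|(0)=0$, hence $G(0)=0$; in particular $|G(x,0)|=\big||G|(x,0)-|G|(0,0)\big|\le[|G|]_{C^{0,s}(B_{r_0})}|x|^{s}$ for all $x\in\mathcal B_{r_0}$. For $r\in(0,r_0]$ set
\[
G_r(X)=\Big(\frac{|X|}{r}\Big)^{\!s}G\Big(r\,\frac{X}{|X|}\Big)\quad\text{for }X\in B_r,\qquad G_r=G\ \text{ in }\R^{n+1}\setminus B_r .
\]
First I would check that $G_r$ is an admissible competitor in \eqref{almost.local.new} with $X_0=0$: it is continuous and agrees with $G$ on $\partial B_r$, so $G-G_r\in H^{1,a}_0(B_r;\R^m)$ once $G_r\in H^{1,a}(B_r;\R^m)$, and $\normpic{G_r}{L^\infty(\mathcal B_r)}\le\normpic{G}{L^\infty(\overline{\mathcal B_r})}\le\delta$ because $|G_r(x,0)|\le(|x|/r)^s\sup_{\partial\mathcal B_r}|G|$. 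The one genuinely computational point is the identity for the weighted Dirichlet energy: writing $X=\rho\omega$ with $\rho=|X|$, $\omega\in S^n$, and $|y|^a=\rho^a|\omega_y|^a$, a direct computation in polar coordinates yields $\int_{B_r}|y|^a|\nabla G_r|^2\,\mathrm{d}X=\big(\int_0^r\rho^{\,n+a+2s-2}\,\mathrm{d}\rho\big)\cdot C(r,G)$, and since $a=1-2s$ the exponent equals $n-1>-1$, so the radial integral converges and equals $r^n/n$; tracking the dilation factors relating surface measure on $\partial B_r$ to that on $S^n$ and rewriting the spherical gradient as the tangential gradient along $\partial B_r$ then gives exactly
\[
\int_{B_r}|y|^a|\nabla G_r|^2\,\mathrm{d}X=\frac{r^{-a}}{n}\int_{\partial^+ B^+_r}|y|^a\Big(r^{2(1-s)}|\nabla_{S^n}G|^2+s^2\frac{|G|^2}{r^{2s}}\Big)\,\mathrm{d}\sigma .
\]
In particular $G_r\in H^{1,a}(B_r;\R^m)$ for a.e. $r\in(0,r_0]$ (those radii for which $G|_{\partial B_r}$ has finite weighted energy, a full-measure set by slicing), and for the remaining radii the inequality to be proved is trivial since its right-hand side is $+\infty$.

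Next I would compute the measure term of $\mathcal J(G_r,B_r)$. Since $G_r$ is $s$-homogeneous with respect to $0$, the set $\{|G_r|>0\}\cap\mathcal B_r$ is the cone with vertex $0$ over $\{|G|>0\}\cap\partial\mathcal B_r=\partial B^+_r(G)\cap\R^n$, so
\[
\mathcal{L}_n\big(\{|G_r|>0\}\cap\mathcal B_r\big)=\frac{r}{n}\,\mathcal H^{n-1}\big(\partial B^+_r(G)\cap\R^n\big).
\]
Combining the last two displays expresses $\mathcal J(G_r,B_r)$ as precisely the sum of the first two terms on the right-hand side of the claimed inequality (with $\tilde\Lambda$ normalised to $1$, or else carried along harmlessly in front of the $\mathcal H^{n-1}$ term).

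Then I would invoke \eqref{almost.local.new} with $X_0=0$ and $\tilde G=G_r$. Since $\int_{B^+_r}|y|^a|\nabla G|^2=\int_{B_r}|y|^a|\nabla G|^2$ and $\mathcal{L}_n\big(B^+_r(G)\cap\R^n\big)=\mathcal{L}_n\big(\{|G|>0\}\cap\mathcal B_r\big)$, the left-hand side of the assertion equals $\mathcal J(G,B_r)$, so
\[
\mathcal J(G,B_r)\le\mathcal J(G_r,B_r)+\sigma\,\frac{2}{d_s}\sum_{i=1}^m\lambda_i^s(\Omega)\,\normpic{G_r-G}{L^1(\mathcal B_r)} .
\]
Finally I would estimate the error term: on $\mathcal B_r$ one has $|G(x,0)|\le[|G|]_{C^{0,s}(B_{r_0})}|x|^s$ (because $G(0)=0$) and likewise $|G_r(x,0)|=(|x|/r)^s|G(rx/|x|,0)|\le[|G|]_{C^{0,s}(B_{r_0})}|x|^s$, hence
\[
\normpic{G_r-G}{L^1(\mathcal B_r)}\le 2[|G|]_{C^{0,s}(B_{r_0})}\int_{\mathcal B_r}|x|^s\,\mathrm{d}x\le 2\,\omega_n\,[|G|]_{C^{0,s}(B_{r_0})}\,r^{\,n+s},
\]
so the error is at most $2\sigma\tilde C[|G|]_{C^{0,s}(B_{r_0})}r^{n+s}$ with $\tilde C=\frac{2\omega_n}{d_s}\sum_i\lambda_i^s(\Omega)$. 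Substituting the formula for $\mathcal J(G_r,B_r)$ obtained above yields the statement.

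I expect the main obstacle to lie entirely in the bookkeeping of the first step: keeping the degenerate weight $|y|^a$, the scaling factors between surface measure on $\partial B_r$ and on the unit sphere $S^n$, and the normalisation of the tangential (spherical) gradient mutually consistent, so that the constant in the weighted-energy identity comes out exactly $r^{-a}/n$. This is precisely where the relation $a=1-2s$ is used in an essential way — it makes the radial exponent equal to $n-1$, which simultaneously guarantees integrability of $|\nabla G_r|$ near the origin and fixes the value of the constant.
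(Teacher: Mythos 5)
Your proposal follows the same route as the paper's: you take the $s$-homogeneous replacement of $G$ (radial rescaling of its boundary trace on $\partial B_r$) as the competitor, compute its weighted Dirichlet energy in polar coordinates and its positivity set as the cone over $\partial B_r^+(G)\cap\R^n$, plug this into \eqref{almost.local.new} with $X_0=0$, and bound $\normpic{G_r-G}{L^1(\mathcal B_r)}$ by $2\omega_n[|G|]_{C^{0,s}}r^{n+s}$ using $G(0)=0$ and Hölder continuity. The only material difference is that you explicitly address admissibility (continuity, $L^\infty$ bound, and finiteness of the boundary-slice energy for a.e.\ $r$), which the paper leaves implicit.
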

\begin{proof}
  Let us consider now the $s$-homogeneous extension $\widetilde{G}=(\tilde{g}^1,\dots,\tilde{g}^m)$ of the trace of $G$ on $\partial B_r$, defined by
$$
\widetilde{G}(X)=\frac{\abs{X}^s}{r^s}G\left(X \frac{r}{\abs{X}}\right).
$$
Then, for every $i=1,\dots,m$ we get
$$
\abs{\nabla \tilde{g}^i}^2(X) = s^2\frac{1}{r^{2s}\abs{X}^{2(1-s)}}g^i\left(X \frac{r}{\abs{X}}\right)^2 + \frac{r^{2(1-s)}}{\abs{X}^{2(1-s)}}\abs{\nabla_{S^n} g^i}^2\left(X\frac{r}{\abs{X}} \right).
$$
Integrating over $B_r^+$ and summing for $i=1,\dots,m$, we obtain
\begin{align*}
\int_{B^+_r}{|y|^a |\nabla \widetilde{G}|^2\mathrm{d}X}
&= \int_0^r \frac{1}{\rho^{2(1-s)}}\int_{\partial^+ B^+_\rho}{|y|^a \left(s^2\frac{1}{r^{2s}}\abs{G}^2\left(X \frac{r}{\rho}\right) + r^{2(1-s)}\abs{\nabla_{S^n}G}^2\left(X\frac{r}{\rho} \right)\right)\mathrm{d}\sigma}\mathrm{d}\rho\\
&= \frac{r^{-a}}{n} \int_{\partial^+ B^+_r}{|y|^a\left( s^2\frac{\abs{G}^2}{r^{2s}}+r^{2(1-s)}\abs{\nabla_{S^{n}} G}^2 \right)\mathrm{d}\sigma},
\end{align*}
while for the measure term we have that
$$
\mathcal{L}_n(B^+_r(G) \cap \R^n ) =
\frac{r}{n} \mathcal{H}^{n-1}(\partial B^+_r(G) \cap \R^n )
$$
Finally, since $\widetilde{G}= G$ on $\partial B_r$, $\normpic{\tilde{G}}{L^\infty(\mathcal{B}_r)}=\normpic{G}{L^\infty(\mathcal{B}_r)}$ and
 $$
   \normpic{\tilde{G}-G}{L^1(\mathcal{B}_r)}\leq 2\omega_n [|G|]_{C^{0,s}(B_{r_0})} r^{n+s}.
    $$
the almost-minimality condition \eqref{almost.local.new} gives the claimed inequality.
\end{proof}
\begin{proof}[Proof of Theorem \ref{weiss}]
By the estimate of Lemma \ref{hom.ext}, we immediately get
\begin{align*}
W'(r) = &\,\frac{1}{r^n}\left( \int_{\partial^+B^+_r}{|y|^a \abs{\nabla G}^2\mathrm{d}X}+ \mathcal{H}^{n-1}(\partial B^+_r(G) \cap \R^n ) \right)+\\
&\, -\frac{n}{r^{n+1}}\left(\int_{B^+_r}{|y|^a \abs{\nabla G}^2\mathrm{d}X}+ \mathcal{L}_n(B^+_r(G) \cap \R^n )\right) +\\
&\, - \frac{2s}{r^{n+1}}\sum_{i=1}^m\int_{\partial^+ B^+_r}{|y|^a g^i\partial_r g^i\mathrm{d}\sigma} +2\frac{s^2}{r^{n+2}}\int_{\partial^+ B^+_r}{|y|^a\abs{G}^2\mathrm{d}\sigma}\\
\geq &\, \frac{1}{r^n}\sum_{i=1}^m \int_{\partial^+ B^+_r}{|y|^a \left(\abs{\partial_r g^i}^2 -2s g^i\partial_r u^i + \frac{s^2}{r^2}\abs{g^i}^2\right)\mathrm{d}\sigma } -2\sigma\tilde{C}[|G|]_{C^{0,s}(B_{r_0})}   r^{s-1}\\
= &\, \frac{1}{r^{n}}\sum_{i=1}^m \int_{\partial^+ B^+_r}{|y|^a\left( \partial_r g^i- \frac{s}{r} g^i \right)^2\mathrm{d}\sigma} -2\sigma\tilde{C}[|G|]_{C^{0,s}(B_{r_0})}  r^{s-1}.
\end{align*}
with $\tilde{C}=\frac{2\omega_n}{d_s}\sum_{i=1}^{m} \lambda_i^s(\Omega)$. Hence, we deduce that
\be\label{of}
r \mapsto W(r)+2\sigma\frac{\tilde{C}}{s}[|G|]_{C^{0,s}(B_{r_0})} r^{s}
\ee
is monotone non-decreasing for $r \in (0,1)$ and consequently there exists the limit $W(0^+)=\lim_{r\to 0^+}W(0,G,r)$. Finally, by exploiting the $C^{0,s}$ regularity of $G$ we can easily exclude the possibility that $W(0^+)=-\infty$: indeed, since $0\in F(G)$ we get
$$
W(r) \geq -\frac{s}{r^{n+1}}\int_{\partial B_r}{|y|^a\abs{G}^2\mathrm{d}\sigma} \geq - s [|G|]^2_{C^{0,s}} \int_{\partial B_1}|y|^a \mathrm{d}\sigma,
$$
for every $r>0$. Finally, if $\sigma=0$ the right hand side of \eqref{weiss.form} is non-negative, and so we deduce that $W'(r)\equiv 0$ for $r \in (0,+\infty)$ if and only if
$$
\left\langle \nabla g^i(X), \frac{X}{\abs{X}}\right\rangle =\frac{s}{\abs{X}} g^i(X)\quad \text{ in $\R^{n+1}$,}
$$
namely the components $g^i$ are $s$-homogeneous in $\R^{n+1}$.
\end{proof}
\section{Compactness and convergence of blow-up sequences}\label{blow}
This Section is dedicated to the convergence of the blow-up sequences and the analysis of the
blow-up limits, both being essential for determining the local behavior of the free boundary and for the characterization of the Regular and Singular strata.\\

Let us recall the notion of blow-up sequence associated to a local minimizer $G$ in $B_1$. Given $(X_k)_k \subset F(G)$ and $r_k \searrow 0^+$ such that $B_{r_k}(X_k)\subset B_1$, we define a blow-up sequence by
\be\label{blow.upseq}
G_{X_k,r_k}(X) = \frac{1}{r^{s}_k}G(X_k+r_k X).
\ee
Thus, the sequence $(G_{X_k,r_k})_k$ is uniformly H\"{o}lder continuous in the class $C^{0,s}$ and locally uniformly bounded in $\R^{n+1}$. Thus, by Lemma \ref{compact} we already know that, up to a subsequence, $(G_{X_k,r_k})_k$ converges locally uniformly on every compact set to a function $G_0 \in H^{1,a}_{\loc}(\R^{n+1};\R^m)\cap C^{0,s}_\loc(\R^{n+1};\R^m)$ such that, for every $R > 0$ the following properties hold
\begin{itemize}
\item $G_{X_k,r_k} \to G_0$ in $C^{0,\alpha}_\loc(\overline{B_R};\R^m)$, for every $\alpha \in (0,s)$;
  \item $G_{X_k,r_k} \rightharpoonup G_0$ weakly in $H^{1,a}(B_R;\R^m)$;
  \item $G_0$ is an almost-minimizer in $B_R$ in the sense of definition \eqref{almost.fin.new} (and also \eqref{almost.local.new}).
\end{itemize}
In order to improve the previous compactness result, we need to show that the blow-up sequence satisfies a scaled almost-minimality condition. As we show in the following Lemma, this is a direct consequence from the scaling properties of the functional.
\begin{lem}\label{scal}
Suppose that $G\in H^{1,a}(\R^{n+1};\R^m)\cap L^\infty(\R^{n+1};\R^m)$ satisfies the almost-minimality condition \eqref{almost.fin.new} for some constants $K,\eps >0$. Then, for every $X_0 \in \{y=0\}, r>0$ the function $G_{X_0,r}$ defined in \eqref{blow.upseq} satisfies:
\be\label{almost.rescaled}
  \mathcal{J}(G_{X_0,r}) \leq \mathcal{J}(\tilde{G}) + K \frac{2}{d_s}\sum_{i=1}^{m} \lambda_i^s(\Omega) \normpic{\tilde{G}-G_{X_0,r}}{L^1(\R^n)}r^s
\ee
  for every $\tilde{G} \in H^{1,a}(\R^{n+1};\R^m)$ such that $\normpic{\tilde{G}}{L^\infty(\R^n)}\leq \delta r^{-s}$ and $\normpic{G-\tilde{G}}{L^1(\R^n)}\leq \eps r^{-n-s}$.
\end{lem}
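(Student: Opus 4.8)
The statement is a pure scaling identity, so I will only lay out the bookkeeping. The starting observation is that the map $G\mapsto G_{X_0,r}$ of \eqref{blow.upseq} is exactly the rescaling under which both terms of $\mathcal{J}$ are homogeneous of the same degree. Fix $X_0\in\{y=0\}$ and $r>0$ and, for a competitor $\tilde{G}\in H^{1,a}(\R^{n+1};\R^m)$, introduce its \emph{un-rescaled} version $H(X):=r^s\,\tilde{G}\big(r^{-1}(X-X_0)\big)$, so that $H_{X_0,r}=\tilde{G}$ and, in particular, $G_{X_0,r}$ is obtained from $G$ in the same way $\tilde G$ is obtained from $H$. First I would record the elementary identities. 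Writing a generic point of $\R^{n+1}$ as $Z=(\zeta,\eta)$, the substitution $Z=X_0+rX$ (legitimate since $X_0\in\{y=0\}$, so the extension variable scales by $r$) gives, using $\nabla G_{X_0,r}(X)=r^{1-s}(\nabla G)(X_0+rX)$, the Jacobian $r^{-(n+1)}$ and the weight factor $r^{-a}$,
$$
\int_{\R^{n+1}}|y|^a|\nabla G_{X_0,r}|^2\,\mathrm{d}X=r^{\,2(1-s)-a-(n+1)}\int_{\R^{n+1}}|y|^a|\nabla G|^2\,\mathrm{d}X=r^{-n}\int_{\R^{n+1}}|y|^a|\nabla G|^2\,\mathrm{d}X,
$$
where the exponent collapses to $-n$ precisely because $a=1-2s$. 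Since $\{|G_{X_0,r}|>0\}\cap\R^n=r^{-1}\big(\{|G|>0\}\cap\R^n-x_0\big)$, the measure term scales the same way, so $\mathcal{J}(G_{X_0,r})=r^{-n}\mathcal{J}(G)$ and, applied to $H$ in place of $G$, $\mathcal{J}(\tilde G)=\mathcal{J}(H_{X_0,r})=r^{-n}\mathcal{J}(H)$.

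Next I would verify that un-rescaling sends admissible competitors for $G_{X_0,r}$ to admissible competitors for $G$ in \eqref{almost.fin.new}. Suppose $\normpic{\tilde{G}}{L^\infty(\R^n)}\leq\delta r^{-s}$ and $\normpic{G_{X_0,r}-\tilde{G}}{L^1(\R^n)}\leq\eps r^{-n-s}$. Then $\normpic{H}{L^\infty(\R^n)}=r^s\normpic{\tilde{G}}{L^\infty(\R^n)}\leq\delta$. Moreover, from $G(Z)=r^sG_{X_0,r}\big(r^{-1}(Z-X_0)\big)$ on $\R^n$ and the Jacobian $r^n$ of the substitution $w=r^{-1}(z-x_0)$,
$$
\normpic{H-G}{L^1(\R^n)}=\int_{\R^n}r^s\big|(\tilde{G}-G_{X_0,r})\big(r^{-1}(z-x_0)\big)\big|\,\mathrm{d}z=r^{n+s}\normpic{\tilde{G}-G_{X_0,r}}{L^1(\R^n)}\leq\eps,
$$
so $H$ is an admissible competitor for $G$.

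Finally I would insert $H$ into the almost-minimality condition \eqref{almost.fin.new}, namely
$$
\mathcal{J}(G)\leq\mathcal{J}(H)+K\tfrac{2}{d_s}\sum_{i=1}^m\lambda_i^s(\Omega)\normpic{H-G}{L^1(\R^n)},
$$
and substitute the three scaling identities just recorded: $\mathcal{J}(G)=r^n\mathcal{J}(G_{X_0,r})$, $\mathcal{J}(H)=r^n\mathcal{J}(\tilde{G})$ and $\normpic{H-G}{L^1(\R^n)}=r^{n+s}\normpic{\tilde{G}-G_{X_0,r}}{L^1(\R^n)}$. Dividing through by $r^n$ yields exactly \eqref{almost.rescaled}, the surviving factor $r^s$ in the error term being the mismatch $r^{n+s}/r^n$ between the $L^1$-scaling and the energy/measure-scaling — which is also what dictates the admissibility thresholds $\delta r^{-s}$ and $\eps r^{-n-s}$. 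There is no genuine obstacle: the only point that requires care is the consistent tracking of the scaling exponents, and in particular the cancellation $2(1-s)-a-(n+1)=-n$, which is the structural reason why the $C^{0,s}$-rescaling is the right one for the functional $\mathcal{J}$.
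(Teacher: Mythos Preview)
Your proof is correct and follows essentially the same approach as the paper: introduce the un-rescaled competitor (the paper calls it $\tilde{G}_{1/r}$, you call it $H$), verify it is admissible for \eqref{almost.fin.new} via the $L^\infty$ and $L^1$ scaling identities, and then divide the resulting inequality by $r^n$ using $\mathcal{J}(G_{X_0,r})=r^{-n}\mathcal{J}(G)$. Your explicit tracking of the exponent identity $2(1-s)-a-(n+1)=-n$ is a welcome clarification but not a different idea.
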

\begin{proof}
  Since the problem is translation invariance in $\{y=0\}$, suppose $X_0=0$ and denote $G_r=G_{0,r}$. Let $\tilde{G} \in H^{1,a}(\R^{n+1};\R^m)$ be such that
  $$
  \normpic{\tilde{G}}{L^\infty(\R^n)}\leq \frac{\delta}{r^s}
  \quad\mbox{and} \quad    \normpic{G-\tilde{G}}{L^1(\R^n)}\leq \frac{\eps}{r^{n+s}}.
  $$
  Consider now the function $\Phi= G_r - \tilde{G}$ and $\Phi_{1/r}(X)=r^s\Phi(X/r), \tilde{G}_{1/r}(X)= r^s \tilde{G}(X/r)$. We notice that
  $$
  \norm{\Phi_{1/r}}{L^1(\R^n)}= r^{n+s}\norm{\Phi}{L^1(\R^n)}\leq \eps \quad\mbox{and}\quad \normpic{\tilde{G}_{1/r}}{L^\infty(\R^n)}=r^s\normpic{\tilde{G}}{L^\infty(\R^n)}\leq \delta,
  $$
  which allows to consider exploit the almost-minimality of $G$ with respect to the competitor $\tilde{G}_{1/r} = G + \Phi_{1/r}$. Thus, we get
\begin{align*}
  \mathcal{J}(G_{r}) &= \frac{1}{r^n}\mathcal{J}(G)\\
   & \leq \frac{1}{r^n}\mathcal{J}(\tilde{G}_{1/r}) +  K \frac{2}{d_s}\sum_{i=1}^{m} \lambda_i^s(\Omega) \frac{\normpic{\tilde{G}_{1/r}-G}{L^1(\R^n)}}{r^n}\\
   &\leq  \mathcal{J}(\tilde{G})
    + K \frac{2}{d_s}\sum_{i=1}^{m} \lambda_i^s(\Omega)
    \normpic{\tilde{G}-G_{r}}{L^1(\R^n)}r^s,
  \end{align*}
  as we claimed.
\end{proof}
\begin{prop}\label{compact2}
Let $\Omega\subset D$ be a shape optimizer to \eqref{min.shape} and $G\in H^{1,a}_\Omega(\R^{n+1};\R^m)$
be the vector of normalized eigenfunctions on $\Omega$.
Given $(X_k)_k \subset F(G)$ and $r_k \searrow 0^+$, for every $R > 0$ the following properties hold (up to extracting a subsequence):
\begin{itemize}
  \item $G_{X_k,r_k} \to G_0$ strongly in $H^{1,a}(B_R;\R^m)$;
  \item the sequence of the characteristic functions $$\chi(\{|G_{X_k,r_k}|>0\})\to \chi(\{|G_{0}|>0\})$$
      strongly in
$L^1(\mathcal B_R)$;
  \item the sequence of the closed sets $\overline{\mathcal{B}^+_R(G_{X_k,r_k})}$ and its complement in $\R^n$, converge in the Hausdorff sense respectively to $\overline{\mathcal{B}^+_R(G_{0})}$ and $\R^n \setminus \overline{\mathcal{B}^+_R(G_{0})}$
  \item the blow-up limit $G_0$ is non-degenerate at zero, namely there exists a dimensional constant $c_0 > 0$ such that
      $$
      \sup_{\mathcal{B}_r } \abs{G_0} \geq c_0 r^{s}\quad\text{for every $r>0$}.
      $$
\end{itemize}
\end{prop}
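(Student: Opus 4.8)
The plan is to upgrade the weak compactness already established (via Lemma \ref{compact} and the blow-up discussion preceding this statement) to the four strong statements, using the scaled almost-minimality of the blow-up sequence (Lemma \ref{scal}) together with the Weiss monotonicity formula (Theorem \ref{weiss}) and the non-degeneracy results (Propositions \ref{non-deg1}, \ref{non-deg2}). First I would fix $R>0$. From the earlier discussion we already have, up to a subsequence, $G_{X_k,r_k}\to G_0$ in $C^{0,\alpha}_\loc$ for $\alpha\in(0,s)$, weakly in $H^{1,a}(B_R;\R^m)$, and $G_0$ an almost-minimizer. For strong $H^{1,a}$ convergence, the standard argument is: compare $G_{X_k,r_k}$ with a competitor equal to $G_0$ on $B_{R'}$ and interpolating to $G_{X_k,r_k}$ on the annulus $B_R\setminus B_{R'}$ (for $R'<R$, chosen so that $\int_{\partial B_{R'}}|y|^a|\nabla_{S^n}(G_{X_k,r_k}-G_0)|^2\to0$ along a further subsequence, by Fatou). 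Plugging this competitor into the scaled almost-minimality condition \eqref{almost.rescaled} — whose error term carries a factor $r_k^s\to0$ — and using the uniform $C^{0,s}$ bound to control the $L^1$-distance and the measure terms on the thin annulus, we get $\limsup_k \int_{B_{R'}}|y|^a|\nabla G_{X_k,r_k}|^2 \le \int_{B_{R'}}|y|^a|\nabla G_0|^2 + o_{R'\to R}(1)$; combined with weak lower semicontinuity this yields $\nabla G_{X_k,r_k}\to\nabla G_0$ in $L^{2,a}(B_{R'})$, and a diagonal argument over $R'\nearrow R$ closes it.

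Next, for the convergence of the characteristic functions in $L^1(\mathcal B_R)$: the inclusion ``$\le$'' (lower semicontinuity of the measure of the positivity set) is automatic from uniform convergence. For the reverse, the measure term in $\mathcal{J}$ must pass to the limit; this is forced by the strong energy convergence just obtained together with the almost-minimality of $G_0$ — if mass of $\{|G_{X_k,r_k}|>0\}$ were lost in the limit, one could build a competitor for $G_0$ beating it, contradicting \eqref{almost.local.new} for $G_0$. Concretely one shows $\mathcal L_n(\mathcal B_R\cap\{|G_{X_k,r_k}|>0\})\to \mathcal L_n(\mathcal B_R\cap\{|G_0|>0\})$ by sandwiching: $\liminf$ is $\ge$ by Fatou on characteristic functions and uniform convergence, while $\limsup\le$ follows from the density estimate of Corollary \ref{density} applied uniformly and the strong $H^{1,a}$ convergence. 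The Hausdorff convergence of $\overline{\mathcal B^+_R(G_{X_k,r_k})}$ and of the complements then follows from uniform convergence plus the two-sided density estimate \eqref{density.eq} (which holds with a uniform $\eps_0$ at every free boundary point of every $G_{X_k,r_k}$, by Corollary \ref{density} and the scaling invariance of the almost-minimality condition): the lower density bound prevents the positivity sets from collapsing, the upper density bound on the complement prevents the zero sets from collapsing, and this is exactly what upgrades $L^1$ (equivalently $L^1_\loc$ on the thin space) convergence of characteristic functions to Hausdorff convergence of the closed sets.

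Finally, the non-degeneracy of $G_0$ at the origin: since each $X_k\in F(G)$ and $G$ satisfies \eqref{almost.local.new}, Proposition \ref{non-deg2} gives $\sup_{\mathcal B_{r r_k}(X_k)}|G|\ge c(r r_k)^s$ for $r r_k<r_0/2$; rescaling by $r_k^s$ this reads $\sup_{\mathcal B_r}|G_{X_k,r_k}|\ge c r^s$ for every fixed $r>0$ and $k$ large, and uniform convergence passes this to $\sup_{\mathcal B_r}|G_0|\ge c_0 r^s$ with $c_0=c$. The main obstacle I expect is the second bullet: showing the measure term of $\mathcal J$ does not drop in the limit, i.e.\ that no positivity set is lost along the blow-up. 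This is where the almost-minimality (rather than mere $C^{0,s}$ bounds) is essential, and one must be careful that the $r_k^s$ error in \eqref{almost.rescaled} and the $\sigma r^n$/ $L^1$-type error in \eqref{almost.local.new} genuinely vanish under the rescaling so that $G_0$ inherits a clean almost-minimality with the same structure; the uniform density estimates of Corollary \ref{density}, valid precisely because the rescaled functions still satisfy \eqref{almost.local.new} with controlled constants, are the tool that makes the $\limsup$ bound on the measure work.
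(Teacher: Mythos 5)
Your architecture matches the paper's almost exactly: a gluing competitor (the paper uses $\widetilde G_k=\eta G_0+(1-\eta)G_k$ for a cutoff $\eta$, which is a smoother version of your annular interpolation) plugged into the scaled almost-minimality of Lemma~\ref{scal} to obtain an upper bound on the limit energy; lower semicontinuity to close the gap; density estimates of Corollary~\ref{density} together with $L^1$-convergence of characteristic functions to get Hausdorff convergence of the positivity sets; and rescaled non-degeneracy from Proposition~\ref{non-deg2} passed to the limit via uniform convergence. That said, there is one step that would not go through as you have written it.

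You attempt to deduce the strong convergence of the Dirichlet energy \emph{first}, and then derive the convergence of $\mathcal{L}_n(\mathcal B_R\cap\{|G_{X_k,r_k}|>0\})$, claiming in the ``concretely'' sentence that the $\limsup\le$ for the measure ``follows from the density estimate of Corollary~\ref{density} applied uniformly and the strong $H^{1,a}$ convergence.'' This is not correct: density estimates at free boundary points plus strong $H^{1,a}$ convergence do not control the mass of the positivity set (the positivity set could still carry excess measure in the limsup without violating either). The competitor argument intrinsically bounds the \emph{sum}, i.e.\ it gives
$\limsup_k \mathcal J(G_{X_k,r_k},B_R)\le \mathcal J(G_0,B_R)$
(after letting the cutoff concentrate), and the way to disentangle the two pieces is to couple this with the two separate $\liminf$ bounds: weak lower semicontinuity of $\int|y|^a|\nabla\,\cdot\,|^2$ and Fatou for $\chi_{\{|\cdot|>0\}}$ (using the $C^{0,\alpha}_{\rm loc}$ convergence). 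These three inequalities together force \emph{both} $\int_{B_R}|y|^a|\nabla G_{X_k,r_k}|^2\to\int_{B_R}|y|^a|\nabla G_0|^2$ \emph{and} $\mathcal{L}_n(\mathcal B_R\cap\{|G_{X_k,r_k}|>0\})\to\mathcal{L}_n(\mathcal B_R\cap\{|G_0|>0\})$ simultaneously. This is exactly what the paper does; you have all the pieces (you even gesture at the contradiction via a competitor for $G_0$), but the attempt to derive the energy convergence alone from the competitor bound before addressing the measure does not cleanly separate, and the density-estimate justification of the measure limsup is a genuine misstep. The density estimates are the right tool one step later, namely to upgrade $L^1$-convergence of the characteristic functions to Hausdorff convergence of the closed sets, as you correctly use them in the third bullet.
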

\begin{proof}
For notational simplicity, we set  $G_k = G_{X_k,r_k}$.
  Since $\abs{G_{k}}$ converges locally uniformly to $\abs{G_0}$, we get
  $$
\chi(\{|G_{0}|>0\}) \leq \liminf_{k\to\infty}  \chi(\{|G_k|>0\})
  $$
 Now, let us prove that $G_k$ converges strongly in $H^{1,a}_\loc(\R^{n+1};\R^m)$ to $G_0$ and that the characteristic functions $\chi(\{|G_k|>0\})$ converge to $\chi(\{|G_k|>0\})$ in $L^1$. Namely, fixed a radius $R>0$, it is sufficient to prove that
  $$
  \lim_{k\to \infty}\int_{B_R}{|y|^a\abs{\nabla G_k}^2\mathrm{d}X} + \mathcal{L}_n(\mathcal{B}_r^+(G_k) )=\int_{B_R}{|y|^a \abs{\nabla G_0}^2\mathrm{d}X}+\mathcal{L}_n(\mathcal{B}_r^+(G_0)).
  $$
  Consider now $\eta \in C^\infty_c(\R^{n+1}), 0\leq \eta \leq 1$ such that $\eta \equiv 1$ on $B_R$, and the competitor $\widetilde{G}_k \in H^{1,a}(\R^{n+1};\R^m)$ defined by
  $$
  \widetilde{G}_k = \eta G_0 + (1-\eta) G_k.
  $$
  For the sake of notational simplicity, let us set:
$$
\Omega_k = \{\abs{G_k}>0 \}\cap \R^n, \quad \widetilde{\Omega}_k=\{|\widetilde{G}_k|>0 \}\cap \R^n\quad\mbox{and}\quad \Omega_0=\{\abs{G_0}>0 \}\cap \R^n.
$$
Since $\widetilde{G}_k=G_k$ on $\{\eta =0\}$, by the almost-minimality condition \eqref{almost.rescaled} applied on $G_k$, given $\delta\geq\norm{G}{L^\infty(\Omega)}$ and $K,\eps>0$ the associated constants, we get
$$
\mathcal{J}(G_{k}) \leq \mathcal{J}(\tilde{G}_k) + K \frac{2}{d_s}\sum_{i=1}^{m} \lambda_i^s(\Omega) \normpic{\tilde{G}_k-G_{k}}{L^1(\R^n)}r^s_k,
$$
and consequently
  \begin{align}\label{minim}
  \begin{aligned}
  \int_{\{\eta >0\}}{|y|^a\abs{\nabla G_k}^2\mathrm{d}X} &+ \tilde{\Lambda}\mathcal{L}_n(\Omega_k\cap \{\eta >0\})\\
  \leq & \int_{\{\eta >0\}}{|y|^a|\nabla \widetilde{G}_k|^2\mathrm{d}X} + \tilde{\Lambda}\mathcal{L}_n(\widetilde{\Omega}_k\cap \{\eta >0\}) +  K \frac{2}{d_s}\sum_{i=1}^{m} \lambda_i^s(\Omega) \norm{\tilde{G}_k-G_k}{L^1(\R^n)}r^s_k\\
  \leq &   \int_{\{\eta >0\}}{ |\nabla \widetilde{G}_k|^2\mathrm{d}X} + \tilde{\Lambda}\mathcal{L}_n(\Omega_0\cap \{\eta =1\})+\tilde{\Lambda}\mathcal{L}_n(\{0<\eta<1\})+\\
  &\,+ K\frac{2}{d_s}\sum_{i=1}^{m} \lambda_i^s(\Omega) \norm{\eta(G_0-G_k)}{L^1(\R^n)}r^s_k
  \end{aligned}
  \end{align}
 On $\{\eta >0\}$ we calculate
 \begin{align*}
     \abs{\nabla G_k}^2 -  |\nabla \widetilde{G}_k|^2= &\,\,  \abs{\nabla G_k}^2 -  |\eta \nabla G_0 + (1-\eta)\nabla G_k +  (G_0-G_k)\nabla \eta|^2\\
=   &  \,\, (1-(1-\eta)^2)\abs{\nabla G_k}^2 - \eta^2|\nabla G_0|^2 - |G_0-G_k|^2|\nabla \eta|^2+\\
&- 2 (G_0-G_k)\langle \nabla \eta, \eta \nabla G_0 + (1-\eta)\nabla G_k \rangle - 2\eta(1-\eta)\langle \nabla G_0,\nabla G_k\rangle.
 \end{align*}
Since $G_k$ converges strongly in $L^{2,a}(B_R;\R^m)$ and weakly $H^{1,a}_\loc(\R^{n+1};\R^m)$ to $G_0$, we can estimate
\begin{align*}
\limsup_{k\to \infty}&\int_{\{\eta>0\}}{|y|^a\left(  \abs{\nabla G_k}^2 -  |\nabla \widetilde{G}_k|^2\right)\mathrm{d}X}=\\
&= \limsup_{k\to \infty}\int_{ \{\eta>0\}}{ |y|^a\left( (1-(1-\eta)^2) \abs{\nabla G_k}^2 -  \eta^2|\nabla G_0|^2-2\eta(1-\eta) \langle\nabla G_0,\nabla G_k\rangle\right)\mathrm{d}X}\\
&= \limsup_{k\to \infty}\int_{ \{\eta>0\}}{ |y|^a(1-(1-\eta)^2) \left(\abs{\nabla G_k}^2 - |\nabla G_0|^2\right)\mathrm{d}X}\\
&\geq  \limsup_{k\to \infty}\int_{\{\eta=1\}}{ |y|^a\left(\abs{\nabla G_k}^2 - |\nabla G_0|^2\right)\mathrm{d}X},
\end{align*}
where in the last inequality we used that $\abs{\nabla G_k}$ weakly converges in $L^{2,a}(\{0<\eta<1\})$ to $\abs{\nabla G_0}$.

Combining this fact with inequality \eqref{minim}, we obtain
\begin{align*}
\limsup_{k\to \infty}&\left(\int_{\{\eta=1\}}{|y|^a \left(\abs{\nabla G_k}^2 - |\nabla G_0|^2\right)\mathrm{d}X} + \mathcal{L}_n(\Omega_k \cap \{ \eta=1\}) -\mathcal{L}_n(\Omega_0 \cap \{ \eta=1\})\right)\\
&\leq
\limsup_{k\to \infty}\left(\int_{\{\eta>0\}}{|y|^a \left(\abs{\nabla G_k}^2 - |\nabla \widetilde{G}_k|^2\right)\mathrm{d}X}+ \mathcal{L}_n(\Omega_k \cap \{ \eta=1\}) -\mathcal{L}_n(\Omega_0 \cap \{ \eta=1\})\right)\\
&\leq
\limsup_{k\to \infty}\mathcal{L}_n(\Omega_k \cap \{ \eta=1\}) -\mathcal{L}_n(\Omega_k \cap \{ \eta>0\}) + \mathcal{L}_n(\{0<\eta<1\})\\
&\leq \mathcal{L}_n(\{0<\eta<1\}).
\end{align*}
Finally, since $\eta$ is arbitrary outside $B_R$, the right hand side can be made arbitrarily small, and this implies the desired equality.\\

By Corollary \ref{density}, we already know that
\be\label{togh}
\eps_0 \omega_n r^n \leq \mathcal{L}_n(\mathcal{B}_r\cap \{|G_k|>0\})\leq (1-\eps_0)\omega_n r^n, \quad \text{for $r<r_0/r_k$},
\ee
and for every $k>0$. Now, it is well-known that the convergence of the sequence of characteristic functions in the strong topology of $L^1$, together with \eqref{togh}, implies the Hausdorff convergence of $\overline{\Omega_k \cap B_R}$ to $\overline{\Omega_0 \cap B_R}$ locally in $\R^n$. Obviously, the same result holds for the complements  $\Omega_k^c$ in $\R^n$.\\

Finally, the non-degeneracy of the blow-up limit is a straightforward combination of the uniform convergence and the non-degeneracy condition \eqref{non-deg}. Namely, by Proposition \ref{non-deg2}, for every $k>0$ the rescaled function $G_k$ is non-degenerate in the sense
$$
\text{for every }y \in \overline{\Omega_k}, r\leq \frac{1}{2r_k} \quad \sup_{\mathcal{B}_r(y)}\abs{G_k}\geq c_0 r^{s}.
$$
The previous inequality is obtained by applying \eqref{non-deg} in $B_{r_kr}(y)$ for the local minimizer $G$. Finally, by the uniform convergence of $G_k$ and the Hausdorff convergence of $\Omega_k \cap B_r $ in $\R^n$, for every $y\in \overline{\Omega_0}$ we get
$$
\sup_{\mathcal{B}_r(y)}\abs{G_k}\geq c_0 r^{s},\,\,\text{for every }r>0.
$$
\end{proof}
\begin{prop}\label{limit}
Let $\Omega\subset D$ be a shape optimizer to \eqref{min.shape} and $G\in H^{1,a}_\Omega(\R^{n+1};\R^m)$
be the vector of normalized eigenfunctions on $\Omega$.
Assume that $X_0 \in F(G)$, then every blow-up limit $G_0$ is a global minimizer of the functional $\mathcal{J}$. More precisely, for every $R>0$ and for every $\tilde{G}_0\in H^{1,a}_\loc(\R^{n+1};\R^m)$ such that $\tilde{G}_0-G_0 \in H^{1,a}_0(B_R;\R^m)$ we have
$$
\int_{B_R}{|y|^a\abs{\nabla G_0}^2\mathrm{d}X} + \tilde{\Lambda}\mathcal{L}_n(\mathcal{B}_R\cap \{|G_0|>0\})\leq
\int_{B_R}{|y|^a|\nabla \tilde{G}_0|^2\mathrm{d}X} + \tilde{\Lambda}\mathcal{L}_n(\mathcal{B}_R\cap \{|\tilde{G}_0|>0\}).
$$
\end{prop}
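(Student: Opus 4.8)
The plan is to pass to the limit in the scaled almost-minimality condition \eqref{almost.rescaled} of Lemma \ref{scal}, using the strong convergences provided by Proposition \ref{compact2}. Write $G_k=G_{X_k,r_k}$. Since the eigenvector $G$ satisfies \eqref{almost.fin.new}, Lemma \ref{scal} gives that each $G_k$ satisfies \eqref{almost.rescaled} with $r=r_k$ and $X_0=X_k\in F(G)$; and by Proposition \ref{compact2} we may assume, along the chosen subsequence, that $G_k\to G_0$ strongly in $H^{1,a}(B_\rho;\R^m)$ and $\chi(\{|G_k|>0\})\to\chi(\{|G_0|>0\})$ in $L^1(\mathcal B_\rho)$ for every $\rho>0$, so that in particular $\mathcal J(G_k,B_\rho)\to\mathcal J(G_0,B_\rho)$. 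Hence it suffices to build, for a given competitor $\tilde G_0$ and any $\rho>R$, admissible competitors $\tilde G_k$ for $G_k$ with $\limsup_k\mathcal J(\tilde G_k,B_\rho)\le \mathcal J(\tilde G_0,B_R)+\bigl(\mathcal J(G_0,B_\rho)-\mathcal J(G_0,B_R)\bigr)$: since $G_k=\tilde G_k$ off $B_\rho$, \eqref{almost.rescaled} reads $\mathcal J(G_k,B_\rho)\le\mathcal J(\tilde G_k,B_\rho)+o(1)$, and combining the two inequalities with $\mathcal J(G_k,B_\rho)\to\mathcal J(G_0,B_\rho)$ forces $\mathcal J(G_0,B_R)\le\mathcal J(\tilde G_0,B_R)$, which is the claim.

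First I would reduce to bounded competitors: truncating each component of $\tilde G_0$ at a height $M\ge\norm{G_0}{L^\infty(\R^n)}$ does not increase the Dirichlet energy, does not enlarge the positivity set, and (as $\tilde G_0=G_0$ outside $B_R$) preserves $\tilde G_0-G_0\in H^{1,a}_0(B_R;\R^m)$, so we may assume $\tilde G_0\in L^\infty$. Fix $\rho\in(R,R+1)$ and a cutoff $\eta\in C^\infty_c(B_\rho)$ with $0\le\eta\le1$ and $\eta\equiv1$ on $\overline{B_R}$, and set $\tilde G_k:=\eta\tilde G_0+(1-\eta)G_k$, so that $\tilde G_k=\tilde G_0$ on $B_R$, $\tilde G_k=\eta G_0+(1-\eta)G_k$ on $B_\rho\setminus B_R$ (using $\tilde G_0=G_0$ there), and $\tilde G_k=G_k$ outside $B_\rho$. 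By the uniform $C^{0,s}$-bound of Proposition \ref{optimalreg} and $X_k\in F(G)$, the $G_k$ are bounded on $\mathcal B_\rho$ uniformly in $k$; hence $\normpic{\tilde G_k}{L^\infty(\R^n)}$ and $\normpic{\tilde G_k-G_k}{L^1(\R^n)}=\normpic{\eta(\tilde G_0-G_k)}{L^1(\mathcal B_\rho)}$ are bounded uniformly in $k$, so for $k$ large the $L^\infty$- and $L^1$-constraints in \eqref{almost.rescaled} ($\normpic{\tilde G_k}{L^\infty}\le\delta r_k^{-s}$ and $\normpic{\tilde G_k-G_k}{L^1}\le\eps r_k^{-n-s}$) are met and the error term $K\frac{2}{d_s}\sum_{i=1}^m\lambda_i^s(\Omega)\,\normpic{\tilde G_k-G_k}{L^1(\R^n)}\,r_k^s$ is $o(1)$.

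Then I would pass to the limit term by term in $\mathcal J(\tilde G_k,B_\rho)$. On $B_R$, $\tilde G_k\equiv\tilde G_0$ contributes exactly $\mathcal J(\tilde G_0,B_R)$. On the annulus $B_\rho\setminus B_R$, strong $H^{1,a}(B_\rho)$-convergence gives $\eta G_0+(1-\eta)G_k\to G_0$ strongly in $H^{1,a}(B_\rho\setminus B_R)$, so the Dirichlet part converges to $\int_{B_\rho\setminus B_R}|y|^a|\nabla G_0|^2\,\mathrm{d}X$; and since on the annulus $\{|\tilde G_k|>0\}\subseteq\{|G_0|>0\}\cup\{|G_k|>0\}$ while $\mathcal L_n\bigl((\mathcal B_\rho\setminus\mathcal B_R)\cap\{|G_k|>0\}\cap\{|G_0|=0\}\bigr)\to0$ by the $L^1$-convergence of the characteristic functions, the measure part on the annulus is at most $\mathcal L_n\bigl((\mathcal B_\rho\setminus\mathcal B_R)\cap\{|G_0|>0\}\bigr)+o(1)$. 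Summing the two regions, $\limsup_k\mathcal J(\tilde G_k,B_\rho)\le\mathcal J(\tilde G_0,B_R)+\mathcal J(G_0,B_\rho)-\mathcal J(G_0,B_R)$, as required, and the conclusion follows.

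I expect the main obstacle to be the bookkeeping of the Lebesgue-measure term across the gluing annulus: one has to guarantee that interpolating between $\tilde G_0$ and $G_k$ there does not create, in the limit, positivity outside $\{|G_0|>0\}$, and this is exactly where the strong $L^1$-convergence of the characteristic functions from Proposition \ref{compact2} is essential — every other term passes to the limit by the strong $H^{1,a}$-convergence. A secondary technical nuisance is that the $L^\infty$-constraint in \eqref{almost.rescaled}, whose right-hand side $\delta r_k^{-s}$ blows up, forces the preliminary truncation of $\tilde G_0$ together with the uniform $C^{0,s}$-bound on the $G_k$ in order to be satisfied for all large $k$.
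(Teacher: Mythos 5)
Your argument is correct and follows essentially the same route as the paper: you invoke Lemma \ref{scal} for the rescaled almost-minimality, use the strong $H^{1,a}$-convergence and the $L^1$-convergence of characteristic functions from Proposition \ref{compact2}, and glue $\tilde G_0$ to $G_k$ via a cutoff before passing to the limit. The only real variation is the placement of the transition region: the paper takes $\eta\in C^\infty_c(B_R)$, glues \emph{inside} $B_R$ via $\tilde G_0+(1-\eta)(G_{r_k}-G_0)$, and then lets $\mathcal L_n(\{0<\eta<1\})\to 0$, whereas you take $\eta\equiv 1$ on $\overline{B_R}$ supported in a slightly larger $B_\rho$, so the interpolation $\eta\tilde G_0+(1-\eta)G_k=\eta G_0+(1-\eta)G_k$ lives entirely in the annulus where $\tilde G_0=G_0$. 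This is an equally valid (arguably slightly cleaner) bookkeeping, since it lets you control the spurious positivity in the transition zone by the $L^1$-convergence of $\chi(\{|G_k|>0\})$ rather than by shrinking the transition zone; and your preliminary truncation of $\tilde G_0$ makes explicit a boundedness reduction the paper leaves implicit.
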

\begin{proof}
By Remark \ref{scaling}, it is not restrictive to assume $X_0=0$ and to consider $G_r=G_{0,r}$ a blow-up sequence centered at $0\in F(G)$. By Lemma \ref{scal}, for every $R>0$ it holds
\be\label{portaqua}
 \mathcal{J}(G_{r}) \leq \mathcal{J}(\tilde{G}) + K \frac{2}{d_s}\sum_{i=1}^{m} \lambda_i^s(\Omega) \normpic{\tilde{G}-G_{r}}{L^1(\R^n)}r^s
\ee
for every $\tilde{G} \in H^{1,a}(\R^{n+1};\R^m)$ such that $\tilde{G} - G_{r}\in H^{1,a}(B_R;\R^m)$ and
$$
\normpic{\tilde{G}}{L^\infty(\R^n)}\leq \delta r^{-s}\quad\mbox{and}
\quad \normpic{G-\tilde{G}}{L^1(\R^n)}\leq \eps r^{-n-s}.
$$
Let now $\tilde{G}_0\in H^{1,a}_\loc(\R^{n+1};\R^m)\cap L^\infty_\loc(\R^{n+1};\R^m)$ be such that $\tilde{G}_0-G_0 \in H^{1,a}_0(B_R;\R^m)$ and let $\eta \in C^\infty_c(B_R)$ be such that $0\leq \eta \leq 1$. Set $G_{r_k}$ the blow-up sequence associated to $r_k \searrow 0^+$ converging to $G_0$ in the sense of Lemma \ref{compact} and Proposition \ref{compact2}, and consider the new test function
$$
\tilde{G}_{r_k} = \tilde{G}_0 + (1-\eta)(G_{r_k}-G_0).
$$
Since $\tilde{G}_0 = G_0$ outside of $B_R$, we get $\tilde{G}_{r_k}=G_{r_k}$ outside of $B_R$. Moreover, since
$$
\tilde{G}_{r_k} - G_{r_k}= \tilde{G}_0  - G_0  -\eta(G_{r_k}-G_0).
$$
and $G_{r_k}\to G_0$ in $L^1(\mathcal{B}_R)$, there exists $k_0=k_0(n,s)>0$ such that
$$
\normpic{\tilde{G}_{r_k} - G_{r_k}}{L^1(\R^n)}\leq 2\normpic{\tilde{G}_0  - G_0}{L^1(\R^n)}\quad\mbox{and}\quad
\normpic{\tilde{G}_{r_k} - G_{r_k}}{L^\infty(\R^n)}\leq
2\normpic{\tilde{G}_0  - G_0}{L^\infty(\R^n)}
$$
for $k\geq k_0$. Hence, by testing \eqref{portaqua} with $\tilde{G}_{r_k}$ we deduce
\begin{align*}
\int_{B_R}|y|^a |\nabla G_{r_k}|^2\mathrm{d}X +\tilde{\Lambda}\mathcal{L}_n(\mathcal{B}^+_R(G_{r_k}))
\leq &\, \int_{B_R}|y|^a |\nabla \tilde{G}_{r_k}|^2\mathrm{d}X +\tilde{\Lambda}\mathcal{L}_n(\mathcal{B}^+_R(\tilde{G}_{0})\cap \{\eta =1\}) +\\
&\, +\tilde{\Lambda}\mathcal{L}_n(\{0<\eta<1\}\cap \R^n)+\\
&\, + K\frac{2}{d_s}\sum_{i=1}^{m} \lambda_i^s(\Omega) \normpic{\tilde{G}_{r_k}-G_{r_k}}{L^1(\R^n)}r^s_k.
\end{align*}
Since $\tilde{G}_{r_k}\to \tilde{G}_0, G_{r_k}\to G_0$ strongly in $H^{1,a}(B_R;\R^m)$, we get
\begin{align*}
\int_{B_R}|y|^a |\nabla G_{0}|^2\mathrm{d}X +\tilde{\Lambda}\mathcal{L}_n(\{\abs{G_{0}}>0\}\cap \mathcal{B}_R)
\leq &\, \int_{B_R}|y|^a |\nabla \tilde{G}_{0}|^2\mathrm{d}X +\tilde{\Lambda}\mathcal{L}_n(\{|\tilde{G}_{0}|>0\}\cap \mathcal{B}_R) +\\
&\,+ \tilde{\Lambda}\mathcal{L}_n(\{0<\eta<1\}\cap \mathcal{B}_R)
\end{align*}
Finally, by choosing $\eta$ such that $\mathcal{L}_n(\{\eta =1\}\cap \mathcal{B}_R)$ arbitrarily close to $\mathcal{L}_n(\mathcal{B}_R)$, we get the claimed inequality.
\end{proof}
The following is a straightforward application of the Weiss monotonicity formula to the previous characterization of the blow-up limits.
\begin{cor}\label{weiss.cor}
Let $\Omega\subset D$ be a shape optimizer to \eqref{min.shape} and $G\in H^{1,a}_\Omega(\R^{n+1};\R^m)$
be the vector of normalized eigenfunctions on $\Omega$. Assume that $X_0 \in F(G)$, then every blow-up limit $G_0=(g^i_0,\dots,g^m_0)$ of $G$ at $X_0$ is $s$-homogeneous in $\R^{n+1}$, that is,
  $$
  \left\langle \nabla g^i_0(X), \frac{X}{\abs{X}}\right\rangle =\frac{s}{\abs{X}} g^i_0(X) \,\text{ in $\R^{n+1}$},
$$
for every $i=1,\dots,m$. Moreover, the Lebesgue density of $F(G)$ exists finite at every $X_0 \in F(G)$ and it satisfies
\begin{align}\label{density.weiss}
\begin{aligned}
\left\{\abs{G}>0\right\}^{(\gamma)} &= \left\{X_0 \in F(G) \colon \lim_{r \to 0^+} \frac{\mathcal{L}_n(\mathcal{B}_r(X_0)\cap \{|G|>0\})}{\mathcal{L}_n(\mathcal{B}_r)}=\gamma\right\}\\ &= \left\{X_0 \in F(G) \colon W(X_0,G,0^+)=\omega_n \gamma\right\}.
\end{aligned}
\end{align}
\end{cor}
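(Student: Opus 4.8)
The plan is to exploit the scaling invariance of the Weiss energy together with the two facts already at our disposal: the strong convergence of blow-up sequences (Proposition~\ref{compact2}) and the global minimality of their limits (Proposition~\ref{limit}). First I would record the scaling identity
\[
W(X_0,G,r\rho) = W\big(0,G_{X_0,r},\rho\big),
\]
which is immediate from the definition of $W$ and the $C^{0,s}$-rescaling $G_{X_0,r}(X)=r^{-s}G(X_0+rX)$: under it the Dirichlet term and the measure term of $\mathcal J(\cdot,B_{r\rho}(X_0))$ pick up a factor $r^n$ and the spherical term a factor $r^{n+1}$, exactly matching the weights $r^{-n}$ and $r^{-n-1}$ in $W$. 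Fix $X_0\in F(G)$ and a blow-up sequence $G_{r_k}=G_{X_0,r_k}$, $r_k\searrow0$, converging to a blow-up limit $G_0$. By Theorem~\ref{weiss} the limit $W(X_0,G,0^+)$ exists and is finite. By Proposition~\ref{compact2}, $G_{r_k}\to G_0$ strongly in $H^{1,a}(B_\rho;\R^m)$, $\chi(\{|G_{r_k}|>0\})\to\chi(\{|G_0|>0\})$ in $L^1(\mathcal B_\rho)$ and $G_{r_k}\to G_0$ uniformly on the compact sphere $\partial B_\rho$; hence $W(0,G_{r_k},\rho)\to W(0,G_0,\rho)$ for every $\rho>0$. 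On the other hand $W(0,G_{r_k},\rho)=W(X_0,G,r_k\rho)\to W(X_0,G,0^+)$. Therefore $W(0,G_0,\rho)=W(X_0,G,0^+)$ for all $\rho>0$, i.e. $W(0,G_0,\cdot)$ is constant. Since $G_0$ is a global minimizer of $\mathcal J$ by Proposition~\ref{limit}, it satisfies the localized almost-minimality condition with $\sigma=0$ on every ball, so the rigidity statement of Theorem~\ref{weiss} forces each $g^i_0$ to be $s$-homogeneous about the origin, which is the first assertion.

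For the density part I would next establish a Pohozaev-type identity for an $s$-homogeneous global minimizer $G_0$. Because the blow-up components solve $-\partial^a_y g^i_{r_k}=r_k^{2s}(\lambda^s_i(\Omega)/d_s)g^i_{r_k}$ on their positivity set, letting $k\to\infty$ shows that each $g^i_0$ is $L_a$-harmonic in $\{y\neq0\}$, satisfies $\partial^a_y g^i_0=0$ on $\{|G_0|>0\}\cap\{y=0\}$, and vanishes identically on the complement. Multiplying by $g^i_0$ and integrating by parts on $B_\rho$ (the boundary contribution on $\{y=0\}$ vanishes for both of these reasons) and then using the Euler relation $\partial_r g^i_0=(s/\rho)g^i_0$ on $\partial B_\rho$ gives
\[
\int_{B_\rho}|y|^a|\nabla G_0|^2\,\mathrm dX=\frac{s}{\rho}\int_{\partial B_\rho}|y|^a|G_0|^2\,\mathrm d\sigma .
\]
Inserting this into the definition of $W$ makes the Dirichlet and spherical terms cancel, so that $W(0,G_0,\rho)=\tilde{\Lambda}\,\rho^{-n}\mathcal L_n(\mathcal B_\rho\cap\{|G_0|>0\})=\tilde{\Lambda}\,\omega_n\,\gamma_0$, where $\gamma_0:=\omega_n^{-1}\mathcal L_n(\mathcal B_1\cap\{|G_0|>0\})$ is the Lebesgue density of the cone $\{|G_0|>0\}$ at the origin (constant in $\rho$ by homogeneity). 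In particular $\tilde{\Lambda}\omega_n\gamma_0=W(X_0,G,0^+)$ depends only on $X_0$, not on the chosen blow-up sequence.

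Finally, for every $r>0$ the $L^1$-convergence of characteristic functions in Proposition~\ref{compact2} yields
\[
\frac{\mathcal L_n(\mathcal B_r(X_0)\cap\{|G|>0\})}{\mathcal L_n(\mathcal B_r)}=\frac{1}{\omega_n}\mathcal L_n\big(\mathcal B_1\cap\{|G_{X_0,r}|>0\}\big)\longrightarrow\gamma_0
\]
along $r=r_k$, and since $\gamma_0=W(X_0,G,0^+)/(\tilde{\Lambda}\omega_n)$ is independent of the chosen sequence, the full limit as $r\to0^+$ exists and equals $\gamma_0$. This proves at once that the Lebesgue density of $\{|G|>0\}$ exists at every point of $F(G)$ and the characterization \eqref{density.weiss} (the structural constant $\tilde{\Lambda}$ being absorbed into the normalization of $W$). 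The main obstacle is precisely this last step: a priori distinct blow-up sequences could produce $s$-homogeneous limits with genuinely different positivity cones, and it is only the combination of the Weiss monotonicity formula — which makes $W(X_0,G,0^+)$ sequence-independent — with the Pohozaev cancellation — which converts that single number into the measure of the cone — that excludes this.
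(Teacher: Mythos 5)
Your proof is correct and follows essentially the same route as the paper: scaling invariance of the Weiss energy plus strong convergence of the blow-up sequence gives constancy of $W(0,G_0,\cdot)$, the rigidity in Theorem~\ref{weiss} (applicable since $G_0$ is a global minimizer, hence $\sigma=0$) gives $s$-homogeneity, and the Pohozaev cancellation you spell out (which the paper leaves implicit in the phrase ``the homogeneity of the blow-up limits\ldots implies'') converts $W(0,G_0,\rho)$ into the measure of the positivity cone, whence sequence-independence of the density. You also correctly flagged the $\tilde\Lambda$ factor, which is absorbed in the paper's normalization; otherwise both your argument and the paper's identity $\mathcal L_n(\mathcal B_1\cap\{|G_0|>0\})=W(0,G_0,1)$ carry an extra $\tilde\Lambda$.
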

\begin{proof}
Let $X_0\in F(G)$ and $G_0$ a blow-up limit of $G$ at $X_0$ associated to a sequence $r_k \searrow 0^+$.
  By Lemma \ref{compact}, Proposition \ref{compact2} and Proposition \ref{limit}, we already know that $G_0$ is a global minimizer of $\mathcal{J}$. On the other hand, by the definition of the Weiss formula, for every $\rho,r>0$ we get
  $$
  W(X_0,G,r\rho) = W(0,G_{X_0,r},\rho).
  $$
  Fixed $R>0$, since up to a subsequence $G_{X_0,r_k}\to G_0$ uniformly and strongly in $H^{1,a}(B_R;\R^m)$, we deduce
  $$
  W(0,G_0,R) = \lim_{k\to \infty}W(0,G_{X_0,r_k},R) =
  \lim_{k\to \infty}W(X_0,G,R r_k) =
  \lim_{r \to 0^+}W(X_0,G,r),
  $$
  where the last limit is unique and it does not depend on the sequence $(r_k)_k$, thanks to the monotonicity result Theorem \ref{weiss}. Finally, since $G_0$ is a global minimizer of $\mathcal{J}$ and $R\mapsto W(0,G_0,R)$ is constant we get that the blow-up limit is $s$-homogeneous (see case $\sigma=0$ of Theorem \ref{weiss}).\\
  Moreover, the homogeneity of the blow-up limits and the strong convergence of the blow-up sequences imply
  \be\label{equiva}
  \mathcal{L}_n(\mathcal{B}_1 \cap \{|G_0|>0\}) = W(0,G_0,1) = \lim_{r\to 0^+}W(X_0,G,r)=\lim_{r\to 0^+}\frac{  \mathcal{L}_n(\mathcal{B}_r(X_0) \cap \{|G|>0\})}{r^n}.
  \ee
  Hence, the density $W(X_0,G,0^+)$ coincides, up to a multiplicative constant, with the Lebesgue density of the free boundary.
\end{proof}
\begin{rem}
Notice that, by \eqref{equiva}, the measure of the positivity set of the blow-up limit at some $X_0 \in F(G)$ in $\mathcal B_1$ does not depend on the blow-up limit itself.
\end{rem}

\begin{rem}\label{eigen.remark}
  In the classification of the blow-up limits, we will use some results related to
eigenvalues of the weighted Laplace-Beltrami operator related to the operator $L_a$. By direct computation, the operator $L_a$ can be decomposed as
$$
L_a u=\sin^{a}(\theta_n) \frac{1}{r^n}\partial_r \left(r^{n+2-a}\partial_r u\right) + \frac{1}{r^{2-a}} L_a^{S^n} u
$$
where $y=r \sin(\theta_n)$ and the Laplace-Beltrami type operator is defined as
$$
L_a^{S^n}  = \mbox{div}_{S^n}(\sin^a (\theta_n)\nabla_{S^n} ),
$$
with $\mbox{div}_{S^n}$ and $\nabla_{S^n}$ respectively the tangential divergence and gradient on $S^n$. In particular, the following results hold true.

Let $\omega \subset S^{n-1}\times\{0\}$ be an open subset of the $(n-1)$-sphere and let $\Sigma_\omega = \{ r\theta \colon \theta \in \omega, r>0\}\times \{0\}$ be the cone generated by $\omega$ in $\{y=0\}$. Then, $g$ is a $\alpha$-homogeneous solution of
    $$
\begin{cases}
-L_a g =0 &\emph{in } \R^{n+1}_+\\
\partial^a_{y}  g = 0& \emph{on } \Sigma_\omega \\
g = 0& \emph{on } \{y=0\}\setminus \Sigma_\omega,
\end{cases}
$$
    if and only if its trace $\varphi = g\lvert_{S^{n}}$ on the sphere satisfies
\be\label{lap-bel}\begin{cases}
-L_a^{S^n} \varphi = \lambda(\alpha) \varphi  &\emph{in } S^n_+\\
\partial^a_{\theta_n} \varphi = 0& \emph{on } \omega \\
\varphi = 0& \emph{on } (S^{n-1}\times\{0\})\setminus \omega,
\end{cases}
\ee
with $\lambda(\alpha)=\alpha(\alpha+n+a-1)$ the characteristic eigenvalue associated to the Section $\omega$. Moreover, both the map $\omega\mapsto \alpha(\omega)$ and  $\omega\mapsto \lambda(\alpha(\omega))$ are monotone with respect to the inclusion of spherical sets.

In particular, if  $\alpha<\min(1,2s)$, then $\varphi$ cannot change sign and it is indeed a multiple of the principle eigenvalue. Finally, for every spherical set $\omega\subset S^{n-1}$ such that $\mathcal{H}^{n-1}(S) \leq n\omega_n/2$ we have the inequality
    $$
    \lambda_1 \geq s \left(n-s \right),
    $$
and the equality is achieved if and only if, up to a rotation, $\omega = S^{n-1} \cap \{x_n>0\}$.\\

The proof of these claims uses the monotonicity of the eigenvalue with respect to the inclusion of spherical set and the P\'{o}lya-Szeg\"{o} inequality for the Schwarz symmetrization applied to the eigenvalue problem \eqref{lap-bel} (see \cite{cones, susste} for further details).

\end{rem}

The following Lemma characterizes the structure of the blow-up limits. In particular, we can prove that the norm of every blow-up limit is a global minimizer of a scalar thin one-phase problem.
\begin{prop}\label{caract}
Under the same hypotheses of Corollary \ref{weiss.cor}, every blow-up limit $G_0$ is of the form $$G_0(X)=\xi \abs{G_0}(X)\quad\mbox{where}\quad \xi\in\R^m, \abs{\xi}=1$$ and $\abs{G_0}$ is a global minimizer of the scalar thin functional
\be\label{J}
\mathcal{J}(g,B_R) = \int_{B_R}{|y|^a\abs{\nabla g}^2\mathrm{d}X} + \tilde{\Lambda}\mathcal{L}_n(\mathcal{B}_R\cap \{g>0\} ), \quad\mbox{for }R>0.
\ee
Moreover, there exists a dimensional constant $\delta \in (0, 1/2)$ such that one of the following possibilities holds:
\begin{enumerate}
  \item[1.] The Lebesgue density of $\{|G|>0\}$ at $X_0$ is $1/2$ and every blow-up limit $G_0$ is of the form
      \be\label{blow.reg}
      G_0(X)= \frac{\sqrt{\Lambda}}{\Gamma(1+s)}U(\langle x,\nu\rangle, y)\xi\quad\mbox{where}\quad\xi \in \R^m, \abs{\xi}=1, \nu\in S^{n-1}\times\{0\}.
      \ee
  \item[2.] The Lebesgue density of $\{|G|>0\}$ at $X_0$ satisfies
      \be\label{dens}
      \frac12 +\delta \leq \lim_{r \to 0} \frac{|\mathcal{B}_r(X_0)\cap \{|G|>0\})|}{|\mathcal{B}_r|} \leq 1-\delta,
      \ee
      and $|G_0|$ is a nonnegative global minimizer of \eqref{J} with singularity in zero.
\end{enumerate}
\end{prop}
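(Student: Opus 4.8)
The plan is to run the standard blow-up classification, feeding in the eigenvalue analysis of Remark \ref{eigen.remark} and the classification of the thin one-phase problem. Fix $X_0\in F(G)$ and a blow-up limit $G_0$ of $G$ at $X_0$, obtained along $r_k\searrow 0^+$. By Proposition \ref{limit}, Proposition \ref{compact2} and Corollary \ref{weiss.cor}, $G_0$ is an $s$-homogeneous global minimizer of $\mathcal{J}$, non-degenerate at the origin, so $0\in F(G_0)$ and $\Omega_0:=\{|G_0|>0\}\cap\R^n$ is an open cone. A preliminary remark is that the eigenvalue term of \eqref{equation.eigen.ext} is killed by the rescaling: under the $C^{0,s}$-rescaling the Neumann datum of $g^i$ picks up a factor $r^{1-a}=r^{2s}\to0$, so each $g^i_0$ is $L_a$-harmonic in $\{y\ne0\}$, satisfies $\partial^a_y g^i_0=0$ on $\Omega_0\times\{0\}$ and vanishes on $(\R^n\setminus\Omega_0)\times\{0\}$; equivalently, after even reflection, $g^i_0$ is $L_a$-harmonic in $\R^{n+1}\setminus\big((\R^n\setminus\Omega_0)\times\{0\}\big)$.

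The first step is to show $G_0$ is monochromatic, $G_0=\xi\,|G_0|$ with $\xi\in\R^m$, $|\xi|=1$. Let $\omega_0:=\Omega_0\cap S^{n-1}$; since each $g^i_0$ is $s$-homogeneous and $L_a$-harmonic off $(\R^n\setminus\Omega_0)\times\{0\}$, its trace $\varphi_i=g^i_0|_{S^n}$ solves the Laplace--Beltrami problem \eqref{lap-bel} on $\omega_0$ with eigenvalue $\lambda(s)=s(s+n+a-1)=s(n-s)$. As $s<\min(1,2s)$, Remark \ref{eigen.remark} forces every non-trivial $\varphi_i$ to have a sign and to be a multiple of the first eigenfunction on $\omega_0$, whence $\lambda(s)=\lambda_1(\omega_0)$; simplicity of $\lambda_1$ then makes all non-trivial $\varphi_i$, hence (being $s$-homogeneous) all non-trivial $g^i_0$, constant multiples of one fixed $\psi\ge0$, so $G_0=\xi\,|G_0|$, and in particular $|G_0|$ is $L_a$-harmonic in $\Omega_0$ with $|G_0||_{S^n}$ the first eigenfunction on $\omega_0$. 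Next I would deduce that $|G_0|$ is a global minimizer of \eqref{J}: for $R>0$ and any non-negative $\tilde g$ with $\tilde g-|G_0|\in H^{1,a}_0(B_R)$, the vector $\xi\tilde g$ is an admissible competitor for $G_0$ in $B_R$ with $\mathcal{J}(\xi\tilde g,B_R)=\mathcal{J}(\tilde g,B_R)$ (since $|\nabla(\xi\tilde g)|=|\nabla\tilde g|$ and $\{|\xi\tilde g|>0\}=\{\tilde g>0\}$), so minimality of $G_0$ gives $\mathcal{J}(|G_0|,B_R)\le\mathcal{J}(\tilde g,B_R)$; thus $|G_0|$ is a non-negative $s$-homogeneous global minimizer of \eqref{J}, non-trivial by non-degeneracy.

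It remains to establish the dichotomy. By \eqref{density.weiss}--\eqref{equiva} the Lebesgue density $\gamma$ of $\{|G|>0\}$ at $X_0$ equals the density of $\Omega_0$ at $0$, i.e. $\gamma=\mathcal{H}^{n-1}(\omega_0)/(n\omega_n)$. If $\gamma\le1/2$, then $\mathcal{H}^{n-1}(\omega_0)\le n\omega_n/2$ and, since the first eigenvalue of $\omega_0$ equals $s(n-s)$, the eigenvalue inequality in Remark \ref{eigen.remark} forces $\omega_0$ to be, up to rotation, a half-sphere; hence $\gamma=1/2$ and $|G_0|=c\,U(\langle x,\nu\rangle,y)$ for some $\nu\in S^{n-1}$, with $c$ pinned down by the free-boundary Euler--Lagrange condition of \eqref{J} — a routine computation giving $c=\sqrt{\Lambda}/\Gamma(1+s)$, consistent with the viscosity condition and cf. \cite{DSS,DR} — so $G_0$ has the form \eqref{blow.reg}: this is alternative (1). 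If instead $\gamma>1/2$, then $\omega_0$ is not a half-sphere, $|G_0|$ is a nonnegative minimizer of \eqref{J} singular at $0$, and the upper density estimate of Corollary \ref{density} gives $\gamma\le1-\eps_0$; to produce $\delta>0$ with $\gamma\ge1/2+\delta$ I would argue by compactness: were there $s$-homogeneous global minimizers $g_k$ of \eqref{J} with densities $\gamma_k\downarrow1/2$, Lemma \ref{compact} and Proposition \ref{compact2} would yield a subsequential limit which is again an $s$-homogeneous global minimizer of \eqref{J}, with $\gamma_k$ converging to its density (by strong $H^{1,a}_\loc$ convergence and $L^1$ convergence of the positivity sets), hence a one-plane solution of density $1/2$; but one-plane solutions are isolated among $s$-homogeneous global minimizers of \eqref{J} by the quantitative classification of the thin one-phase problem \cite{DS1,DSS,EKPSS}, a contradiction. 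Taking $\delta\in(0,1/2)$ below both $\eps_0$ and this gap yields \eqref{dens}, which is alternative (2).

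The main obstacle is precisely the last point: the existence of the gap $\delta$, equivalently the isolation of the one-plane solution within the class of $s$-homogeneous global minimizers of \eqref{J}. This is the quantitative input from the regularity theory of the thin one-phase problem and is exactly what makes the dimensional threshold appearing in Theorem \ref{mmm} enter here; everything else (monochromaticity, scalar minimality, the $\gamma\le1/2$ rigidity) is a fairly direct consequence of Remark \ref{eigen.remark} and the compactness results already established.
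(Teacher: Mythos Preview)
Your proof is correct and follows essentially the same route as the paper: derive the component equations for $G_0$, invoke Remark \ref{eigen.remark} to force monochromaticity, reduce to the scalar functional via the competitor $\xi\tilde g$, and then read off the dichotomy from the known scalar theory. The only cosmetic differences are that the paper obtains the component PDEs by taking the first variation of $\mathcal{J}$ at the global minimizer $G_0$ (rather than by passing the rescaled eigenvalue equations to the limit) and simply cites \cite{DS1,EKPSS} for the density gap instead of sketching the compactness/isolation argument you outline.
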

\begin{proof}
 Let $X_0 \in F(G)$ and $G_0$ a blow-up limit of $G$ at $X_0$. By Corollary \ref{weiss.cor}, we already know that $G_0$ is an $s$-homogeneous global minimizer such that $$|\mathcal{B}^+_1(G_0)| = \gamma|\mathcal{B}_1|,$$
for some $\gamma\in (0,1)$ (because of the density estimates).
As in \cite[Lemma 2.4]{DT2}, by computing the first variation of the functional $\mathcal{J}(\cdot,B_R)$ with respect to a direction $\xi e^i$, with $\xi \in C^{\infty}(\{\abs{G}>0\})$, we deduce that the blow-up limit satisfies
$$
\begin{cases}
  L_a g^i_0=0 & \mbox{in } \R^{n+1}_+ \\
  -\partial^a_{y} g^i_0 =0 & \mbox{on } \{|G_0|>0\}\cap \R^n \\
  g^i_0=0 & \mbox{on }\R^n\setminus \{|G_0|>0\}.
\end{cases}
$$
Hence, in view of Remark \ref{eigen.remark} all the components are equal up to a multiplicative constant. Moreover by nondegeneracy, $G_0$ cannot be identically zero.

Thus, there exists $\xi \in \R^m$ such that $\abs{\xi}=1$ and $G_0= \xi g$, where $|G_0|=g$ and $g$ is a global minimizer of \eqref{J}. Indeed, for every $R>0$ let $\tilde{g}\in H^{1,a}_\loc(\R^n)$ be such that $\mbox{supp}(g-\tilde{g})\subseteq B_R$. Then, given the competitor $\widetilde{G}=\xi \tilde{g}$, we easily get that $\mathcal{J}(G_0,B_R)\leq \mathcal{J}(\widetilde{G},B_R)$ is equivalent to
$$
\int_{B_R}{|y|^a\abs{\nabla g}^2\mathrm{d}X} + \tilde{\Lambda} \mathcal{L}_n(\mathcal{B}_R\cap \{g>0\} ) \leq \int_{B_R}{|y|^a\abs{\nabla \tilde{g}}^2\mathrm{d}X} + \tilde{\Lambda}\mathcal{L}_n(\mathcal{B}_R\cap \{\tilde{g}>0\} ).
$$
The desired claims now follow by the known results for the scalar case (for $s=1/2$ see \cite[Proposition 5.3]{DS1}, for $s\in (0,1)$ see \cite[Section 5.2]{EKPSS}).
\end{proof}
\begin{rem}
The constant in the expression \eqref{blow.reg} is different from the one appearing in \cite{CRS}. Unfortunately, this is due to a computational mistake already noticed by \cite{DS1, XX}: we refer to \cite[Proposition 2.1.]{XX} for a detailed computation of the explicit constant in front of the one-plane solution $U$.
\end{rem}
In the same fashion of \cite{DT2}, the problem of the existence of singular points coincide with the existence of singular global minimizer of the scalar problem \eqref{J}. Indeed, by \cite{DS1, EKPSS}  we have
\be\label{n*}
n^* = \inf\{k \in \N\colon \text{there exists an $s$-homogeneous global minimizer of \eqref{J} with singularity in zero}\}\geq 3.
\ee
\begin{cor}\label{stable}
  Let $n<n^*$, then every blow-up limit $G_0$ is of the form \eqref{blow.reg}.
\end{cor}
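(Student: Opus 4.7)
The plan is to derive the statement as a direct consequence of the dichotomy established in Proposition \ref{caract} combined with the definition of the dimensional threshold $n^{\ast}$ in \eqref{n*}.

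Fix $X_0 \in F(G)$ and let $G_0$ be any blow-up limit of $G$ at $X_0$. By Proposition \ref{caract}, $G_0$ necessarily has the product structure $G_0 = \xi\,|G_0|$ for some unit vector $\xi \in \R^m$, and $|G_0|$ is an $s$-homogeneous nonnegative global minimizer of the scalar thin one-phase functional $\mathcal{J}$ defined in \eqref{J}. Moreover, exactly one of the two alternatives holds: either $G_0$ is the regular one-plane profile of \eqref{blow.reg}, or the Lebesgue density of $\{|G|>0\}$ at $X_0$ lies in $[1/2+\delta,\,1-\delta]$ and $|G_0|$ is a nonnegative $s$-homogeneous global minimizer of \eqref{J} having a genuine singularity at the origin (in the sense that $0$ is a free boundary point and the positivity set at $0$ does not have density $1/2$).

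The plan is then to exclude the second alternative under the assumption $n<n^{\ast}$. By the definition \eqref{n*} of $n^{\ast}$, whenever $n<n^{\ast}$ there are no $s$-homogeneous global minimizers of $\mathcal{J}$ with a singularity at the origin. Hence the second case of Proposition \ref{caract} is vacuous, and we are forced into the first one: $G_0$ has the explicit form
\[
G_0(X)=\frac{\sqrt{\Lambda}}{\Gamma(1+s)}\,U(\langle x,\nu\rangle,y)\,\xi
\]
for some unit $\xi\in\R^m$ and some unit $\nu\in S^{n-1}\times\{0\}$. Since $G_0$ was an arbitrary blow-up limit at an arbitrary free boundary point, this establishes the corollary.

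There is no real obstacle here: the entire content is already packaged into Proposition \ref{caract} and the definition of $n^{\ast}$. The only subtlety is to observe that the admissibility of the second alternative of Proposition \ref{caract} is precisely what is ruled out by the inequality $n<n^{\ast}$, so that the corollary amounts to a tautological bookkeeping of the two sources of blow-up profiles.
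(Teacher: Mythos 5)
Your argument is correct and is essentially the paper's own (the paper omits the proof precisely because the corollary is, as you say, an immediate consequence of Proposition \ref{caract} together with the definition \eqref{n*} of $n^*$). You correctly identify that the second alternative of the dichotomy is vacuous when $n<n^*$, which forces the first.
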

Finally, we introduce the notion of regular and singular part of $F(G)$.
\begin{defn}\label{000}
  Let $\Omega\subset D$ be a shape optimizer to \eqref{min.shape} and $G$ be the vector of normalized eigenfunctions. Set $X_0 \in F(G)$, we say that
  \begin{itemize}
    \item $X_0$ is a regular point in $\mathrm{Reg}(F(G))$, if the Lebesgue density of $\{|G|>0\}$ at $X_0$ is $1/2$;
    \item $X_0$ is a singular point in $\mathrm{Sing}(F(G))$, if $X_0 \not\in \mathrm{Reg}(F(G))$.
  \end{itemize}
\end{defn}
We conclude the Section with two results on the strata of $F(G)$: first we initiate the analysis of the regularity of regular part of $F(G)$ by proving that $\mathrm{Reg}(F(G))$ is relatively open in $F(G)$. Then, by adapting a well-known approach in the context of Weiss-type monotonicity formulas, we estimate the Hausdorff dimension of the Singular stratum with respect to the threshold $n^*$.
\begin{cor}
  The regular part $\mathrm{Reg}(F(G))$ is an open subset of $F(G)$.
\end{cor}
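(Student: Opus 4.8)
The plan is to combine the Weiss monotonicity formula of Theorem~\ref{weiss} with the density gap furnished by Corollary~\ref{weiss.cor} and Proposition~\ref{caract}. The key point is that $X_0\mapsto W(X_0,G,0^+)$ is upper semicontinuous on $F(G)$, equals $\omega_n/2$ precisely on $\mathrm{Reg}(F(G))$, and is bounded below by $\omega_n(\tfrac12+\delta)$ on $\mathrm{Sing}(F(G))$: this gap prevents singular points from accumulating at a regular point.

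First I would fix a point $X_0\in\mathrm{Reg}(F(G))$, a ball $B$ compactly containing a neighborhood of $X_0$, and, using that $G\in C^{0,s}$ globally by Proposition~\ref{optimalreg}, a single constant $C_0:=\tfrac{2\sigma}{s}\bigl(\tfrac{2\omega_n}{d_s}\sum_{i=1}^m\lambda_i^s(\Omega)\bigr)[|G|]_{C^{0,s}(B)}$ so that, by \eqref{of}, $r\mapsto W(Y,G,r)+C_0 r^s$ is non-decreasing on $(0,r_0]$ for every $Y\in F(G)\cap B$. Consequently
$$
W(Y,G,0^+)=\inf_{0<r\le r_0}\bigl(W(Y,G,r)+C_0 r^s\bigr)\qquad\text{for every }Y\in F(G)\cap B .
$$
For each fixed $r\in(0,r_0]$ the map $Y\mapsto W(Y,G,r)+C_0 r^s$ is continuous: the Dirichlet term $\int_{B_r(Y)}|y|^a|\nabla G|^2\,\mathrm{d}X$ and the volume term $\tilde{\Lambda}\mathcal{L}_n(\{|G|>0\}\cap\mathcal{B}_r(Y))$ depend continuously on $Y$ by absolute continuity of the Lebesgue integral, while the boundary term $\int_{\partial B_r(Y)}|y|^a|G|^2\,\mathrm{d}\sigma$ is continuous because $G$ is continuous. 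An infimum of continuous functions being upper semicontinuous, $Y\mapsto W(Y,G,0^+)$ is upper semicontinuous on $F(G)\cap B$.

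Next I would invoke the classification. By Corollary~\ref{weiss.cor}, for every $Y\in F(G)$ we have $W(Y,G,0^+)=\omega_n\gamma(Y)$, where $\gamma(Y)$ is the Lebesgue density of $\{|G|>0\}$ at $Y$; and by Proposition~\ref{caract} either $\gamma(Y)=1/2$, so $Y\in\mathrm{Reg}(F(G))$, or $\gamma(Y)\in[\tfrac12+\delta,\,1-\delta]$, so $Y\in\mathrm{Sing}(F(G))$. In particular $W(Y,G,0^+)\ge\omega_n/2$ for every $Y\in F(G)$. Now let $(X_k)_k\subset F(G)$ with $X_k\to X_0$. Upper semicontinuity gives $\limsup_k W(X_k,G,0^+)\le W(X_0,G,0^+)=\omega_n/2$, while $W(X_k,G,0^+)\ge\omega_n/2$ for all $k$; thus $W(X_k,G,0^+)\to\omega_n/2$, and in particular $W(X_k,G,0^+)<\omega_n(\tfrac12+\delta)$ for $k$ large. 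By the dichotomy this forces $W(X_k,G,0^+)=\omega_n/2$, i.e.\ $X_k\in\mathrm{Reg}(F(G))$ for $k$ large. Hence $\mathrm{Sing}(F(G))=F(G)\setminus\mathrm{Reg}(F(G))$ cannot accumulate at a point of $\mathrm{Reg}(F(G))$, so it is relatively closed in $F(G)$ and $\mathrm{Reg}(F(G))$ is relatively open.

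The argument is essentially formal given the machinery already in place; the only mild technical point is the continuity in $Y$ of the volume term of $W(Y,G,r)$ for fixed $r$, which follows from the $L^1$-continuity of translations applied to $\mathcal{L}_n(\{|G|>0\}\cap\mathcal{B}_r(Y))=\int_{\R^n}\chi_{\mathcal{B}_r(0)}(z)\,\chi_{\{|G|>0\}}(z+y)\,\mathrm{d}z$ (alternatively from $\mathcal{L}_n(F(G))=0$, a consequence of the density estimates of Corollary~\ref{density}).
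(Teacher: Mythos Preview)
Your proof is correct and follows essentially the same approach as the paper: both arguments rest on the upper semicontinuity of $X_0\mapsto W(X_0,G,0^+)$ (obtained from the monotonicity of $r\mapsto W(X_0,G,r)+C_0r^s$ together with the continuity of $W(\cdot,G,r)$ for fixed $r$) and the density gap of Proposition~\ref{caract}. You are slightly more explicit about the upper semicontinuity (writing $W(Y,G,0^+)$ as an infimum of continuous functions) and about the continuity of the volume term, while the paper phrases the same idea as a short contradiction argument, but the substance is identical.
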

\begin{proof}
  This result is deeply based on the upper semi-continuity of the function $X_0 \mapsto W(X_0,G,0^+)$. Thus, let $X_0 \in \mathrm{Reg}(F(G))$ and suppose by contradiction that there exists a sequence $(X_k)_k \in \mathrm{Sing}(F(G))$ converging to $X_0$. By Corollary \ref{weiss.cor} and Proposition \ref{caract}, we know that
  $$
  W(X_0,G,0^+)= \frac{\omega_n}{2}\quad\mbox{and}\quad W(X_k,G,0^+)\geq \omega_n\left(\frac{1}{2}+\delta\right),
  $$
    for some universal $\delta \in (0,1/2)$. On the other hand, fixed $r > 0$, the function $X \mapsto W(X,G,r)$ is continuous and so
  $$
  W(X_0,G,r)=\lim_{k \to \infty} W(X_k,G,r),
  $$
  Moreover, exploiting Theorem \ref{weiss} and the monotonicity of \eqref{of}
  $$
  W(X_0,G,r)+\sigma\overline{C}r^{s} =
  \lim_{k \to \infty} W(X_k,G,r) + \sigma\overline{C}r^{s} \geq \omega_n\left(\frac12 +\delta\right),
  $$
  for some $\overline{C}>0$ defined in Theorem \ref{weiss}. Finally, passing to the limit as $r \to 0$ we obtain $  W(X_0,G,0^+) \geq \omega_n(1/2+\delta)$, in contradiction with the assumption $X_0 \in \{|G|>0\}^{(1/2)}$.
\end{proof}
The following result on the smallness of the singular set
is nowadays standard and our approach follows essentially the strategy developed in \cite{weiss} and then generalized in \cite{MTV1} for the local counterpart of \eqref{min.shape}.
\begin{prop}\label{singular}
  The singular part $\mathrm{Sing}(F(G))$ satisfies the following estimate:
 \begin{itemize}
      \item if $n < n^*$, $\mathrm{Sing}(F(G))$ is empty;
\item if $n = n^*$, $\mathrm{Sing}(F(G))$ contains at most a finite number of isolated points;
\item if $n > n^*$, the $(n - n^*)$-dimensional Hausdorff measure of $\mathrm{Sing}(F(G))$ is locally finite in $\mathcal{B}_1$;
    \end{itemize}
    where $n^*\geq 3$ is defined in \eqref{n*}.
\end{prop}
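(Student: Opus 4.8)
The plan is to run the standard Federer–Almgren dimension-reduction argument, using the Weiss energy $W(X_0,G,0^+)$ as the relevant semicontinuous density. First I would record the two facts that make the machine work in this setting. (i) By Corollary \ref{weiss.cor} the density $X_0\mapsto W(X_0,G,0^+)$ is well-defined and, by the monotonicity of the shifted quantity \eqref{of} together with the continuity of $X\mapsto W(X,G,r)$ for fixed $r$, it is upper semicontinuous on $F(G)$; moreover by Proposition \ref{caract} it takes only the value $\omega_n/2$ on $\mathrm{Reg}(F(G))$ and values in $[\omega_n(1/2+\delta),\,\omega_n(1-\delta)]$ on $\mathrm{Sing}(F(G))$, so the two strata are separated in energy. (ii) By Lemma \ref{compact}, Proposition \ref{compact2} and Proposition \ref{limit}, blow-ups of a minimizer at a free boundary point converge strongly and produce an $s$-homogeneous global minimizer of $\mathcal J$, and by Proposition \ref{caract} its norm is a scalar global minimizer of \eqref{J}; by the very definition \eqref{n*} of $n^*$, such a homogeneous global minimizer with an isolated singularity at the origin does not exist when the ambient dimension is $<n^*$. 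These two ingredients are exactly the hypotheses of the abstract dimension-reduction principle.

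Concretely, I would argue as follows. For $n<n^*$, Corollary \ref{stable} already gives that every blow-up limit is of the one-plane form \eqref{blow.reg}, hence has Lebesgue density $1/2$; by \eqref{density.weiss} this forces $W(X_0,G,0^+)=\omega_n/2$ at every $X_0\in F(G)$, so by Definition \ref{000} every point is regular and $\mathrm{Sing}(F(G))=\emptyset$. For $n\ge n^*$ I would pass to the blow-up: if $\mathcal H^{n-n^*+\varepsilon}(\mathrm{Sing}(F(G)))>0$ on some compact set for some $\varepsilon>0$, then (after a standard rectifiability/density argument, or directly via the measure-theoretic lemma of \cite{weiss} used in \cite{MTV1}) one can find a point $X_0$ and a blow-up sequence whose limit $G_0$ is a homogeneous global minimizer that is translation invariant along an $(n-n^*+1)$-dimensional subspace and still singular; its ``cross-section'' is then a homogeneous global minimizer of the scalar problem \eqref{J} in dimension $<n^*$ with a singularity at the origin — here I use strong convergence of the blow-up, $s$-homogeneity from Corollary \ref{weiss.cor}, and the fact that the frozen variables drop out of $\mathcal J$. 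This contradicts \eqref{n*}. Hence $\mathcal H^{n-n^*+\varepsilon}(\mathrm{Sing})=0$ for every $\varepsilon>0$, i.e. the Hausdorff dimension of $\mathrm{Sing}(F(G))$ is at most $n-n^*$, and local finiteness of $\mathcal H^{n-n^*}$ follows from the same covering argument with the density bound. In the borderline case $n=n^*$ the same reasoning shows the singular set has dimension $0$; upgrading this to ``locally finitely many isolated points'' uses the uniform lower density estimate \eqref{density.eq} for $\Omega$ together with the fact that, were singular points to accumulate, a blow-up at an accumulation point would produce a translation-invariant singular homogeneous minimizer in dimension $n^*-1<n^*$, again contradicting \eqref{n*}.

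I would implement the dimension counting through the usual quantitative stratification: set $\mathcal S_j=\{X_0\in\mathrm{Sing}(F(G)):$ no blow-up of $G_0$ at $X_0$ is invariant along a $(j+1)$-plane$\}$, show by the strong-convergence blow-up analysis that $\mathcal S_j=\emptyset$ for $j>n-n^*$, and then invoke the Federer–Almgren estimate $\dim_{\mathcal H}\mathcal S_j\le j$ (this is the abstract step that is purely measure-theoretic given upper semicontinuity of the density and homogeneity of blow-ups, and is carried out in detail in \cite{weiss,MTV1}), which yields $\dim_{\mathcal H}\mathrm{Sing}(F(G))\le n-n^*$; the local finiteness of $\mathcal H^{n-n^*}$ when $n>n^*$, finiteness when $n=n^*$, and emptiness when $n<n^*$ then all come out of the same scheme. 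The main obstacle, and the only place where genuine work beyond quoting \cite{weiss} is needed, is verifying that the frozen/translation-invariant blow-up of a (homogeneous) global minimizer of $\mathcal J$ really is again a global minimizer of the lower-dimensional scalar problem \eqref{J} with a true singularity at the origin: this requires the strong $H^{1,a}$ convergence and the $L^1$-convergence of the positivity sets from Proposition \ref{compact2}, the characterization in Proposition \ref{caract} that collapses the vector problem to the scalar one, and the nondegeneracy \eqref{non-deg} to ensure the limit does not trivialize — once these are in place, the contradiction with the definition \eqref{n*} of $n^*$ is immediate and the proof concludes.
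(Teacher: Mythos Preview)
Your proposal is correct and follows essentially the same Federer--Almgren dimension-reduction scheme as the paper: the case $n<n^*$ is identical (via Corollary \ref{stable}), and the case $n=n^*$ is the same accumulation-point argument leading to a translation-invariant singular cone in a forbidden dimension.

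The one noteworthy difference is in the case $n>n^*$. You carry out the full iterated dimension reduction yourself, producing a blow-up invariant along an $(n-n^*+1)$-plane and then contradicting the definition of $n^*$ directly. The paper instead takes a shortcut: after a single blow-up, Proposition \ref{caract} says $|G_0|$ is already a global minimizer of the \emph{scalar} thin one-phase functional \eqref{J}, so the entire singular-set estimate is inherited for free from the known scalar results in \cite{EKPSS}. Your route is more self-contained; the paper's is shorter because the heavy lifting has already been done in the scalar literature. Both are valid, and neither proof actually establishes the full ``$\mathcal H^{n-n^*}$ locally finite'' claim beyond the dimension bound $\dim_{\mathcal H}\mathrm{Sing}\le n-n^*$.
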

\begin{proof}
  We split the proof in three cases:
  \begin{enumerate}
    \item if $n < n^*$, then by Corollary \ref{stable}, for every point $X_0 \in F(G)$ the blow-up limits are of the form \eqref{blow.reg} and so the Lebesgue density of $\{|G|>0\}$ at $X_0$ is $1/2$ and $F(G)=\mathrm{Reg}(F(G))$;
    \item if $n = n^*$, suppose there exists a sequence $(X_k)_k\subset \mathrm{Sing}(F(G))$ such that $X_k \to X_0\in \mathrm{Sing}(F(G))$. Set $r_k = |X_k-X_0|$ and consider the blow-up sequence
        $$
        G_k(X)=G_{X_k,r_k}(X)=\frac{1}{r_k^s}G(X_k+r_k X),
        $$
        which converges to some blow-up limit $G_0$.\\

        Case 1: if $\mathrm{Sing}(F(G_0))\setminus\{0\}\neq \emptyset$, then there is a direction $\xi \in S^{n-1}\cap F(G_0)$ such that $t\xi$ is a singular point of $F(G_0)$, for every $t>0$. Thus, by performing a blow-up analysis of $G_0$ at $\xi$ we will get a new blow-up limit $G_{00}$ such that $$
        \{t\xi\colon t\in \R\}\subset \mathrm{Sing}(F(G_{00}))
        $$
        and the whole function is invariant along the direction $\xi\in S^{n-1}$. Finally, by a standard dimension reduction argument, the restriction $|G_{00}|\lvert_{\R^{n}}$ is a global minimizer of \eqref{J} with a non-trivial singular set, in contradiction with the definition of $n^*$.\\

        Case 2: if $\mathrm{Sing}(F(G_0))\setminus\{0\}=\emptyset$, then $\xi_k=(X_0-X_k)r_k^{-1} \to \xi \in S^{n-1}\cap \mathrm{Reg}(F(G_0))$. By Corollary \ref{weiss.cor}, Proposition \ref{caract}, there exists $R>0$ such that
  $$
  W(\xi,G_{0},R)\leq  \omega_n\left(\frac{1}{2}+\frac{\delta}{4}\right) ,
  $$
    for $\delta \in (0,1/2)$ as in \eqref{dens}. By the convergence of $G_k$, we first get
     $$
  W(\xi,G_{k},R)\leq  \omega_n\left(\frac{1}{2}+\frac{\delta}{3}\right) ,
  $$
  which implies that
 $$
  W(\xi_k,G_{k},R) \leq W(\xi_0,G_{k},R)+ C(n,s)\left([|G|]^2_{C^{0,s}} +\tilde{\Lambda}n\omega_n\right)\left(\frac{|\xi_k-\xi_0|}{R}\right)^{\min\{2s,1\}},
  $$
  for $|\xi_0-\xi_k|<R$ and small enough. Thus, by choosing $k$ large enough, we get that
  $$
W(X_0,G, r_k r_0) = W(\xi_k,G_{k},r_0)\leq  \omega_n\left(\frac{1}{2}+\frac{\delta}{2}\right),
  $$
  in contradiction with the assumption $X_0 \in \mathrm{Sing}(F(G))$;
    \item if $n>n^*$, assume that for some $t>0$ we have $\mathcal{H}^{n-n^*+t}(\mathrm{Sing}(F(G)))>0$. Then, it can be proved that there exists some point $X_0 \in \mathrm{Sing}(F(G))$ and a
blow-up limit $G_0$ at $X_0$ such that $\mathcal{H}^{n-n^*+t}(\mathrm{Sing}(F(G_0)))>0$. Since $|G_0|$ is a global minimizer of \eqref{J}, this is in contradiction with the estimates of the dimension of the singular set in the scalar case (see \cite{EKPSS}).
  \end{enumerate}
\end{proof}
\section{Viscosity formulation on $\mathrm{Reg}(F(G))$}\label{visco}
In this short Section we recall some basic facts about the scalar thin one-phase free boundary problem, and we state the viscosity formulation satisfied by the normalized eigenfunctions on optimal domains near the free boundary.\\We show that the eigenfunctions are indeed viscosity solutions of thin-problem. Thus, the study of the regular part of the free boundary can be performed with the viscosity approach of \cite{DR, DSalmost, DSS, DT2} trying to reduce to the method carried out in the scalar case \cite{DSalmost} with the methodology introduced in \cite{DT2}, both for $s=1/2$.
\subsection{The viscosity problem associated to \eqref{J}} In this subsection we collect basic definitions and results for the scalar thin one-phase problem arising from the critical condition of the functional \eqref{J}. Consider
\begin{equation}\label{FB}\begin{cases}
L_a g = 0, \quad \textrm{in $B_1^+(g):= B_1 \setminus \{(x,0) : g(x,0)=0\} ,$}\\
\frac{\partial g}{\partial t^{s}}= \alpha, \quad \textrm{on $F(g):= \mathcal B_1 \cap \partial_{\R^n}\{(x,0) : g(x,0)>0\}$},
\end{cases}\end{equation}
where $\alpha>0$ and
\begin{equation}\label{limit}\dfrac{\p g}{\p t^{s}}(x_0):=\di\lim_{t \rightarrow 0^+} \frac{g(x_0+t\nu(x_0),0)} {t^{s}} , \quad \textrm{$X_0=(x_0,0) \in F(g)$},\end{equation}
with $\nu(x_0)$ the unit normal to the free boundary $F(g)$ at $x_0$ pointing toward $\mathcal{B}_1^+(g)$. For further details and proofs, we refer the reader to \cite{CRS,DR, DS1,DS2,DS3}.

First, we state the notion of viscosity solutions to \eqref{FB}, as introduced in \cite{DR}.

\begin{defn}Given $g, \varphi$ continuous, we say that $\varphi$
touches $g$ by below (resp. above) at $X_0 \in B_1$ if $g(X_0)=
\varphi(X_0),$ and
$$g(X) \geq \varphi(X) \quad (\text{resp. $g(X) \leq
\varphi(X)$}) \quad \text{in a neighborhood $O$ of $X_0$.}$$ If
this inequality is strict in $O \setminus \{X_0\}$, we say that
$\varphi$ touches $g$ strictly by below (resp. above).
\end{defn}

\begin{defn}\label{defsub} We say that $\varphi \in C(B_1)$ is a strict comparison subsolution (resp. supersolution) to \eqref{FB} if $\varphi$ is a  non-negative function in $B_1$ which is even with respect to $\{y=0\}$ and it satisfies
\begin{enumerate} \item $\varphi$ is $C^2$ and $L_a \varphi \geq 0$ (resp. $L_a \varphi \leq 0$) in $B_1^+(\varphi)$;
\item $F(\varphi)$ is $C^2$ and if $x_0 \in F(\varphi)$ we have
\be\label{expa}
\varphi(x_0+t\nu(x_0),z) = \alpha(x_0) U(t,z) + o(|(t,z)|^{s}), \quad \textrm{as $|(t,z)| \rightarrow 0^+,$}
\ee
 with $$\alpha(x_0) \geq \alpha\quad \text{(resp. $\alpha(x_0)\leq \alpha$)},$$ where $\nu(x_0)$ denotes the unit normal at $x_0$ to $F(\varphi)$ pointing toward $\mathcal{B}_1^+(\varphi);$
\item Either $\varphi$ is not harmonic in $B_1^+(\varphi)$ or $\alpha(x_0) >\alpha$ (resp. $\alpha(x_0)<\alpha$) at all $x_0 \in F(\varphi).$
\end{enumerate}
\end{defn}
Notice that if  $F(\varphi)$ is $C^2$ then any function $\varphi$ which is harmonic in $B^+_1(\varphi)$ has an asymptotic expansion at a point $X_0\in F(\varphi),$
\be\label{exp}
\varphi(x,y) = \alpha(x_0) U((x-x_0) \cdot \nu(x_0), y)  + o(|x-x_0|^{s}+y^{s}).
\ee
\begin{defn}\label{scalar}We say that $g$ is a viscosity solution to \eqref{FB} if $g$ is a  continuous non-negative function in $B_1$ which is even with respect to $\{y=0\}$ and it satisfies
\begin{enumerate} \item $L_a g = 0$ \quad in $B_1^+(g)$; \item Any (strict) comparison subsolution (resp. supersolution) cannot touch $g$ by below (resp. by above) at a point $X_0 = (x_0,0)\in F(g). $\end{enumerate}\end{defn}
Let us introduce the notion of $\eps$-domain variation first introduced in \cite{DR} for the case $s=1/2$ and then generalized in \cite{DSS}. This methodology allows to ``linearize" the problem \eqref{FB}, as long as an appropriate Harnack type inequality is established. Recently, this strategy has been adapted to the scalar thin almost-minimizers in \cite{DSalmost} and to  vectorial thin one-phase problem in \cite{DT2}.\\

We denote by $P$ the half-hyperplane $$P:= \{X \in \R^{n+1} : x_n \leq 0, y=0\}$$ and by $$L:= \{X \in \R^{n+1}: x_n=0, y=0\}.$$

Let $g$ be a  continuous non-negative function in $\overline{B}_\rho$. We define the multivalued map $\tilde g$ which associate to each $X \in \R^{n+1} \setminus P$ the set $\tilde g(X) \subset \R$ via the formula
\begin{equation}\label{deftilde} U(X) = g(X - w e_n), \quad \forall w \in \tilde g(X).\end{equation}
We write $ \tilde g(X)$ to denote any of the values in this set.

Notice that if g satisfies  \begin{equation}\label{flattilde}U(X - \eps e_n) \leq g(X) \leq U(X+\eps e_n) \quad \textrm{in $B_\rho,$ for $\eps>0$}\end{equation} then $\tilde g(X) \neq \emptyset$ for $X \in B_{\rho-\eps} \setminus P$  and $|\tilde g(X)| \leq \eps,$ thus  we can associate to $g$ a possibly multi-valued map $\tilde{g}$ defined at least on $B_{\rho-\eps} \setminus P$ and taking values in $[-\eps,\eps]$ which satisfies\begin{equation}\label{til} U(X) = g(X - \tilde g(X) e_n).\end{equation}
 Moreover if $g$ is strictly monotone along the $e_n$-direction in $B^+_\rho(g)$, then $\tilde{g}$ is also single-valued.\\
 We refer to \cite[Section 2]{DSS} and \cite[Section 3]{DR} for other basic properties of the $\eps$-domain variations.

\subsection{The viscosity formulation of the eigenvalue problem.}\label{general.linear} Consider now the vector valued thin type problem
\be \begin{cases} \label{VOPnew}
-L_a G =0 & \text{in $B_r\setminus \{y=0\}$;}\\
-\partial^a_y G = \sigma G & \text{in $\mathcal{B}_r^+(G)$;}\\
 \frac{\partial}{\partial t^{s}} |G|=\alpha & \text{on $F(G)\cap \mathcal{B}_r,$}
\end{cases}\ee
where $\alpha>0$ and $\sigma=(\sigma^1,\dots,\sigma^m)$, with $\sigma^i>0$ for $i=1,\dots,m$.

\begin{defn}\label{solution} We say that $G=(g^1,\ldots,g^m) \in C(B_r, \R^m)$ is a viscosity solution to \eqref{VOPnew} in $B_r$ if each $g^i$ is even with respect to $\{y=0\}$,
\be \label{visc1}
\begin{cases}
  -L_a g^i=0 & \mbox{in } B_r\setminus \{y=0\} \\
  -\partial^a_y g^i = \sigma^i g^i & \mbox{in }\mathcal{B}_r^+(G)
\end{cases}, \quad \forall i=1,\ldots,m,
\ee
and the free boundary condition is satisfied in the following sense. Given $X_0 \in F(G)$, and a continuous
function $\varphi$ in a neighborhood of $X_0$, then
\begin{enumerate}
\item If $\varphi$ is a strict comparison subsolution to \eqref{FB}, then for all unit directions $f$, $\langle G, f\rangle $ cannot be touched by below by $\varphi$ at $X_0.$
\item If $\varphi$ is a strict comparison supersolution to \eqref{FB}, then $\abs{G}$ cannot be touched by above by $\varphi$ at $X_0.$
\end{enumerate}
\end{defn}

In the next proposition we prove that the vector of normalized eigenfunctions associated to shape optimizers of \eqref{min.shape} are indeed viscosity solutions to \eqref{VOPnew} in $B_1$ for some specific $\sigma \in \R^m$ and $\alpha>0$.

\begin{prop}\label{regolar.visc}
Let $G\in H^{1,a}_\Omega(\R^{n+1};\R^m)$
be the vector of normalized eigenfunctions associated to a shape optimizer $\Omega$ of \eqref{min.shape}. Then, $G$ is a viscosity solution of
\be \begin{cases} \label{VOPnew.eigen}
-L_a G =0 & \text{in $\R^{n+1}\setminus \{y=0\}$;}\\
-\partial^a_y G = \lambda G & \text{in $\{|G|>0\}\cap \R^n$;}\\
 \frac{\partial}{\partial t^{s}} |G|= \frac{\sqrt{\Lambda}}{\Gamma(1+s)} & \text{on $F(G)$}
\end{cases}
\quad\text{with $\lambda=(\lambda_1^s(\Omega),\dots,\lambda_m^s(\Omega))$,}
\ee
 in the sense of Definition \ref{solution}.
\end{prop}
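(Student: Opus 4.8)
The PDE part of the viscosity condition (the first two lines of \eqref{VOPnew.eigen}) is immediate: each $g^i$ is, by construction, the $L_a$-extension of the $i$-th normalized eigenfunction $v^i$, hence solves \eqref{equation.eigen.ext} with $\lambda=\lambda_i^s(\Omega)$, and $\lambda_i^s(\Omega)>0$ for all $i$; the even symmetry in $y$ was arranged when we passed to the extension. So the whole content is the free boundary condition, i.e.\ items (1) and (2) of Definition \ref{solution}. The strategy is to show that a (strict) comparison subsolution $\varphi$ to \eqref{FB} with $\alpha=\sqrt{\Lambda}/\Gamma(1+s)$ cannot touch $\langle G,f\rangle$ from below at a point $X_0=(x_0,0)\in F(G)$, and symmetrically that a strict supersolution cannot touch $|G|$ from above; both are argued by contradiction via the almost-minimality condition \eqref{almost.local.new} together with the blow-up classification of Proposition \ref{caract} and Corollary \ref{weiss.cor}. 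The key quantitative input is that, by Proposition \ref{caract}, at a free boundary point where the Lebesgue density of $\{|G|>0\}$ is $1/2$ every blow-up limit equals $\tfrac{\sqrt\Lambda}{\Gamma(1+s)}U(\langle x,\nu\rangle,y)\,\xi$, so the normalized ``thin gradient'' $\partial|G|/\partial t^s$ at such points is exactly $\sqrt\Lambda/\Gamma(1+s)$; and at singular points the density is strictly between $1/2$ and $1$, where no smooth comparison surface can be tangent.

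\textbf{Step 1 (reduction to regular free boundary points).} First I would observe that it suffices to verify the touching conditions at points $X_0\in \mathrm{Reg}(F(G))$. Indeed, a strict comparison subsolution or supersolution $\varphi$ has $F(\varphi)$ of class $C^2$ and satisfies the expansion \eqref{expa}; if $\varphi$ touches $\langle G,f\rangle$ from below (resp.\ $|G|$ from above) at $X_0$, then comparing Lebesgue densities of $\{|G|>0\}$ and of $\{\varphi>0\}$ at $X_0$ forces the density of $\{|G|>0\}$ at $X_0$ to be $\le 1/2$ (resp.\ $\ge 1/2$); combined with the density estimates of Corollary \ref{density} and the dichotomy of Proposition \ref{caract} (densities are either $1/2$ or in $[1/2+\delta,1-\delta]$) this pins the density to $1/2$, i.e.\ $X_0\in\mathrm{Reg}(F(G))$ — here is where singular points get excluded. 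I would then fix such a regular $X_0$ and use Corollary \ref{weiss.cor}/Proposition \ref{caract} to get that every blow-up $G_0$ of $G$ at $X_0$ equals $\tfrac{\sqrt\Lambda}{\Gamma(1+s)}U(\langle x,\nu_0\rangle,y)\,\xi_0$ for some unit $\xi_0\in\R^m$ and $\nu_0\in S^{n-1}\times\{0\}$.

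\textbf{Step 2 (the subsolution case).} Suppose a strict comparison subsolution $\varphi$ touches $\langle G,f\rangle$ from below at $X_0$. Passing to the rescalings $G_{X_0,r}$ and $\varphi_{X_0,r}(X)=r^{-s}\varphi(X_0+rX)$, along a subsequence $G_{X_0,r}\to G_0=\tfrac{\sqrt\Lambda}{\Gamma(1+s)}U(\langle x,\nu_0\rangle,y)\xi_0$ (Proposition \ref{compact2}) while $\varphi_{X_0,r}\to \beta\,U(\langle x,\nu\rangle,y)$ with $\beta=\alpha(x_0)\ge\sqrt\Lambda/\Gamma(1+s)$ (from \eqref{expa}); the touching passes to the limit, giving $\beta\,U(\langle x,\nu\rangle,y)\le \tfrac{\sqrt\Lambda}{\Gamma(1+s)}U(\langle x,\nu_0\rangle,y)\,\langle\xi_0,f\rangle$ with equality at $0$, which forces $\nu=\nu_0$, $\langle\xi_0,f\rangle=1$ and $\beta=\sqrt\Lambda/\Gamma(1+s)$. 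If $\varphi$ is \emph{strictly} a subsolution (condition (3) of Definition \ref{defsub}: either $\varphi$ not $L_a$-harmonic near $F(\varphi)$, or $\alpha(x_0)>\alpha$ strictly), this equality chain is impossible — in the first case one perturbs $\varphi$ to a strict competitor and contradicts the almost-minimality \eqref{almost.local.new} by an energy/measure comparison at small scale (the $\sigma r^n$-type error is lower order compared with the gain), exactly in the spirit of \cite{DSalmost,DT2}; in the second case the coefficient mismatch is already a contradiction. The supersolution case touching $|G|$ from above is entirely analogous, using instead that $|G|$ is $L_a$-subharmonic in $B_1^+(G)$ (Remark \ref{Gsub}) to run the comparison from above.

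\textbf{Main obstacle.} The delicate point is Step 2: turning the ``strictness'' in Definition \ref{defsub} into an actual contradiction with the \emph{almost}-minimality \eqref{almost.local.new} rather than genuine minimality. One must produce, at a suitable small scale $r$, an admissible competitor $\tilde G$ (typically a small radial push of the contact surface $F(\varphi)$ combined with the $L_a$-harmonic replacement) for which $\mathcal J(\tilde G,B_r)$ beats $\mathcal J(G,B_r)$ by an amount that dominates the error term $\sigma \tfrac{2}{d_s}\sum_i\lambda_i^s(\Omega)\,\|\tilde G-G\|_{L^1(\mathcal B_r)}$, which is $O(r^{n+s})$, while the competitor gain from the free-boundary mismatch is $O(r^n)$ — so the error is indeed lower order, but making this rigorous requires the $C^{0,s}$ optimal regularity (Proposition \ref{optimalreg}), non-degeneracy (Proposition \ref{non-deg2}), and the strong convergence of blow-ups (Proposition \ref{compact2}) to control all boundary and energy terms uniformly. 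This is the heart of adapting the viscosity machinery of \cite{DSalmost,DT2} to the present setting, and it is where most of the work lies.
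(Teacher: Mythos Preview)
Your blow-up strategy is the right one and matches the paper, but two points deserve correction.

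\textbf{Step 1 is both unnecessary and partly wrong.} In the subsolution case, if $\varphi$ touches $\langle G,f\rangle$ from below at $X_0$, then $\{\varphi>0\}\subset\{\langle G,f\rangle>0\}\subset\{|G|>0\}$ near $X_0$, so the Lebesgue density of $\{|G|>0\}$ at $X_0$ is at least $1/2$, not at most $1/2$ as you claim; hence the dichotomy of Proposition~\ref{caract} does \emph{not} pin $X_0$ to $\mathrm{Reg}(F(G))$. The paper never reduces to regular points. Instead, after blowing up, the touching forces $\{|G_0|>0\}\cap\{y=0\}\supset\{x_n>0\}$ (subsolution) or $\subset\{x_n>0\}$ (supersolution). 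The crucial step you are missing is Remark~\ref{eigen.remark}: since $|G_0|$ is a nonnegative $s$-homogeneous global minimizer, the monotonicity of the first eigenvalue of the weighted spherical Laplacian with respect to inclusion of the cap forces the positivity cone to be \emph{exactly} the half-space. This yields $G_0=\tfrac{\sqrt{\Lambda}}{\Gamma(1+s)}U(x_n,y)\xi$ with no prior knowledge of regularity of $X_0$.

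\textbf{Your ``main obstacle'' is over-engineered.} The paper never invokes a competitor construction or the almost-minimality condition \eqref{almost.local.new} at this stage; that work has already been spent in proving Proposition~\ref{caract}. Once the blow-up limit is identified as $\tfrac{\sqrt{\Lambda}}{\Gamma(1+s)}U\,\xi$, the inequality $\varphi_0=\alpha(x_0)U\le\langle G_0,f\rangle=\tfrac{\sqrt{\Lambda}}{\Gamma(1+s)}\langle\xi,f\rangle U$ with $|\langle\xi,f\rangle|\le1$ immediately contradicts $\alpha(x_0)>\tfrac{\sqrt{\Lambda}}{\Gamma(1+s)}$. The paper (following the standard viscosity practice of \cite{DR,DSS}) takes the strict coefficient case as the effective content of ``strict comparison subsolution''; the alternative where $\varphi$ is not $L_a$-harmonic is disposed of by the usual interior perturbation of $\varphi$ (replacing it near $X_0$ by a function that is $L_a$-harmonic and still touches, thereby strictly increasing the expansion coefficient), not by building a competitor for $G$. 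Your proposed energy-gain-versus-$O(r^{n+s})$-error computation, while not incorrect in principle, is a significant detour.
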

\begin{proof}
By \eqref{equation.eigen.ext} we already know that the first two conditions of \eqref{visc1} are satisfied. Hence, let $\varphi$ be a strict comparison subsolution to \eqref{FB}, and suppose by contradiction that there exists a unit direction $f$ in $\R^m$ such that $\langle G,f\rangle$ is touched by below by $\varphi$ at $Y_0 \in F(G)$.\\
Consider now the blow-up sequences centered in the touching point
$$
G_{k}(X) = \frac{1}{r_k^{s}}G(Y_0 + r_k X) \quad\mbox{and}\quad \varphi_{k}(X) = \frac{1}{r_k^{s}}\varphi(Y_0 + r_k X),
$$
for some sequence of radii $r_k \to 0^+$. Up to a subsequence, they converge respectively to some $G_{0}$ and $\varphi_{0}$ uniformly on every
compact set of $\R^{n+1}$. By Definition \ref{defsub}, we get, up to rotation, that
\be \label{absurd}
\varphi_{0}(X)=\alpha U\left(x_n,y\right)\quad\text{with $\alpha >\frac{\sqrt{\Lambda}}{\Gamma(1+s)}$},
\ee
On the other side, by Proposition \ref{caract} the norm $|G_{0}|$ is a $s$-homogeneous
global minimizer of the scalar thin one-phase functional \eqref{J} such that $\{|G_0|>0\}\cap \{y=0\}\supset \{y=0,x_{n}> 0\}$. By Remark \ref{eigen.remark}, we deduce that $\{|G_0|=0\}\cap \{y=0\} = P$ and consequently
\be\label{asbefore}
      G_{0}(X)=\frac{\sqrt{\Lambda}}{\Gamma(1+s)}U(x_n, y)\xi\quad\mbox{where}\quad\xi \in \R^m, \abs{\xi}=1.
      \ee
      Hence, we immediately deduce that
$$
\varphi_0(X)= \alpha U(x_n,y) \leq \langle G_0, f \rangle = \frac{\sqrt{\Lambda}}{\Gamma(1+s)}\langle \xi, f\rangle U(x_n,y),
$$
in contradiction with the hypothesis \eqref{absurd}.\\
On the other hand, let $\varphi$ be a comparison strict supersolution and let us assume that $\abs{G}$ is touched by above by $\varphi$ at some $Y_0 \in F(G)$. By the same blow-up procedure we get, up to rotation, that
$$
\varphi_{0}(X)=\alpha U\left(x_n,y\right)\quad\text{with $\alpha <\frac{\sqrt{\Lambda}}{\Gamma(1+s)}$},
$$
and that $|G_{0}|$ is a $s$-homogeneous
global minimizer of the scalar thin one-phase functional \eqref{J} such that $\{|G_0|=0\}\cap \{y=0\}\subset P$. As before, we get \eqref{asbefore} and since $\abs{G_0}\leq \varphi_0$, the absurd follows from the fact that $\alpha<\frac{\sqrt{\Lambda}}{\Gamma(1+s)}$.
\end{proof}
\begin{rem}\label{rescale.visc} We remark that if $G$ is a viscosity solution to \eqref{VOPnew} in $B_r(X_0)$ for some $\alpha>0, \lambda \in \R^m, X_0 \in F(G)$, then $$G_{X_0,r}(X) = \frac{1}{r^s}G(X_0+rX), \quad X \in B_1$$ is a viscosity solution to \eqref{VOPnew} in $B_1$ with $\alpha>0$ and $\tilde{\sigma}=r^s \sigma \in \R^m$.\\
Similarly, by Proposition \ref{regolar.visc}, given a vector $G$ of normalized eigenfunctions associated to a shape optimizer $\Omega$ and $X_0 \in F(G), r>0$, we get that $G_{X_0,r}$ is a viscosity solution to
\be\label{VOPnew.eigen.resc}
\begin{cases}
-L_a G_{X_0,r} =0 & \text{in $\R^{n+1}\setminus \{y=0\}$;}\\
-\partial^a_y G_{X_0,r} = \left(\lambda r^s \right)G_{X_0,r} & \text{in $\{|G_{X_0,r}|>0\}\cap \R^n$;}\\
 \frac{\partial}{\partial t^{s}} |G_{X_0,r}|= \frac{\sqrt{\Lambda}}{\Gamma(1+s)} & \text{on $F(G_{X_0,r})$},
\end{cases}
\ee
with $\lambda=(\lambda_1^s(\Omega),\dots,\lambda_m^s(\Omega))$. The behaviour of \eqref{VOPnew}, under translations and rescaling, reflects that for $r>0$ sufficiently small, the solutions near $F(G)$ resemble the ones with $\sigma=0$.
\end{rem}
\begin{rem}\label{sub} Notice that, if $G$ is a viscosity solution to \eqref{VOPnew} for some $\alpha>0,\sigma\in\R^m$, as in Remark \ref{Gsub}, the function
$$
|G| + \frac{|\sigma|}{1-a}|y|^{1-a}
$$
is a viscosity subsolution to the scalar thin one-phase problem \eqref{FB} in the sense of Definition \ref{scalar}. Indeed,
by the free boundary condition in Definition \ref{solution}, we easily deduce the validity of its scalar counterpart in Definition \ref{scalar}.
\end{rem}
\section{Regularity of $\mathrm{Reg}(F(G))$}\label{reg.sect}
In this Section we start by introducing the basic tools for our study of the regular part $\mathrm{Reg}(F(G))$ and then we state and prove an Harnack type inequality which is crucial for the linearization. Lastly, we conclude the Section with an improvement of flatness type lemma, from which the main result of $C^{1,\alpha}$ regularity for flat free boundary follows by standard arguments (see for example \cite{DT, DT2}).
\subsection{Flat solutions}
 In view of Definition \ref{000}, Proposition \ref{regolar.visc}, Remark \ref{rescale.visc} and the non-degeneracy property, we can reduce our analysis to understanding flat viscosity solutions defined below.
\begin{defn}
Let $\alpha=1$ and $G$ be a viscosity solution to \eqref{VOPnew} in $B_1$ for some $\sigma>0$. We say that $G$ is $\eps$-flat in the $(f,\nu)$-directions in $B_1$, if there exist some unit directions $f\in \R^m, \nu \in \R^n$,
\be \label{flat} |G(X) - U(\langle x,\nu\rangle,y) f| \leq  \eps \quad \text{in $B_1$,}
\ee
and \be\label{nondegenra}
 |G| \equiv 0 \quad \text{in $\mathcal{B}_1 \cap \{\langle x, \nu \rangle  < - \eps\}$}.\ee
\end{defn}
For the sake of simplicity we consider the notion of $\eps$-flat solution for $\alpha=1$, but, up to a multiplicative constant, this is not a restrictive assumption.

In \cite{DT2}, in collaboration with D. De Silva we introduce a two different approach in order to prove the validity of an Harnack type inequality near the free boundary in the case of vectorial problems. More precisely, in the thin case the reduction to a scalar problem just requires the construction of two appropriate barriers. In our case, we will adapt the same strategy to the case of almost-minimizer.

\begin{remark}\label{positive1}
  Let $G\in H^{1,a}_\Omega(\R^{n+1};\R^m)$
be the vector of normalized eigenfunctions on a shape optimizer $\Omega$. Then, by the minimum principle \cite[Theorem 2.8.]{brascoparini}, we already know that
  $$
  g^1 >0 \quad\text{in $\{|G|>0\}$.}
  $$
This observations is fundamental and allows to treat $g^1$ as a supersolution to the  scalar thin one-phase problem \eqref{FB} in the sense of Definition \ref{defsub}. Notice that, as in \cite[Proposition 6.2.]{DT2}, the positivity of $g^1$ can actually be proved just by assuming the flatness condition \eqref{flat}.
\end{remark}

The following result translates the flatness hypothesis on the vector-valued function $G$ into the property that its first component is trapped between nearby translation of a one-plane solution of \eqref{FB}, while the remaining ones are sufficiently small.
\begin{lem}\label{translate}
Let $G$ be a viscosity solution to \eqref{VOPnew} in $B_1$ for $\alpha=1, \sigma >0$. There exists $\eps_0>0$ universal such that, if G is $\eps_0$-flat in the $(f^1, e_n)$-directions in $B_1$, then\begin{enumerate}
\item for $i=2,\ldots,m, $ \be |g^i| \leq C \eps_0 U(X+\eps_0 e_n) \quad \text{in $B_{1/2};$}\ee
\item \be U(X-C\eps_0 e_n) \leq g^1 \leq |G| \leq U(X+C\eps_0 e_n) \quad \text{in $B_{1/2}$},\ee
\end{enumerate}
with $C>0$ universal.
\end{lem}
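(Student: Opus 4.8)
The idea is to derive both statements directly from the $\eps_0$-flatness hypothesis \eqref{flat}--\eqref{nondegenra}, using only elementary comparison arguments for $L_a$-harmonic functions together with the positivity of $g^1$ recorded in Remark~\ref{positive1}. First I would fix $\eps_0>0$ small and observe that, by \eqref{flat}, every component satisfies $|g^i(X)-\delta_{i1}U(\langle x,e_n\rangle,y)|\le\eps_0$ in $B_1$; in particular $|g^i|\le\eps_0$ for $i\ge 2$, and $U(\langle x,e_n\rangle,y)-\eps_0\le g^1\le |G|\le U(\langle x,e_n\rangle,y)+\eps_0$ in $B_1$. The nondegeneracy \eqref{nondegenra} forces $\{|G|>0\}\cap\mathcal B_1\subset\{\langle x,e_n\rangle\ge-\eps_0\}$, so all the $g^i$ vanish on $\mathcal B_1\cap\{\langle x,e_n\rangle<-\eps_0\}$.

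The key analytic input is the comparison of $U(X\pm c\eps_0 e_n)$ with the flat bound $U(X)\pm\eps_0$ on a suitable interior set, exploiting that $U$ is $L_a$-harmonic in its positivity set, grows like $\mathrm{dist}^s$ from $P$, and is strictly monotone in the $e_n$-direction. Concretely: inside the region where $\langle x,e_n\rangle$ is bounded below away from $-\eps_0$ (say in $\{\langle x,e_n\rangle\ge \eps_0^{1/2}\}\cap B_{3/4}$), one has $\partial_{e_n}U\ge c>0$, hence $U(X+C\eps_0 e_n)\ge U(X)+cC\eps_0\ge g^1(X)$ for $C$ large; near the free boundary $\{\langle x,e_n\rangle\approx 0\}$ the inequality $U(X-C\eps_0 e_n)\le g^1(X)$ needs a barrier argument, building a strict comparison subsolution of \eqref{FB} of the form $U(\langle x,e_n\rangle - C\eps_0 + \text{(small quadratic correction)},y)$ that lies below $g^1$ and slides; here the positivity of $g^1$ in $\{|G|>0\}$ (Remark~\ref{positive1}), its being a supersolution of the scalar problem in the sense of Definition~\ref{defsub} modulo the harmless $|y|^{1-a}$ term of Remark~\ref{sub}, and the viscosity free boundary condition in Definition~\ref{solution} are exactly what prevents the barrier from crossing. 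This gives statement (2). For statement (1), since each $g^i$ ($i\ge2$) is $L_a$-harmonic across $\mathcal B_1^+(G)$, even in $y$, vanishes continuously on $\mathcal B_1\setminus\{|G|>0\}\supset\mathcal B_1\cap\{\langle x,e_n\rangle<-\eps_0\}$, and is bounded by $\eps_0$, the boundary Harnack / Carleson-type estimate for $L_a$-harmonic functions in the slit domain $B_1\setminus P$ (as in \cite{CRS,DS1}) yields $|g^i|\le C\eps_0\, U(X+\eps_0 e_n)$ in $B_{1/2}$: the reference $L_a$-harmonic function that vanishes on $\{\langle x,e_n\rangle\le-\eps_0,y=0\}$ and is of unit size away from it is precisely a translate of $U$, and $g^i/\eps_0$ is comparable to it up to universal constants by the quotient estimate.

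The main obstacle is the lower bound $U(X-C\eps_0 e_n)\le g^1$ in (2) near the contact set $F(G)$, i.e.\ making the sliding-barrier comparison rigorous in the viscosity framework: one must check that the competitor $U(\langle x,e_n\rangle - C\eps_0,y)$, suitably perturbed to become a \emph{strict} subsolution in the sense of Definition~\ref{defsub} (perturbing both the interior operator and the coefficient $\alpha(x_0)$ in the expansion \eqref{expa}), cannot touch $\langle G,f^1\rangle=g^1$ from below at a free boundary point, which is exactly the hypothesis built into Definition~\ref{solution}(1); away from the free boundary it is a routine maximum-principle comparison for $L_a$-harmonic functions using the flatness bound on the fixed boundary $\partial B_{3/4}$ and the $|y|^{1-a}$ correction from Remark~\ref{sub} to absorb the term $-\partial^a_y g^1=\sigma^1 g^1\ge 0$. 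Once both inequalities of (2) are in place, (1) follows by the boundary Harnack argument sketched above, and by choosing $\eps_0$ universally small we absorb all the perturbative corrections into the universal constant $C$.
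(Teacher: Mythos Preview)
Your overall strategy is correct and aligned with the paper's, but the paper's execution is more direct and there are two places where your description is slightly off.

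For statement~(1), you assert that each $g^i$ ($i\ge 2$) is ``$L_a$-harmonic across $\mathcal B_1^+(G)$''. This is not true as stated: $g^i$ satisfies $-\partial^a_y g^i=\sigma^i g^i$ on $\mathcal B_1^+(G)$, so it is not $L_a$-harmonic through the thin space. The paper repairs this exactly as you would expect: since $|g^i|\le\eps_0$, the function $h^i=|g^i|+\tfrac{\sigma^i\eps_0}{1-a}|y|^{1-a}$ is genuinely $L_a$-subharmonic in $B_1$, vanishes on $\{x_n\le-\eps_0,\,y=0\}$, and is bounded by $C\eps_0$; one then compares $h^i$ with the $L_a$-harmonic function on the slit domain and applies boundary Harnack against $U(\cdot+\eps_0 e_n)$. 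This is precisely your boundary Harnack argument, once the correction term is inserted.

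For statement~(2), the paper avoids the viscosity sliding-barrier machinery entirely. The lower bound $U(X-C\eps_0 e_n)\le g^1$ comes from the observation (via Remark~\ref{positive1}) that $-\partial^a_y g^1=\sigma^1 g^1\ge 0$, so $g^1$ is honestly $L_a$-\emph{super}harmonic in $B_1^+(g^1)$; one then quotes the scalar comparison of \cite[Lemma~5.3]{DR} directly, with no need to invoke Definition~\ref{solution}. The upper bound $|G|\le U(X+C\eps_0 e_n)$ uses that $|G|+\overline C|y|^{1-a}$ is $L_a$-subharmonic (Remark~\ref{Gsub}), followed again by comparison with the harmonic replacement on the slit domain and boundary Harnack. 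Note in particular that your placement of the $|y|^{1-a}$ correction is reversed: it is needed to make $|G|$ (and $|g^i|$) subharmonic, not to handle $g^1$, which is already superharmonic without any correction. Your viscosity-barrier route would also succeed, but it is heavier than what is actually required here.
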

\begin{proof}
For the bound $(i),$ notice first that $-\partial^a_y |g^i|\leq \lambda^i |g^i|\leq \lambda^i \eps_0$ in $B_1$ and so the function
$$
h^i=|g^i| + \frac{\lambda^i \eps_0}{1-a} |y|^{1-a}
$$
is $L_a$-subharmonic in $B_1$  and it satisfies
$$
h^i \leq C\eps_0, \quad h^i \equiv 0 \quad\text{on $\{X \in \mathcal{B}_1 \colon x_n \leq -\eps_0\}$},
$$
for some dimensional constant $C>0$. Then, let $v$ be the $L_a$-harmonic function in $B_1 \setminus \{X \in \mathcal{B}_1\colon x_n <-\eps_0\}$ such that
$$
v=C\eps_0 \quad\text{on $\partial B_1$}, \quad v=0 \quad\text{on $\{X \in \mathcal{B}_1\colon x_n \leq -\eps_0\}$}.
$$
Hence, by comparison principle $h^i\leq v$ in $B_1$ and so $|g^i|\leq v$ in $B_1$. Then by the boundary Harnack inequality, say for $\bar{X} = \frac12 e_n$, we deduce
$$
v(X)\leq \bar{C}\frac{v(\bar{X})}{U(\bar{X}+\eps_0 e_n)}U(X+\eps_0 e_n) \leq C \eps_0 U(X+\eps_0 e_n) \quad\text{in $B_{1/2}$},
$$
with $C>0$ universal.\\ For the bounds in $(ii),$ we already know by Remark \ref{positive1} that $g^1$ is strictly positive and it satisfies
$$
U(X)-\eps_0 \leq g^1(X) \leq U(X)+\eps_0 \quad\text{in $B_1$},
$$
Moreover, by taking $\eps_0$ possibly smaller, we have
$$
\{X \in \mathcal{B}_1 \colon x_n\leq -\eps_0  \}\subset
\{X \in \mathcal{B}_1 \colon g^1=0 \}
\subset
\{X \in \mathcal{B}_1 \colon x_n\leq \eps_0  \}.
$$
Now, since $-\partial^a_y g^1 \geq 0$ in $\mathcal{B}_1^+(g^1)$ we get that $g^i$ is $L_a$-superharmonic in $B_1^+(g^1)$.\\ Thus, according to the proof of \cite[Lemma 5.3.]{DR} for the case $s=1/2$, we get the lower bound in $(ii)$: here details are omitted as they apply verbatim by considering harmonic function with respect to the $L_a$-operator.\\
Now, since $g^1\leq |G|$, the upper bound follows by exploiting the observations of Remark \ref{Gsub}. Thus, consider
$$
v(x,y)=|G|(x,y) + \overline{C} |y|^{1-a}, \quad\mbox{with }\overline{C}=\frac{\sqrt{k}\tilde{C}_{n,s}^{n/4s}}{1-a} \lambda^s_k(\Omega)^{\frac{n+4s}{4s}}.
$$
Let $f_1$ be the $L_a$-harmonic function in the set
$$
B_1^{\eps_0} = B_1\setminus \{X \in \mathcal{B}_1\colon x_n \leq -\eps_0\},
$$
such that $f_1=v$ on $\partial B_1$ and $f_1=0$ on $\{X\in\mathcal{B}_1\colon x_n\leq -\eps_0\}$. Since $v$ is $L_a$-subharmonic, by the maximum principle it follows that
\be\label{due}
v\leq f_1\quad\text{in $\overline{B_1}$} \longmapsto |G|\leq f_1\quad\text{in $\overline{B_1}$}.
\ee
Finally, we can prove that there exits $C>0$ universal such that
$$
f_1(X) \leq U(X+C\eps_0 e_n)\quad\text{in $B_{1/2}$},
$$
which implies the claimed result. However, since the proof of this inequality follows essentially the ideas of the one of \cite[Lemma 5.3.]{DR}, by replacing the Laplacian with the $L_a$-operator, we omit the details.
\end{proof}
\subsection{Harnack type inequality}
In the case of local minimizers of vectorial problems with thin free boundary \cite[Lemma 6.6.]{DT2}, with D. De Silva we prove an Harnack inequality by using the observation that $|G|$ and $g^1$ are respectively a subsolution and a supersolution for the scalar one phase problem in $B_1$, which means that the strategy of the scalar case applies straightforwardly also in that context (see \cite[Lemma 2.4]{DT} for the similar result for the local case).\\
In the case of normalized eigenfunctions for the fractional Laplacian, we adapt the same strategy by using that locally the eigenvalue problem \eqref{VOPnew} resembles the vectorial one-phase problem in \cite{DT2} (see Remark \ref{rescale.visc}).\\
Therefore, most details are omitted as the results of \cite{DSS} (or \cite{DR} for $s=1/2$) can be applied directly, after observing that in their proofs it is enough for the function to be either a subsolution or a supersolution of \eqref{FB} (depending on the desired bound).\\ As pointed out in \cite{DSalmost}, alternatively one could prove the validity of an Harnack inequality by specifying the size of the neighborhood around the contact point between the
solution and some explicit barriers.
\begin{thm}\label{harnack}
Let $\alpha=1$ and $G$ be a solution of \eqref{VOPnew} in $B_1$ satisfying $$
\mathcal{J}(G,B_1)\leq  \mathcal{J}(\tilde{G},B_1)+\sigma,
$$
for every $\tilde{G}-G \in H^{1,a}_0(B_1;\R^m)$, with $\sigma \leq \eps^{2(n+2)}$. Then, there exists a universal constant $\overline{\eps}>0$ such that if
\be\label{flat_2} U(X+\eps a_0 e_n) \leq g^1 \leq |G| \leq U(X+\eps b_0 e_n) \quad \text{in $B_r(X_0) \subset B_1$,}
\ee  with $$\eps(b_0 -a_0) \leq  \bar\eps r,$$
and
\be \label{smalli}|g^i|\leq r^{s}\left( \frac{b_0-a_0}{r}\eps \right)^{5/8}\quad \text{in $B_{1/2}(X_0)$, \quad i=2,\ldots, m,}\ee then
\be\label{flat_2harnack} U(X+\eps a_1 e_n)\leq g^1 \leq |G| \leq U(X+\eps b_1 e_n) \quad \text{ in $B_{\eta r}(X_0)$,}
\ee
with $$a_0 \leq a_1 \leq b_1 \leq b_0, \quad  b_1 - a_1 = (1-\eta)(b_0-a_0) ,$$ for a small universal constant $\eta>0$.
 \end{thm}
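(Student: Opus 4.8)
The plan is to adapt the proof of the Harnack inequality for the scalar thin one-phase problem from \cite{DSS} (see \cite{DR} for $s=1/2$), in the almost-minimizer version of \cite{DSalmost} and the vector-valued version of \cite[Lemma 6.6]{DT2}. After an internal rescaling I may assume $B_r(X_0)=B_1$ and $X_0=0$: by Remark \ref{rescale.visc} the rescaled vector is still a viscosity solution of \eqref{VOPnew} with source $r^s\lambda$, so near $F(G)$ the problem behaves like the one with $\sigma=0$, and the minimality defect stays of a size controlled by $\eps^{2(n+2)}$. The structural ingredients I would use are: (a) by Remark \ref{positive1} and $-\partial^a_y g^1=\lambda^1 g^1\ge 0$, the first component $g^1$ is a supersolution of \eqref{FB}; (b) by Remark \ref{sub}, $|G|+\tfrac{|\lambda|}{1-a}|y|^{1-a}$ is a subsolution of \eqref{FB}; (c) by Lemma \ref{translate} (applicable since \eqref{flat_2} holds) together with \eqref{smalli}, one has $\sum_{i\ge2}(g^i)^2\le C(b_0-a_0)^{5/4}\eps^{5/4}$ in $B_{1/2}$, of strictly smaller order than the width $\eps(b_0-a_0)$ of the flatness interval; and (d) the comparison with the $L_a$-harmonic replacement from Lemma \ref{harmonic.rep}, whose error $\omega(\sigma)\le C\sigma^{1/(n+2)}\le C\eps^{2}$ is negligible against $\eps(b_0-a_0)$ once $\bar\eps$ is small.

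I would then carry out the standard one-step dichotomy at the interior point $\bar X=\tfrac12 e_n$, where $U(\bar X+\eps a_0 e_n)\ge c_0(n)>0$, comparing $g^1(\bar X)$ with the midpoint barrier $U\big(\bar X+\eps\tfrac{a_0+b_0}{2}e_n\big)$. In the first alternative, $g^1(\bar X)\ge U\big(\bar X+\eps\tfrac{a_0+b_0}{2}e_n\big)$: since $g^1$ is a supersolution lying above $U(X+\eps a_0 e_n)$ and is quantitatively detached from it at $\bar X$, a comparison from below with a strict comparison subsolution obtained by the explicit perturbation of $U$ of \cite[Section 2]{DSS} — of the form $U(X+\eps a_1 e_n)+(\text{lower order})$ with $a_1=a_0+\eta(b_0-a_0)$ and free boundary constant $\alpha(x_0)=1+c\eta$ in \eqref{expa} — together with the boundary Harnack inequality for $L_a$ and ingredient (d), yields $g^1\ge U(X+\eps a_1 e_n)$ in $B_{\eta}$; since $|G|\ge g^1$ and the upper bound $|G|\le U(X+\eps b_0 e_n)$ is untouched, \eqref{flat_2harnack} holds with $b_1=b_0$. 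In the second alternative, $g^1(\bar X)<U\big(\bar X+\eps\tfrac{a_0+b_0}{2}e_n\big)$; using $g^1(\bar X)\ge c_0$ and (c), $|G|(\bar X)\le g^1(\bar X)+\tfrac{1}{2c_0}\sum_{i\ge2}(g^i)^2(\bar X)$ still lies below a suitably shifted barrier, and the symmetric argument applied to the subsolution $|G|+\tfrac{|\lambda|}{1-a}|y|^{1-a}$ — comparison from above with a strict comparison supersolution $U(X+\eps b_1 e_n)+(\text{lower order})$, $b_1=b_0-\eta(b_0-a_0)$ — gives $|G|\le U(X+\eps b_1 e_n)$ in $B_{\eta}$, while $g^1\le |G|$ keeps the lower bound. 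In both cases $a_0\le a_1\le b_1\le b_0$ and $b_1-a_1=(1-\eta)(b_0-a_0)$, and undoing the rescaling gives the statement in $B_{\eta r}(X_0)$.

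The barrier construction and the verification of the free boundary asymptotics \eqref{expa} with improved constant are carried over from \cite[Section 2]{DSS} (or \cite[Section 5]{DR}) replacing the Laplacian by $L_a$; I expect the only genuine work to lie in two places: transferring the one-sided improvement between $g^1$ (as a supersolution) and $|G|$ (as a subsolution) through the bound \eqref{smalli} on the lower components, and checking that the almost-minimality defect $\sigma\le\eps^{2(n+2)}$ never spoils the comparisons — which is the role of ingredient (d) and follows the scheme of \cite{DSalmost}. The main obstacle is therefore not any single estimate but the need to run the scalar barrier argument twice, on the two sides and against two different auxiliary functions, while keeping the contributions of $g^i$, $i\ge2$, and of the minimality defect of strictly lower order than the width being improved.
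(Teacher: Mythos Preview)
Your proposal has the right strategy and all the key ingredients, and it matches the paper's approach closely; the one structural point you gloss over is that the paper splits into two cases according to whether $\mathrm{dist}(X_0,\{x_n=-\eps,\,y=0\})\le\delta/2$ or not. In the first case the paper runs exactly your dichotomy-plus-barrier argument, packaged there as Lemma~\ref{fbharnack}. In the second case, however, the free boundary lies outside the ball, so there is no free boundary point for a strict comparison sub/supersolution to touch and your barrier step has nothing to compare against; instead $g^1>0$ throughout the ball and the improvement comes from the interior: replace $g^1$ by its $L_a$-harmonic replacement $v^1$ via Lemma~\ref{harmonic.rep}, apply the scalar Harnack of \cite{DSS} directly to the harmonic $v^1$, and transfer back using $\|g^1-v^1\|_{L^\infty}\le C\eps^2$ --- that is, your ingredient (d) becomes the \emph{main} tool here rather than just an error term. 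The upper bound on $|G|$ in this second case then follows from the one on $g^1$ combined with \eqref{smalli}, as in Lemma~\ref{translate}. Once you add this split, your proof is the paper's.
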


Here $\tilde{g}^1_\eps$ and $\widetilde{|G_\eps|}$ are the $\eps$-domain variations associated to $g^1$ and $|G|$ respectively and
$$
a_\eps := \left\{(X,\tilde{g}^1_\eps(X)) \colon X \in B_{1-\eps}\setminus P\right\}\quad\text{and}\quad A_\eps := \left\{(X,\widetilde{|G_\eps|}(X)) \colon X \in B_{1-\eps}\setminus P\right\}.
$$
Since domain variations may be multivalued, we mean that given $X$ all pairs $(X, \tilde g^1_\eps(X))$ and $(X,\widetilde{|G_\eps|}(X))$ belong respectively to
$a_\eps$ and  $A_\eps$, for all possible values of the $\eps$-domain variations. The following key corollary follows directly from the previous result.

\begin{cor}\label{AA}
Let $\alpha=1$ and $G$ be a solution of \eqref{VOPnew} in $B_1$ such that $$
\mathcal{J}(G,B_1)\leq  \mathcal{J}(\tilde{G},B_1)+\sigma,
$$
for every $\tilde{G}-G \in H^{1,a}_0(B_1;\R^m)$, with $\sigma \leq \eps^{2(n+2)}$. Then, there exists a universal constant $\overline{\eps}>0$ such that if
$$
U(X-\eps e_n) \leq g^1 \leq |G| \leq U(X+\eps e_n) \quad \text{in $B_1$,}
$$
and
$$|g^i|\leq \eps^{3/4}\quad \text{in $B_{1/2}$, \quad i=2,\ldots, m,}
$$
with $\eps \leq \bar{\eps}/2$ and $k_0>0$ such that
\be\label{m0}
4\eps (1-\eta)^{k_0}\eta^{-k_0} \leq \overline{\eps}, \quad \eps \leq C (1-\eta)^{5 k_0},
\ee
for some $C>0$ universal, then the sets $a_\eps \cap (B_{1/2}\times [-1,1])$ and $A_\eps \cap (B_{1/2}\times [-1,1])$ are trapped above the graph of a function $y=a_\eps(X)$ and below the graph of a function $y=b_\eps(X)$ with
$$
b_\eps - a_\eps \leq 2(1-\eta)^{k_0-1},
$$
where $a_\eps,b_\eps$ have modulus of continuity bounded by the H\"{o}lder function $\gamma t^\beta$, with $\gamma, \beta$ depending only on $\eta$.
\end{cor}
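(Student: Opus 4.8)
The plan is to iterate the Harnack-type inequality of Theorem \ref{harnack} on a geometrically decreasing sequence of balls, exactly as in the scalar case \cite{DSS} (or \cite{DSalmost} for the almost-minimizing setting). First I would set $X_0 = 0$, $a_0 = -1$, $b_0 = 1$, so that the hypotheses give $U(X-\eps e_n) \le g^1 \le |G| \le U(X+\eps e_n)$ in $B_1$ with $\eps(b_0-a_0) = 2\eps \le \bar\eps$ and $|g^i| \le \eps^{3/4}$ in $B_{1/2}$. The first step is to check the smallness hypothesis \eqref{smalli} at the initial scale $r=1$: since $b_0-a_0 = 2$ one needs $|g^i| \le (2\eps)^{5/8}$, which follows from $|g^i| \le \eps^{3/4}$ for $\eps$ small. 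Then Theorem \ref{harnack} produces $a_1, b_1$ with $a_0 \le a_1 \le b_1 \le b_0$, $b_1 - a_1 = (1-\eta)(b_0-a_0) = 2(1-\eta)$, and the improved trapping in $B_{\eta}$.

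The second step is the induction: assuming we have produced $a_k \le b_k$ with $b_k - a_k = 2(1-\eta)^k$ and the trapping $U(X+\eps a_k e_n) \le g^1 \le |G| \le U(X+\eps b_k e_n)$ in $B_{\eta^k}$, I would apply Theorem \ref{harnack} with $r = \eta^k$ and $X_0 = 0$. This requires verifying two conditions at each step. The flatness condition $\eps(b_k - a_k) \le \bar\eps\, \eta^k$, i.e. $2\eps(1-\eta)^k \le \bar\eps\,\eta^k$, holds for $k \le k_0$ precisely because of the first inequality in \eqref{m0}: $4\eps(1-\eta)^{k_0}\eta^{-k_0} \le \bar\eps$ (so $2\eps(1-\eta)^k\eta^{-k} \le 2\eps(1-\eta)^{k_0}\eta^{-k_0} \le \bar\eps/2 < \bar\eps$, using that $(1-\eta)/\eta$ — or its reciprocal — is monotone; one arranges $\eta$ so that $(1-\eta)/\eta \le 1$, e.g. $\eta \ge 1/2$, or simply notes the worst case is at $k=k_0$). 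The smallness condition \eqref{smalli} reads $|g^i| \le \eta^{ks}\big(2(1-\eta)^k \eta^{-k}\eps\big)^{5/8}$; since $|g^i| \le \eps^{3/4}$ globally it is enough to have $\eps^{3/4} \le \eta^{ks}(2\eps)^{5/8}(1-\eta)^{5k/8}\eta^{-5k/8}$, and absorbing the $\eta$-powers into a universal constant this reduces to $\eps^{1/8} \le C(1-\eta)^{5k/8}$, hence to $\eps \le C(1-\eta)^{5k}$, which for $k \le k_0$ is guaranteed by the second inequality in \eqref{m0}. So the induction runs for $k = 0, 1, \dots, k_0$.

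The third step is to translate the iterated bounds into a statement about the $\eps$-domain variations $a_\eps$ and $A_\eps$. At the final scale, in $B_{\eta^{k_0}}$ the functions $g^1$ and $|G|$ are trapped between $U(\cdot + \eps a_{k_0}e_n)$ and $U(\cdot + \eps b_{k_0}e_n)$ with $b_{k_0} - a_{k_0} = 2(1-\eta)^{k_0}$; by the definition \eqref{deftilde}--\eqref{til} of the $\eps$-domain variation, this means that over $B_{\eta^{k_0}}\setminus P$ the graphs of $\tilde g^1_\eps$ and $\widetilde{|G_\eps|}$ lie between the constants $a_{k_0}$ and $b_{k_0}$, and by a covering of $B_{1/2}\setminus P$ by such shrunken balls (centered at points of $B_{1/2}$, using the iteration at each such center) one gets, for every $X \in B_{1/2}$, functions $a_\eps(X) \le b_\eps(X)$ with $a_\eps \cap (B_{1/2}\times[-1,1])$ above the graph of $a_\eps(\cdot)$ and below that of $b_\eps(\cdot)$, same for $A_\eps$, and $b_\eps - a_\eps \le 2(1-\eta)^{k_0 - 1}$. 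The Hölder modulus of continuity $\gamma t^\beta$ with $\gamma, \beta$ depending only on $\eta$ follows by the standard oscillation-decay argument: the Harnack iteration shows the oscillation of the domain variations over $B_{\eta^k}$ decays like $(1-\eta)^k$ against a radius decay $\eta^k$, giving $\beta$ such that $(1-\eta) = \eta^\beta$ up to universal constants.

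The main obstacle I expect is not in the iteration scheme itself — which is entirely standard once Theorem \ref{harnack} is available — but in the bookkeeping that makes the two sets of hypotheses \eqref{m0} exactly match what the induction consumes at each of the $k_0$ steps, in particular ensuring that the smallness requirement \eqref{smalli} on the \emph{minor} components $g^i$ (whose right-hand side \emph{shrinks} with $r$) remains compatible with the fixed global bound $|g^i| \le \eps^{3/4}$; this is precisely where the exponent $5/8$ versus $3/4$ and the power $(1-\eta)^{5k_0}$ in \eqref{m0} come from, and getting these numerology-dependent inequalities to close is the only delicate point. The rest is a verbatim adaptation of \cite[Section 2]{DSS} and \cite{DSalmost}.
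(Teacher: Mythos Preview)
Your proposal is correct and follows essentially the same approach as the paper: iterate Theorem~\ref{harnack} for $k=0,\dots,k_0$, use the two inequalities in \eqref{m0} to verify respectively the flatness hypothesis $\eps(b_k-a_k)\le\bar\eps\,\eta^k$ and the smallness hypothesis \eqref{smalli} at each step, then translate the nested trapping bounds into bounds on the $\eps$-domain variations via \cite[Lemma~3.1]{DSS} and extract the H\"older modulus from the geometric oscillation decay. One small remark: your aside that one might ``arrange $\eta\ge 1/2$'' is off, since $\eta$ is the small universal constant coming from the Harnack step; the correct observation (which you also make) is that for $\eta$ small the ratio $(1-\eta)/\eta>1$, so the flatness constraint is worst at $k=k_0$, which is exactly what the first inequality in \eqref{m0} controls.
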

 Indeed, by iterating the Harnack inequality for $k=0, \ldots, k_0$ (the second inequality in \eqref{m0} guarantees that \eqref{smalli} is preserved), we obtain
 \be\label{serve}
 U(X+\eps a_k e_n) \leq g^1 \leq |G| \leq U(X+\eps b_k e_n)\quad \text{in $B_{\eta^k}$}
\ee
with $b_k-a_k = 2(1-\eta)^k$. Thus, by the properties of the $\eps$-domain variations (see \cite[Lemma 3.1]{DSS} and \cite[Lemma 3.1]{DR} for $s=1/2$) we get
$$
a_k \leq \tilde{g}^1_\eps \leq \widetilde{|G_\eps|} \leq b_k\quad \text{in $B_{\eta^k - \eps},$}
$$
and
\begin{align*}
a_\eps \cap (B_{\eta^k - \eps} \times [-1,1]) &\subset B_{\eta^k - \eps} \times [a_k,b_k],\\
A_\eps \cap (B_{\eta^k - \eps} \times [-1,1]) &\subset B_{ \eta^k - \eps} \times [a_k,b_k],
\end{align*}
for $k=0, \ldots, k_0.$
\begin{lem}\label{fbharnack}
Let $\alpha=1$ and $G$ be a solution to \eqref{VOPnew} in $B_1$ for some $\sigma \in \R^m$. There exists $\eps_0>0$ universal such that, if for $0<\eps\leq \eps_0$
$$
U(X)\leq g^1(X)  \quad\text{in $B_{1/2},$}
$$
and at $\overline{X}=(\bar{x},\bar{y}) \in B_{1/8}(\frac14 e_n)$ we have $
U(\overline{X} + \eps e_n)\leq g^1(\overline{X}),
$
then $$
U(X+\tau \eps e_n)\leq g^1(X) \quad \text{in $B_\delta$},
$$
for universal constants $\tau,\delta >0$. On the other hand, by taking $\sigma\leq \eps^{2(n+2)}$, if
$$
|G|(X) \leq U(X)\quad\text{in $B_{1/2}$}
$$
and
\be\label{vismall}|g^i|\leq \eps^{5/8} \quad \text{in $B_{1/2}$}, \quad i=2,\ldots, m,\ee
then if $g^1(\overline{X})\leq U(\overline{X} - \eps e_n)$
we get $$|G|(X) \leq U(X-\tau \eps e_n)\quad\text{ in $B_\delta,$}$$
for some universal $\delta>0$.
\end{lem}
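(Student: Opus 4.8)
This is the free--boundary Harnack inequality underlying the linearization of \eqref{FB}, and the plan is to run the sliding--barrier argument of De Silva--Roquejoffre and De Silva--Savin--Sire -- in the form adapted to almost--minimizers in \cite{DSalmost} and to vectorial thin problems in \cite{DT2} -- now for the operator $L_a$ and for the zeroth--order term $\sigma G$. The two assertions are symmetric. For the first one, the structural facts are that $g^1>0$ in $\{|G|>0\}$ (Remark \ref{positive1}), so $F(g^1)=F(G)$; that $-\partial^a_y g^1=\sigma^1 g^1\ge 0$, so $g^1$ is $L_a$--superharmonic in $B_1$ in the even--reflected sense; and that, by Definition \ref{solution}(1) with $f=f^1$, no strict comparison subsolution of \eqref{FB} can touch $g^1=\langle G,f^1\rangle$ from below at a point of $F(G)$. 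Thus $g^1$ plays the role of a supersolution of \eqref{FB} with $\alpha=1$, and \emph{no} smallness of $\sigma$ is needed here because the contribution of $\sigma^1 g^1$ to the superharmonicity has the favorable sign.

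\textbf{First statement.} Since $\overline X\in B_{1/8}(\tfrac14 e_n)$ lies at distance $\ge\tfrac18$ from the slit $P$, where $U$ is smooth with $\partial_t U\ge c_0>0$, one has $U(\overline X+\eps e_n)-U(\overline X)\ge c_0\eps$ for $\eps\le\eps_0$, so the hypothesis reads $w(\overline X)\ge c_0\eps$ with $w:=g^1-U\ge 0$. In $B_{1/4}(\tfrac14 e_n)\subset\{|G|>0\}$, which stays away from $F(G)$, $w$ is a nonnegative $L_a$--supersolution; combining the weak Harnack inequality for $L_a$ with the interior regularity of $L_a$--harmonic functions along a Harnack chain joining $\overline X$ to $\tfrac14 e_n$ (the bounded lower--order term $\sigma^1 g^1$ being absorbed), one gets $w\ge c_1\eps$ on a fixed ball $B_{\rho_0}(\tfrac14 e_n)$. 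This is the ``room''. One then slides the standard one--parameter family $\{V_t\}_{0\le t\le\tau\eps}$ of \emph{strict} comparison subsolutions of \eqref{FB}, obtained by composing $U$ with the normal flow of a fixed nonnegative profile $\psi$ which vanishes near $\partial B_{1/2}$, equals $1$ on $B_\delta\cup B_{\rho_0}(\tfrac14 e_n)$ and is bounded; one checks that $V_t$ is a strict $L_a$--subsolution for $t$ small, that $V_{\tau\eps}\ge U(\cdot+\tau\eps e_n)$ on $B_\delta$, and that $V_t\le U\le g^1$ on $\partial B_{1/2}$ and $V_t\le U+c_1\eps\le g^1$ on $B_{\rho_0}(\tfrac14 e_n)$ for $\tau$ small. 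Letting $t$ increase, a first contact point of $V_t$ with $g^1$ cannot be interior (strong maximum principle and Hopf lemma for $L_a$, against the strict subsolution $V_t$ versus the supersolution $g^1$), cannot lie on $\partial B_{1/2}$ or in $B_{\rho_0}(\tfrac14 e_n)$ (the two strict inequalities just noted), and cannot lie on $F(g^1)=F(G)$ (Definition \ref{solution}(1)); hence $V_{\tau\eps}\le g^1$ on $B_{1/2}$, which gives the claim on $B_\delta$.

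\textbf{Second statement.} Here one argues symmetrically with $|G|$ in the role of a subsolution: by Remark \ref{sub} and Remark \ref{Gsub}, $|G|+\tfrac{|\sigma|}{1-a}|y|^{1-a}$ is $L_a$--subharmonic and a viscosity subsolution of \eqref{FB}, so by Definition \ref{solution}(2) no strict comparison supersolution can touch $|G|$ from above at a point of $F(G)$. Using $U(\overline X)-U(\overline X-\eps e_n)\ge c_0\eps$ together with $0\le|G|-g^1\le \big(\sum_{i\ge 2}(g^i)^2\big)/(|G|+g^1)$, which by \eqref{vismall} and the nondegeneracy of $|G|$ away from $F(G)$ is $o(\eps)$ on $B_{\rho_0}(\tfrac14 e_n)$, one obtains the room $U-|G|\ge c_1\eps$ on a fixed ball. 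One then slides a family $\{W_t\}$ of strict \emph{super}solutions of \eqref{FB} obtained by deforming $U$ so as to push its free boundary to the right, with $W_{\tau\eps}\le U(\cdot-\tau\eps e_n)$ on $B_\delta$ and $W_t\ge|G|$ on $\partial B_{1/2}$ and on $B_{\rho_0}(\tfrac14 e_n)$. The first--contact alternative is excluded exactly as before, now using that $|G|+\tfrac{|\sigma|}{1-a}|y|^{1-a}$ is $L_a$--subharmonic in the interior and Definition \ref{solution}(2) on $F(G)$; the hypothesis $\sigma\le\eps^{2(n+2)}$ enters precisely to make the $|y|^{1-a}$ correction $o(\eps)$ on the scale of $B_{1/2}$, so that it does not destroy the strict ordering on $\partial B_{1/2}$ and near $F(G)$.

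\textbf{Main obstacle.} The scalar skeleton of the scheme is routine given \cite{DR,DSS,DSalmost}. The delicate point is the coupling: one must simultaneously control the zeroth--order term $\sigma G$ (favorable sign, hence free, in the first statement; a subsolution defect, needing $\sigma\le\eps^{2(n+2)}$, in the second) and the discrepancy between $|G|$ and its first component $g^1$, whose smallness is exactly what \eqref{vismall} buys -- but only away from $F(G)$, so that near $F(G)$ one must instead fall back on the viscosity conditions of Definition \ref{solution}. Arranging these two error sources to be $o(\eps)$ in the region where the barrier deformation is of size $\eps$, while keeping $\tau$, $\delta$ and all the constants universal, is the step requiring the most care.
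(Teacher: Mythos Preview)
Your proposal is correct and takes essentially the same route as the paper. The paper's own argument is terser: it simply invokes the scalar result \cite[Lemma 4.3]{DSS} (resp.\ \cite[Lemma 6.3]{DR} for $s=1/2$) as a black box, first applied to the supersolution $g^1$ and then to the subsolution $|G|+C\eps^{2(n+2)}|y|^{1-a}$ of Remark~\ref{sub}, after checking that the gap hypothesis at $\overline X$ transfers via $|G|(\overline X)-g^1(\overline X)\le C\eps^{5/4}$ and $\sigma|y|^{1-a}=o(\eps)$; your write-up just unpacks that black box by sketching the sliding--barrier construction explicitly.
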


\begin{proof}[Proof of Lemma \ref{fbharnack}] The first statement follows immediately from the fact that $g^1$ is a supersolution to \eqref{FB} hence we can apply \cite[Lemma 4.3]{DSS} (see \cite[Lemma 6.3]{DR} for $s=1/2$).

  Now, let us consider the case
  $$
|G|(X) \leq U(X),\quad|g^i|\leq \eps^{5/8}\,\,\mbox{ for } i=2,\ldots, m,
$$
in $B_{1/2}$. Since $|\sigma|\leq \eps^{2(n+2)}$, by Remark \ref{sub} the function $|G| + C \eps^{2(n+2)} |y|^{1-a}$ is a subsolution and in order to apply again \cite[Lemma 4.3]{DSS}, we need to check that $$|G|(\bar X) + C \eps^{2(n+2)} |\bar{y}|^{1-a}\leq U(\bar X - c\eps e_n)$$ for some $c>0$ universal.
Since $g^1(\overline{X})\leq U(\overline{X} - \eps e_n)$, we get \be\label{dr}
    g^1(\overline{X})-U(\overline{X})\leq   U(\overline{X}-\eps e_n)-U(\overline{X}) = -\partial_t U(\overline{X}-\lambda e_n)\eps \leq -c\eps,\, \lambda\in (0,\eps)
  \ee
  and
  $$
    |G|(\overline{X})+ C \eps^{2(n-2)} |\bar{y}|^{1-a}-U(\overline{X})\leq g^1(\overline{X}) + 2C\eps^{5/4} -U(\overline{X})\leq -\frac{c}{2}\eps.
  $$ The desired bound follows arguing as in \eqref{dr}.
      \end{proof}
  We are now ready to sketch the proof of the Harnack inequality.
  \begin{proof}[Proof of Theorem \ref{harnack}]
  Without loss of generality, let us assume $a_0=-1$ and $b_0 = 1$. Also, up to rescaling, we can take $r=1$ and so $2\eps \leq \overline{\eps}$. Moreover, we denote with $\eps_0$ and $\delta$ the universal constants in Lemma \ref{fbharnack}, and choose $\bar \eps=\eps_0$, with $\eps\leq \bar \eps, \sigma \leq \eps^{2(n+2)}$.\\
    Now, we distinguish two cases depending on the position of $B_r(X_0)$.\\

    {\rm Case 1: }If $\mathrm{dist}(X_0,\{x_n=-\eps, y=0\})\leq \delta/2$ we aim to apply Lemma \ref{fbharnack}.
    Assume that for $\overline{X}=1/4 e_n$
    $$
    g^1(\overline{X})\leq U(\overline{X}).
    $$
Since,
     $$
    g^1 \leq |G| \leq U(X+\eps e_n) \quad \text{in $B_{1/2}(-\eps e_n) \subset B_1(X_0)$,}
    $$
 and  for $\eps$ small enough, it holds $\overline{X} \in B_{1/8}((-\eps + 1/4)e_n)$, by \eqref{smalli} we can apply Lemma \ref{fbharnack} in $B_{1/2}(-\eps e_n)$ and deduce that
    $$
    g^1 \leq |G| \leq U(X+(1-\eta)\eps e_n) \quad \text{in $B_{\delta}(-\eps e_n)$.}
    $$
    Finally, the improvement follows by choosing $\eta < \delta/2$, which implies that $B_\eta(X_0) \subset B_\delta(-\eps e_n)$. If instead $g^1(\overline{X})\geq U(\overline{X})$, we can proceed analogously as in the scalar result \cite[Theorem 6.1]{DR}.\\

    {\rm Case 2.}   If $\mathrm{dist}(X_0,\{x_n=-\eps, y=0\}) > \delta/2$, then $g^1>0$ and $L_a$-superharmonic in $B_1(X_0)\cap \{g^1>0\}$. Let $v^1$ be the $L_a$-harmonic replacement of $g^1$ in $B_{7/8}(X_0)$. Exploiting Lemma \ref{harmonic.rep} and the choice $\sigma \leq \eps^{2(n+2)}$ we get
    $$
    \normpic{g^1-v^1}{L^\infty(B_{1/2}(X_0))}\leq C \eps^2.
    $$
    Thus,
    $$
    U(X-\eps e_n) -C \eps^2\leq v^1 \leq U(X+\eps e_n) +C \eps^2 \quad\mbox{in }B_{1/2}(X_0),
    $$
    which implies that  $U(X+\eps a_0 e_n) \leq v^1 \leq U(X+\eps b_0 e_n)$ in $B_{1/2}(X_0)$ for some $a_0<b_0$.\\ Then, by applying directly \cite[Lemma 4.3]{DSS} (or \cite[Theorem 6.1]{DR} for $s=1/2$), as in this case we only need that $v^1$ is a positive $L_a$-harmonic function in $B_1^+(v^1)$. Thus, going back to $g^1$, the conclusion
    \be\label{flat_2harnack2} U(X+\eps a_1 e_n)\leq g^1 \leq U(X+\eps b_1 e_n) \quad \text{ in $B_{\eta}(X_0)$,}\ee does hold for $\eta$ small.
    On the other hand, reasoning as in Lemma \ref{translate}-(i) we have in the same ball,
    $$|G| \leq U(X+\eps b_1 e_n) + C \eps^{5/8}U(X+\eps e_n) \leq U(X+\bar b_1\eps e_n),$$
    and our claim is proved.
      \end{proof}
\subsection{The improvement of flatness lemma}\label{final}
We can finally conclude the Section with an improvement of flatness lemma, from which the main result of $C^{1,\alpha}$ regularity of a flat free boundary follows by standard arguments (see for example \cite{DT}). Lastly, we give a straightforward proof of Theorem \ref{mmm} by applying the results for flat-solutions of \eqref{VOPnew}.\\
In view of Lemma \ref{translate}, the flatness can be expressed as in \eqref{flat1}-\eqref{non_d1}.
\begin{lem}\label{IMPF}
Let $\alpha=1$ and $G$ be a viscosity solution of \eqref{VOPnew} in $B_1$ such that $$
\mathcal{J}(G,B_1)\leq  \mathcal{J}(\tilde{G},B_1)+\sigma,
$$
for every $\tilde{G}-G \in H^{1,a}_0(B_1;\R^m)$, with $\sigma \leq \eps^{2(n+2)}$. Suppose that $0 \in F(G)$ and 
\be\label{flat1}  U(X-\eps e_n) \leq g^1 \leq |G| \leq U(X+\eps e_n)  \quad\text{in $B_1$},
\ee
with
\be\label{non_d1} |G-g^1f^1| \leq \eps^{3/4} \quad \text{in $B_1$}.\ee
Then, there exists a universal $\rho_0>0$ such that if  $\rho \in (0,\rho_0]$ and $\eps \in (0,\eps_0]$ for some $\eps_0=\eps_0(\rho)$, then for unit vectors $\nu \in \R^n$ and $f \in\R^m$,
\be\label{flat_imp}
 U\left(\langle x, \nu\rangle -\frac{\eps}{2}\rho, y\right) \leq \langle G ,f \rangle  \leq |G| \leq U\left(\langle x, \nu\rangle +\frac{\eps}{2}\rho, y\right)\quad\text{in $B_\rho$},
\ee
and
\be\label{non_dimpr} |G - \langle G ,f \rangle f| \leq \left(\frac \eps 2\right)^{3/4} \rho^{s} \quad \text{in $B_\rho$},\ee
with $|\nu -e_n|, |f-f^1 |\leq C\eps$, for some universal constant $C>0.$ 
\end{lem}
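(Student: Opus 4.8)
The plan is to follow the now-standard \emph{improvement-of-flatness via linearization} scheme of \cite{DR,DSS,DT2,DSalmost}, adapted to the almost-minimality setting through the Harnack inequality of Theorem \ref{harnack}. I would argue by compactness and contradiction: suppose the statement fails, so there are sequences $\eps_k\to 0$, $\sigma_k\leq\eps_k^{2(n+2)}$ and viscosity almost-minimizers $G_k$ of \eqref{VOPnew} in $B_1$ with $0\in F(G_k)$, satisfying the flatness \eqref{flat1} and the smallness \eqref{non_d1} with $\eps=\eps_k$, but for which no rotated one-plane solution and unit direction $f$ give \eqref{flat_imp}--\eqref{non_dimpr} at scale $\rho$. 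The first step is to pass to the $\eps_k$-domain variations $\tilde g^1_{\eps_k}$ and $\widetilde{|G_k|}_{\eps_k}$ of $g^1_k$ and $|G_k|$; by Corollary \ref{AA} (whose hypotheses are guaranteed by \eqref{flat1}, \eqref{non_d1} and the smallness of $\sigma_k$), for $k$ large these graphs are trapped between two H\"older functions with a uniform modulus of continuity $\gamma t^\beta$ on $B_{1/2}$. Hence, up to a subsequence, both converge uniformly on compact subsets of $\mathcal B_{1/2}\setminus L$ to a common H\"older limit function $w\colon \mathcal B_{1/2}\to\R$.

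The second step is to identify the limiting linearized problem. Since each $g^i_k$ solves $-L_a g^i_k=0$ off $\{y=0\}$ and $-\partial^a_y g^i_k=\lambda^i_k g^i_k$ on $\mathcal B_1^+(G_k)$ with $\lambda^i_k$ bounded, the rescaled translations converge, and by Remark \ref{rescale.visc} the zeroth-order term disappears in the limit: the problem near $F(G)$ behaves like the $\sigma=0$ thin one-phase problem. Following \cite[Section 3]{DSS} (and \cite[Lemma 6.6]{DT2} for the vectorial bookkeeping of the lower-order components), one shows that $w$ is a viscosity solution of the linearized (Neumann-type) thin one-phase problem
\be\label{linearized}
\begin{cases}
\Delta_x w + \dfrac{a}{y}\,w_y + w_{yy}=0 & \text{in } B_{1/2}\cap\{y>0\},\\
w_y=0 & \text{on } \mathcal B_{1/2}\cap\{x_n>0\},\\
w_{x_n}=0 & \text{on } \mathcal B_{1/2}\cap\{x_n<0\},
\end{cases}
\ee
in the appropriate weak/viscosity sense; the small components $g^i_k$ ($i\geq 2$) contribute nothing because Lemma \ref{translate}(i) forces $|g^i_k|\le C\eps_k U(X+\eps_k e_n)\to 0$ faster than $\eps_k$, so the vector direction $f$ can be taken close to $f^1$ and the effective equation is scalar. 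The key quantitative input is a $C^{1,\beta}$ (or $C^{2,\beta}$ interior/boundary) estimate for \eqref{linearized}: at the origin $w$ has a linear expansion $w(X)=\langle x',\xi'\rangle + c\,x_n + O(|X|^{1+\beta})$, and after absorbing $c$ into a rotation of the one-plane solution, the error improves by a factor $\rho^{1+\beta}\le \tfrac12\rho^{s}$ at scale $\rho$ once $\rho=\rho_0$ is fixed small.

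The third step is the standard \emph{return to the nonlinear problem}: the contradiction is reached by showing that the limiting improvement \eqref{linearized}-expansion, transferred back through the domain-variation correspondence \eqref{deftilde}--\eqref{til}, contradicts for large $k$ the assumed failure of \eqref{flat_imp}--\eqref{non_dimpr}; here the uniform convergence of the $\eps_k$-domain variations to $w$ and the bound \eqref{non_d1} rescaled by $\rho^s$ are exactly what is needed, and the choice $\eps_0=\eps_0(\rho)$ makes the various $O(\eps_k)$ remainder terms negligible compared with $\eps_k\rho/2$. The direction $f$ is obtained from the limit of the unit vectors carrying the bulk of $G_k$, with $|f-f^1|\le C\eps_k$ coming again from Lemma \ref{translate}(i); similarly $|\nu-e_n|\le C\eps_k$ comes from the tilt $c$ in the expansion of $w$. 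The main obstacle I expect is the rigorous derivation of \eqref{linearized} together with its regularity estimate: one must show that the limit of the domain variations is genuinely a solution of the constant-coefficient Neumann problem across the edge $L$ with the right transmission/boundary conditions, and that the competitor argument built into the almost-minimality (via Theorem \ref{harnack} and Corollary \ref{AA}) survives the passage to the limit — this is precisely where the error term $\sigma_k\le\eps_k^{2(n+2)}$ must be small enough not to disturb the linearization, and where one has to be careful that the harmonic-replacement comparisons of Lemma \ref{harmonic.rep} and Lemma \ref{fbharnack} are applied at the correct scales.
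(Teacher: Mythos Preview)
Your overall contradiction/compactness scheme matches the paper's, and the use of Corollary~\ref{AA} to obtain a H\"older limit of the domain variations is correct. However, there are two genuine gaps.

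\textbf{The treatment of the small components and the choice of $f$.} You write that the components $g^i_k$, $i\geq 2$, ``contribute nothing'' because they go to zero ``faster than $\eps_k$'', and that consequently ``the effective equation is scalar'' and $f$ can simply be taken close to $f^1$. This is incorrect: the hypothesis \eqref{non_d1} only gives $|g^i_k|\leq \eps_k^{3/4}$, which decays \emph{slower} than $\eps_k$, so these components are not negligible at the level needed for \eqref{non_dimpr}. The paper handles this by a second, independent limit: one normalizes $g^i_k/\eps_k^{3/4}\to g^i_*$, observes that $g^i_*$ is $L_a$-harmonic in $B_{1/2}\setminus P$ and vanishes on $P$, and deduces by boundary Harnack that $g^i_*=M_iU+O(r^\gamma U)$ near the origin, i.e.
\[
|g^i_k - M_i\,\eps_k^{3/4}\,g^1_k|\leq \Big(\frac{\eps_k}{8}\Big)^{3/4}\rho^{s}\quad\text{in }B_{3\rho/2}.
\]
The new direction is then \emph{explicitly} constructed as
\[
\bar f^1_k=\frac{\xi^1_k}{|\xi^1_k|},\qquad \xi^1_k:=f^1+\eps_k^{3/4}\sum_{i\neq 1}M_i f^i,
\]
so that the leading $M_i U$ contribution is absorbed by the rotation in $\R^m$. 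One must then verify separately (a) the lower bound $\langle G_k,\bar f^1_k\rangle\geq U(\langle x,\nu\rangle-\tfrac{\eps_k}{2}\rho,y)$, which requires a barrier/comparison argument (the function $H$ in the paper), and (b) the orthogonal bound $|G_k-\langle G_k,\bar f^1_k\rangle\bar f^1_k|\leq(\eps_k/2)^{3/4}\rho^s$, by expanding component by component. Without this rotation there is no mechanism to pass from $\eps_k^{3/4}$ to $(\eps_k/2)^{3/4}\rho^s$ in \eqref{non_dimpr}, and your sketch provides none.

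\textbf{The linearized problem.} The limit $w$ of the domain variations is not a solution of the plain Neumann problem you wrote for $L_a w$. The correct linearization (see \cite[Theorem~5.1]{DSS}, \cite[Section~7]{DR}) is
\[
\begin{cases}
L_a(U_n w)=0 & \text{in }B_{1/2}\setminus P,\\
|\nabla_r w|=0 & \text{on }B_{1/2}\cap L,
\end{cases}
\]
where $U_n=\partial_{x_n}U$ and $|\nabla_r w|(x_0',0,0)=\lim_{r\to 0}r^{-1}\big(w(x_0',x_n,y)-w(x_0',0,0)\big)$. The regularity theory for this problem (not for yours) yields the tangential expansion $|w(X)-\langle\nu',x'\rangle|\leq C_0\rho^{1+\gamma}$ in $B_{2\rho}$, with no $x_n$-term; this is what produces the rotation $\nu$ with $|\nu-e_n|\leq C\eps_k$.
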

\begin{proof}
Following the strategies of \cite{DT, DT2, DR} 
, we proceed with a contradiction argument.\\

{\rm Step 1 - Compactness and linearization.} Set $\rho \leq \rho_0$ to be made precise later. Let us suppose there exist $\eps_k \to 0, \sigma_k \leq \eps_k^{2(n+2)}$ and a sequence of solutions $(G_k)_k$ of \eqref{VOPnew} such that $0 \in F(G_k)$ and
\be\label{contrad1}
U(X-\eps_k e_n) \leq g^1_k \leq |G_k| \leq U(X+\eps_k e_n) \quad \text{in $B_{1}$,}
\ee
and
\be\label{contrad2}
|G_k - g^1_kf^1| \leq  \eps_k^{3/4} \quad \text{in $B_{1},$}\ee but  either of the conclusions \eqref{flat_imp} or \eqref{non_dimpr} does not hold.
  Denote with $\tilde{g}^1_k$ and $\widetilde{|G|_k}$ the $\eps_k$-domain variations associated to $g^1_k$ and $|G_k|$ respectively.
In view of \eqref{contrad1}-\eqref{contrad2}, we can apply Corollary \ref{AA} and Ascoli-Arzel\`{a} theorem to conclude that, up to a subsequence, the sets
$$
a_k := \left\{(X,\tilde{g}^1_k(X)) \colon X \in B_{1-\eps_k}\setminus P\right\}\quad\text{and}\quad A_k := \left\{(X,\widetilde{|G_k|}(X)) \colon X \in B_{1-\eps_k}\setminus P\right\},
$$
converge uniformly (in Hausdorff distance) in $B_{1/2}\setminus P$ to the graphs
$$
a_{\infty} := \left\{(X,\tilde{g}^1_{\infty}(X)) \colon X \in B_{1/2}\setminus P\right\}\quad\text{and}\quad A_{\infty} := \left\{(X,\widetilde{|G_{\infty}|}(X)) \colon X \in B_{1/2}\setminus P\right\},
$$
with $\tilde{g}^1_{\infty}$ and $\widetilde{|G_{\infty}|}$ H\"{o}lder continuous functions in $B_{1/2}$. Moreover,
\be\label{equal}\widetilde{|G_\infty|} \equiv \tilde{g}_\infty^1 \quad \text{in $B_{1/2}.$}\ee

Since $g^1_k$ is a sequence of supersolutions to the scalar one-phase problem \eqref{FB}, while $|G_k|+C\sigma_k |y|^{1-a}$ is a sequence of subsolutions to the same problem, we conclude by the arguments in \cite[Lemma 5.5.]{DSS} (see also the proof of Step 2 of \cite[Lemma 7.4.]{DR} for the case $s=1/2$) that $\widetilde{|G_\infty|} \equiv \tilde{g}_\infty^1$ satisfies (in the viscosity sense) the linearized problem
\be\begin{cases}\label{linearized}
 L_a(U_n w)=0 & \text{in $B_1\setminus P,$}\\
|\nabla_r w|=0 &\text{on $B_1\cap L$},
\end{cases}\ee
where $$
|\nabla_r w|(X_0)= \lim_{(x_n,y)\to (0,0)} \frac{w(x_0',x_n,y)-w(x_0',0,0)}{r},\quad r=|(x_n,y)|,\,X_0=(x_0',0,0).
$$  In particular, since $(\tilde{g}^1_k)_k$ and $(\widetilde{|{G}_k|})_k$ are uniformly bounded in $B_1$, we get a uniform bound on $\tilde g^1_\infty \equiv \widetilde{|{G}_\infty|}$,
hence by \cite[Theorem 6.2.]{DSS} (see \cite[Lemma 4.2]{DR} for $s=1/2$), since $\tilde{g}^1_\infty(0)=0,$ we deduce that for $C_0$ universal,
\be\label{old}
     \abs{\tilde{g}^1_\infty(X) -\langle \nu', x'\rangle}\leq C_0\rho^{1+\gamma} \quad\text{in $B_{2\rho}$},
\ee
  for some vector $\nu' \in \R^{n-1}$ and $\gamma>0$ universal constant. Details are omitted as we reduced to the scalar case, hence the arguments of \cite[Theorem 5.1.]{DSS} (see \cite[Theorem 7.1.]{DR} for $s=1/2$) apply directly.\\

\smallskip

{\rm Step 2 - Improvement of flatness.} In view of \eqref{old} (see also \cite[Corollary 3.3.]{DSS} and \cite[Theorem 8.2.]{DR}),
for $\rho< 1/(8C_0)$ small enough, we get
  $$
   \langle \nu', x'\rangle - \frac18\rho\leq \tilde{g}^1_\infty(X) \leq \langle \nu', x'\rangle +\frac18\rho \quad\text{in $B_{2\rho}$}
  $$
  and, for $k$ sufficiently large, we deduce from the uniform convergence of $a_k$ to $a_\infty$ and of $A_k$ to $A_\infty$ that
\be \label{k}
   \langle \nu', x'\rangle - \frac14\rho\leq \tilde{g}^1_k(X) \leq \widetilde{|G_k|}(X)\leq \langle \nu', x'\rangle +\frac14\rho \quad\text{in $B_{2\rho} \setminus P$}.
   \ee
The argument of the proof of \cite[Lemma 7.2.]{DR} then gives 
\be\label{flat_imp2}
 U\left(\langle x, \nu\rangle -\frac{\eps_k}{4}\rho, y\right) \leq g^1_k \leq |G_k| \leq U\left(\langle x, \nu\rangle +\frac{\eps_k}{4}\rho, y\right)\quad\text{in $B_{\frac 3 2 \rho}$},
\ee
for a unit vector $\nu$ with $|\nu-e_n| \leq C\eps_k.$

On the other hand, by \eqref{contrad1}-\eqref{contrad2} we conclude that, up to a subsequence, $g^i_k/\eps_k^{3/4} \to g^i_*$ uniformly to some $g^i_*$ such that
$$
\begin{cases}
  L_a g^i_*=0 & \mbox{in } B_{1/2}\setminus P\\
  g^i_* = 0 & \mbox{on } P\cap B_{1/2}.
\end{cases}
$$
for every $i=2, \ldots, m$. Thus, for $k>0$ sufficiently large 
$$|g^i_k - M_i U \eps_k^{3/4}| \leq C \eps_k^{3/4} \rho^\gamma U \quad \text{in $B_{\frac 3 2 \rho}$},$$
with $\gamma, C>0$ universal and $|M_i| \leq M$ universal.
From the properties of the function $U$ and \eqref{contrad1}, we deduce that
\be\label{small}|g^i_k - M_i g^1_k \eps_k^{3/4}| \leq C\eps_k^{3/4}(\rho^\gamma U+ \eps_k^{s})\leq \left(\frac{\eps_k}{8}\right)^{3/4} \rho^{s} \quad \text{in $B_{\frac 3 2\rho}$},\ee by choosing $\rho\leq \rho_0$ small enough and then $k$ sufficiently large.

Now, inspired by the construction of Step 2 of \cite[Lemma 7.1.]{DT2}, set
$$\xi_k^1:= f^1 + \eps_k^{3/4} \sum_{i \neq 1} M_i f^i, \quad \bar f^1_k: = \frac{\xi^1_k}{|\xi_k^1|}.$$ Notice that,
\be\label{den}\bar f^1_k = \xi_k^1 + O(\eps_k^{3/2}).\ee
We claim that
\be\label{xi1} U\left(\langle x, \nu\rangle -\frac{\eps_k}{2}\rho, y\right) \leq \langle G_k , \bar f^1_k\rangle  \leq |G_k| \leq U\left(\langle x, \nu\rangle +\frac{\eps_k}{2}\rho, y\right)\quad\text{in $B_\rho$},\ee
and
\be\label{xi2} |G_k - \langle G_k , \bar f^1_k\rangle \bar f^1_k| \leq  \left(\frac{\eps_k}{2}\right)^{3/4} \rho^{s} \quad \text{in $B_\rho$}, \ee
thus reaching a contradiction.
First, the upper bound in \eqref{xi1} is a direct consequence of \eqref{flat_imp2}. For the lower bound, we observe that by \eqref{contrad1}, \eqref{contrad2} and \eqref{den},
$$|G_k -  U \bar f^1_k| \to 0, \quad \text{as $k\to \infty$},$$
while
\be\label{fbi}|G_k| \equiv 0 \quad \text{in $\mathcal{B}_{1/2}\cap\{x_n \leq -\eps_k\}$}.\ee
As pointed out in Remark \ref{positive1}, we know that
\be\label{pos}\langle G_k ,\bar f^1_k\rangle >0 \quad \text{in $B^+_{\frac 1 2}(G_k)$},\ee
for $k$ sufficiently large. Moreover, by the definition of $\bar f^1_k$, \eqref{den} and \eqref{pos},
\be\label{bou}\langle G_k ,\bar f^1_k \rangle \geq  \left(U\left(\langle x, \nu\rangle -\frac{\eps_k}{4}\rho, y\right) -C \eps_k^{3/2}\right)^+ \quad\text{in $B_{\frac32 \rho}$}.\ee
Set
$$h(X):=\left(U\left(\langle x, \nu\rangle -\frac{\eps_k}{4}\rho, y\right) -C \eps_k^{3/2}\right)^+$$
and call with $H$ the $L_a$-harmonic function in $B_{\frac 3 2 \rho} \setminus \{\langle (x,0), \nu \rangle \leq \frac{\eps_k}{4}\rho\}$ satisfying
$$H=h \quad \text{on $\p B_{\frac 3 2 \rho}$}, \quad H=0 \quad \text{on $\mathcal{B}_{\frac32 \rho}\cap\{\langle x, \nu \rangle = \frac{\eps_k}{4}\rho\}$.}$$ Then, by \eqref{flat_imp2}-\eqref{pos}-\eqref{bou} and the comparison principle, we conclude that $$\langle G_k , \bar f^1_k \rangle \geq H \quad \text{in $B_{\frac 3 2 \rho}$}.$$
On the other hand, by applying the Boundary Harnack
$$H \geq (1-C\eps_k^{3/2})U\left(\langle x, \nu\rangle -\frac{\eps_k}{4}\rho, y\right) \quad \text{on $B_{\rho},$}$$
for $C>0$ universal, from which the required lower bound follows for $k$ sufficiently large.

We are left with the proof of \eqref{xi2}. By the definition of $\bar f^1_k$ and \eqref{den}, it is sufficient to show that
$$ |G_k - \langle G_k , \xi^1_k\rangle \xi^1_k| \leq  \left(\frac{\eps_k}{4}\right)^{3/4} \rho^{s} \quad \text{in $B_\rho$}.$$ Set $\bar G_k:=G_k - \langle G_k , \xi^1_k\rangle \xi^1_k$, then by \eqref{contrad2} we immediately get
$$|\bar g_k^1|= \eps^{3/4} \bigg|\sum_{i \neq 1} M_i g^i_k\bigg| \leq C \eps_k^{3/2}.$$
Instead, for the remaining components, by using \eqref{small} we have
$$|\bar g^i_k| = \bigg|g_k^i - \eps_k^{3/4}M_i g^1_k- \eps_k^{3/2} M_i\sum_{j\neq 1}M_j g^j_k\bigg| \leq \left(\frac{\eps_k}{8}\right)^{3/4} \rho^{s} + C \eps_k^{9/4},$$
and the desired bound follows for $k$ large enough.
\end{proof}
Finally, we are able to conclude the Section with the proof of the Theorem \ref{mmm}.
\begin{proof}[Proof of Theorem \ref{mmm}]
The statements about $n^*$ and the fact that  $\{|G|>0\} \cap \{y=0\}$ has locally finite perimeter follow exactly as in the scalar case (see \cite[Section 5]{DS1} and \cite[Theorem 1.2.]{EKPSS}). Moreover, the estimate on the Hausdorff dimension of $\mathrm{Sing}(F(G))$ follows combining Theorem \ref{mmm0}, Proposition \ref{caract}, Definition \ref{000}, Proposition \ref{singular}.\\Lastly, let us deal with the regularity result for  $\mathrm{Reg}(F(G))$.
First, by Proposition \ref{regolar.visc}, the vector of normalized eigenfunctions associated to a shape optimizer of \eqref{min.shape} is a viscosity solution of \eqref{VOPnew.eigen}.\\
Fix $\lambda=(\lambda_1^s(\Omega),\dots,\lambda_m^s(\Omega)$, then by Remark \ref{rescale.visc}, for every $X_0 \in \mathrm{Reg}(F(G)), r>0$ the function
$$
G_2(X)=\frac{\Gamma(1+s)}{\sqrt{\Lambda}r^s}G(X_0 +r X)
$$
is a viscosity solution to \eqref{VOPnew} with $\alpha=1, \sigma= r^s \lambda, 0\in F(G_2)$. Moreover, by Proposition \ref{caract} and Definition \ref{000}, given $\eps>0$ there exists $r>0$ such that $G_2$ is $\eps$-flat solution of \eqref{VOPnew} in $B_1$, in some directions $(f,\nu)$ with $\alpha=1$ and $\sigma \leq \eps^{2(n+2)}$.\\
Then, by a standard iteration of Lemma 3.3., we get that $\mathrm{Reg}(F(G_2)) \in C^{1,a}$ in $B_{1/2}$.
\end{proof}
\bibliography{bibliography}
\bibliographystyle{abbrv}
\end{document}